\title{On the Kottwitz conjecture for local shtuka spaces}
\author{David Hansen, Tasho Kaletha, and Jared Weinstein}
\begin{document}
\maketitle

\begin{abstract}  Kottwitz's conjecture describes the contribution of a supercuspidal represention to the cohomology of a local Shimura variety in terms of the local Langlands correspondence.  A natural extension of this conjecture concerns Scholze's more general spaces of local shtukas.  
Using a new Lefschetz-Verdier trace formula for v-stacks, we prove the extended conjecture, disregarding the action of the Weil group, and modulo a virtual representation whose character vanishes on the locus of elliptic elements.  As an application, we show that for an irreducible smooth representation of an inner form of $\GL_n$, the $L$-parameter constructed by Fargues-Scholze agrees with the usual semisimplified parameter arising from local Langlands.
\end{abstract}
{\let\thefootnote\relax\footnotetext{T.K. was supported in part by NSF grants DMS-1161489, DMS-1801687, and a Sloan Fellowship.  J.W. was supported by NSF grants DMS-1303312,  DMS-1902148, and a Sloan Fellowship.}}

\tableofcontents

%!TEX root = FixedPoints2021.tex
\section{Introduction}
\label{SectionIntroduction}

Let $F$ be a finite extension of the field $\Q_p$ of $p$-adic numbers, and let $G$ be a connected reductive group defined over $F$.  Scholze \cite[\S23]{ScholzeLectures} has introduced a tower of moduli spaces of mixed-characteristic shtukas
\[ \Sht_{G,b,\mu}=\varprojlim_K \Sht_{G,b,\mu,K} \]
depending on a $\sigma$-conjugacy class of $b\in G(\breve F)$ (where $\breve F$ is the completion of the maximal unramified extension of $F$) and on a conjugacy class of cocharacters $\mu\from \Gm\to G$ defined over $\overline{F}$.   Here $K$ ranges over open compact subgroups of $G(F)$.   Each $\Sht_{G,b,\mu,K}$ is a locally spatial diamond defined over $\Spd \breve{E}$, where $E$ is the field of definition of the conjugacy class of $\mu$.  

When $\mu$ is minuscule, $\Sht_{G,b,\mu,K}$ is the diamond associated to a rigid-analytic variety $\mc{M}_{G,b,\mu,K}$ \cite[\S24]{ScholzeLectures}.  The latter is a {\em local Shimura variety}, whose general existence was conjectured in \cite{RapoportViehmann}.  The theory of Rapoport-Zink spaces \cite{RapoportZink} provides instances of $\mc{M}_{G,b,\mu,K}$ admitting a moduli interpretation, as the generic fiber of a deformation space of $p$-divisible groups.  

%Let $F$ be a finite extension of the field $\Q_p$ of $p$-adic numbers, and let $\breve F$ be the completion of the maximal unramified extension of $F$, relative to a fixed algebraic closure $\bar F$. Let $\sigma \in \tx{Aut}(\breve F/F)$ be the arithmetic Frobenius element. Let $G$ be a connected reductive group defined over $F$, $[b] \in B(G)$ a $\sigma$-conjugacy class of elements of $G(\breve F)$, and $\{\mu\}$ a conjugacy class of cocharacters $\Gm \to G$ defined over $\bar F$. Assume that $\{\mu\}$ is minuscule and that $[b] \in B(G,\{\mu\})$.  The triple $(G,[b],\{\mu\})$ is called a local Shimura datum \cite[\S5]{RapoportViehmann}. In loc. cit. the authors conjecture the existence of an associated tower $\mc{M}_{G,b,\mu,K}$ of rigid analytic spaces over $\breve E$, indexed by open compact subgroups $K \subset G(F)$.  Here $E$ is the field of definition of the conjugacy class $\{\mu\}$, a finite extension of $F$.  The isomorphism class of the tower should only depend on the classes $[b]$ and $\{\mu\}$.  The theory of Rapoport-Zink spaces \cite{RapoportZink} provides instances of such a tower together with a moduli interpretation, as the generic fiber of a deformation space of $p$-divisible groups. In general these towers were constructed in \cite[\S24]{ScholzeLectures}.

The {\em Kottwitz conjecture} \cite[Conjecture 5.1]{RapoportNonArchimedean}, \cite[Conjecture 7.3]{RapoportViehmann} relates the cohomology of $\mc{M}_{G,b,\mu,K}$ to the local Langlands correspondence, in the case that $b$ lies in the unique basic class in $B(G,\mu)$.  There is a natural generalization of this conjecture for $\Sht_{G,b,\mu,K}$, as we now explain.  

%Let us review the precise statement.
%Let $B(G)_\tx{bas}\subset B(G)$ be the set of basic $\sigma$-conjugacy classes. Assume that $[b]\in B(G)_{\tx{bas}}$ and choose a representative $b\in [b]$. Let $G_b$ be the associated inner form of $G$.  Note that since $B(G,\set{\mu})$ contains a unique basic element, $[b]$ is uniquely determined by $\set{\mu}$.

%The tower $\mc{M}_{G,b,\mu,K}$ receives commuting actions of $G_b(F)$ and $G(F)$. The action of $G_b(F)$ preserves each $\mc{M}_{G,b,\mu,K}$, while the action of $g \in G(F)$ sends $\mc{M}_{G,b,\mu,K}$ to $\mc{M}_{G,b,\mu,gKg^{-1}}$.  There is furthermore a {\em Weil descent datum} on this tower from $\breve{E}$ down to $E$.  It need not be effective.

Let $G_b$ the inner form of $G$ associated to $b$.  The tower $\Sht_{G,b,\mu,K}$ admits commuting actions of $G_b(F)$ and $G(F)$.  The action of $G_b(F)$ preserves each $\Sht_{G,b,\mu,K}$, whereas the action of $g\in G(F)$ sends $\Sht_{G,b,\mu,K}$ to $\Sht_{G,b,\mu,gKg^{-1}}$.  There is furthermore a (not necessarily effective) Weil descent datum on this tower from $\breve{E}$ down to $E$.  

Let $\ell$ be a prime distinct from $p$.  The geometric Satake equivalence produces an object $\mc{S}_\mu$ in the derived category of \'etale $\Z_\ell$-sheaves on $\Sht_{G,b,\mu,K}$;  this is compatible with the actions of $G(F)$ and $G_b(F)$ on the tower.  Let $C$ be the completion of an algebraic closure of $\breve{E}$.  For a smooth representation $\rho$ of $G_b(F)$ with coefficients in $\overline{\Q}_\ell$, we define:
\[ R\Gamma(G,b,\mu)[\rho]=\varinjlim_K R\Hom_{G_b(F)}(R\Gamma_c(\Sht_{G,b,\mu,K,C},\mc{S}_{\mu}),\rho). \]
Then $R\Gamma(G,b,\mu)[\rho]$ lies in the derived category of smooth representations of $G(F)\times W_E$ with coefficients in $\overline{\Q}_\ell$, where $W_E$ is the Weil group.  Informally, this is the $\rho$-isotypic component of the cohomology of the tower $\Sht_{G,b,\mu}$. 

 A recent result of Fargues-Scholze \cite[Corollary I.7.3]{FarguesScholze} states that if $\rho$ is finite length and admissible, then $R\Gamma(G,b,\mu)[\rho]$ is a complex of finite length admissible representations of $G(F)$ admitting a continuous action of $W_E$.
 
Let $\Groth(G_b(F))$ be the Grothendieck group of the category of finite length admissible representations of $G(F)$ with $\overline{\Q}_\ell$ coefficients.  Also, let $\Groth(G(F)\times W_E)$ be the Grothendieck group of the category of finite length admissible representations of $G(F)$ with $\overline{\Q}_\ell$ coefficients, which come equipped with a continuous action of $W_E$ commuting with the $G(F)$-action.  Following \cite{ShinGaloisRepresentations} we define a map
\[ \Mant_{b,\mu}\from \Groth(G_b(F))\to \Groth(G(F)\times W_E) \]
(for ``Mantovan'', referencing \cite{Mantovan}) sending $\rho$ to the Euler characteristic of $R\Gamma(G,b,\mu)[\rho]$.

%
%\[ H^i_c(\mc{M}_{G,b,\mu,K}\times_{\breve E}C, \bar\Q_\ell),\] 
%a $\bar\Q_\ell$-vector spaces equipped with an action of $G_b(F)$ as well as an action of $I_E$, which extends to an action of $W_E$ due to the Weil descent datum. The actions of $G_b(F)$ and $W_E$ commute. Given an irreducible smooth admissible representation $\rho$ of $G_b(F)$ we have the $\bar \Q_\ell$-vector space
%\[H^{i,j}(G,b,\mu)[\rho]=\varinjlim_K \tx{Ext}^j_{G_b(F)}(H^i_c(\mc{M}_{G,b,\mu,K}\times_{\breve E}C, \bar\Q_\ell),\rho) \]
%with a smooth action of  $G(F)\times W_E$, leading to the virtual $G(F)\times W_E$-representation
%\[ H^*(G,b,\mu)[\rho] := \sum_{i\in\Z} (-1)^{i+j} H^{i,j}(G,b,\mu)[\rho](-d), \]
%where $d= \dim \mc{M}_{G,b,\mu} = \<2\rho_G,\mu\>$, and $\rho_G$ is half the sum of the positive roots of $G$.  The isomorphism class of $H^*(G,b,\mu)[\rho]$ only depends on $(G,[b],\set{\mu})$ and $\rho$.

The Kottwitz conjecture (appropriately generalized) describes $\Mant_{b,\mu}(\rho)$ in terms of the local Langlands correspondence, when $\rho$ lies in a supercuspidal $L$-packet. The complex dual groups of $G$ and $G_b$ are canonically identified, and we write $\hat{G}$ for either.   Let $^LG=\hat G \rtimes W_F$ be the $L$-group. The basic form of the local Langlands conjecture predicts that the set of isomorphism classes of essentially square-integrable representations of $G(F)$ (resp., $G_b(F)$) is partitioned into $L$-packets $\Pi_\phi(G)$ (resp., $\Pi_\phi(G_b)$), and that each such packet is indexed by a discrete Langlands parameter $\phi : W_F \times \tx{SL}_2(\C) \to {^LG}$. When $\phi$ is discrete and trivial on $\tx{SL}_2(\C)$, we say $\phi$ is supercuspidal; in this case it is expected that the packets $\Pi_\phi(G)$ and $\Pi_\phi(G_b)$ consist entirely of supercuspidal representations. 

Our generalized Kottwitz conjecture is conditional on the refined local Langlands correspondence for supercuspidal $L$-parameters, in the formulation of \cite[Conjecture G]{KalethaLocalLanglands}. In particular, it relies crucially on the endoscopic character identities satisfied by $L$-packets. These are reviewed in Appendix \ref{LLCReview}. Note that we do not assume any compatibility between the validity of \cite[Conjecture G]{KalethaLocalLanglands} and the construction of \cite{FarguesScholze}, i.e. we do not require that the construction of \cite{FarguesScholze} satisfy any portion of \cite[Conjecture G]{KalethaLocalLanglands}.

We take this opportunity to give a brief summary of the status of \cite[Conjecture G]{KalethaLocalLanglands}. In short, the full conjecture is known for regular supercuspidal parameters \cite[Definition 5.2.3]{KalRSP} provided $G$ splits over a tame extension of $F$, $F$ has characteristic zero, and $p$ is sufficiently large (at least $(e+2)n$, where $e$ is the ramification index of $F/\Q_p$ and $n$ is the smallest size of a faithful algebraic representation of $G$). The proof is contained in \cite[\S5.3]{KalRSP} and \cite[\S4.4]{FKS}. However, various parts of that conjecture are known under less restrictive assumptions. To describe this, we remind the reader that \cite[Conjecture G]{KalethaLocalLanglands} consists of the following assertions:
\begin{enumerate}
    \item The existence of a finite set $\Pi_\phi$ of representations of rigid inner forms of $G$ for each tempered $L$-parameter $\phi$.
    \item The existence and uniqueness of a generic constituent of $\Pi_\phi$ for a fixed Whittaker datum.
    \item A bijection between $\Pi_\phi$ and the set $\tx{Irr}(\pi_0(S_\phi^+))$ of irreducible representations of the refined centralizer component group associated to $\phi$.
    \item The character identities of ordinary endoscopy, as recalled in Appendix \ref{LLCReview}.
\end{enumerate}
At the moment a set $\Pi_\phi$ has been constructed in \cite[\S\S4.1,4.2]{KalSLP} for every supercuspidal parameter $\phi$ provided $G$ splits over a tame extension of $F$ and $p$ does not divide the order of the Weyl group of $G$ (this assumption on $p$ implies that any supercuspidal parameter maps wild inertia into a torus of $\hat G$; under weaker assumptions on $p$ this is not automatically true, but for parameters $\phi$ that do have this property the construction of \cite{KalSLP} works under weaker assumptions on $p$). A bijection between $\Pi_\phi$ and $\tx{Irr}(\pi_0(S_\phi^+))$ has been constructed in \cite[\S\S4.3-4.5]{KalSLP} for any supercuspidal parameter $\phi$. Assuming $F$ has characteristic zero and $p \geq (e+2)n$, the existence and uniqueness of generic constituent in $\Pi_\phi(G)$, as well as the character identities of ordinary endoscopy, are proved in \cite[\S4.4]{FKS} for all regular supercuspidal parameters $\phi$. They are also proved for non-regular supercuspidal parameters $\phi$ but only for certain endoscopic elements.

Returning to the subject of this paper, let $S_\phi=\tx{Cent}(\phi,\hat{G})$. For any $\pi \in \Pi_\phi(G)$ and $\rho \in \Pi_\phi(G_b)$ the refined form of the local Langlands conjecture implies the existence of an algebraic representation $\delta_{\pi,\rho}$ of $S_\phi$, which can be thought of as measuring the relative position of $\pi$ and $\rho$. (The representation $\delta_{\pi,\rho}$ also depends on $b$, but we suppress this from the notation.) The conjugacy class of $\mu$ determines by duality a conjugacy class of weights of $\hat G$;  we denote by $r_{\mu}$ the irreducible representation of $\hat G$ of highest weight $\mu$. There is a natural extension of $r_{\mu}$ to $^LG_E$, the $L$-group of the base change of $G$ to $E$ \cite[Lemma 2.1.2]{Kot84t}. Write $r_{\mu} \circ \phi_E$ for the representation of $S_\phi \times W_E$ given by
\[ r_{\mu} \circ \phi_E(s,w) = r_{\mu}(s\cdot \phi(w)). \]

\begin{cnj} \label{cnj:kottwitz}
\label{Kottwitz} Let $\phi : W_F \to {^LG}$ be a supercuspidal Langlands parameter. Given $\rho \in \Pi_\phi(G_b)$, we have the following equality in $\Groth(G(F)\times W_E)$:
\begin{equation}
\label{KottwitzEquation}
\Mant_{b,\mu}(\rho) = \sum_{\pi \in \Pi_\phi(G)} \pi \boxtimes \tx{Hom}_{S_\phi}(\delta_{\pi,\rho},r_{\mu} \circ \phi_E).
\end{equation}
\end{cnj}

%\begin{cnj}[Kottwitz] \label{cnj:kottwitz}
%\label{Kottwitz} Let $\phi : W_F \to {^LG}$ be a supercuspidal Langlands parameter. Given $\rho \in \Pi_\phi(G_b)$, each $H^i(G,b,\mu)[\rho]$ is admissible, and we have the following equality\footnote{\cite[Conjecture 7.3]{RapoportViehmann} omits the sign $(-1)^d$.} in $\Groth(G(F)\times W_E)$:
%\begin{equation}
%\label{KottwitzEquation}
%H^*(G,b,\mu)[\rho] = (-1)^{d}\sum_{\pi \in \Pi_\phi(G)} \pi \boxtimes \tx{Hom}_{S_\phi}(\delta_{\pi,\rho},r_{\{\mu\}} \circ \phi_E)(-\frac{d}{2}).
%\end{equation}
%\end{cnj}
%It is not obvious that $H^*_c(G,b,\mu)[\rho]$ is admissible as a virtual $G(F)$-representation;  this is an assertion of the conjecture. In fact, as stated in \cite{RapoportViehmann}, the expectation is that each $H^{i,j}_c(\mc{M}_{G,b,\mu})[\rho]$ is an admissible representation of $G(F)$.

This conjecture is more general than the formulation of Kottwitz's conjecture in \cite{RapoportNonArchimedean} and \cite{RapoportViehmann}, in that two conditions are removed.   The first is that we are allowing the cocharacter $\mu$ to be non-minuscule -- this is what requires passage from the local Shimura varieties $\mc{M}_{G,b,\mu}$ to the local shtuka spaces $\Sht_{G,b,\mu}$.  The second is that we do not require $G$ to be a $B$-inner form of its quasi-split inner form $G^*$.  This condition, reviewed in \S\ref{sub:binner}, has the effect of making the definition of $\delta_{\pi,\rho}$ straightforward. To remove it, we use the formulation of the refined local Langlands correspondence \cite[Conjecture G]{KalethaLocalLanglands} based on the cohomology sets $H^1(u \to W,Z \to G)$ of \cite{KalRI}. The definition of $\delta_{\pi,\rho}$ in this setting is a bit more involved and is given in \S\ref{sub:geninner}, see Definition \ref{DeltaPiRho}.

We now present our main theorem.  

\begin{thm}  \label{TheoremMain}
Assume the refined local Langlands correspondence \cite[Conjecture G]{KalethaLocalLanglands}. Let $\phi\from W_F \times \tx{SL}_2 \to {^LG}$ be a discrete Langlands parameter with coefficients in $\overline{\Q}_\ell$, and let $\rho\in \Pi_\phi(G_b)$ be a member of its $L$-packet.  After ignoring the action of $W_E$, we have an equality in $\Groth(G(F))$:
 \[\Mant_{b,\mu}(\rho)=\sum_{\pi\in\Pi_{\phi}(G)} \left[\dim \Hom_{S_\phi}(\delta_{\pi,\rho},r_\mu)\right]\pi + \mathrm{err}, \]
where $\mathrm{err} \in \Groth(G(F))$ is a virtual representation whose character vanishes on the locus of elliptic elements of $G(F)$.

If the packet $\Pi_{\phi}(G)$ consists entirely of supercuspidal representations and the semisimple $L$-parameter $\varphi_{\rho}$ associated with $\rho$ as in \cite[\S I.9.6]{FarguesScholze} is supercuspidal, then in fact $\mathrm{err}=0$.
\end{thm}

Of course we expect that $\varphi_{\rho}=\phi^{\mathrm{ss}}$, so that if $\phi$ is supercuspidal, the error term should vanish.  In that case we obtain Conjecture \ref{cnj:kottwitz} modulo ignoring the action of $W_E$.  For a discrete but non-supercuspidal parameter $\phi$, the error term in Theorem \ref{TheoremMain} is often provably nonzero, cf. \cite{Imai} for some examples. However, for applications to the local Langlands correspondence, it is crucial to have Theorem \ref{TheoremMain} in this extra generality.

The shtukas appearing in our work have only one ``leg''. Scholze defines moduli spaces of mixed-characteristic shtukas $\Sht_{G,b,\set{\mu_i}}$ with arbitarily many legs, fibered over a product $\prod_{i=1}^r \Spd \breve{E}_i$.  It is straightforward to extend Conjecture \ref{cnj:kottwitz} and Theorem \ref{TheoremMain} to this setting as well. In fact, Theorem \ref{TheoremMain} in this extended level of generality follows immediately from the results already proved in this paper, by allowing the legs to coalesce and using the fact that cohomology of shtuka spaces forms a local system over $(\mathrm{Div}^1)^I$. We leave the details to the interested reader.

Theorem \ref{TheoremMain} has an application to the local Langlands correspondence.

\begin{thm} \label{TheoremLLCInnerForm} Let $G$ be any inner form of $\mathrm{GL}_n/F$, and let $\pi$ be an irreducible smooth representation of $G(F)$. Then the $L$-parameter $\varphi_\pi$ associated with $\pi$ by the construction of Fargues-Scholze \cite[\S I.9]{FarguesScholze} agrees with the usual semisimplified $L$-parameter attached to $\pi$.
\end{thm}

\subsection{Remarks on the proof, and relation with prior work}
Ultimately,  Theorem \ref{TheoremMain} is proved by an application of a Lefschetz-Verdier trace formula.   Let us illustrate the idea in the Lubin-Tate case:  say $F=\Q_p$,  $G=\GL_n$, $\mu=(1,0,\dots,0)$, and $b$ is basic of slope $1/n$.   Let $H_0$ be the $p$-divisible group over $\overline{\F}_p$ with isocrystal $b$, so that $H_0$ has dimension $1$ and height $n$.  In this case $G_b(F)=\Aut^0 H_0=D^\times$, where $D/\Q_p$ is the division algebra of invariant $1/n$.  The spaces $\M_K=\M_{G,b,\mu,K}$ are known as the Lubin-Tate tower;  we consider these as rigid-analytic spaces over $C$, where $C/\Q_p$ is a complete algebraically closed field. 

Atop the tower sits the infinite-level Lubin-Tate space $\M=\varprojlim_K\M_K$ as described in \cite{ScholzeWeinstein}.  This is a perfectoid space admitting an action of $G(\Q_p)\times G_b(\Q_p)$.  
The $C$-points of $\M$ classify equivalence classes of triples $(H,\alpha,\iota)$, where $H/\OO_{C}$ is a $p$-divisible group, $\alpha\from \Q_p^n\to VH$ is a trivialization of the rational Tate module, and $\iota\from H_0\otimes_{\overline{\F}_p} \OO_C/p\to H\otimes_{\OO_{C}} \OO_C/p$ is an isomorphism in the isogeny category.   (Equivalence between two such triples is a quasi-isogeny between $p$-divisible groups which makes both diagrams commute.)  Then $\M$ admits an action of $G(\Q_p)\times G_b(\Q_p)$, via composition with $\alpha$ and $\iota$, respectively.

The Hodge-Tate period map exhibits $\M$ as a pro-\'etale $D^\times$-torsor over Drinfeld's upper half-space $\Omega^{n-1}$ (the complement in $\mathbf{P}^{n-1}$ of all $\Q_p$-rational hyperplanes).  This map $\M\to \Omega^{n-1}$ is equivariant for the action of $G(\Q_p)$.  

Now suppose $g\in G(\Q_p)$ is a regular elliptic element (that is, an element with irreducible characteristic polynomial).  Then $g$ has exactly $n$ fixed points on $\Omega^{n-1}$.  For each such fixed point $x\in (\Omega^{n-1})^g$, the element $g$ acts on the fiber $\M_x$.  Because $\M\to \Omega^{n-1}$ is a $G_b(F)$-torsor, there must exist $g'\in G_b(\Q_p)$ such that $(g,g')$ fixes a point in the fiber $\M_x$.

\bigskip
\noindent {\textbf{Key observation}}.  The elements $g\in G(\Q_p)$ and $g'\in G_b(\Q_p)$ are related, meaning they become conjugate over $\overline{\Q}_p$.
\bigskip

We sketch the proof of this claim.   Suppose $y$ corresponds to the triple $(H,\alpha,\iota)$.  This means there exists an automorphism $\gamma$ of $H$ (in the isogeny category) which corresponds to $g$ on the Tate module and $g'$ on the special fiber, respectively.  We verify now that $g$ and $g'$ are related.  Let $B_{\cris}=B_{\cris}(C)$ be the crystalline period ring.  There are isomorphisms
\[ B_{\cris}^n {\to} VH \otimes_{\Q_p} B_{\cris} {\to} M(H_0) \otimes B_{\cris}, \]
where the first map is induced from $\alpha$, and the second map comes from the comparison isomorphism between \'etale and crystalline cohomology of $H$ (using $\iota$ to identify the latter with $M(H_0)$).  
The composite map carries the action of $g$ onto that of $g'$, which is to say that $g$ and $g'$ become conjugate over $B_{\cris}$.   This implies that $g$ and $g'$ are related.

Suppose that $\rho$ is an admissible representation of $D^\times$ with coefficients in $\overline{\Q}_\ell$.  There is a corresponding $\overline{\Q}_\ell$-local system $\mc{L}_{\rho}$ on $\Omega^{n-1}_{C,\et}$.

Let $g\in G(F)$ be elliptic.  A na\"ive form of the Lefschetz trace formula would predict that:
\[
\tr\left(g\vert R\Gamma_c(\Omega^{n-1},\mc{L}_\rho)\right) = \sum_{x\in (\Omega^{n-1})^g} \tr(g\vert \mathcal{L}_{\rho,x}).\]
For each fixed point $x$, the key observation above gives $\tr(g\vert \mathcal{L}_{\rho,x})=\tr \rho(g')$, where $g$ and $g'$ are related.  By the Jacquet-Langlands correspondence, there exists a discrete series representation $\pi$ of $G(\Q_p)$ satisfying $\tr \pi(g)=(-1)^{n-1}\tr \rho(g')$ (here $\tr \pi(g)$ is interpreted as a Harish-Chandra character).  Thus the Euler characteristic of $R\Gamma_c(\Omega^{n-1},\mc{L}_\rho)$  equals $(-1)^{n-1}n\pi$ up to a virtual representation with trace zero on the elliptic locus.  

In this situation $\mc{S}_\mu = \Z_\ell[n-1]$ (up to a Tate twist), and we find that $R\Gamma(G,b,\mu)[\rho]$ is the shift by $n-1$ of the dual of $R\Gamma_c(\Omega^{n-1},\mc{L}_{\rho^\vee})$.  
 Therefore in $\Groth(\GL_n(\Q_p))$ we have 
\[ \Mant_{b,\mu}(\rho) = n\pi+\rm{err}, \]
where the character of err vanishes on the locus of elliptic elements.  This is in 
accord with Theorem \ref{TheoremMain}.

This argument goes back at least to the 1990s, as discussed in \cite[Chap. 9]{HarrisPortrait}, and as far as we know first appears in \cite{Fal94}.  The present article is our attempt to push this argument as far as it will go.  If a suitable Lefschetz formula is valid, then the equality in Theorem \ref{TheoremMain} can be reduced to an endoscopic character identity relating representations of $G(F)$ and $G_b(F)$ (Theorem \ref{TheoremEndoscopicTraceRelation}), which we prove in \S\ref{SectionTransferOfFunctions}.

Therefore the difficulty in Theorem \ref{TheoremMain} lies in proving the validity of the Lefschetz formula.  Prior work of Strauch and Mieda proved Theorem \ref{TheoremMain} in the case of the Lubin-Tate tower  \cite{StrauchJL}, \cite{StrauchDeformationSpaces}, \cite{MiedaLubinTateTower}, \cite{MiedaGeometricApproach} and also in the case of a basic Rapoport-Zink space for GSp(4) \cite{MiedaGSp4}.  

In applying a Lefschetz formula to a non-proper rigid space, care must be taken to treat the boundary.  For instance, if $X$ is the affinoid unit disc $\set{\abs{T}\leq 1}$ in the adic space $\mathbf{A}^1$, then the automorphism $T\mapsto T+1$ has Euler characteristic 1 on $X$, despite having no fixed points.  The culprit is that this automorphism fixes the single boundary point in $\overline{X}\backslash X$.  Mieda \cite{MiedaLefschetz} proves a Lefschetz formula for an operator on a rigid space, under an assumption that the operator has no topological fixed points on a compactification.
Now, in all of the above cases, $\M_{G,b,\mu,K}$ admits a {\em cellular decomposition}.  This means (approximately) that $\M_{G,b,\mu,K}$ contains a compact open subset, whose translates by Hecke operators cover all of $\M_{G,b,\mu,K}$.  This is enough to establish the ``topological fixed point'' hypothesis necessary to apply Mieda's Lefschetz formula.   Shen \cite{ShenUnitary} constructs a cellular decomposition for a basic Rapoport-Zink space attached to the group $U(1,n-1)$, which paves the way for a similar proof of Theorem \ref{TheoremMain} in this case as well.
For general $(G,b,\mu)$, however, the $\mc{M}_{G,b,\mu,K}$ do not admit a cellular decomposition, and so there is probably no hope of applying the methods of \cite{MiedaLefschetz}.  

We had no idea how to proceed, until we learned of the shift of perspective offered by Fargues' program on the geometrization of local Langlands \cite{FarguesGeometrization}, followed by the work \cite{FarguesScholze}.  At the center of that program is the stack $\Bun_G$ of $G$-bundles on the Fargues-Fontaine curve.  This is a geometrization of the Kottwitz set $B(G)$:  There is a bijection $b\mapsto \E^b$ between $B(G)$ and points of the underlying topological space of $\Bun_G$.  For basic $b$ there is an open substack $\Bun_G^b\subset \Bun_G$ classifing $G$-bundles which are everywhere isomorphic to $\E^b$;  in this situation $\Aut \E^b=G_b(F)$ and so we have an isomorphism $\Bun_G^b\isom [\ast/G_b(F)]$.  

Let $\mu$ be a cocharacter of $G$.  As in geometric Langlands, there is a stack $\Hecke_{G,\leq\mu}$ lying over the product $\Bun_G\times\Bun_G$, which parametrizes $\mu$-bounded modifications of $G$-bundles at one point of the curve.  For each $\mu$, one uses $\Hecke_{G,\leq\mu}$ to define a Hecke operator $T_\mu$ on a suitable derived category $D(\Bun_G,\Z_\ell)$ of \'etale $\Z_\ell$-sheaves on $\Bun_G$.  If $b\in B(G,\mu)$, then the moduli space of local shtukas $\Sht_{G,b,\mu}$ appears as the fiber of $\Hecke_{G,\leq\mu}$ over the point $(\E^b,\E^1)$ of $\Bun_G\times \Bun_G$.  Consequently there is an expression for $R\Gamma(G,b,\mu)[\rho]$ in terms of the Hecke operators $T_\mu$, see Proposition \ref{PropHInTermsOfHecke}.

Heavy use is made in \cite{FarguesScholze} of the notion of {\em universal local acyclity} (ULA) as a property of objects $A\in D(X,\Z_\ell)$ for Artin v-stacks $X$. When $X=[\point/G_b(F)]$, a ULA object is an admissible complex of representations of $G_b(F)$.   It is proved in \cite{FarguesScholze} that the Hecke operators $T_\mu$ preserve ULA objects;  the admissibility of $R\Gamma(G,b,\mu)[\rho]$ is deduced from this.

We learned from \cite{LuZheng} that the ULA condition is precisely the right hypothesis necessary to prove a Lefschetz-Verdier trace formula applicable to the cohomology of $A$. This explains the counterexample above: $j_!\Z_\ell$ fails to be ULA, where $j$ is the inclusion of the affinoid disc $X$ into its compactification $\overline{X}$. In fact \cite{LuZheng} is written in the context of schemes, but their formalism applies equally well in the context of rigid-analytic spaces and diamonds. Indeed, some interesting new phenomena occur in the diamond context.  For instance, if $H$ is a locally profinite group acting continuously on a proper diamond $X$, and $A\in D(X,\Z_\ell)$ is a ULA object which is $H$-equivariant, then $R\Gamma(X,A)$ is an admissible $H$-module.   One gets a formula for the {\em trace distribution} of $H$ acting on $R\Gamma(X,A)$, in terms of local terms living on the fixed-point locus in $H\times X$.  We explain the Lefschetz-Verdier trace formula for diamonds in \S\ref{SectionLefschetzVerdier}.

In \S\ref{SectionLocalTerms}, we study the Lefschetz-Verdier trace formula as it pertains to the mixed-characteristic affine Grassmannian $\Gr_{G,\leq \mu}$.  The object $\mc{S}_\mu$ is ULA on $\Gr_{G,\leq\mu}$ and $G(F)$-equivariant, so it makes sense to ask for its local term $\loc_g(x,A)$ at a fixed point $x$ of a regular element $g\in G(F)$.  (Such fixed points are all isolated.)  We found quickly that that result we needed for Theorem \ref{TheoremMain} would follow if we knew that $\loc_g(x,A)$ agreed with the na\"ive local term $\tr(g\vert A_x)$.  We asked Varshavsky, who devised a method for proving this agreement in the scheme setting.  We show how to deduce the required statement for $\Gr_{G,\leq\mu}$, using the Witt vector affine Grassmannian as a bridge between diamonds and schemes.  (We thank the referee for pointing out that an earlier argument we had here was incorrect.)

Finally, in \S\ref{SectionProofOfMainTheorem} we prove Theorem \ref{TheoremMain} by applying our trace formula to the Hecke stack $\Hecke_{G,b,\leq\mu}$.  An important step is to show that fixed points of elliptic elements $g\in G(F)$ acting on $\Gr_{G,\leq\mu}$ are admissible, as we observed above in the Lubin-Tate case.

\subsection*{Acknowledgments}
We are grateful to Peter Scholze for explaining to us some of the material on inertia stacks that appears in \S4.  We also thank Jean-Fran\c{c}ois Dat, Laurent Fargues, Martin Olsson, Jack Thorne, and Yakov Varshavsky for many helpful conversations.  Additionally, DH is grateful to Marie-France Vign\'eras for sharing a scanned copy of her book \cite{VignerasBook}. Finally, we are very grateful to the referee for their detailed comments and feedback on earlier versions of this paper.

%!TEX root = FixedPoints2022.tex
\section{Review of the objects appearing in Kottwitz's conjecture} \label{sec:stm}

\subsection{Basic notions}

Let $\breve{F}$ be the completion of the maximal unramified extension of $F$, and let $\sigma\in \Aut \breve{F}$ be the Frobenius automorphism. Let $G$ be a connected reductive group defined over $F$. Fix a quasi-split group $G^*$ and a $G^*(\overline{F})$-conjugacy class $\Psi$ of inner twists $G^*\to G$; thus elements $\psi \in \Psi$ are isomorphisms $G^*_{\overline{F}} \to G_{\overline{F}}$ such that for each $\tau \in \Gamma$ the automorphism $\psi^{-1}\circ\tau(\psi)$ of $G^*_{\overline{F}}$ is inner. Given an element $b\in G(\breve{F})$, there is an associated inner form $G_b$ of a Levi subgroup of $G^*$ as described in \cite[\S3.3,\S3.4]{Kot97}. Its group of $F$-points is given by
\[ G_b(F) \isom \set{ g \in G(\breve F)\; \big\vert \; \tx{Ad}(b)\sigma(g)=g }. \]
Up to isomorphism the group $G_b$ depends only on the $\sigma$-conjugacy class $[b]$. It will be convenient to choose $b$ to be decent \cite[Definition 1.8]{RapoportZink}. Then there exists a finite unramified extension $F'/F$ such that $b \in G(F')$. This allows us to replace $\breve F$ by $F'$ in the above formula. The slope morphism $\nu : \D \to G_{\breve F}$ of $b$, \cite[\S4]{KottwitzIsocrystals}, is also defined over $F'$. The centralizer $G_{F',\nu}$ of $\nu$ in $G_{F'}$ is a Levi subgroup of $G_{F'}$. The $G(F')$-conjugacy class of $\nu$ is defined over $F$, and then so is the $G(F')$-conjugacy class of $G_{F',\nu}$. There is a Levi subgroup $M^*$ of $G^*$ defined over $F$ and  $\psi \in \Psi$ that restricts to an inner twist $\psi : M^* \to G_b$, see \cite[\S4.3]{Kot97}.

From now on assume that $b$ is basic.  This is equivalent to $M^*=G^*$, so that $G_b$ is in fact an inner form of $G^*$ and of $G$. Furthermore, $\Psi$ is an equivalence class of inner twists $G^* \to G$ as well as $G^* \to G_b$. This identifies the dual groups of $G^*$, $G$, and $G_b$, and we write $\hat G$ for either of them. 

 Let $\phi : W_F \times \tx{SL}_2(\C) \to {^LG}$ be a discrete Langlands parameter and let $S_\phi=\tx{Cent}(\phi,\hat G)$. For $\lambda\in X^*(Z(\hat{G})^\Gamma)$ write $\tx{Rep}(S_\phi,\lambda)$ for the set of isomorphism classes of algebraic representations of the algebraic group $S_\phi$ whose restriction to $Z(\hat{G})^\Gamma$ is $\lambda$-isotypic, and write $\tx{Irr}(S_\phi,\lambda)$ for the subset of irreducible such representations. The class of $b$ corresponds to a character $\lambda_b : Z(\hat G)^\Gamma \to \C^\times$ via the isomorphism $B(G)_\tx{bas} \to X^*(Z(\hat G)^\Gamma)$ of \cite[Proposition 5.6]{KottwitzIsocrystals}. Assuming the validity of the refined local Langlands conjecture \cite[Conjecture G]{KalethaLocalLanglands} we will construct in the following two subsections for any $\pi \in \Pi_\phi(G)$ and $\rho \in \Pi_\phi(G_b)$ an element $\delta_{\pi,\rho} \in \tx{Rep}(S_\phi,\lambda_b)$ that measures the relative position of $\pi$ and $\rho$.

\subsection{Construction of $\delta_{\pi,\rho}$ in a special case} \label{sub:binner}

The statements of the Kottwitz conjecture given in \cite[Conjecture 5.1]{RapoportNonArchimedean} and \cite[Conjecture 7.3]{RapoportViehmann} make the assumption that $G$ is a $B$-inner form of $G^*$. In that case, the construction of $\delta_{\pi,\rho}$ is straightforward and we shall now recall it.

The assumption on $G$ means that some $\psi \in \Psi$ can be equipped with a decent basic $b^* \in G^*(F^\tx{nr})$ such that $\psi$ is an isomorphism $G^*_{F^\tx{nr}} \to G_{F^\tx{nr}}$ satisfying $\psi^{-1}\sigma(\psi)=\tx{Ad}(b^*)$. In other words, $\psi$ becomes an isomorphism over $F$ from the group $G^*_{b^*}$ to $G$. Under this assumption, and after choosing a Whittaker datum $\mf{w}$ for $G^*$, the isocrystal formulation of the refined local Langlands correspondence \cite[Conjecture F]{KalethaLocalLanglands}, which is implied by the rigid formulation \cite[Conjecture G]{KalethaLocalLanglands} according to \cite{KalRIBG}, predicts the existence of bijections
\begin{eqnarray*}
\Pi_{\phi}(G) &\isom & \Irr(S_{\phi},\lambda_{b*})\\
\Pi_{\phi}(G_b) &\isom & \Irr(S_{\phi},\lambda_{b^*}+\lambda_b)
\end{eqnarray*}
where we have used the isomorphisms $B(G)_{\bas}\isom X^*(Z(\hat{G})^{\Gamma}) \cong B(G^*)_\tx{bas}$ of \cite[Proposition 5.6]{KottwitzIsocrystals} to obtain from $[b] \in B(G)_\tx{bas}$ and $[b^*] \in B(G^*)_\tx{bas}$ characters $\lambda_{b}$ and $\lambda_{b^*}$ of $Z(\hat G)^\Gamma$.

These bijections are uniquely characterized by the endoscopic character identities which are part of \cite[Conjecture F]{KalethaLocalLanglands}.  Write $\pi\mapsto \tau_{b^*,\mf{w},\pi}$, $\rho\mapsto \tau_{b^*,\mf{w},\rho}$ for these bijections and define
\begin{equation} \delta_{\pi,\rho} := \check\tau_{b^*,\mf{w},\pi} \otimes\tau_{b^*,\mf{w},\rho}. \end{equation}
While these bijections depend on the choice of Whittaker datum $\mf{w}$ and the choice of $b^*$, we will argue in Subsection \ref{sub:geninner} that for any pair $\pi$ and $\rho$ the representation $\delta_{\pi,\rho}$ is independent of these choices. Of course it does depend on $b$, but this we take as part of the given data.

\subsection{Construction of $\delta_{\pi,\rho}$ in the general case} \label{sub:geninner}

We now drop the assumption that $G$ is a $B$-inner form of $G^*$. Because of this, we no longer have the isocrystal formulation of the refined local Langlands correspondence. However, we do have the formulation based on rigid inner twists \cite[Conjecture G]{KalethaLocalLanglands}. What this means with regards to the Kottwitz conjecture is that neither $\pi$ nor $\rho$ correspond to representations of $S_\phi$. Rather, they correspond to representations $\tau_\pi$ and $\tau_\rho$ of a different group $\pi_0(S_\phi^+)$. Nonetheless it will turn out that $\check\tau_\pi \otimes \tau_\rho$ provides in a natural way a representation $\delta_{\pi,\rho}$ of $S_\phi$.

In order to make this precise we will need the material of \cite{KalRI} and \cite{KalRIBG}, some of which is summarized in \cite{KalethaLocalLanglands}. First, we will need the cohomology set $H^1(u \to W,Z \to G^*)$ defined in \cite[\S3]{KalRI} for any finite central subgroup $Z \subset G^*$ defined over $F$. As in \cite[\S3.2]{KalRIBG} it will be convenient to package these sets for varying $Z$ into the single set
\[ H^1(u \to W,Z(G^*) \to G^*) := \varinjlim H^1(u \to W,Z \to G^*). \]
The transition maps on the right are injective, so the colimit can be seen as an increasing union.

Next, we will need the reinterpretation, given in \cite{KotBG}, of $B(G)$ as the set of cohomology classes of algebraic 1-cocycles of a certain Galois gerbe $1 \to \D(\bar F) \to \mc{E} \to \Gamma \to 1$. This reinterpretation is also reviewed in \cite[\S3.1]{KalRIBG}. For this, we recall that inflation along $W_F \to \Z$ induces an isomorphism between $B(G)=H^1(\<\sigma\>,G(L))$ and $H^1(W_F,G(\bar L))$, where we have written $L=\breve F$ to ease typestting. In \cite[App B]{Kot97} Kottwitz constructs a continuous homomorphism $W_F \to \mc{E}$ whose composition with the natural projection $\mc{E} \to \Gamma$ is the natural map $W_F \to \Gamma$. He proves in \cite[\S8 and App B]{Kot97} that pulling back along this homomorphism and pushing along the inclusion $G(\bar F) \to G(\bar L)$ gives an isomorphism $H^1_\tx{alg}(\mc{E},G(\bar F)) \to B(G)$, and in particular $H^1_\tx{bas}(\mc{E},G(\bar F)) \to B_\tx{bas}(G)$. While the section $W_F \to \mc{E}$ is not completely canonical, the induced isomorphism on cohomology is independent of the choice of section. Strictly speaking, Kottwitz gives the proof only in the case of tori, but the general case is immediate from that.

Finally, we will need the comparison map
\[ H^1_\tx{bas}(\mc{E},G(\bar F)) \to H^1(u \to W,Z(G) \to G) \]
of \cite[\S3.3]{KalRIBG}. 

After this short review we turn to the construction of $\delta_{\pi,\rho} \in \Rep(S_\phi,\lambda_b)$. For this, it is not enough to work with the cohomology class of $b$, because $\delta_{\pi,\rho}$ is an invariant of the equivalence class of the triple $(b,\pi,\rho)$, and changing $b$ within its cohomology class must be accompanied with a corresponding change in $\rho$. Therefore we must work with cocycles. 

To that end, fix the section $W_F \to \mc{E}$. If $z_b \in Z^1_\tx{bas}(\mc{E},G(\bar F))$ denotes a representative of the element of $H^1_\tx{bas}(\mc{E},G(\bar F))$ corresponding to the class of $b$, there exists $g \in G(\bar L)$, unique up to right multiplication by elements of $G_b(F)$, such that
\begin{equation} \label{eq:zb}
g^{-1} z_b(w)w(g) = b \cdot \sigma(b) \cdots \sigma^{|w|-1}(b) \qquad \forall w \in W_F \to \mc{E},    
\end{equation}
where $|w|$ is the image of $w$ under $W_F \to \Z$. Note that the image of $g$ in $G_\tx{ad}(\bar L)$ lies in $G_\tx{ad}(\bar F)$ and that $\tx{Ad}(g)$ induces an $F$-isomorphism $G_{z_b} \to G_b$. Therefore $\rho \circ \tx{Ad}(g)$ is an irreducible representation of $G_{z_b}(F)$ whose isomorphism class does not depend on the choice of $g$.

Choose any inner twist $\psi \in \Psi$ and let $\bar z_\sigma := \psi^{-1}\sigma(\psi) \in G^*_\tx{ad}(\ol{F})$. Then $\bar z \in Z^1(F,G^*_\tx{ad})$ and the surjectivity of the natural map $H^1(u \to W,Z(G^*) \to G^*) \to H^1(F,G^*_\tx{ad})$ asserted in \cite[Corollary 3.8]{KalRI} allows us to choose $z \in Z^1(u \to W,Z(G^*) \to G^*)$ lifting $\bar z$. Then $(\psi,z) : G^* \to G$ is a rigid inner twist, and $(\psi,\psi^{-1}(z)\cdot z_b) : G^* \to G_{z_b}$ is also a rigid inner twist.

The $L$-packets $\Pi_\phi(G)$ and $\Pi_\phi(G_{z_b})$ are now parameterized by representations of a certain cover $S_\phi^+$ of $S_\phi$. While \cite[Conjecture G]{KalethaLocalLanglands} is formulated in terms of a finite cover depending on an auxiliary choice of a finite central subgroup $Z \subset G^*$, we will adopt here the point of view of \cite{KalRIBG} and work with a canonical infinite cover, namely the preimage of $S_\phi$ in the universal cover of $\hat G$. Following \cite[\S3.3]{KalRIBG} we can present this universal cover as follows. Let $Z_n \subset Z(G)$ be the subgroup of those elements whose image in $Z(G)/Z(G_\tx{der})$ is $n$-torsion, and let $G_n=G/Z_n$. Then $G_n$ has adjoint derived subgroup and connected center. More precisely, $G_n=G_\tx{ad} \times C_n$, where $C_n=C_1/C_1[n]$ and $C_1=Z(G)/Z(G_\tx{der})$. It is convenient to identify $C_n=C_1$ as algebraic tori and take the $m/n$-power map $C_1 \to C_1$ as the transition map $C_n \to C_m$ for $n|m$. The isogeny $G \to G_n$ dualizes to $\hat G_n \to \hat G$ and we have $\hat G_n = \hat G_\tx{sc} \times \hat C_1$. Note that $\hat C_1=Z(\hat G)^\circ$. The transition map $\hat G_m \to \hat G_n$ is then the identity on $\hat G_\tx{sc}$ and the $m/n$-power map on $\hat C_1$. Set $\hat{\bar G} = \varprojlim \hat G_n = \hat G_\tx{sc} \times \hat C_\infty$, where $\hat C_\infty = \varprojlim \hat C_n$. Then $\hat{\bar G}$ is the universal cover of $\hat G$. Elements of $\hat{\bar G}$ can be written as $(a,(b_n)_n)$, where $a \in \hat G_\tx{sc}$ and $(b_n)_n$ is a sequence of elements $b_n \in \hat C_1$ satisfying $b_n=(b_m)^{\frac{m}{n}}$ for $n|m$. In this presentation, the natural map $\hat{\bar G} \to \hat G$ sends $(a,(b_n))$ to $a_\tx{der} \cdot b_1$, where $a_\tx{der} \in \hat G_\tx{der}$ is the image of $a \in \hat G_\tx{sc}$ under the natural map $\hat G_\tx{sc} \to \hat G_\tx{der}$.

\begin{dfn}\label{DefinitionOfSphiplus} Let $Z(\hat{\bar G})^+ \subset S_\phi^+ \subset \hat{\bar G}$ be the preimages of $Z(\hat G)^\Gamma \subset S_\phi \subset \hat G$ under $\hat{\bar G}\to \hat G$.
\end{dfn}

Given a character $\lambda : \pi_0(Z(\hat{\bar G})^+) \to \C^\times$ (which we will always assume trivial on the kernel of $Z(\hat{\bar G})^+ \to \hat G_n$ for some $n$) let $\Rep(\pi_0(S_\phi^+),\lambda)$ denote the set of isomorphism classes of representations of $\pi_0(S_\phi^+)$ whose pull-back to $\pi_0(Z(\hat{\bar G})^+)$ is $\lambda$-isotypic, and let $\Irr(\pi_0(S_\phi^+),\lambda)$ be the (finite) subset of irreducible representations. Let $\lambda_z$ be the character corresponding to the class of $z$ under the Tate-Nakayama isomorphism
\[ H^1(u \to W,Z(G^*) \to G^*) \to \pi_0(Z(\hat{\bar G})^+)^* \]
of \cite[Corollary 5.4]{KalRI}, and let $\lambda_{z_b}$ be the character corresponding to the class of $z_b$ in $H^1(u \to W,Z(G) \to G)$. Then according to \cite[Conjecture G]{KalethaLocalLanglands}, upon fixing a Whittaker datum $\mf{w}$ for $G^*$ there are bijections
\begin{eqnarray*}
\Pi_{\phi}(G) &\isom & \Irr(\pi_0(S_{\phi}^+),\lambda_{z})\\
\Pi_{\phi}(G_{z_b}) &\isom & \Irr(\pi_0(S_{\phi}^+),\lambda_{z}+\lambda_{z_b})
\end{eqnarray*}
again uniquely determined by the endoscopic character identities. We write $\pi\mapsto \tau_{z,\mf{w},\pi}$, $\rho\mapsto \tau_{z,\mf{w},\rho}$ for these bijections, and $\tau\mapsto \pi_{z,\mf{w},\tau}$, $\tau\mapsto \rho_{z,\mf{w},\tau}$ for their inverses. We form the representation $\check\tau_{z,\mf{w},\pi} \otimes\tau_{z,\mf{w},\rho} \in \Rep(\pi_0(S_\phi^+),\lambda_{z_b})$, where we are identifying $\rho$ with the representation $\rho\circ\tx{Ad}(g)$ of $G_{z_b}(F)$.

Recall the map \cite[(4.7)]{KalRIBG}
\begin{equation} \label{eq:rivsbgllc} S_\phi^+ \to S_\phi,\qquad (a,(b_n)) \mapsto \frac{a_\tx{der} \cdot b_1}{N_{E/F}(b_{[E:F]})}. \end{equation}
Here $a_\tx{der} \in \hat G_\tx{der}$ is the image of $a \in \hat G_\tx{sc}$ under the natural map $\hat G_\tx{sc} \to \hat G_\tx{der}$ and $E/F$ is a sufficiently large finite Galois extension. This map is independent of the choice of $E/F$. According to \cite[Lemma 4.1]{KalRIBG} pulling back along this map defines a natural bijection $\Irr(\pi_0(S_\phi^+),\lambda_{z_b}) \cong \Irr(S_\phi,\lambda_b)$. Note that since $\phi$ is discrete the group $S_\phi^\natural$ defined in loc. cit. is equal to $S_\phi$. The lemma remains valid, with the same proof, if we remove the requirement of the representations being irreducible, and we obtain the bijection $\Rep(\pi_0(S_\phi^+),\lambda_{z_b}) \to \Rep(S_\phi,\lambda_b)$.
\begin{dfn}
\label{DeltaPiRho}
Let $\delta_{\pi,\rho}$ be the image of $\check\tau_{z,\mf{w},\pi} \otimes\tau_{z,\mf{w},\rho}$ under the bijection  $\Rep(\pi_0(S_\phi^+),\lambda_{z_b}) \to \Rep(S_\phi,\lambda_b)$.
\end{dfn}
 In the situation when $G$ is a $B$-inner form of $G^*$, this definition of $\delta_{\pi,\rho}$ agrees with the one of Subsection \ref{sub:binner}, because then we can obtain $z$ from $b^*$ just like we obtained $z_b$ from $b$, and then $\tau_{z,\mf{w},\pi}$ and $\tau_{b^*,\mf{w},\pi}$ are related via \eqref{eq:rivsbgllc}, and so are $\tau_{z,\mf{w},\rho}$ and $\tau_{b^*,\mf{w},\rho}$, see \cite[\S4.2]{KalRIBG}.

\begin{lem} \label{lem:deltaindep}  Assume \cite[Conjecture G]{KalethaLocalLanglands}.
The representation $\delta_{\pi,\rho}$ is independent of the choices of Whittaker datum $\mf{w}$ and of a rigidifying 1-cocycle $z \in Z^1(u \to W,Z(G^*) \to G^*)$.
\end{lem}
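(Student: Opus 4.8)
The plan is to show that $\delta_{\pi,\rho}$ depends neither on the Whittaker datum $\mf{w}$ nor on the rigidifying cocycle $z$, by tracking how each bijection and construction transforms under a change in these data and checking that the ambiguities cancel in the tensor product $\check\tau_{z,\mf{w},\pi}\otimes\tau_{z,\mf{w},\rho}$ after passing through $\Rep(\pi_0(S_\phi^+),\lambda_{z_b})\to\Rep(S_\phi,\lambda_b)$. I would treat the two independences separately.

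First, independence of $\mf{w}$. Recall that changing the Whittaker datum from $\mf{w}$ to $\mf{w}'$ twists the parametrization bijections by a character of $\pi_0(S_\phi^+)$ (equivalently of $S_\phi$, pulled back) depending only on the relative position of $\mf{w}$ and $\mf{w}'$; this is the standard behavior in \cite[Conjecture G]{KalethaLocalLanglands} and is recalled in Appendix \ref{LLCReview}. Crucially, the \emph{same} character $\epsilon_{\mf{w},\mf{w}'}$ twists both the bijection for $\Pi_\phi(G)$ and that for $\Pi_\phi(J_b)$, since both are parametrizations of packets for inner forms of the same $G^*$ with the same Whittaker normalization. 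Hence $\tau_{z,\mf{w}',\pi}\isom\tau_{z,\mf{w},\pi}\otimes\epsilon_{\mf{w},\mf{w}'}$ and $\tau_{z,\mf{w}',\rho}\isom\tau_{z,\mf{w},\rho}\otimes\epsilon_{\mf{w},\mf{w}'}$, so in $\check\tau_{z,\mf{w}',\pi}\otimes\tau_{z,\mf{w}',\rho}$ the factor $\check\epsilon_{\mf{w},\mf{w}'}\otimes\epsilon_{\mf{w},\mf{w}'}$ is trivial and the tensor product is unchanged already in $\Rep(\pi_0(S_\phi^+),\lambda_{z_b})$.

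Next, independence of $z$. Any two lifts $z,z'$ of the fixed $\bar z\in Z^1(F,G^*_\tx{ad})$ differ by a cocycle in $Z^1(u\to W,Z(G^*)\to G^*)$ coming from $Z^1(u\to W,Z(G^*)\to Z(G^*))$, i.e. by a class in $H^1(u\to W,Z(G^*))$, whose image under Tate–Nakayama is a character $\mu$ of $\pi_0(Z(\hat{\bar G})^+)$. Passing from $z$ to $z'$ shifts $\lambda_z$ to $\lambda_z+\mu$, and the normalization properties of \cite[Conjecture G]{KalethaLocalLanglands} (compatibility of the parametrization with central characters / twisting by central cocycles) give $\tau_{z',\mf{w},\pi}\isom\tau_{z,\mf{w},\pi}\otimes\mu$, and likewise $\tau_{z',\mf{w},\rho}\isom\tau_{z,\mf{w},\rho}\otimes\mu$, since for $J_b$ the relevant cocycle $\psi^{-1}(z)\cdot z_b$ is shifted by the same $\psi^{-1}(z'/z)$, which maps to the same $\mu$ as $z'/z$ does (the map $H^1(u\to W,Z(G^*))\to H^1(u\to W,Z(G))$ is compatible with Tate–Nakayama, as $\psi$ is an inner twist and hence the identity on centers). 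Therefore $\check\tau_{z',\mf{w},\pi}\otimes\tau_{z',\mf{w},\rho}\isom\check\mu\otimes\check\tau_{z,\mf{w},\pi}\otimes\tau_{z,\mf{w},\rho}\otimes\mu\isom\check\tau_{z,\mf{w},\pi}\otimes\tau_{z,\mf{w},\rho}$ in $\Rep(\pi_0(S_\phi^+),\lambda_{z_b})$, with $\lambda_{z_b}$ unchanged. Applying the bijection to $\Rep(S_\phi,\lambda_b)$ then gives the same $\delta_{\pi,\rho}$.

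I expect the main obstacle to be bookkeeping rather than a conceptual difficulty: one must verify that the character by which the $J_b$-parametrization twists is \emph{exactly} the same (not merely ``a'' character) as the one twisting the $G$-parametrization, in both the Whittaker and the cocycle variations, and that the identification $\Rep(\pi_0(S_\phi^+),\lambda_{z_b})\to\Rep(S_\phi,\lambda_b)$ of \cite[Lemma 4.1]{KalRIBG} is compatible with these twists (which it is, since it is just pullback along \eqref{eq:rivsbgllc}, and $\lambda_{z_b}$, $\lambda_b$ depend only on $b$, not on $z$ or $\mf{w}$). The cleanest way to organize this is to note that both $\tau_\pi$ and $\tau_\rho$ live in $\Rep(\pi_0(S_\phi^+))$ for packets attached to rigid inner twists $(\psi,z)$ and $(\xi\circ\psi,\psi^{-1}(z)\cdot z_b)$ whose rigidifying cocycles differ by the \emph{fixed} $z_b$; changing $z$ or $\mf{w}$ acts on the pair by a common twist, so the ``difference'' datum $z_b$—and hence $\delta_{\pi,\rho}$—is untouched.
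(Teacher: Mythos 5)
Your approach is essentially the same as the paper's: in both arguments the key point is that replacing $\mf{w}$ by $\mf{w}'$, or $z$ by $z'$, twists the parametrizations of $\Pi_\phi(G)$ and $\Pi_\phi(J_b)$ by a common one-dimensional character of $\pi_0(S_\phi^+)$, which then cancels in $\check\tau_{z,\mf{w},\pi}\otimes\tau_{z,\mf{w},\rho}$. The paper makes this precise by citing specific results: \cite[Theorem 4.3]{KalGen} for the Whittaker variation (giving a character $(\mf{w},\mf{w}')$ of $\pi_0(S_\phi/Z(\hat G)^\Gamma)$ inflated to $\pi_0(S_\phi^+)$) and \cite[Lemma 6.2]{KalRIBG} for the variation in $z$.

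One imprecision in your $z$-argument is worth flagging. You let $\mu$ be the Tate--Nakayama image of $z'/z$; but that gives a character of $\pi_0(Z(\hat{\bar G})^+)$, not of $\pi_0(S_\phi^+)$, so the formula $\tau_{z',\mf{w},\pi}\cong\tau_{z,\mf{w},\pi}\otimes\mu$ does not literally typecheck, since there is no canonical extension of $\mu$ to the larger group. The content of \cite[Lemma 6.2]{KalRIBG} is precisely that there is a canonical character of $\pi_0(S_\phi^+)$ (restricting to $\mu$ on $\pi_0(Z(\hat{\bar G})^+)$) determined by the central cocycle $z'/z$, by which both parametrizations twist. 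The crucial observation you make correctly --- that this character depends only on $z'/z$ and is therefore identical for $G$ and $J_b$, since $\psi$ restricts to the canonical $F$-isomorphism of centers --- is exactly what makes the cancellation work once the existence of the character is granted. So your proof is sound in outline, but the step asserting the twist ``by $\mu$'' should be replaced by an appeal to the cited lemma, just as the Whittaker step implicitly appeals to the cited theorem.
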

\begin{proof}
Both of these statements follow from \cite[Conjecture G]{KalethaLocalLanglands}. For the independence of Whittaker datum, one can prove that the validity of this conjecture implies that if $\mf{w}$ is replaced by another choice $\mf{w'}$ then there is an explicitly constructed character $(\mf{w},\mf{w'})$ of $\pi_0(S_\phi/Z(\hat G)^\Gamma)$ whose inflation to $\pi_0(S_\phi^+)$ satisfies $\tau_{z,\mf{w},\sigma}=\tau_{z,\mf{w'},\sigma} \otimes (\mf{w},\mf{w'})$ for any $\sigma \in \Pi_\phi(G) \cup \Pi_\phi(G_b)$. See \S4 and in particular Theorem 4.3 of \cite{KalGen}, the proof of which is valid for a general $G$ that satisfies \cite[Conjecture G]{KalethaLocalLanglands}, bearing in mind that the transfer factor we use here is related to the one used there by $s \mapsto s^{-1}$. The independence of $z$ follows from the same type of argument, but now using \cite[Lemma 6.2]{KalRIBG}.
\end{proof}

\subsection{Spaces of local shtukas and their cohomology} \label{sub:shtcoh}

We recall here some material from \cite{ScholzeLectures} and \cite{FarguesGeometrization} regarding the Fargues-Fontaine curve and moduli spaces of local shtukas. 

Let $k$ be the residue field of $F$.  For a perfectoid space $S$ over $k$, we have the Fargues-Fontaine curve $X_S$ \cite{FarguesFontaineCurve}, an adic space over $F$.   For $S=\Spa(R,R^+)$ affinoid with pseudouniformizer $\varpi$, the adic space $X_S$ is defined as follows:
\begin{eqnarray*}
 Y_S &=& (\Spa W_{\OO_F}(R^+))\backslash\set{p[\varpi]=0} \\
 X_S &=& Y_S/\Frob^\Z.
 \end{eqnarray*}
Here $\Frob$ is the $q$th power Frobenius on $S$. 

For an affinoid perfectoid space $S$ lying over the residue field of $F$, the following sets are in bijection:
\begin{enumerate}
\item $S$-points of $\Spd F$
\item Untilts $S^\sharp$ of $S$ over $F$,
\item Cartier divisors of $Y_S$ of degree 1.
\end{enumerate}
Given an untilt $S^\sharp$, we let $D_{S^\sharp}\subset Y_S$ be the corresponding divisor.  If $S^\sharp=\Spa(R^\sharp,R^{\sharp +})$ is affinoid, then the completion of $Y_S$ along $D_{S^\sharp}$ is $\Spf B_{\dR}^+(R^\sharp)$, where $B_{\dR}^+(R^\sharp)$ is the de Rham period ring attached to the perfectoid algebra $R^\sharp$.  The untilt $S^\sharp$ determines a Cartier divisor on $X_S$, which we still refer to as $D_{S^\sharp}$.

There is a functor $b \mapsto \mc{E}^b$ from the category of isocrystals with $G$-structure to the category of $G$-bundles on $X_S$ (for any $S$). When $S$ is a geometric point this functor induces a bijection between the sets of isomorphism classes \cite{FarguesGBundles}.

We now recall Scholze's definition of the local shtuka space. It is a set-valued functor on the pro-\'etale site of perfectoid spaces over $\F_p$ and is equipped with a morphism to $\Spd C$. Thus it can be described equivalently as a set-valued functor on the pro-\'etale site of perfectoid spaces over $C$.

\begin{dfn}
\label{DefinitionMGBMu} The local shtuka space $\Sht_{G,b,\mu}$ inputs a perfectoid $C$-algebra $(R,R^+)$, and outputs the set of isomorphisms
\[ \gamma\from \E^1\vert_{X_{R^\flat}\backslash D_R} \isom \E^b\vert_{X_{R^\flat}\backslash D_R} \]
of $G$-torsors that are meromorphic along $D_R$ and bounded by $\mu$ pointwise on $\Spa R$. 
\end{dfn}

Let us briefly recall the condition of being pointwise bounded by $\mu$. If $\Spa(C,O_C) \to \Spa R$ is a geometric point we obtain via pull-back $\gamma : \mc{E}^1|_{X_{C^\flat} \sm \{x_C\}} \to \mc{E}^b|_{X_{C^\flat} \sm \{x_C\}}$, where we have written $x_C$ in place of $D_C$ to emphasize that this a point on $X_{C^\flat}$. The completed local ring of $X_{C^\flat}$ at $x_C$ is Fointaine's ring $B_{\dR}^+(C)$. A trivialization of both bundles $\mc{E}^1$ and $\mc{E}^b$ on a formal neighborhood of $x_C$, together with $\gamma$, leads to an element of $G(B_{\dR}(C))$, well defined up to left and right multiplication by elements of $G(B_{\dR}^+(C))$. The corresponding element of the double coset space $G(B_{\dR}^+(C)) \lmod G(B_{\dR}(C)) / G(B_{\dR}^+(C))$ is indexed by a conjugacy class of co-characters of $G/C$ according to the Cartan decomposition, and we demand that this conjugacy class is dominated by $\mu$ in the usual order (given by the simple roots of the universal Borel pair).

The space $\Sht_{G,b,\mu}$ is a locally spatial diamond \cite[\S23]{ScholzeLectures}. Since the automorphism groups of $\mc{E}^1$ and $\mc{E}^b$ are the constant group diamonds $\ul{G(\Q_p)}$ and $\ul{G_b(\Q_p)}$ respectively, the space $\Sht_{G,b,\mu}$ is equipped with commuting actions of $G(\Q_p)$ and $G_b(\Q_p)$, acting by pre- and post-composition on $\gamma$.

\begin{rmk}
According to \cite[Corollary 23.2.2]{ScholzeLectures} the above definition recovers the moduli space of local shtukas with one leg and infinite level structure.  We have dropped the subscript $\infty$ used in \cite{ScholzeLectures} to denote the infinite level structure.
\end{rmk}

We will use the cohomology theory developed in \cite{ScholzeEtaleCohomology}. For any compact open subgroup $K \subset G(F)$ the quotient $\Sht_{G,b,\mu,K}=\Sht_{G,b,\mu}/K$ is again a locally spatial diamond \cite[\S23]{ScholzeLectures}.   For each $n=1,2,\dots$, let $V_{\mu,n}\in\Rep(\hat{G},\Z/\ell^n\Z)$ be the Weyl module associated to $\mu$.  By the geometric Satake equivalence (Theorem \ref{ThmSatakeEquivalence}), there is a corresponding object $\mc{S}_{\mu,n}$ of $D_{\et}(\Gr_{G,b,\leq\mu},\Z/\ell^n\Z[\sqrt{q}])$.  Define
\[ R\Gamma_c(\Sht_{G,b,\mu}/K,\mc{S}_{\mu}) = \varinjlim_U R\Gamma_c(U,\mc{S}_{\mu}), \]
where $U\subset \Sht_{G,b,\mu}/K$ runs over quasicompact open subsets, and where we have put
\[ R\Gamma_c(U,\mc{S}_{\mu})=\varprojlim_n R\Gamma_c(U,\mc{S}_{\mu,n}). \]
Then $R\Gamma_c(\Sht_{G,b,\mu}/K,\mc{S}_{\mu})$ is a complex of $\Z_\ell[\sqrt{q}]$-modules carrying an action of $G_b(F)\times W_E$.  

\begin{dfn} \label{dfn:stcoh}
Let $\rho$ be a finite-length admissible representation of $G_b(F)$ with coefficients in $\overline{\mathbf{Q}_{\ell}}$. Then we define
\[ R\Gamma(G,b,\mu)[\rho]=
\varinjlim_{K\subset G(F)} R\Hom_{G_b(F)}(R\Gamma_c(\Sht_{G,b,\mu}/K,\mc{S}_\mu) \otimes \overline{\mathbf{Q}_{\ell}}, \rho),\]
where $K$ runs over the set of open compact subgroups of $G(F)$. 
\end{dfn}

By Proposition \ref{PropHInTermsOfHecke} below, this defines a finite-length $W_E$-equivariant object in the derived category of smooth representations of $G(F)$ with coefficients in $\overline{\mathbf{Q}_{\ell}}$, and we write $\Mant_{b,\mu}(\rho)$ for the image of $R\Gamma(G,b,\mu)[\rho]$ in $\Groth(G(F) \times W_E)$.

\begin{rmk} We now discuss the relationship between our definition of $\Mant_{b,\mu}(\rho)$ and the virtual representation $H^*(G,b,\mu)[\rho]$ defined in \cite{RapoportViehmann}.

When $\mu$ is minuscule, $\Sht_{G,b,\mu,K}$ is the diamond $\M_{G,b, \mu,K}^{\diamond}$ associated to the local Shimura variety $\M_{G,b,\mu,K}$ \cite[\S 24.1]{ScholzeLectures}.   The latter is a rigid-analytic variety of dimension $d=\class{\mu,2\rho_G}$, where $2\rho_G$ is the sum of the positive roots.  Moreover, in that case, $\mc{S}_\mu=\mathbf{Z}_{\ell}[\sqrt{q}][d](\tfrac{d}{2})$ is a shift and twist of the constant sheaf. In \cite{RapoportViehmann}, $H^*(G,b,\mu)[\rho]$ is defined as the alternating sum
\[ \sum_{i,j\in\Z} (-1)^{i+j} H^{i,j}(G,b,\mu)[\rho](-d),\]
where
\[ H^{i,j}(G,b,\mu)[\rho]=\varinjlim_K \tx{Ext}^i_{G_b(F)}(H^{j}_{c}(\M_{G,b,\mu,K},\Z_\ell) \otimes \overline{\mathbf{Q}_{\ell}},\rho)
\]
Note that $H^{i,j}(G,b,\mu)[\rho]$ vanishes for all but finitely many $(i,j)$ and each $H^{i,j}(G,b,\mu)[\rho]$ is an admissible representation of $G_b(F)$, by the analysis in \cite{FarguesScholze}. On the other hand, unwinding definitions, we see that there is a spectral sequence $H^{i,j-d}(G,b,\mu)[\rho](-\tfrac{d}{2}) \implies H^{i+j}\left( R\Gamma(G,b,\mu)[\rho] \right)$. 

Putting these observations together, we get the equality \[ \Mant_{b,\mu}(\rho) = (-1)^d H^*(G,b,\mu)[\rho](\tfrac{d}{2}).\] Note that in our formulation, the Tate twist appearing in \cite[Conjecture 7.3]{RapoportViehmann} has been absorbed into the normalization of $\Mant_{b,\mu}$.
\end{rmk}

%!TEX root = FixedPoints2022.tex
\section{Transfer of conjugation-invariant functions from $G(F)$ to $G_b(F)$}
\label{SectionTransferOfFunctions}
Throughout, $F/\Q_p$ is a finite extension, and $G/F$ is a connected reductive group.  

\subsection{The space of strongly regular conjugacy classes in $G(F)$}

The following definitions are important for our work.

\begin{itemize}
	\item $G_{\rs}\subset G$ is the open subvariety of regular semisimple elements, meaning those whose connected centralizer is a maximal torus.
	\item $G_{\sr}\subset G$ is the open subvariety of strongly regular semi-simple elements, meaning those regular semisimple elements whose centralizer is connected, i.e. a maximal torus;
	\item $G(F)_{\elli}\subset G(F)$ is the open subset of strongly regular elliptic elements, meaning those strongly regular semisimple elements in $G(F)$ whose centralizer is an elliptic maximal torus.
\end{itemize}
We put $G(F)_{\sr}=G_{\sr}(F)$ and $G(F)_{\rs}=G_{\rs}(F)$.  Note that $G(F)_{\elli}\subset G(F)_{\sr}\subset G(F)_{\rs}$.   The inclusion $G(F)_{\sr}\subset G(F)_{\rs}$ is dense.

If $g$ is regular semisimple, then it is necessarily contained in a unique maximal torus $T$, namely the neutral component $\Cent(g,G)^\circ$, but this is not necessarily all of $\Cent(g,G)$.  If $G_\tx{der}$ is simply connected, then $\Cent(g,G)$ is connected;  thus in such a group, regular semisimple and strongly regular semisimple mean the same thing.

Observe that if $g$ is regular semisimple, then $\alpha(g)\neq 1$ for all roots $\alpha$ relative to the action of $T$.  Indeed, if $\alpha(g)=1$, then the root subgroup of $\alpha$ would commute with $g$, and then it would have dimension strictly greater than $\dim T$.

All of the sets $G(F)_{\sr}$, $G(F)_{\rs}$, $G(F)_{\elli}$ are conjugacy-invariant, so we may for instance consider the quotient
$G(F)_{\sr}\sslash G(F)$, considered as a topological space.

\begin{lem} \label{LemLocallyProfinite} $G(F)_{\rs}\sslash G(F)$ is locally profinite, in fact equal to the disjoint union of the locally profinite sets $T(F)_{\rs}/N(T,G)(F)$, where $T$ runs over the set of $G(F)$-conjugacy classes of $F$-rational maximal tori in $G$ and $N(T,G)$ is the normalizer of $T$ in $G$. The same is true with ``rs'' replaced by ``sr''.
\end{lem}

\begin{proof}  Let $T\subset G$ be a $F$-rational maximal torus.  The set $H^1(F,N(T,G))$ classifies conjugacy classes of $F$-rational tori, as follows:  given a $F$-rational torus $T'$, we must have $T'=xTx^{-1}$ for some $x\in G(\overline{F})$.  Then for all $\sigma\in \Gal(\overline{F}/F)$, $x^{-1} x^\sigma$ normalizes $T$.  We associate to $T'$ the class of $\sigma\mapsto x^{-1}x^{\sigma}$ in $H^1(F,N(T,G))$, and it is a simple matter to see that this defines a bijection as claimed.  (In fact $H^1(F,N(T,G))$ is finite.)

There is a map $G(F)_{\rs}\sslash G(F)\to H^1(F,N(T,G))$, sending the conjugacy class of $g\in G(F)_{\rs}$ to the conjugacy class of the unique $F$-rational torus containing it, namely $\Cent(g,G)^\circ$.  We claim that this map is locally constant.

To prove the claim, we consider
\[ \varphi : G(F) \times T_{\rs}(F) \to G_{\rs}(F),  \;\;\; (g,t)\mapsto gtg^{-1}, \]
a morphism of $p$-adic analytic varieties.  We would like to show that $\varphi$ is open.  To do this, we will compute its differential at the point $(g,t)$, by means of a change of variable.  Consider the map
\[ \psi = L_{gtg^{-1}}^{-1}\circ\varphi \circ (L_g \times L_t).\]
Explicitly, for $(z,w) \in G(F) \times T(F)$ we have  $\psi(z,w)=gt^{-1}ztwz^{-1}g^{-1}$.

Let $\mf{g}=\Lie G$, $\mf{t}=\Lie T$.  The derivative $d\psi(1,1) : \mf{g} \times \mf{t} \to \mf{g}$ is given by the formula
\[d\psi(1,1)(Z,W)=\tx{Ad}(g)[ (\tx{Ad}(t^{-1})-\tx{id})Z+W].\]
We would like to check that $d\psi(1,1)$ is surjective.  We may decompose $\mf{g}=\mf{t} \oplus \mf{t}^\perp$, where $\mf{t}^\perp$ is the descent to $F$ of the direct sum of all root subspaces of $\mf{g}_{\overline{F}}$ for the action of $T$.

The element $t$ is regular, hence $\alpha(t)\neq 1$ for all roots of $\mf{g}$ for the action of $T$.  Therefore $\tx{Ad}(t^{-1})-\tx{id} : \mf{g}/\mf{t} \to \mf{t}^\perp$ is an isomorphism.   It follows that $d\psi$ is surjective.  The derivative of $\varphi$ at $(g,t)$ is
\[ d\varphi(g,t)=dL_{gtg^{-1}}(gtg^{-1})\circ d\psi(1,1)\circ (dL_g(1) \times dL_t(1)).\]

All terms $dL$ are isomorphisms, so $d\varphi(g,t)$ is also surjective. Thus $\varphi$ is a submersion in the sense of Bourbaki VAR \S5.9.1, hence it is open by loc. cit. \S5.9.4.

Therefore if $g\in T(F)_{\rs}$ and $g'$ is sufficiently close to $g$ in $G(F)$, then $g'$ is conjugate in $G(F)$ to an element of $T(F)$, which proves the claim about the local constancy of $G(F)_{\rs}\sslash G(F)\to H^1(F,N(T,G))$.

The fiber of this map over $T'$ is $T'(F)_{\rs}$ modulo the action of the finite group $N(T',G)(F)/T'(F)$.   Since $T'(F)_{\rs}$ is locally profinite, so is its quotient by the action of a finite group.
\end{proof}

\subsection{Hecke transfer maps}

Suppose that $b\in G(\breve{F})$ is basic. The goal of this section is to define a family of explicit maps, which input a conjugation-invariant function on $G(F)_{\sr}$ and output a conjugation-invariant function on $G_b(F)_{\sr}$. We shall call them Hecke transfer maps, as a way of foreshadowing their relation to the Hecke operators defined on the stack $\Bun_G$.

Given a sufficiently strong version of the local Langlands conjectures, we will show that the Hecke transfer maps act predictably on the trace characters attached to irreducible admissible representations.

We begin by recalling the concept of related elements and the definition of their invariant in the isocrystal setting from \cite{KalIso}.

\begin{lem} 
\label{LemmaSteinberg}
Suppose $g\in G(F)$ and $g'\in G_b(F)$ are strongly regular elements which are conjugate over an algebraic closure of $\breve{F}$. Then they are conjugate over $\breve F$.
\end{lem}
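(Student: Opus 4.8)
The plan is to exhibit a conjugating element as a rational point of a torsor under a maximal torus, and then to conclude by the vanishing of Galois cohomology over $\breve F$.

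First I would fix an algebraic closure $\overline{\breve F}$ of $\breve F$ and use the identification $J_b(F)\isom\set{h\in G(\breve F)\mid \Ad(b)\sigma(h)=h}$ to regard $j$ as an element of $G(\breve F)$, so that $g$ and $j$ both lie in $G(\breve F)$ and, by hypothesis, become conjugate in $G(\overline{\breve F})$. The key object is the transporter
\[ X=\set{x\in G\mid xg=jx}, \]
a closed subscheme of $G$ which, since $g,j\in G(\breve F)$, is defined over $\breve F$, and which satisfies $X(\overline{\breve F})\neq\emptyset$ by assumption. Setting $T=\Cent(g,G)$ — which, by the very definition of strong regularity, is a maximal torus of $G$, hence smooth, connected, and defined over $F$ — right translation exhibits $X$ as a right $T$-torsor over $\breve F$: for $x_0,x\in X(\overline{\breve F})$ the element $x_0^{-1}x$ centralizes $g$ and so lies in $T$, while conversely $x_0T\subset X$. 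Thus $X$ determines a class in $H^1(\breve F,T)$, and the lemma becomes equivalent to the triviality of this class.

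The crux is therefore the vanishing $H^1(\breve F,T)=0$. I would argue it as follows: $\breve F$ is complete for a discrete valuation, has characteristic $0$, and has algebraically closed residue field, so it has cohomological dimension $\le 1$ (Serre, \emph{Galois cohomology}, II.3.3); Steinberg's theorem (loc.\ cit., III.2.3) then yields $H^1(\breve F,T)=0$ for the connected linear algebraic group $T$ over the perfect field $\breve F$. Consequently $X(\breve F)\neq\emptyset$, which produces the required $x\in G(\breve F)$ with $xgx^{-1}=j$.

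I do not expect a serious obstacle here; the proof is essentially a formal consequence of the vanishing of $H^1(\breve F,-)$ for connected groups. The one point genuinely requiring the hypothesis is strong regularity: it is used precisely to guarantee that $\Cent(g,G)$ is connected, so that $X$ is a torsor under a \emph{connected} group. For a merely regular semisimple element the centralizer may be disconnected, and $H^1(\breve F,-)$ of a nontrivial finite \'etale group need not vanish, so the argument would break down without this assumption.
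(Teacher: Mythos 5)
Your proof is correct and is essentially the same as the paper's: both reduce the problem to a class in $H^1(\breve F, T)$ for the connected centralizer $T=\Cent(g,G)$ and invoke Steinberg's vanishing theorem. You phrase it via the transporter torsor $X=\set{x \mid xg=jx}$ while the paper writes the cocycle $\tau\mapsto z^{-\tau}z$ explicitly, but these are two formulations of the identical argument, and your closing remark correctly identifies strong regularity (connectedness of $T$) as the load-bearing hypothesis.
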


\begin{proof}  Let $K$ be an algebraic closure of $\breve{F}$.  Say $g'=zgz^{-1}$ with $z\in G(K)$.  Let $T=\Cent(g,G)$;  then for all $\tau$ in the inertia group $ \Gal(\overline{F}/F^{\nr})$, $z^{-\tau} z$ commutes with $g$ and therefore lies in $T(K)$.   Then $\tau\mapsto z^{-\tau} z$ is a cocycle in $H^1(\breve{F},T)$.  Since $T$ is a connected algebraic group, $H^1(\breve{F},T)=0$ \cite[Theorem 1.9]{SteinbergRegularElements}.  If $x\in T(K)$ splits the cocycle, then $y=zx^{-1}\in G(\breve{F})$, and $g'=ygy^{-1}$, so that $g$ and $g'$ are related.
\end{proof}

It is customary to call elements $g,g'$ as in the above lemma \emph{stably conjugate}, or \emph{related}. Suppose we have strongly regular elements $g\in G(F)_{\sr}$ and $g'\in G_b(F)_{\sr}$ which are related.  Let $T=\Cent(g,G)$, and suppose $y\in G(\breve{F})$ with $g'=ygy^{-1}$.  The rationality of $g$ means that $g^\sigma=g$, whereas the rationality of $g'$ in $G_b$ means that $(g')^\sigma = b^{-1}g' b$.  Combining these statements shows that $b_0:=y^{-1}by^{\sigma}$ commutes with $g$ and therefore lies in $T(\breve{F})$.
%% Steps:\begin{eqnarray*}
%ygy^{-1} &=& j \\
%&=& b(j)^{\sigma}b^{-1} \\
%&=& b(y g y^{-1})^\sigma b^{-1} \\
%&=& by^\sigma g y^{-\sigma}b^{-1}.
%\end{eqnarray*}

\begin{dfn} \label{dfn:inv}
For strongly regular related elements $g\in G(F)_{\sr}$ and $g'\in G_b(F)_{\sr}$, the invariant $\inv[b](g,g')$ is the class of $y^{-1}by^{\sigma}$ in $B(T)$, where $y\in G(\breve{F})$ satisfies $g'=ygy^{-1}$. 
\end{dfn}

\begin{fct} \label{fct:inv}
The invariant $\inv[b](g,g')\in B(T)$ only depends on $b$, $g$, and $g'$ and not on the element $y$ which conjugates $g$ into $g'$.  It depends on the rational conjugacy classes of $g$ and $g'$ as follows:
\begin{itemize}
\item For $z\in G(F)$ we have $\inv[b]((\ad z)(g),g') = (\ad z)(\inv[b](g,g'))$, a class in $B((\ad z)(T))$.
\item For $z\in G_b(F)$ we have $\inv[b](g,(\ad z)(g')) = \inv[b](g,g')$.
\end{itemize}
The image of $\tx{inv}[b](g,g')$ under the composition of $B(T) \to B(G)$ and $\kappa\from B(G) \to \pi_1(G)_\Gamma$ equals $\kappa(b)$.
\end{fct}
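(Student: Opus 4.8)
The plan is to deduce all three assertions directly from the defining relation in Definition \ref{dfn:inv}, using only that $g$ is strongly regular, so that $\Cent(g,G)=T$ is a connected maximal torus, together with the description $J_b(F)=\{h\in G(\breve F):\Ad(b)\sigma(h)=h\}$. The single fact that drives everything is the tautology that if $j=ygy^{-1}$ with $y\in G(\breve F)$ (such $y$ exists by Lemma \ref{LemmaSteinberg}), then $b_0:=y^{-1}by^\sigma$ is $\sigma$-conjugate to $b$ in $G(\breve F)$.

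First I would check independence of $y$. If $y,y'\in G(\breve F)$ both conjugate $g$ to $j$, then $y^{-1}y'$ centralizes $g$, hence lies in $T(\breve F)$; writing $y'=yt$ with $t\in T(\breve F)$ gives $(y')^{-1}b(y')^\sigma=t^{-1}b_0\,t^\sigma$, which is $\sigma$-conjugate to $b_0$ inside $T(\breve F)$, so the class in $B(T)$ is unchanged. Next, for $z\in G(F)$ I would set $g'=(\ad z)(g)$, so $\Cent(g',G)=(\ad z)(T)=:T'$, and use the conjugator $y'=yz^{-1}$, which indeed satisfies $j=y'g'(y')^{-1}$. Since $z^\sigma=z$, one computes $(y')^{-1}b(y')^\sigma=z\,b_0\,z^{-1}$, which represents the image of $\inv[b](g,j)$ under the isomorphism $B(T)\to B(T')$ induced by $\ad z$, as required.

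For $z\in J_b(F)$ I would instead set $j'=(\ad z)(j)$, leaving $g$ and hence $T$ unchanged, and use the conjugator $y'=zy$. Here $z^\sigma=b^{-1}zb$, so $(y')^{-1}b(y')^\sigma=y^{-1}z^{-1}b\,(b^{-1}zb)\,y^\sigma=y^{-1}by^\sigma=b_0$, whence $\inv[b](g,j')=\inv[b](g,j)$. Finally, for the image in $\pi_1(G)_\Gamma$: by the observation of the first paragraph, the image of $\inv[b](g,j)$ under $B(T)\to B(G)$ is the class $[b]$, and applying $\kappa$ gives $\kappa(b)$.

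I do not expect a genuine obstacle: the Fact is bookkeeping about the compatibility of the somewhat ad hoc Definition \ref{dfn:inv} with the standard structures, and the only place where care is needed is the appeal to strong regularity, which is what guarantees $\Cent(g,G)$ is connected, so that both the ambiguity in $y$ is genuinely controlled by $T(\breve F)$ and $b_0$ really lands in $T(\breve F)$.
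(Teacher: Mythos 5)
Your proof is correct, and it is the natural direct verification: every step unwinds Definition \ref{dfn:inv} together with the description $J_b(F)=\{h\in G(\breve F):\Ad(b)\sigma(h)=h\}$, and the uses of strong regularity (to identify $\Cent(g,G)=T$, controlling both the ambiguity in $y$ and the fact that $b_0\in T(\breve F)$) are placed exactly where they are needed. The paper does not include a proof of this Fact---it merely refers to the isocrystal setting of Kaletha's paper cited just above the definition---so there is no argument in the text to compare against; your bookkeeping supplies the intended content.
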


\begin{dfn} \label{DfnRelB}
We define a diagram of topological spaces
\begin{equation}
\label{Relation}
\xymatrix{
& \Rel_b \ar[dl] \ar[dr] & \\
G(F)_{\sr}\stslash G(F) && G_b(F)_{\sr}\stslash G_b(F).
}
\end{equation}
as follows.  The space $\Rel_b$ is the set of conjugacy classes of triples $(g,g',\lambda)$, where $g\in  G(F)_{\sr}$  and $g'\in G_b(F)_{\sr}$ are related, and $\lambda\in X_*(T)$, where $T=\Cent(g,G)$.  It is required that $\kappa(\inv[b](g,g'))$ agrees with the image of $\lambda$ in $X_*(T)_\Gamma$.  We consider $(g,g',\lambda)$ conjugate to $((\ad z)(g),(\ad z')(g'),(\ad z)(\lambda))$ whenever $z\in G(F)$ and $z'\in G_b(F)$.  We give $\Rel_b\subset (G(F) \times G_b(F)\times X_*(G))/(G(F)\times G_b(F))$ the subspace topology, where $X_*(G)$ is taken to be discrete.
\end{dfn}

\begin{rmk}\label{RmkAtMostOne}
 Given $g\in G(F)_{\sr}$ and $\lambda$ a cocharacter of its torus, there is at most one conjugacy class of $g'\in G_b(F)$ with $(g,g',\lambda)\in \Rel_b$.  In other words, $g$ and $\inv[b](g,g')$ determine the conjugacy class of $g'$.   Indeed, suppose $(g,g',\lambda)$ and $(g,g'',\lambda)$ are both in $\Rel_b$.  Then $g'=ygy^{-1}$ and $g''=zgz^{-1}$ for some $y,z\in G(F^{\nr})$, and $y^{-1}by^{\sigma}$ and $z^{-1}bz^\sigma$ are $\sigma$-conjugate in $T(\breve{F})$.  This means there exists $t\in T(\breve{F})$ such that $y^{-1}by^{\sigma}=(zt)^{-1}b (zt)^{\sigma}$.   We see that $x=zty^{-1}\in G_b(F)$, and that $x$ conjugates $g'$ onto $g''$.
\end{rmk}

\begin{lem} \label{LemEtale} The map $\Rel_{b}\to G(F)_{\sr}\sslash G(F)$ is a homeomorphism locally on the source.  Its image consists of those classes that transfer to $G_b$.  In particular, the image is open and closed.

The analogous statement is true for $\Rel_b\to G_b(F)_{\sr}\sslash G_b(F)$.
\end{lem}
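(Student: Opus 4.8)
The plan is to reduce the statement to the structure of $G(F)_{\sr}\sslash G(F)$ found in the proof of Lemma \ref{LemLocallyProfinite}, exploiting the fact that the only datum in a triple $(g,j,\lambda)$ beyond $g$ itself is discrete. First I would record precisely when a class transfers: for $g\in G(F)_{\sr}$ with $T=\Cent(g,G)$ and $j=ygy^{-1}$, $y\in G(\breve F)$, the condition $j\in J_b(F)$, i.e.\ $\Ad(b)\sigma(j)=j$, is equivalent to $y^{-1}b\sigma(y)\in T(\breve F)$, and in that case $j$ is automatically strongly regular, being $\breve F$-conjugate to $g$. Hence $g$ transfers to $J_b$ if and only if $b$ is $\sigma$-conjugate in $G(\breve F)$ to an element of $T(\breve F)$, i.e.\ if and only if $[b]$ lies in the image of $B(T)\to B(G)$; when it does, the classes $\inv[b](g,j)\in B(T)$ realized by related $j\in J_b(F)_{\sr}$ fill out the entire fiber of $B(T)\to B(G)$ over $[b]$, and by the remark preceding this lemma (via the isomorphism $\kappa$) each of these classes determines the $J_b(F)$-conjugacy class of $j$. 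In particular whether $g$ transfers depends only on the $G(F)$-conjugacy class of $T$, so it factors through the locally constant map $G(F)_{\sr}\sslash G(F)\to H^1(F,N(T,G))$ of Lemma \ref{LemLocallyProfinite} to a finite set; consequently the transferring locus is open and closed, and it will suffice to identify it with the image of $\Rel_b$.

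Next I would make the fibered structure explicit. By the proof of Lemma \ref{LemLocallyProfinite}, $G(F)_{\sr}\sslash G(F)=\bigsqcup_{[T']}T'(F)_{\sr}/W_{T'}$ is a disjoint union of open and closed pieces indexed by the finitely many $G(F)$-conjugacy classes of $F$-rational maximal tori, where $W_{T'}=N(T',G)(F)/T'(F)$ acts freely on $T'(F)_{\sr}$; pulling back gives $\Rel_b=\bigsqcup_{[T']}\Rel_b^{T'}$ with each $\Rel_b^{T'}$ open and closed in $\Rel_b$. Fixing $[T']$ and unwinding the definition of $\Rel_b$ --- using that $X_*(G)$ is discrete, that $\kappa\colon B(T')\to X_*(T')_{\Gamma}$ is an isomorphism, and that by the preceding remark $(g',\lambda)$ determines the $J_b(F)$-conjugacy class of $j'$ --- every class in $\Rel_b^{T'}$ is represented by some $(g',j',\lambda)$ with $g'\in T'(F)_{\sr}$ and $\lambda$ lying in a fixed $W_{T'}$-stable subset $\Lambda_{T'}\subset X_*(T')$, namely the preimage under $X_*(T')\to X_*(T')_{\Gamma}\isom B(T')$ of the fiber of $B(T')\to B(G)$ over $[b]$; this $\Lambda_{T'}$ is nonempty exactly when $T'$ transfers. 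I would then check that this gives a homeomorphism $\Rel_b^{T'}\isom (T'(F)_{\sr}\times\Lambda_{T'})/W_{T'}$ for the diagonal $W_{T'}$-action, with continuity resting on the observation that for a fixed $y_0\in G(\breve F)$ with $y_0^{-1}b\sigma(y_0)\in T'(\breve F)$ the assignment $g'\mapsto y_0g'y_0^{-1}$ is a continuous map $T'(F)_{\sr}\to J_b(F)$, and with $W_{T'}$-equivariance following from Fact \ref{fct:inv}.

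Under this identification the map $\Rel_b^{T'}\to T'(F)_{\sr}/W_{T'}$ becomes $(T'(F)_{\sr}\times\Lambda_{T'})/W_{T'}\to T'(F)_{\sr}/W_{T'}$, $[g',\lambda]\mapsto[g']$. Since $W_{T'}$ acts freely both quotient maps are covering maps; composing a local section of $T'(F)_{\sr}\times\Lambda_{T'}\to(T'(F)_{\sr}\times\Lambda_{T'})/W_{T'}$ with the projection $T'(F)_{\sr}\times\Lambda_{T'}\to T'(F)_{\sr}$ (a local homeomorphism, $\Lambda_{T'}$ being discrete) and with $T'(F)_{\sr}\to T'(F)_{\sr}/W_{T'}$ exhibits the map, locally on the source, as a composite of local homeomorphisms, hence as a local homeomorphism; it is surjective exactly when $\Lambda_{T'}\neq\emptyset$, and $\Rel_b^{T'}$ is empty otherwise. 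Reassembling over $[T']$ shows $\Rel_b\to G(F)_{\sr}\sslash G(F)$ is a homeomorphism locally on the source whose image is the open and closed transferring locus. The assertion for the map to $J_b(F)_{\sr}\sslash J_b(F)$ follows by running the same argument with the roles of $G$ and $J_b$ interchanged, using the $J_b(F)$-conjugation invariance recorded in Fact \ref{fct:inv} and the evident symmetric form of the preceding remark.

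I expect the one genuine obstacle to be the topological bookkeeping in the second step: verifying that the set-theoretic description $\Rel_b^{T'}\isom(T'(F)_{\sr}\times\Lambda_{T'})/W_{T'}$ is in fact a homeomorphism for the subspace topology inherited from $(G(F)\times J_b(F)\times X_*(G))/(G(F)\times J_b(F))$ --- equivalently, that the local section $g'\mapsto[(g',y_0g'y_0^{-1},\lambda)]$ has open image in $\Rel_b$. This is exactly the point at which one must combine the openness of the conjugation map $\varphi\colon G(F)\times T'(F)_{\sr}\to G(F)_{\sr}$ proved in Lemma \ref{LemLocallyProfinite} with the discreteness of the $\lambda$-coordinate and the freeness of the $W_{T'}$-actions; once that is in place, everything else is formal.
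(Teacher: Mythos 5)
Your proposal follows the same strategy as the paper's proof: decompose $G(F)_{\sr}\sslash G(F)$ into pieces $T'(F)_{\sr}/W_{T'}$ via Lemma \ref{LemLocallyProfinite}, use the preceding remark to identify $\Rel_b$ fiberwise over these pieces with (a subset of) $T'(F)_{\sr}\times X_*(T')$ modulo $W_{T'}$, and conclude from discreteness of the cocharacter lattice. The paper dispatches everything after the local-homeomorphism claim with ``the other statements are evident from the definitions,'' whereas you carefully spell out the transferring criterion via $B(T')\to B(G)$, the identification $\Rel_b^{T'}\isom(T'(F)_{\sr}\times\Lambda_{T'})/W_{T'}$, and the topological bookkeeping with the openness of the conjugation map; this is a correct and welcome expansion of the same argument, not a different route.
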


\begin{proof}

The proof of Lemma \ref{LemLocallyProfinite} shows that $G(F)_{\sr}\sslash G(F)$ is the disjoint union of spaces $T(F)_{\sr}/W_T$, as $T\subset G$ runs through the finitely many conjugacy classes of $F$-rational maximal tori, and $W_T=N(T,G)(F)/T(F)$ is a finite group.  By the above remark, $\Rel_b$ injects into the disjoint union of the spaces $T(F)_{\sr}/W_T\times X_*(T)$, with the map to $G(F)_{\sr}\stslash G(F)$ corresponding to the projection $T(F)_{\sr}/W_T\times X_*(T)\to T(F)_{\sr}/W_T$.  Since $X_*(T)$ is discrete, this map is a homeomorphism locally on the source.  The other statements are evident from the definitions.  
\end{proof}

The definition of $\Rel_b$ already suggests a means for transferring functions from $G(F)_{\sr}\stslash G(F)$ to $G_b(F)_{\sr}\stslash G_b(F)$, namely, by pulling back from $G(F)_{\sr}\stslash G(F)$ to $\Rel_b$, multiplying by a compactly supported kernel function, and then pushing forward to $G_b(F)_{\sr}\stslash G_b(F)$.  We will define one such kernel function for each geometric conjugacy class of cocharacters $\mu\from \Gm\to G_{\overline{F}}$.  

Let $\hat{G}$ be the Langlands dual group. It comes equipped with a splitting, in particular with a torus and Borel $\hat T \subset \hat B \subset \hat G$. Given a conjugacy class of cocharacters $\mu$ for $G$ as above, we obtain a character $\hat\mu : \hat T \to \Gm$ which is $\hat B$-dominant. Let $r_\mu$ be the Weyl module of the dual group $\hat G$ whose highest weight with respect to $(\hat T,\hat B)$ is $\hat\mu$.  

A cocharacter $\lambda\in X_*(T)$ corresponds to a character $\hat{\lambda}\in X^*(\hat{T})$.  Let $r_\mu[\lambda]$ be the $\hat{\lambda}$-weight space of $r_\mu$.   The quantity $\dim r_\mu[\lambda]$ will give us our kernel function. While we will not need it here, we note that there is an explicit formula for $\dim r_\mu[\lambda]$ coming from the Weyl character formula.

%\begin{lem} \label{LemEtale}  Given a class $j\in G(F)_{\rs}\stslash G(F)$, there are only finitely many triples $(g,j,\lambda)\in \Rel_b$ with 
%\end{lem}

%\begin{proof} The first statement follows from the definition of the topology on $\Rel_{b,\mu}$. The second statement is immediate from the fact that $r_\mu$ is finite-dimensional and thus has only finitely many weights, each having finite multiplicity. For the third statement, let $g \in G(F)_{\sr}$ be any element whose stable class transfers to $G_b$ and let $T$ be its centralizer. As $j$ runs over the elements of $G_b(F)$ in the corresponding stable class, $\tx{inv}[b](g,j)$ runs over the full fiber of $\pi_1(T)_\Gamma \to \pi_1(G)_\Gamma$ over the image of $[b]$ under $B(G) \to \pi_1(G)_\Gamma$. Choose any $\lambda \in X_*(T)$ that is a weight of $r_\mu$. Then $\lambda$ has the same image in $\pi_1(G)_\Gamma$ as $\mu$ and hence there is $j \in G_b(F)$ such that the image of $\lambda$ in $\pi_1(T)_\Gamma$ equals $\tx{inv}[b](g,j)$. Thus $\dim r_\mu[\lambda] \cdot (g,j,\lambda) \in \Rel_{b,\mu}$.

%The fact that the image is open and closed follows from the proof of Lemma \ref{LemLocallyProfinite}.
%\end{proof}

We now fix a commutative ring $\Lambda$ in which $p$ is invertible.  For a topological space $X$, we let $C(X,\Lambda)$ be the space of continuous $\Lambda$-valued functions on $X$, where $\Lambda$ is given the discrete topology.

\begin{dfn}\label{DefinitionTbmu} Let $d=\class{\mu,2\rho_G}$, where $2\rho_G$ is the sum of the positive roots of $G$.  We define the Hecke transfer map
\[ T_{b,\mu}^{G\to G_b}\from C(G(F)_{\sr}\stslash G(F),\Lambda)\to C(G_b(F)_{\sr}\stslash G_b(F),\Lambda) \]
by
% pulling back $f$ to $\Rel_{b,\mu}$ and then integrating along the fiber of $\Rel_{b,\mu} \to G_b(F)_{\sr}\stslash G_b(F)$. Explicitly,
\[ [T_{b,\mu}^{G\to G_b} f](g') = (-1)^d\sum_{(g,g',\lambda) \in \Rel_{b}} f(g) \dim r_\mu[\lambda]. \]
Analogously, we define 
\[ T_{b,\mu}^{G_b\to G}\from C(G_b(F)_{\sr}\stslash G_b(F),\Lambda)\to C(G(F)_{\sr}\stslash G(F),\Lambda) \]  
by
\[ [T_{b,\mu}^{G_b\to G} f'](g) =(-1)^d \sum_{(g,g',\lambda) \in \Rel_{b}} f'(g') \dim r_\mu[\lambda]. \]
\end{dfn}

Since $r_\mu$ is finite-dimensional, the sum is finite. If $f'$ has compact support, then so does its image.

\begin{lem}
The Hecke transfer map $T_{b,\mu}^{G\to G_b}$ is zero unless $[b]$ is the unique basic class in $B(G,\mu)$.  
\end{lem}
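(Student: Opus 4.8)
The plan is to trace through the definition of $T_{b,\mu}^{G\to J}$ and observe that a nonzero value forces a compatibility between $\mu$ and $[b]$ in $\pi_1(G)_\Gamma$. Concretely, $[T_{b,\mu}^{G\to J}f](j)$ is a sum over triples $(g,j,\lambda)\in\Rel_b$ of $f(g)\dim r_\mu[\lambda]$. For this sum to be nonzero for some $f$ and some $j\in J_b(F)_{\sr}$, there must exist at least one triple $(g,j,\lambda)\in\Rel_b$ with $\dim r_\mu[\lambda]\neq 0$. By definition of $\Rel_b$, such a triple requires $g\in G(F)_{\sr}$ related to $j$, a cocharacter $\lambda\in X_*(T)$ with $T=\Cent(g,G)$, and the condition that $\kappa(\inv[b](g,j))$ agrees with the image of $\lambda$ in $X_*(T)_\Gamma$.

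First I would record two constraints on $\lambda$. On one hand, $\dim r_\mu[\lambda]\neq 0$ means $\hat\lambda$ is a weight of $r_\mu$, the irreducible representation of $\hat G$ of highest weight $\hat\mu$; since all weights of an irreducible representation lie in a single coset of the root lattice, namely the coset of the highest weight, the image of $\lambda$ in $X_*(T)_\Gamma$ maps to the image of $\mu$ under $X_*(T)_\Gamma\to \pi_1(G)_\Gamma$ (the quotient of $X_*(T)_\Gamma$ by the coroot lattice, which is dual to the root-lattice coset statement for $\hat G$). On the other hand, by the last assertion of Fact \ref{fct:inv}, the image of $\inv[b](g,j)$ under $B(T)\to B(G)\xrightarrow{\kappa}\pi_1(G)_\Gamma$ equals $\kappa(b)$. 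Combining these with the compatibility condition defining $\Rel_b$ — that $\kappa(\inv[b](g,j))$ and the image of $\lambda$ coincide in $X_*(T)_\Gamma$, hence a fortiori in $\pi_1(G)_\Gamma$ — yields $\kappa(b) = \mu^\natural$ in $\pi_1(G)_\Gamma$, where $\mu^\natural$ denotes the image of $\mu$.

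Finally I would invoke the description of $B(G,\mu)$: its unique basic class $[b_\mu]$ is characterized (for basic classes) by the Kottwitz map $\kappa$ sending it to $\mu^\natural\in\pi_1(G)_\Gamma$, using the identification $B(G)_\tx{bas}\isom X^*(Z(\hat G)^\Gamma)$ and the fact that $\pi_1(G)_\Gamma$ is the relevant target. Thus if $[b]$ is \emph{not} the unique basic class in $B(G,\mu)$, then $\kappa(b)\neq\mu^\natural$, so no triple $(g,j,\lambda)\in\Rel_b$ with $\dim r_\mu[\lambda]\neq 0$ can exist, and $T_{b,\mu}^{G\to J}$ is identically zero.

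The only mild subtlety — and the step I would be most careful about — is matching the two "weight-lattice modulo (co)root-lattice" computations precisely: the statement that all weights of $r_\mu$ lie in $\hat\mu + (\text{root lattice of }\hat G)$, dualized, must be reconciled with the identification of $\pi_1(G)_\Gamma$ as $X_*(T)$ modulo the coroot lattice, then pushed through the $\Gamma$-coinvariants, and with the normalization of $\kappa$ on basic elements so that $\kappa(b_\mu)=\mu^\natural$. All of this is standard (cf. \cite{KottwitzIsocrystals}), so I expect no real obstacle, only bookkeeping.
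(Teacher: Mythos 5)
Your proposal is correct and takes essentially the same route as the paper: extract a triple $(g,j,\lambda) \in \Rel_b$ with $\dim r_\mu[\lambda] \neq 0$, observe that this forces $\lambda$ and $\mu$ to have the same image in $\pi_1(G)_\Gamma$, combine with Fact \ref{fct:inv} to identify that common image with $\kappa(b)$, and conclude via the characterization of the unique basic class of $B(G,\mu)$. The only cosmetic difference is that you phrase the first constraint via the root-lattice coset of the weights of $r_\mu$, whereas the paper equivalently says $\hat\mu$ and $\hat\lambda$ agree when restricted to $Z(\hat G)$; these are dual descriptions of the same fact.
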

\begin{proof}
Suppose there exists an $F$-rational maximal torus $T\subset G$ and a cocharacter $\lambda\in X_*(T)$ such that $r_\mu[\lambda]\neq 0$.  Then $\hat{\mu}$ and $\hat{\lambda}$ must agree when restricted to the center $Z(\hat{G})$, which is to say that $\hat{\mu}$ and $\hat{\lambda}$ have the same image in $X^*(Z(\hat{G}))$.  Equivalently, if we conjugate $\mu$ so as to assume it is a cocharacter of $T$, then $\mu$ and $\lambda$ have the same image under $X_*(T)\isom \pi_1(T)\to \pi_1(G)$. By Fact \ref{fct:inv} and the functoriality of $\kappa$ the image of $\lambda$ in $\pi_1(G)_\Gamma$ equals $\kappa(b)$.  We conclude that $\kappa([b])$ equals the image of $\mu$ in $\pi_1(G)_\Gamma$.  This means that $[b]$ is the unique basic class in $B(G,\mu)$.	
\end{proof}

%Every triple $(\hat T,\hat B,\hat\mu)$ gives the same restriction $\hat\mu|_{Z(\hat G)}$. The resulting element of $X^*(Z(\hat G))=\pi_1(G)$ can also be obtained by choosing any maximal torus $T \subset G$, an element $\mu \in X_*(T)=\pi_1(T)$ in the conjugacy class $\{\mu\}$, and mapping $\mu$ under $\pi_1(T) \to \pi_1(G)$. We assume that the image of $[b]$ under $\kappa : B(G) \to \pi_1(G)_\Gamma = X^*(Z(\hat G)^\Gamma)$ is equal to $\hat\mu|_{Z(\hat G)^\Gamma}$.

%We consider the following variation $\Rel_{b,\mu}$ of $\Rel_b$: We define $\Rel_{b,\mu}$ to be the multiset whose elements are the triples $(g,j,\lambda) \in \Rel_b$ and each element occurs with multiplicity equal to the multiplicity of $\lambda$ as a weight of the representation $r_\mu$. Thus, if $\lambda$ is not a weight of $r_\mu$ then the element $(g,j,\lambda) \in \Rel_b$ does not belong to $\Rel_{b,\mu}$, and if $\lambda$ occurs in $r_\mu$ with positive multiplicity, then $(g,j,\lambda)$ occurs with that multiplicity as an element of $\Rel_{b,\mu}$. We denote by $r_\mu[\lambda]$ the $\lambda$-weight space of $\mu$, so that $\dim r_\mu[\lambda]$ is the multiplicity of $\lambda$ in $r_\mu$.

Assume therefore that $[b]$ is the unique basic class in $B(G,\mu)$.  We may define a ``truncation'' $\Rel_{b,\mu}\subset \Rel_b$, consisting of conjugacy classes of triples $(g,g',\lambda)$ for which $\lambda\leq \mu$ .   Then the kernel function $(g,g',\lambda)\mapsto \dim r_\mu[\lambda]$ is supported on $\Rel_{b,\mu}$.  In the diagram
\begin{equation}
\label{Relbmu}
\xymatrix{
& \Rel_{b,\mu} \ar[dl] \ar[dr] & \\
G(F)_{\sr}\stslash G(F) && G_b(F)_{\sr}\stslash G_b(F),
}
\end{equation}
both maps are finite \'etale over their respective images.

The following theorem is proved in \S\ref{SectionProofOfEndoscopicTraceRelation}.  It relates the Hecke transfer map $T_{b,\mu}^{G\to G_b}$ to the local Jacquet-Langlands correspondence for $G$.

\begin{thm} \label{TheoremEndoscopicTraceRelation} Assume that $b\in B(G,\mu)$ is basic, and that $\Lambda$ is an algebraically closed field of characteristic 0 abstractly isomorphic to $\mathbf{C}$.  Let $\phi\from W_F \times \mathrm{SL}_2 \to \;^LG$ be a discrete $L$-parameter with coefficients in $\Lambda$, and 
let $\rho\in \Pi_\phi(G_b)$.  Let $\Theta_\rho\in C(G_b(F)_{\sr}\stslash G_b(F),\Lambda)$ be its Harish-Chandra character.   Then for any $g\in G(F)_{\sr}$ that transfers to $G_b(F)$, we have
\begin{equation}
\label{EqEndoscopicTraceRelation}
 \left[T_{b,\mu}^{G_b\to G}\Theta_\rho\right](g)=\sum_{\pi\in \Pi_\phi(G)} \dim \Hom_{S_\phi}(\delta_{\pi,\rho},r_\mu)\Theta_{\pi}(g), 
\end{equation}
assuming the validity of the refined local Langlands conjecture, i.e. \cite[Conjecture G]{KalethaLocalLanglands}.
\end{thm}

%\begin{rmk} When $g$ does not transfer to $G_b(F)$, the left-hand side is zero by definition, but the right-hand side need not be zero, as the following example shows.
%\end{rmk}

\begin{exm}
Let $G=\tx{GL}_2$, and let $\mu\from \bf{G}_m \to G$ the co-character sending $x$ to the diagonal matrix with entries $(x,1)$. We have $\pi_1(G)=\Z$ as a trivial $\Gamma$-module.  Let $b \in B(G,\mu)$ be the basic class.  Then $b$ corresponds to the isocrystal of slope $1/2$,  and $G_b(F)$ is the multiplicative group of the nonsplit quaternion algebra over $F$.   Let $\phi$ be a discrete Langlands parameter.  The $L$-packets $\Pi_\phi(G)=\set{\pi}$ and $\Pi_\phi(G_b)=\set{\rho}$ are singletons.   We have $S_\phi=Z(\hat G)=\C^\times$, and $\delta_{\pi,\rho}$ is the identity character of $S_\phi$. The representation $r_\mu$ is the standard representation of $\hat G=\tx{GL}_2(\C)$,  and $\dim\Hom_{S_\phi}(\delta_{\pi,\rho},r_\mu)=2$.   Therefore the right-hand side of \eqref{EqEndoscopicTraceRelation} equals $2\Theta_\pi(g)$.

Let $w\mu$ be the cocharacter sending $x$ to the diagonal matrix with entries $(1,x)$. The map $\lambda \mapsto r_\mu[\lambda]$ sends $\mu$ and $w\mu$ to $1$ and all other co-characters to $0$. For any strongly regular $g' \in G_b(F)$ there is a unique $G(F)$-conjugacy class of strongly regular $g \in G(F)$ related to $g'$. Let $S \subset G$ be the centralizer of one such $g$. Then $X_*(S) \cong \Z[\Gamma_{E/F}]$ for a quadratic extension $E/F$ and the map $\pi_1(S)_\Gamma \to \pi_1(G)_\Gamma$ is an isomorphism. There are exactly two elements $\lambda,w\lambda \in X_*(S)$ that map to $\tx{inv}[b](g,g')$.  Finally,  $d=1$.   Therefore $T_{b,\mu}^{G \to G_b}f(g')=-2f(g)$.  Setting $f=\Theta_\rho$, we find that Theorem \ref{TheoremEndoscopicTraceRelation} reduces to the Jacquet-Langlands character identity
\[ \Theta_\rho(g')=-\Theta_\pi(g). \]
%In this situation the functions $T_{b,\mu}^{G\to G_b}\Theta_\pi$ and $-2\Theta_\rho$ are equal.  

% (referee objected to this paragraph) We can switch the roles of the two groups and let $G$ be the non-trivial inner form of $\tx{GL}_2$. We can still choose the same $\mu$ and $b$. Then $G_b$ is isomorphic to $\tx{GL}_2$.   Now,  not all $g' \in G_b(F)$ transfer to $G(F)$; only the elliptic ones do. Therefore the functions $T_{b,\mu}^{G \to G_b}\Theta_\pi$ and $-2\Theta_\rho$ are not equal, because the former is zero on all non-elliptic $g'$, while the latter is not.

\end{exm}

\subsection{Proof of Theorem \ref{TheoremEndoscopicTraceRelation}}
\label{SectionProofOfEndoscopicTraceRelation}
%Note I have switched the roles of G and G_b to be consistent with section 2 -JW
We now give the proof of Theorem \ref{TheoremEndoscopicTraceRelation}. We will use the notation and results of \S\ref{CharacterRelations}.

%We are assuming that $j$ transfers to an element of $G(F)$. We have the bijection $g \mapsto \tx{inv}[b](j,g)$ from the set of $G(F)$-conjugacy classes in the unique stable class in $G(F)$ to which $j$ transfers and the fiber of $\pi_1(T)_\Gamma \to \pi_1(G)_\Gamma$ over $\kappa([b])^{-1}$, where $T$ is the centralizer of $j$. Given $\nu$ in that fiber let $\iota_{b,\nu}(j)$ be the corresponding element. 
We are given a discrete $L$-parameter $\phi$, a representation $\rho\in \Pi_{\phi}(G_b)$ in its $L$-packet, and an element $g\in G(F)_{\sr}$.  We assume that $g$ is related to an element of $G_b(F)$.  This means there exists a triple in $\Rel_b$ of the form $(g,g',\lambda)$.   For the moment we fix such a triple $(g,g',\lambda)$.  

Let $s\in S_\phi$ be a semi-simple element, and let $\dot s \in S_\phi^+$ be a lift of it.  Then we have the refined endoscopic datum $\mf{\dot e}=(H,\mc{H},\dot s,\eta)$ defined in \eqref{eq:red};  we choose as in that section a $z$-pair $\mf{z}=(H_1,\eta_1)$. Then 
\begin{eqnarray*}
%e(G)\sum_{\pi'\in \Pi_\phi(G)}
%\tr\tau_{z,\mf{w},\pi'}(\dot s)\Theta_{\pi'}(\iota_{b,\nu}(j))\\
%&\stackrel{\eqref{FormulaForThetaRho}}{=}&
%\sum_{h_1\in H_1(F)/\tx{st}} \Delta(h_1,\iota_{b,\nu}(j))S\Theta_{\phi^
%s}(h_1) \\
%&\stackrel{\eqref{RelationBetweenTransferFactors}}{=}&
%\sum_{h_1\in H_1(F)/\tx{st}} \Delta(h_1,j)\class{\inv[b](j,\iota_{b,\nu}(g)),s_{h,j}}S\Theta_{\phi^s}(h_1) \\
%&\stackrel{\eqref{ModificationOfE}(3)}{=}&
%\sum_{h_1\in H_1(F)/\tx{st}} \Delta(h_1,j)\nu(s_{h,j})S\Theta_{\phi^s}(h_1).
% Invariants get inverted if we switch arguments, see llc1, Fact 2.1.3. 
&\;&e(G_b)\sum_{\rho'\in \Pi_\phi(G_b)}
\tr\tau_{z,\mf{w},\rho'}(\dot s)\Theta_{\rho'}(g')\\
&\stackrel{\eqref{FormulaForThetaPi}}{=}&
\sum_{h_1\in H_1(F)/\tx{st}}
\Delta(h_1,g')S\Theta_{\phi^s}(h_1) \\
&\stackrel{\eqref{RelationBetweenTransferFactors}}{=}&
\sum_{h_1\in H_1(F)/\tx{st}}
\Delta(h_1,g)\class{\inv[b](g,g'),s^\natural_{h,g}}S\Theta_{\phi^s}(h_1) \\
&=&
\sum_{h_1\in H_1(F)/\tx{st}} \Delta(h_1,g)\lambda(s^\natural_{h,g})S\Theta_{\phi^s}(h_1).
\end{eqnarray*}
We now multiply this expression by the kernel function $\dim r_\mu[\lambda]$, and then sum over all $G_b(F)$-conjugacy classes of elements $g' \in G_b(F)$ and all $\lambda \in X_*(T_g)$ such that $(g,g',\lambda)$ lies in $\Rel_b$. We obtain
\begin{eqnarray*}
&&e(G_b)
\sum_{(g',\lambda)}\;
\sum_{\rho'\in \Pi_{\phi}(G_b)}
\tr\tau_{z,\mf{w},\rho'}(\dot s)\Theta_{\rho'}(g')\dim r_\mu[\lambda]\\
&=& 
\sum_{h_1\in H_1(F)/\tx{st}}\Delta(h_1,g)
S\Theta_{\phi^s}(h_1)\sum_{(g',\lambda)}\lambda(s^\natural_{h,g})\dim r_\mu[\lambda]\\
&\stackrel{(*)}{=}&\sum_{h_1\in H_1(F)/\tx{st}}\Delta(h_1,g)S\Theta_{\phi^s}(h_1)\tr r_{\mu}(s^\natural_{h,g}) \\
&\stackrel{(**)}{=}&\tr r_{\mu}(s^\natural)\sum_{h_1\in H_1(F)/\tx{st}}\Delta(h_1,g)S\Theta_{\phi^s}(h_1)\\
&\stackrel{\eqref{FormulaForThetaPi}}{=}&
\tr r_{\mu}(s^\natural)e(G)\sum_{\pi\in \Pi_{\phi}(G)} \tr\tau_{z,\mf{w},\pi}(\dot s)\Theta_{\pi}(g).\\
\end{eqnarray*}

%the old one
%\sum_{h_1\in H_1(F)/\tx{st}}\Delta(h_1,j)
%\sum_{\lambda\in X^*(\hat{T})}\lambda(s_{h,g}^{-1})\dim r_\mu[\lambda]S\Theta_{\phi^s}(h_1)\\
%&=&\sum_{h_1\in H_1(F)/\tx{st}}\Delta(h_1,j)\tr r_{\mu}(s_{h,j}^{-1}) S\Theta_{\phi^s}(h_1)\\
%&\stackrel{(*)}{=}&\tr \check r_{\mu}(s^\natural)\sum_{h_1\in H_1(F)/\tx{st}}\Delta(h_1,j)S\Theta_{\phi^s}(h_1)\\
%&\stackrel{\eqref{FormulaForThetaPi}}{=}&
%\tr \check r_{\mu}(s^\natural)e(G_b)\sum_{\rho\in \Pi_{\phi}(G_b)} \tr\tau_{z,\mf{w},\rho}(\dot s)\Theta_{\rho}(j),
%\end{eqnarray*}
We justify $(**)$: Let $T \subset G$ be the centralizer of $g$. The image of $s^\natural_{h,g}$ under any admissible embedding $\hat T \to \hat G$ is conjugate to $s^\natural$ in $\hat{G}$ and $\tr r_\mu$ is conjugation-invariant. Recall here that $s^\natural \in S_\phi$ is the image of $\dot s$ under \eqref{eq:rivsbgllc}.

We justify $(*)$: $\lambda \in X_*(T)$ determines the $G_b(F)$-conjugacy class of $g'$, since $\tx{inv}[b](g,g') \in B(T)$ determines it. Therefore the sum over $(g',\lambda)$ is in reality a sum only over $\lambda$. There exists $g' \in G_b(F)$ with $\kappa(\tx{inv}[b](g,g'))$ being the image of $\lambda$ in $X_*(T)_\Gamma$ if and only if the image of $\lambda$ under $X_*(T) \to X_*(T)_\Gamma \to \pi_1(G)_\Gamma$ equals $\kappa(b)$. Since the image of $\mu$ in $\pi_1(G)_\Gamma$ also equals $\kappa(b)$,  the sum over $(g',\lambda)$ is in fact the sum over $\lambda \in X_*(T)$ having the same image as $\mu$ in $\pi_1(G)_\Gamma$. In terms of the dual torus $\hat T$ this is the sum over $\lambda \in X^*(\hat T)$ whose restriction to $Z(\hat G)^\Gamma$ equals that of $\mu$. Since for $\lambda$ not satisfying this condition the number $\tx{dim}r_\mu[\lambda]$ is zero, we may extend the sum to be over all $\lambda \in X_*(T)=X^*(\hat T)$.

We now continue with the equation. Multiply both sides of the above equation by $\tr\check\tau_{z,\mf{w},\rho}(\dot s)$. As functions of $\dot s \in S_\phi^+$, both sides then become invariant under $Z(\hat{\bar G})^+$ and thus become functions of the finite quotient $\bar S_\phi = S_\phi^+/Z(\hat{\bar G})^+=S_\phi/Z(\hat G)^\Gamma$. Now apply $\abs{\bar S_\phi}^{-1}\sum_{\bar s \in \bar S_\phi}$ to both sides to obtain an equality between
\begin{equation}
\label{EquationFirstExpression}
\abs{\bar S_\phi}^{-1} e(G_b)\sum_{\bar s \in \bar S_\phi} \sum_{(g',\lambda)}
\sum_{\rho'\in \Pi_{\phi}(G_b)}
\tr\check\tau_{z,\mf{w},\rho}(\dot s)\tr\tau_{z,\mf{w},\rho'}(\dot s)\Theta_{\rho'}(g')\dim r_\mu[\lambda]
\end{equation}
%the old one
%\[ |\bar S_\phi|^{-1} e(G)\sum_{\bar s \in \bar S_\phi} \sum_{\nu\in X^*(\hat{T})}
%\sum_{\pi'\in \Pi_{\phi}(G)}
%\tr\check\tau_{z,\mf{w},\pi}(\dot s)\tr\tau_{z,\mf{w},\pi'}(\dot s)\Theta_{\pi'}(\iota_{b,-\lambda}(j))\dim r_\mu[\lambda]
%\]
and
\begin{equation}
\label{EquationSecondExpression}
\abs{\bar S_\phi}^{-1}e(G) \sum_{\bar s \in \bar S_\phi}\tr r_{\mu}(s^\natural)\sum_{\pi\in \Pi_{\phi}(G)} \tr\check\tau_{z,\mf{w},\rho}(\dot s) \tr\tau_{z,\mf{w},\pi}(\dot s)\Theta_{\pi}(g), 
\end{equation}
%the old one
%\[ \abs{\bar S_\phi}^{-1} \sum_{\bar s \in \bar S_\phi}\tr \check r_{\mu}(s^\natural)e(G_b)\sum_{\rho\in \Pi_{\phi}(G_b)} \tr\check\tau_{z,\mf{w},\pi}(\dot s) \tr\tau_{z,\mf{w},\rho}(\dot s)\Theta_{\rho}(j), \]
where in both formulas $\dot s$ is an arbitrary lift of $\bar s$ and $s^\natural \in S_\phi$ is the image of $\dot s$ under \eqref{eq:rivsbgllc}. Executing the sum over $\bar s$ in \eqref{EquationFirstExpression}  gives
\[ e(G_b)\sum_{(g',\lambda)} \Theta_{\rho}(g')\dim r_\mu[\lambda] = e(G_b)[T^{G_b\to G}_{b,\mu}\Theta_\rho](g).
\]
%\[ e(G_b)\sum_{} \Theta_{\pi}(\iota_{b,-\lambda}(j))\dim r_\mu[\lambda] = e(G)[T_{b,\mu}\Theta_\pi](j),
%\]
%which is the left side of the equation in Theorem \ref{TheoremEndoscopicTraceRelation}, multiplied by $e(G)$.

To treat \eqref{EquationSecondExpression} note that $\check\tau_{z,\mf{w},\rho}\otimes\tau_{z,\mf{w},\pi}(\dot s) = \check{\delta}_{\pi,\rho}(s^\natural)$. Furthermore, the composition of the map \eqref{eq:rivsbgllc} with the natural projection $S_\phi \to S_\phi/Z(\hat G)^\Gamma$ is equal to the natural projection $S_\phi^+ \to S_\phi^+/Z(\hat{\bar G})^+ = S_\phi/Z(\hat G)^\Gamma = \bar S_\phi$. Thus $s^\natural$ is simply a lift of $\bar s$ to $S_\phi$.  We find that \eqref{EquationSecondExpression} equals
\[ e(G)\abs{\bar S_\phi}^{-1} \sum_{\bar s \in \bar S_\phi}\tr r_{\mu}(s^\natural) \tr\check{\delta}_{\pi,\rho}(s^\natural) = e(G)\dim \Hom_{S_\phi}(\delta_{\pi,\rho}, r_\mu).\]
We have now reduced Theorem \ref{TheoremEndoscopicTraceRelation} to the identity
\begin{equation}
\label{EquationSignIdentity}
e(G)e(G_b)=(-1)^{\class{2\rho_G,\mu}},
\end{equation}
where $\rho_G$ is the sum of the positive roots. Recall that $G^*$ is a quasi-split inner form of $G$.  Let $\mu_1,\mu_2 \in X^*(Z(\hat G_\tx{sc})^\Gamma)$ be the elements corresponding to the inner twists $G^* \to G$ and $G^* \to G_b$ by Kottwitz's homomorphism \cite[Theorem 1.2]{Kot86}. By Lemma \ref{lem:kotsign} we have $e(G_b)e(G)=(-1)^{\<2\rho,\mu_2-\mu_1\>}$. But since $G_b$ is obtained from $G$ by twisting by $b$, the difference $\mu_2-\mu_1$ is equal to the image of $\kappa(b) \in X^*(Z(\hat G)^\Gamma)$ under the map $X^*(Z(\hat G)^\Gamma) \to X^*(Z(\hat G_\tx{sc})^\Gamma)$ dual to the natural map $Z(\hat G_\tx{sc}) \to Z(\hat G)$. Since $b \in B(G,\mu)$ we see that $\mu_2-\mu_1=\mu$ and \eqref{EquationSignIdentity} follows. The proof of Theorem \ref{TheoremEndoscopicTraceRelation} is complete.

\subsection{An adjointness property} \label{SectionInteractionWithOrbitalIntegrals}

In this section we will discuss an adjointness property of the Hecke transfer maps $T_{b,\mu}^{G \to G_b}$. This will be used in \S\ref{sub:6.3}.

Let $\Lambda$ be an algebraically closed field of characteristic zero.  For a topological space $X$, we let $C_c(X,\Lambda)$ be the space of compactly supported locally constant $\Lambda$-valued functions. The space of distributions $\Dist(G(F),\Lambda)$ is the $\Lambda$-linear dual of $C_c(G(F),\Lambda)$. The subspace of invariant distributions $\Dist(G(F),\Lambda)^{G(F)}$ is the linear dual of the space of coinvariants $C_c(G(F),\Lambda)_{G(F)}$. 

Given a $\Lambda$-valued Haar measure $dx$ on $G(F)$, integration against a function $f \in C(G(F)\stslash G(F),\Lambda)$ is a $G(F)$-invariant distribution on $G(F)$. Due to the functions in $C_c(G(F),\Lambda)$ being locally constant and having compact support, the ``integral'' is in reality a finite sum. For our purposes we will work with functions $f \in C(G(F)_\tx{sr} \stslash G(F),\Lambda)$ and integrate them against test functions in $C_c(G(F)_\tx{sr})$.

The Weyl integration formula can be used to compute this distribution in terms of orbital integrals. In fact, we will need a ``stable'' variant of this formula. Before we can explain this, we need to discuss choices of measures.

Choose a $\Lambda$-valued Haar measure on $F$. Then a choice of an element $\eta \in \bigwedge^{\dim(G)}(\tx{Lie}(G)(F)^*)=\bigwedge^{\dim(G)}(\tx{Lie}(G)^*)(F)$ leads to a $\Lambda$-valued Haar measure $dx_\eta$ on $G(F)$; note that multiplying $\eta$ by an element of $\OO_F^\times$ doesn't affect the measure $dx_\eta$. More generally, any element of $\bigwedge^{\dim(G)}(\tx{Lie}(G)^*)(\breve F)$ leads to a $\Lambda$-valued Haar measure $dx_\eta$ on $G(F)$ by choosing $a \in \OO_{\breve F}^\times$ with the property that $a\eta \in \bigwedge^{\dim(G)}(\tx{Lie}(G)^*)(F)$ and defining $dx_\eta := dx_{a\eta}$, noting that this does not depend on the choice of $a$.
% since any two choices of $a$ differ by an element of $O_F^\times$
In fact, this procedure allows us to even attach a measure to an element $\eta \in \bigwedge^{\dim(G)}(\tx{Lie}(G)^*)(\ol{F})$, by taking $a \in \ol{F}^\times$ such that $a\eta \in \bigwedge^{\dim(G)}(\tx{Lie}(G)^*)(F)$ and letting $dx_\eta := |a|_\Lambda^{-1}dx_{a\eta}$. But for this we need to make sense of $|a|_\Lambda$, which requires choosing a compatible system of roots of $p$ in $\Lambda$. For us, elements of $\bigwedge^{\dim(G)}(\tx{Lie}(G)^*)(\breve F)$ will suffice, so we will not make such a choice.

 % To do this, we fix a compatible system $p_n \in \Lambda^\times$, $n=1,2,3,\dots$, of roots of $p$, i.e. $p_1=p$ and $p_n^{(n/m)}=p_m$ for all $m|n$. This produces a map $\Q \to \Lambda$, $r/s \mapsto p_s^r$, which we denote by $t \mapsto p^t$. For any $x \in \ol{F}^\times$ we then define $|x|_\Lambda \in \Lambda$ by $|x|_\Lambda=q^{-\tx{ord}_F(x)}$, where $\tx{ord}_F : \ol{F}^\times \to \Q$ is the unique extension of the normalized valuation $F^\times \to \Z$, and $q$ is the size of the residue field of $F$, an integral power of $p$. Now, given $\eta \in \bigwedge^{\dim(G)}(\tx{Lie}(G)^*)(\ol{F})$, there exists $a \in \ol{F}^\times$ such that $a\eta \in \Lambda^{\dim(G)}(\tx{Lie}(G)^*)(F)$. The Haar measure $dx_\eta := |a|_\Lambda^{-1}dx_{a\eta}$ of $G(F)$ depends only on $\eta$, and not on $a$.

This procedure allows us to choose Haar measures compatibly in the following two situations. First, consider the inner forms $G$ and $G_b$. They are canonically identified over $\breve F$, which gives an identification $\bigwedge^{\dim(G)}(\tx{Lie}(G)^*)(\breve F)= \bigwedge^{\dim(G)}(\tx{Lie}(G_b)^*)(\breve F)$. Haar measures on $G(F)$ and $G_b(F)$ corresponding to the same $\eta$ will be called compatible. Second, consider two maximal $F$-rational tori $T_1$ and $T_2$, each either in $G$ or $G_b$. They are called related if there exists $g \in G(\ol{F})$, or equivalently (cf. Lemma \ref{LemmaSteinberg}) $g \in G(\breve F)$, such that $gT_1g^{-1}=T_2$ and the isomorphism $\tx{Ad}(g) :T_1 \to T_2$ is $F$-rational; we are using here the identification $G_{\breve F} = (G_b)_{\breve F}$. We obtain an isomorphism $\tx{Ad}(g) : \bigwedge^{\dim(G)}(\tx{Lie}(T_1)^*)(\breve F)= \bigwedge^{\dim(G)}(\tx{Lie}(T_2)^*)(\breve F)$, which leads again to the notion of compatible measures on $T_1(F)$ and $T_2(F)$. The choice of $g \in G(\breve F)$ is unique up to multiplication by $N(T_1,G)(\breve F)$, and since this group acts on $\bigwedge^{\dim(G)}(\tx{Lie}(T_1)^*)(\breve F)$ via a $\OO_{\breve F}^\times$-valued character of the Weyl group, the notion of compatible measures does not depend on the choice of $g$.

% $T \subset G$ and $T_b \subset G_b$. Over $\bar F$, the torus $T$ is conjugate to $T_b$ in $G_{\ol{F}}=G_{b,\ol{F}}$. The resulting isomorphism $T_{\ol{F}} \to T_{b,\ol{F}}$ is well-defined up to the action of the Weyl group. The action of the Weyl group on $\bigwedge^{\dim(G)}(\tx{Lie}(T)^*)(\ol{F})$ is through an $\OO_{\ol{F}}^\times$-valued character. Therefore, an element of $\bigwedge^{\dim(G)}(\tx{Lie}(T)^*)(\ol{F})$ produces the same Haar measure on $T(F)$ as any of its Weyl conjugates. Since the Weyl orbits in $\bigwedge^{\dim(G)}(\tx{Lie}(T)^*)(\ol{F})$ are naturally identified with those in $\bigwedge^{\dim(G)}(\tx{Lie}(T_b)^*)(\ol{F})$, we can speak of compatibility of measures on $T(F)$ and $T_b(F)$. In fact, choosing a Haar measure on one maximal torus of $G$ provides compatible choices on all maximal tori of all inner forms.

From now on we assume that the Haar measures on $G(F)$ and $G_b(F)$ have been chosen compatibly, and the Haar measures on all tori of $G$ and $G_b$ that are related to each other have been chosen compatibly.

We now return to the discussion of distributions. For $\phi\in C_c(G(F)_\tx{sr},\Lambda)$, let $\phi_G\in C_c(G(F)_{\sr}\stslash G(F),\Lambda)$ be the orbital integral function,
\[ \phi_G(y) = \int_{x\in G(F)/G(F)_y} \phi(xyx^{-1})\; dx.\]
As remarked by the referee, the map $\phi \to \phi_G$ is induces an isomorphism
\begin{equation} \label{eq:orbint}
C_c(G(F)_\tx{sr},\Lambda)_{G(F)} \to C_c(G(F)_{\sr}\stslash G(F),\Lambda),	
\end{equation}
cf. Lemma \ref{LemLocallyProfinite}. The stable Weyl integration formula states 
\begin{equation}
\label{EqWeylIntegration}
 \int_{G(F)} f(x)\phi(x)\; dx = \class{f,\phi_G}_G.
 \end{equation}
We explain now the notation $\class{f,\phi_G}_G$.  For a function $h\in C_c(G(F)_{\sr}\stslash G(F),\Lambda)$, we define
\[ \class{f,h}_G = \sum_{T} \abs{W(T,G)(F)}^{-1}\int_{t \in T(F)_{\sr}} \abs{D(t)} \sum_{t_0 \sim t} f(t_0)h(t_0)dt, \]
where 
\begin{itemize}
\item $T$ runs over a set of representatives for the stable classes of maximal tori, 
\item $W(T,G)=N(T,G)/T$ is the absolute Weyl group, 
\item $D(t)=\det\left(\Ad(t)-1\biggm\vert\Lie G/\Lie T\right)$ is the usual Weyl discriminant, and
\item $t_0$ runs over the $G(F)$-conjugacy classes inside of the stable class of $t$.
\end{itemize}
Note that the integral does not depend on the chosen representative, since any two are isomorphic over $F$ by definition of stable conjugacy, and the isomorphism is canonical up to the action of the Weyl group $W(T,G)(F)$, which is irrelevant given the sum $t_0 \sim t$.

The following lemma shows that the Hecke transfer maps $T_{b,\mu}^{G\to G_b}$ and $T_{b,\mu}^{G_b\to G}$ are adjoint with respect to the pairing $\class{\cdot,\cdot}_G$ and its analogue $\class{\cdot,\cdot}_{G_b}$, defined similarly.

\begin{lem} \label{LemAdjointTransfers} Given $f' \in C(G_b(F)_{\sr}\stslash G_b(F),\Lambda)$ and $f \in C(G(F)_{\sr}\stslash G(F),\Lambda)$, one of which has compact support, we have
\[ \<T_{b,\mu}^{G_b \to G}f',f\>_G = \<f',T_{b,\mu}^{G \to G_b}f\>_{G_b}. \]
\end{lem}
\begin{proof}
By definition $\<T_{b,\mu}^{G_b \to G}f',f\>_G$ equals
\begin{equation}
\label{EqHeckePairingStep1}
(-1)^d\sum_T |W(T,G)(F)|^{-1}\int_{t \in T(F)_{\sr}} |D(t)| \sum_{t_0 \sim t} \sum_{(t_0,t_0',\lambda)}r_{\mu,\lambda}f'(t_0')f(t_0)dt. 
\end{equation}
The first sum runs over a set of representatives for the stable classes of maximal tori in $G$. The second sum runs over the set $t_0$ of $G(F)$-conjugacy classes of elements that are stably conjugate to $t$. Let $T_{t_0}$ denote the centralizer of $t_0$. The third sum runs over triples $(t_0,t_0',\lambda)$, where $t_0'$ is a $G_b(F)$-conjugacy class that is stably conjugate to $t_0$, and $\lambda \in X_*(T_{t_0})$ maps to $\tx{inv}(t_0,t_0') \in X_*(T_{t_0})_\Gamma$. Note that if $T$ does not transfer to $G_b$ then it does not contribute to the sum, because the sum over $(t_0,t_0',\lambda)$ is empty. Let $\mc{X}$ be a set of representatives for those stable classes of maximal tori in $G$ that transfer to $G_b$. The above expression becomes
\[(-1)^d\sum_{T \in \mc{X}} |W(T,G)(F)|^{-1}\int_{t \in T(F)_{\sr}} |D(t)| \sum_{(t_0,t_0',\lambda)}r_{\mu,\lambda}f'(t_0')f(t_0)dt, \]
where now the second sum runs over triples $(t_0,t_0',\lambda)$ with $t_0$ a $G(F)$-conjugacy class and $t_0'$ a $G_b(F)$-conjugacy class, both stably conjugate to $t$, and $\lambda \in X_*(T_{t_0})$ mapping to $\tx{inv}(t_0,t_0') \in X_*(T_{t_0})_\Gamma$.

Let $\mc{X}'$ be a set of representatives for those stable classes of maximal tori of $G_b$ that transfer to $G$. We have a bijection $\mc{X} \leftrightarrow \mc{X}'$. Fix arbitrarily an admissible isomorphism $T \to T'$ for any $T \in \mc{X}$ and $T' \in \mc{X}'$ that correspond under this bijection. It induces an isomorphism $W(T,G) \to W(T',G_b)$ of finite algebraic groups, as well as an isomorphism $T(F) \to T'(F)$ of toplogical groups that preserves the chosen measures (since we have arrange the measures to be compatible). Given $t \in T(F)_{\sr}$ let $t' \in T'(F)_{\sr}$ be its image under the admissible isomorphism. Then $|D(t)|=|D(t')|$, and \eqref{EqHeckePairingStep1} becomes
\begin{equation}
\label{EqHeckePairingStep2}
(-1)^d\sum_{T' \in \mc{X}'} |W(T',G_b)(F)|^{-1}\int_{t' \in T'_{\sr}(F)} |D(t')| \sum_{(t_0,t_0',\lambda)}r_{\mu,\lambda}f'(t_0')f(t_0)dt, 
\end{equation}
where now the second sum runs over triples $(t_0,t_0',\lambda)$, where $t_0$ is a $G(F)$-conjugacy class, $t_0'$ is a $G_b(F)$-conjugacy class, both are stably conjugate to $t'$, and $\lambda \in X_*(T_{t_0})$ maps to $\tx{inv}(t_0,t_0') \in X_*(T_{t_0})_\Gamma$. Reversing the arguments from the beginning of this proof we see that this expression equals $\<f',T_{b,\mu}^{G \to G_b}f\>_{G_b}$.
\end{proof}

In \S\ref{sub:6.3} we will define by geometric means an operator 
\[ \tilde T_{b,\mu}^{G \to G_b} : C_c(G(F)_{\elli},\Lambda)_{G(F)} \to C_c(G_b(F)_{\elli},\Lambda)_{G(F)} \] and show (Proposition \ref{PropositionLiftOfPhi}) that $T_{b,\mu}^{G \to G_b}$ corresponds to $\tilde T_{b,\mu}^{G \to G_b}$ under \eqref{eq:orbint}.

\section{The Lefschetz-Verdier trace formula for v-stacks}
\label{SectionLefschetzVerdier}

The goal of this section is to build up some machinery related to the Lefschetz-Verdier trace formula.

We briefly review the setup in the context of a separated finite type morphism of schemes $p\from X\to \Spec k$, where $k$ is an algebraically closed field.   Let $\ell$ be a prime unequal to the characteristic of $k$, and let $A$ be an object of $D(X_{\et},\overline{\Q}_\ell)$, the derived category of \'etale $\overline{\Q}_\ell$-sheaves on $X$.  Suppose $f\from X\to X$ is a $k$-linear endomorphism.  If we are given the additional datum of a morphism\footnotemark $Rf_!A\to A$,  we obtain an operator $Rp_!A\isom Rp_!Rf_!A\to Rp_!A$ on the compactly supported cohomology $Rp_!A=R\Gamma_c(X,A)$.
\footnotetext{Equivalently, a morphism $A\to Rf^!A$.  A special case occurs when $f$ is an automorphism, $A$ is an honest sheaf on $X_{\et}$,  and $A\to f^*A$ is a morphism, such as the identity morphism on the constant sheaf $A=\overline{\Q}_\ell$.  More generally, we may replace $f$ with an algebraic correspondence $c=(c_1,c_2)\from C\to X\times_k X$.  In that setting, the required extra datum is a morphism $c_1^*\mc{F} \to c_2^!\mc{F}$:  this is the notion of a {\em cohomological correspondence} lying over $c$.}

In the special case that $X$ is proper over $k$,  so that $R\Gamma_c(X,A)=R\Gamma(X,A)$, the Lefschetz-Verdier trace formula \cite{SGA5}, \cite{VarshavskyLefschetz} expresses $\tr (f\vert R\Gamma(X,A))$ in terms of data living on the fixed point locus $\Fix(f)$ of $f$.   In particular, at isolated fixed points $x\in \Fix(f)$, there are local terms $\loc_x(f,A) \in \overline{\Q}_\ell$, and if all fixed points are isolated, then $\tr (f\vert R\Gamma(X,A))$ is the sum of the $\loc_x(f,A)$.  

In order to apply the Lefschetz-Verdier trace formula, we need to assume that $A$ satisfies a suitable finiteness hypothesis (constructible of bounded amplitude).   Under this hypothesis one establishes an isomorphism \cite[Expos\'e III,  (3.1.1)]{SGA5}
\begin{equation}
\label{EqCrucialIsomorphismSchemes}
 \D A \boxtimes_k^{\LL} A \isom \RHom(\pr_1^*A, \pr_2^!A) 
 \end{equation}
 in $D((X\times_k X)_{\et},\overline{\Q}_\ell)$,  where $\pr_1,\pr_2\from X\times_k X\to X$ are the projection maps, and $\D$ is Verdier duality relative to $k$.  Once \eqref{EqCrucialIsomorphismSchemes} is established, the definition of local terms and the validity of the Lefschetz-Verdier trace formula can be derived by applying Grothendieck's six functor formalism.    The special case where $X=\Spec k$ is instructive;  the finiteness condition on $A$ is that it be a perfect complex of $\overline{\Q}_\ell$-vector spaces,  and then \eqref{EqCrucialIsomorphismSchemes} reduces to the fact that $A^\vee\otimes^{\LL} A\to \operatorname{RHom}(A,A)$ is an isomorphism.   This allows us to express the trace of an endomorphism $f\in \End A$ as the image of $f$ under the evaluation map $A^\vee\otimes^{\LL} A\to \overline{\Q}_\ell$.  

In this section we extend the formalism of the Lefschetz-Verdier trace formula to the setting of perfectoid spaces, diamonds, and v-stacks.   The main result is Theorem \ref{ThmLefschetzVerdier} and its Corollary \ref{CorLefschetzVerdier}.  We very closely follow the approach of \cite{LuZheng}, putting a suitable \emph{ symmetric monoidal 2-category of cohomological correspondences} at center stage.   In both the schematic and perfectoid settings, the finiteness condition required of the object $A$ can be stated in terms of the property of {\em universal local acyclicity} (ULA);  as noted in \cite[Theorem IV.2.23]{FarguesScholze}, this is precisely the hypothesis necessary to obtain the isomorphism in \eqref{EqCrucialIsomorphismSchemes}.  

The statement of Lefschetz-Verdier is formally identical in the schematic and perfectoid settings. However, in the perfectoid setting, there arises the possibility that the fixed point locus $\Fix(f)$ has the structure of a locally profinite set, in which case the local terms appearing in Lefschetz-Verdier are not a function on $\Fix(f)$, but rather a {\em distribution} on $\Fix(f)$.  This observation is critical to our applications.  

For our applications, we have included two additional theorems concerning local terms in the perfectoid setting, which could also have been stated in the schematic setting and may be of independent interest.  Theorem \ref{ThmKunnethForCharacteristicClass} is a sort of K\"unneth isomorphism for local terms on a fiber product of stacks.   Theorem \ref{ThmCharClassOnXModG} states that in the situation of a smooth group $G$ acting on a diamond $X$,  the local terms corresponding to individual elements $g\in G$ agree with local terms computed on the quotient stack $[X/G]$.

\subsection{Decent v-stacks and the six-functor formalism}

We recall here some material from \cite{ScholzeEtaleCohomology} and \cite{EnhancedSixFunctors} on the main classes of geometric objects we deal with - perfectoid spaces, diamonds, and v-stacks - and their associated \'etale cohomology formalism. 

Let $\Perf$ be the category of perfectoid spaces in characteristic $p$.  There are four topologies we consider on $\Perf$, which we list from coarsest to finest:  the analytic topology, the \'etale topology, the pro-\'etale topology, and the v-topology.  The v-topology is a rough analogue of the fpqc topology on schemes.  All representable presheaves on $\Perf$ are sheaves for the v-topology \cite[Theorem 1.2]{ScholzeEtaleCohomology}.

A {\em diamond} is a pro-\'etale sheaf on $\Perf$ of the form $X/R$, where $X$ is a perfectoid space and $R\subset X \times X$ is a pro-\'etale equivalence relation.  Diamonds are automatically v-sheaves \cite[Proposition 11.9]{ScholzeEtaleCohomology}.  A particularly well-behaved class of diamonds are the locally spatial diamonds \cite[Definition 1.4] {ScholzeEtaleCohomology}.  There is a natural functor $X\mapsto X^\diamond$ from analytic adic spaces over $\Z_p$ to locally spatial diamonds.  

 A v-sheaf $Y$ on $\Perf$ is {\em small} if there exists a surjective map $X\to Y$ from a perfectoid space $X$.  A {\em v-stack} is a stack over $\Perf$ with its $v$-topology.   A {\em small v-stack} \cite[Definition 12.4]{ScholzeEtaleCohomology} is a v-stack $Y$ on $\Perf$ such that there exists a surjective map $X\to Y$ from a perfectoid space $X$, such that $X\times_Y X$ is a small v-sheaf.  

As with any category of stacks, v-stacks form a strict $(2,1)$-category.  The objects of this category are v-stacks $X$, which are themselves categories fibered in groupoids over $\Perf$.  The morphisms between v-stacks $X\to Y$ are functors between fibered categories.  Given two morphisms $f_1,f_2\from X\to Y$, a 
2-morphism $\alpha\from f_1\Rightarrow f_2$ is an invertible natural transformation between functors.  

\begin{exm} \label{ExmBG} Let $S$ be a diamond, and let $G\to S$ be a group diamond.  The stack $BG=[S/G]$ classifying $G$-torsors is a small v-stack, as $S\times_{[S/G]} S\isom G$ is already a diamond.  Let $H\to S$ be another group diamond.  The morphisms $[S/G]\to [S/H]$ correspond to $S$-homomorphisms $G\to H$.  Suppose we are given two homomorphisms $f_1,f_2\from G\to H$, inducing morphisms $\phi_1,\phi_2\from [S/G]\to [S/H]$.  The set of 2-morphisms $\phi_1\Rightarrow \phi_2$ may be identified with the set of $h\in H(S)$ satisfying $f_1 = (\ad h)\circ f_2$.  
\end{exm}

We use the notation $\ast$ to indicate ``horizontal'' composition between 2-morphisms.  Thus if $X,Y,Z$ are v-stacks, $f_1,f_2\from X\to Y$ and $g_1,g_2\from Y\to Z$ are morphisms, and $\alpha\from f_1\Rightarrow f_2$ and $\beta\from g_1\Rightarrow g_2$ are 2-morphisms, then $\beta\ast \alpha\from g_1\circ f_1\Rightarrow g_2\circ f_2$ is another 2-morphism.

Let $\Lambda$ be a ring which is $n$-torsion for some $n$ prime to $p$.  For every small v-stack $X$ there is a triangulated category $D_{\et}(X,\Lambda)$ \cite[Definition 1.7]{ScholzeEtaleCohomology}.  If $X$ is a locally spatial diamond, then $D_{\et}(X,\Lambda)$ is equivalent to the left-completion of the derived category of sheaves of $\Lambda$-modules on the \'etale topology of $X$ \cite[Proposition 14.15]{ScholzeEtaleCohomology}.

The familiar six functors of Grothendieck have analogues in the world of small v-stacks \cite[Definition 1.7]{ScholzeEtaleCohomology}.  There is a derived tensor product $\otimes_\Lambda^\LL$ and a derived internal hom $\RHom_{\Lambda}$.  For any morphism $f\from Y\to X$ of small v-stacks, there is a pair of adjoint functors $f^*$ and $Rf_*$.  

\begin{rmk}\label{RmkAdjointnessCompatibility} The adjointness between $f^*$ and $Rf_*$ is compatible with 2-morphisms, in the following sense.  Suppose $\alpha\from f\Rightarrow g$ is a 2-morphism between $f,g\from Y\to X$.  Then there are natural isomorphisms $\alpha_*\from f_*\to g_*$ and $\alpha^*\from f^*\to g^*$, such that the following diagrams commute:
\[
\xymatrix{
\id_{D_{\et}(X,\Lambda)} \ar[r]^{\text{unit}} \ar[dr]_{\text{unit}} & f_*f^* \ar[d]^{\alpha_*\alpha^*} \\
& g_*g^*
}
\;\;
\xymatrix{
f^*f_* \ar[rr]^{\text{counit}} \ar[d]_{\alpha_*\alpha^*} && \id_{D_{\et}(Y,\Lambda)} \\
g^*g_* \ar[urr]_{\text{counit}} &&
}
\]
\end{rmk} 

We propose for convenience the following definition. 

\begin{dfn} A morphism $f\from Y\to X$ is {\em representable in nice diamonds} or simply {\em nice} if is compactifiable \cite[Definition 22.2]{ScholzeEtaleCohomology}, representable in locally spatial diamonds \cite[Definition 13.3]{ScholzeEtaleCohomology}, and locally of finite geometric transcendence degree \cite[Definition 21.7]{ScholzeEtaleCohomology}.  
\end{dfn}

If $f\from Y\to X$ is representable in nice diamonds, then there are an adjoint pair of functors $Rf_!$ and $Rf^!$ \cite[Sections 22 and 23]{ScholzeEtaleCohomology}.  

\begin{thm}[{\cite[Theorem 1.8]{ScholzeEtaleCohomology}}]
\label{ThmSixOperations} The six operations $\otimes_\Lambda$, $\RHom_\Lambda$, $f^*$, $Rf_*$, $Rf_!$, and $Rf^!$ obey the rules:
\begin{enumerate}
\item[(P1.)]$f^*A \otimes_\Lambda^\LL f^*B \isom f^*(A\otimes_\Lambda^\LL B)$,
\item[(P2.)] $Rf_*\RHom_\Lambda(f^*A,B)\isom \RHom_\Lambda(A,Rf_*B)$,
\item[(P3.)] $Rf_!(A\otimes_\Lambda^\LL f^*B)\isom Rf_!A\otimes_\Lambda^\LL B$ (the projection formula),
\item[(P4.)] $\RHom_\Lambda(Rf_!A,B)\isom Rf_*\RHom_\Lambda(A,Rf^!B)$ (local Verdier duality),
\item[(P5.)] $Rf^!\RHom_\Lambda(A,B)\isom \RHom_\Lambda(f^*A,Rf^!B)$
\end{enumerate}
\end{thm}

We also need the following base change results.

\begin{thm}[{\cite[Theorem 1.9]{ScholzeEtaleCohomology}}]
\label{ThmBaseChange}
Let 
\begin{equation}
\label{EqCartesianDiagram}
\xymatrix{
Y' \ar[r]^{\tilde{g}} \ar[d]_{f'} & Y \ar[d]^f \\
X' \ar[r]_g & X 
}
\end{equation}
be a cartesian diagram of small v-stacks.
\begin{enumerate}
\item[(BC1.)] If $f$ is representable in nice diamonds, then $g^*Rf_! \isom Rf'_!\tilde{g}^*$.
\item[(BC2.)] If $g$ is representable in nice diamonds, then $Rg^!Rf_*\isom Rf_*'R\tilde{g}^!$. 
\end{enumerate}
\end{thm}

There is a notion of {\em cohomological smoothness} \cite[Definition 23.8]{ScholzeEtaleCohomology} for morphisms between small v-stacks which are representable in nice diamonds.  Let $\Lambda$ be an $n$-torsion ring for some $n$ not divisible by $p$.  For a morphism $f\from Y\to X$ of small v-stacks which is representable in nice diamonds, there is a natural map of functors
\begin{equation}
\label{EqPullbackToUpperShriek}
Rf^!\Lambda_X\otimes_\Lambda^{\LL} f^*\to Rf^!,
\end{equation}
adjoint to 
\[Rf_!(Rf^!\Lambda_X \otimes^{\LL}_{\Lambda} f^*A) \stackrel{(P3)}{\isomto} Rf_!Rf^!\Lambda_X \otimes^{\LL}_{\Lambda} A \stackrel{\text{counit}}{\to} A.\]  If $f$ is cohomologically smooth, then \eqref{EqPullbackToUpperShriek} is an equivalence \cite[Theorem 1.10]{ScholzeEtaleCohomology}.  Furthermore, the object $Rf^!\Lambda_X$ is invertible in the monoidal category $D_{\et}(Y,\Lambda)$.  (For $X$ a small v-stack, an object $A$ of $D_{\et}(X,\Lambda)$ is invertible if and only if \'etale-locally on $X$ there is an isomorphism $A\isom L[n]$ for some invertible $\Lambda$-module $L$.)

There is also the following base change theorem for cohomologically smooth morphisms.

\begin{thm}[{\cite[Theorem 1.10]{ScholzeEtaleCohomology}}] \label{ThmSmoothBaseChange}
In the cartesian diagram \eqref{EqCartesianDiagram}, assume that $f$ is cohomologically smooth.  Then $\tilde{g}^*Rf^!\isom R(f')^!g^*$ and $(f')^*Rg^!\isom R\tilde{g}^! f^*$.  
\end{thm}

In our applications, we will crucially need to deal with \emph{stacky} morphisms $f\from Y\to X$ between v-stacks. These morphisms are never representable in nice diamonds, and the hoped-for functors $Rf_!$ and $Rf^!$ were not constructed in \cite{ScholzeEtaleCohomology}. In the companion paper \cite{EnhancedSixFunctors}, we have extended the $!$-functor formalism to certain stacky maps between certain small v-stacks, using the $\infty$-categorical machinery of \cite{LiuZheng}. Here we briefly recall the main results from \cite{EnhancedSixFunctors}, referring the reader to that paper for a more detailed discussion.

\begin{dfn}[{\cite[Definition 1.1]{EnhancedSixFunctors}}]  A {\em decent v-stack} is a small v-stack $X$ such that the diagonal $\Delta_X \from X\to X \times X$ is representable in locally separated locally spatial diamonds \cite[Definition 4.3]{EnhancedSixFunctors}, and such that there is a locally separated locally spatial diamond $U$ with a morphism $ U\to X$ which is strictly surjective \cite[Definition 4.1]{EnhancedSixFunctors}, representable in locally spatial diamonds, and which locally on $U$ is compactifiable of finite dim.trg and cohomologically smooth. Any such morphism $U \to X$ is called a \emph{chart} for $X$.

A morphism $f:X\to Y$ between decent v-stacks is \emph{fine} if there exists a commutative diagram
\[
\xymatrix{W\ar[d]_{b}\ar[r]^{g} & V\ar[d]^{a}\\
X\ar[r]^{f} & Y
}
\]
where the vertical maps are charts and $g$ is locally on $W$ compactifiable
of finite dim.trg.
\end{dfn}

Note that these definitions rely on the notion of cohomological smoothness for morphisms representable in nice diamonds. 

In our applications, we will often need to deal with decent v-stacks equipped with a structure map to a fixed v-stack $S$. We refer to such objects as decent $S$-v-stacks.

In \cite{EnhancedSixFunctors}, we showed that decent v-stacks and fine morphisms between them are very reasonable notions: 

\begin{itemize}
\item Any locally separated locally spatial diamond is a decent v-stack. In particular, if $X$ is any analytic adic space over $\Spa \mathbf{Z}_p$, the associated diamond $X^\lozenge$ is a decent v-stack.
\item Decent v-stacks are Artin v-stacks in the sense of \cite{FarguesScholze}. 
\item Any absolute product or fiber product of decent v-stacks is decent.
\item Fine morphisms are stable under composition and (decent) base change. 
\item Any morphism of decent v-stacks which is representable in nice diamonds is fine.
\end{itemize}

The key motivation for singling out fine morphisms of decent v-stacks is the following result.
\begin{thm}[{\cite[Theorem 1.4]{EnhancedSixFunctors}}] If $f\from Y\to X$ is any fine map of decent v-stacks, there exist functors $Rf_!$ and $Rf^!$ satisfying the properties listed in Theorems \ref{ThmSixOperations} and \ref{ThmBaseChange}, and agreeing with the constructions in \cite{ScholzeEtaleCohomology} when $f$ is representable in nice diamonds. Moreover, the associations $f \rightsquigarrow Rf_!$ and $f \rightsquigarrow Rf^!$ naturally have the structure of pseudo-functors, and on the class of proper morphisms there is a pseudo-natural isomorphism $Rf_! \to Rf_*$.

Finally, there is a notion of cohomological smoothness for fine maps between decent v-stacks, which can be defined extrinsically in terms of charts or intrinsically in terms of the $!$-functors \cite[Proposition 4.17]{EnhancedSixFunctors}, agreeing with the notion discussed above for morphisms representable in nice diamonds, and with the same formal properties as in the representable case. In particular, the map (\ref{EqPullbackToUpperShriek}) is an isomorphism for cohomologically smooth morphisms, and the evident analogue of Theorem \ref{ThmSmoothBaseChange} holds.
\end{thm}

Again, we refer the reader to \cite{EnhancedSixFunctors} for a complete discussion.

Finally, we need the notion of the relative dualizing complex for v-stacks.

\begin{dfn}[The dualizing complex]  Let $f\from X\to S$ be a fine morphism of decent v-stacks.  We define $K_{X/S}=Rf^!\Lambda$, an object in $D_{\et}(X,\Lambda)$.  

Suppose that $S$ is connected, and that $f$ is proper.  Then $Rf_*=Rf_!$, and so there is a morphism $Rf_*K_{X/S}=Rf_!Rf^!\Lambda \overset{\mathrm{counit}}{\longrightarrow} \Lambda$, which induces a morphism on the level of global sections\footnote{Here and elsewhere, we write $H^0(X,A)$ as shorthand for $\Hom(\Lambda_X,A)$ whenever $A\in D_{\et}(X,\Lambda)$.}
\[ H^0(X,K_{X/S})=H^0(S,Rf_*K_{X/S})\to H^0(S,\Lambda)=\Lambda, \]
which we notate as $\omega\mapsto \int_X \omega$.  
\end{dfn}

We record the following lemmas for convenience.
\begin{lem} 
\label{LemInvertibleSheaf}
Let $f\from Y\to X$ be a fine morphism of decent v-stacks, and let $A,I\in D_{\et}(X,\Lambda)$ be any objects with $I$ invertible.  The natural map $Rf^!A\otimes^{\LL}_{\Lambda} f^*I \to Rf^!(A\otimes^{\LL}_{\Lambda} I)$ of \eqref{EqPullbackToUpperShriek} is an isomorphism.
\end{lem}

\begin{proof} Let $w_{A,I}$ be this morphism.  We also get a map $w_{A\otimes^{\LL}_{\Lambda} I,I^{-1}}\from Rf^!(A \otimes^{\LL}_{\Lambda} I) \otimes^{\LL}_{\Lambda} f^*I^{-1} \to Rf^!A$, which induces $Rf^!(A\otimes^{\LL}_{\Lambda} I)\to Rf^!A \otimes^{\LL}_{\Lambda} f^*I$.  This is the inverse to $w_{A,I}$.
\end{proof}

\begin{lem}  \label{LemInverseDualizingComplexes} Let $f\from Y\to X$ be a fine morphism of decent v-stacks which is cohomologically smooth.  Then there is a canonical isomorphism
\begin{equation}
\label{EqInverseDualizingComplexes}
R\Delta_{Y/X}^!\Lambda_{Y\times_X Y} \otimes^{\LL}_{\Lambda} Rf^!\Lambda_X \isom \Lambda_Y,
\end{equation}
so that $K_{Y/Y\times_X Y} \isom K_{Y/X}^{-1}$ is invertible.  
\end{lem}

\begin{proof} Let $\pr_1,\pr_2\from Y\times_X Y\to Y$ be the projection morphisms; each is cohomologically smooth.  We have 
\[\Lambda_Y\isom R\id_Y^!\Lambda_Y\isom R\Delta_{Y/X}^!R\pr_1^!\Lambda_Y \isom R\Delta_{Y/X}^!\Lambda_{Y\times_X Y} \otimes^{\LL}_{\Lambda} \Delta_{Y/X}^*R\pr_1^!\Lambda_Y,\]
where in the last isomorphism we used Lemma \ref{LemInvertibleSheaf} combined with the cohomological smoothness of $\pr_1$.  Now use Theorem \ref{ThmSmoothBaseChange} to obtain an isomorphism $\Delta_{Y/X}^*R\pr_1^!f^*\Lambda_X \isom \Delta_{Y/X}^*\pr_2^*Rf^!\Lambda_X\isom Rf^!\Lambda_X$.
\end{proof}

For the remainder of the section,  we fix $\Lambda$, an $n$-torsion ring for some $n$ not divisible by $p$.  We will now start writing $f_!$ for $Rf_!$ and $\otimes$ for $\otimes^{\LL}_{\Lambda}$, etc.

\subsection{Examples}

We wish to illustrate the behavior of the functors $f_!$ and $f^!$ through a long list of examples. In the following, assume $S=\Spd C$ for an algebraically closed perfectoid field $C$, or else $S=\Spd k$ for an algebraically closed discrete field of characteristic $p$.  In both cases, we freely identify $D_{\et}(S,\Lambda)$ with the derived category of $\Lambda$-modules. Observe that in both cases, $S$ is decent: this is trivial for $S=\Spd C$, while for $S=\Spd k$ the condition on the diagonal is easy to check, and one can show that $U = S \times \Spd \mathbf{F}_p((t^{1/p^{\infty}}))\to S$ is a chart.

%\begin{exm} An object $A\in D_{\et}(S,\Lambda)$ is ULA over $S$ if and only if (when considered as a complex of $\Lambda$-modules) $A$ is perfect.  
%\end{exm}

\begin{exm} \label{ExmLocallyProfiniteSet} Let $T$ be a locally profinite set, and let $T_S=\underline{T} \times S$ be the associated constant diamond over $S$.  Then $f\from T_S\to S$ is representable in nice diamonds.  Let $C(T,\Lambda)$ be the ring of continuous functions $T\to \Lambda$, for the discrete topology on $\Lambda$.  We may naturally identify $D_{\et}(T_S,\Lambda)$ with the derived category of the abelian category of smooth $C(T,\Lambda)$-modules in the sense of \S \ref{Sheaveslpfsets}. Indeed, $D_{\et}(T_S,\Lambda) \cong D(T_{S,\et},\Lambda)$ since $T_S$ locally has cohomological dimension zero, and then the site $T_{S,\et}$ agrees with the site associated with the topological space $T$.   Finally,  Lemma \ref{lem:sheaflps} identifies $\Sh(T,\Lambda)$ with the category of smooth $C(T,\Lambda)$-modules.   This identification matches the constant sheaf $\Lambda$ with the smooth $C(T,\Lambda)$-module $C_c(T,\Lambda)$. Under this identification, we have concrete descriptions of the four operations associated with $f\from T_S\to S$.   Here we freely use some language and notation from \S \ref{Sheaveslpfsets}.
\begin{itemize}
\item $f^*$ sends a $\Lambda$-module $M$ to the smooth $C(T,\Lambda)$-module $C_c(T,\Lambda)\otimes_\Lambda M$,
\item $f_*$ sends a smooth $C(T,\Lambda)$-module $M$ to $M^c$ regarded as a $\Lambda$-module.
\item $f_!$ sends a smooth $C(T,\Lambda)$-module $M$ to its underlying $\Lambda$-module.
\item $f^!$ sends a $\Lambda$-module $M$ to $\Rhom_\Lambda(C_c(T,\Lambda),M)^s$.  In particular $H^0(T_S, f^!\Lambda_S) \isom \Dist(T,\Lambda)$, the module of $\Lambda$-valued distributions on $T$.
\end{itemize}
%It will not generally be true that $\Lambda_{T_S}$ is ULA over $S$, unless $T$ is discrete.
\end{exm}

\begin{exm} \label{ExLocallyProPGroup} Suppose $G$ is a locally pro-$p$ group.  Let $[S/G_S]$ be the classifying v-stack of $G_S$-torsors.  Assume that there is a separated locally spatial diamond $X$ together with a strictly surjective cohomologically smooth map $X\to S$, and admitting a free $G_S$-action.\footnote{In particular, these hypotheses are satisfied when $G$ is a closed subgroup of $\mathrm{GL}_n(E)$ for $E$ a finite extension of $\mathbf{Q}_p$ or $\mathbf{F}_p((t))$, which will cover all the cases we need. In this situation, we may simply take $X = \mathrm{GL}_{n,E}^{\lozenge} \times S$ with its evident $G_S$-action, where $\mathrm{GL}_{n,E}$ is regarded as a rigid analytic group over $E$. See \cite[Example IV.1.9.iv]{FarguesScholze} for some additional discussion.} Then $[S/G_S]$ is a decent v-stack. Indeed, the condition on the diagonal is easy to check, and one can also check that the natural map $X/G_S \to [S/G_S]$ is a chart.
%We assume that there is a free action of $G_S$ on a cohomologically smooth diamond $X\to S$.   
%In that case $[S/G_S]$ has a cohomologically smooth uniformization by the cohomologically smooth diamond $X/G_S$;  thus $[S/G_S]$ is a smooth $S$-Artin v-stack. 

Let $q\from S\to [S/G_S]$ be the quotient map.   Then $q$ is representable in nice diamonds.   The functor $q^*$ is an equivalence of monoidal categories between $D_{\et}([S/G_S],\Lambda)$ and the derived category of $\Lambda$-modules with a smooth $G$-action \cite[Theorem V.1.1]{FarguesScholze}.   (Strictly speaking, if $M$ is an object of $D_{\et}([S/G_S],\Lambda)$, then $q^*M$ is a bare $\Lambda$-module, but then for each $g\in G$ there is a 2-morphism $\alpha_g\from q\implies q$ as in Example \ref{ExmBG} inducing an automorphism of $q^*M$.)   

With respect to this equivalence of categories, the functors $q^*,q^!$ (resp., $q_*,q_!$) take the following values on a $\Lambda$-module $M$ (resp., a $\Lambda$-module $M$ with smooth $G$-action).
\begin{itemize}
\item $q^*M=M$ with its $G$-action forgotten.
\item $q_*M=C(G,M)^{G\text{-sm}}$, the module of continuous $M$-valued functions on $G$ which are smooth with respect to the action of $G$ by right translation.
\item $q_!M=C_c(G,M)=C_c(G,\Lambda)\otimes M$.
\item $q^!M=\Hom_G(C_c(G,\Lambda),M)$.  In particular, $q^!\Lambda=\mathrm{Haar}(G,\Lambda)$ is the module of left-invariant Haar measures on $G$.
\end{itemize}

We give justifications for these expressions in the next example, which is more general.
\end{exm}

\begin{exm} \label{ExLocallyProPGroupQuotient}  This example generalizes the previous one.  Let $G$ be a locally pro-$p$ group as in Example \ref{ExLocallyProPGroup}.   Suppose $H\subset G$ is a closed subgroup.  Then $[S/H_S]$ is also a decent v-stack.  Let
\[ q\from [S/H_S]\to [S/G_S] \]
be the quotient map, so $q$ is representable in nice diamonds.   The functors $q^*,q^!$ (resp., $q_*,q_!$) take the following values on a $\Lambda$-module $M$ with smooth $G$-action (resp., smooth $H$-action):
\begin{itemize}
\item $q^*M$ is the restriction of $M$ from $G$ to $H$.
\item $q_*M=\mathrm{Ind}_H^G M$ is the smooth induction of $M$ from $H$ to $G$.
\item $q_!M=\mathrm{cInd}_H^G M$ is the compact induction of $M$ from $H$ to $G$.
\item $q^!M$ seems difficult to describe explicitly in general, but there are two special cases:
\begin{enumerate}
\item If $H\subset G$ is open, then $q^!M\isom q^*M$ is the restriction of $M$ from $G$ to $H$.
\item If $H$ is a direct factor of $G$, so that $G=H\times H'$, then
\[ q^!M= \Hom_{H'}(C_c(H',\Lambda),M)^{H\mathrm{-sm}}. \]
\end{enumerate}
\end{itemize}

For the claims regarding $q^*$ and $q_*$:  Let $q_H\from S\to [S/H_S]$ be the quotient map for $H$, and similarly for $q_G\from S\to [S/G_S]$.  Then the underlying module of $q^*M$ is $q_H^*q^*M=q_G^*M$,  which we have identified with $M$ itself.   For $h\in H$ we have a 2-morphism $\beta_h\from q_H\implies q_H$, which induces an action of $h\in H$ on $q_G^*M$, as well as the 2-morphism $\alpha_h\from q_G\implies q_G$ inducing the action of $h\in G$ on $q_G^*M$ as discussed in Example \ref{ExLocallyProPGroup};  these actions agree because $\alpha_h = \id_q\ast \beta_h$.  Since $q^*$ is restriction,  $q_*$ must be its right adjoint, which is smooth induction.

For the claim about $q_!$,  consider the cartesian diagram:
\[
\xymatrix{
(G/H)_S \ar[d]_{\tilde{q}} \ar[r]^{\tilde{q}_G} & [S/H_S] \ar[d]^q \\
S \ar[r]_{q_G} & [S/G_S]
}
\]
The underlying module of $q_!M$ is $q_G^*q_!M$.  By base change property (BC1) we have $q_G^*q_!M \isom \tilde{q}_!\tilde{q}_G^*M$, which by Example \ref{ExmLocallyProfiniteSet} is identified with the underlying $\Lambda$-module of $\tilde{q}_G^*M$.  The latter is the descent of $M$ along the $H$-torsor in topological spaces $G\to G/H$.   In our dictionary between sheaves on $G/H$ and smooth $C(G/H,\Lambda)$-modules, $\tilde{q}_G^*M$ is the module of smooth $H$-equivariant functions $G\to M$ which are compactly supported modulo $H$.  This is none other than the compact induction of $M$ from $H$ to $G$.  (To show that the action of $G$ is by right translation on such functions,  one has to appeal to the compatibility of base change with the 2-isomorphisms $\alpha_g\from q_G\implies q_G$.)

We now turn to the claims for $q^!$.  In the case that $H\subset G$ is open, $q$ is \'etale, and so $q^!\isom q^*$.  In the case that $G=H\times H'$, suppose $M$ is a smooth $H$-module;  we have an isomorphism of smooth $G$-modules
\[ q_!M = \mathrm{cInd}_H^G M \isom M\boxtimes C_c(H',\Lambda),\]
from which it is easy to see that the right adjoint to $q_!M$ is as claimed.  
\end{exm}

\begin{exm}\label{ExLocallyProPStructureMap} Suppose $G$ is a locally pro-$p$-group as in Example \ref{ExLocallyProPGroup}.   Let $f\from [S/G_S]\to S$ be the structure morphism.   Then $f$ is fine, and in fact is cohomologically smooth.  The functors associated with $f$ have the following descriptions:
\begin{itemize}
\item $f^*M=M$ with trivial $G$-action.
\item $f_*M=M^G$ is the (derived) $G$-invariants of $M$, that is, the group cohomology.  
\item $f_!M = (M\otimes\mathrm{Haar}(G,\Lambda))_G$ is the group homology of $M$ twisted by the module of Haar measures.
\item $f^!M = \mathrm{Haar}(G,\Lambda)^* \otimes M$ is $M$ (with trivial $G$-action) twisted by the dual of the module of Haar measures.  In particular $K_{[S/G_S]/S}=\mathrm{Haar}(G,\Lambda)^*$.
\end{itemize}

The claim about $f^*$ is clear from the definitions, and $f_*$ is the right adjoint to $f^*$.  Next we consider $f^!$.  Since $f$ is cohomologically smooth, we have $f^!M\isom f^*M\otimes f^!\Lambda$.  Let $q\from S\to [S/G_S]$ be as in Example \ref{ExLocallyProPGroup}, so that $f\circ q = \id_S$.  We have $M=q^!f^!M=\mathrm{Haar}(G,\Lambda)\otimes f^!M$, so that $f^!M=\mathrm{Haar}(G,\Lambda)^* \otimes M$ as claimed. From here it is easy to compute $f_!$ as the left adjoint of $f^!$.  
\end{exm}

%\begin{exm}\label{ExLocallyProPStackyMaps} This is a generalization of Example \ref{ExLocallyProPStructureMap}.  Let $G$ be a locally pro-$p$ group as in Example \ref{ExLocallyProPGroup}.  Let $H\subset G$ be a closed normal subgroup, and let $f\from [S/G_S] \to [S/(G/H)_S]$ be the morphism sending a $G_S$-torsor $Y\to X$ to the $(G/H)_S$-torsor $Y/H_S\to X$.   Then $f$ is cohomologically smooth.   In terms of smooth $G$- and $(G/H)$-modules we have:
%\begin{itemize}
%\item $f^*M$ is the inflation of $M$ from $G/H$ to $G$.
%\item $f_*M=M^H$ is the (derived) $H$-invariants of $M$.
%\item $f_!M = (M\otimes \mathrm{Haar}(H,\Lambda))_H$.  
%\item $f^!M=\mathrm{Haar}(H,\Lambda)^* \otimes M$.
%\end{itemize}
%The justifications for $f^*$ and $f_*$ being similar to the previous example, we turn to $f_!$ and $f^!$.  Since $f$ is cohomologically smooth, we have $f^!M\isom f^*M\otimes f^!\Lambda$.   Thus we must determine the invertible object $f^!\Lambda$.   This can be accomplished by applying Theorem \ref{ThmSmoothBaseChange} to the diagram:
%\[
%\xymatrix{
%[S/H_S] \ar[r]^{\tilde{q}} \ar[d]_{\tilde{f}} & [S/G_S] \ar[d]^f \\
%S \ar[r]_{q\;\;} & [S/(G/H)_S] 
%}
%\]
%We get that the module underlying $f^!\Lambda$ is $\tilde{q}^*f^!\Lambda\isom \tilde{f}^!q^*\Lambda=\mathrm{Haar}(H,\Lambda)^\ast$ by Example \ref{ExLocallyProPStructureMap}.  
%\end{exm} 

\begin{exm} \label{ExTModG} This example combines Examples \ref{ExmLocallyProfiniteSet} with \ref{ExLocallyProPStructureMap}.  Let $G$ be a locally pro-$p$ group as in Example \ref{ExLocallyProPGroup}.   Let $T$ be a locally profinite set equipped with a continuous action of $G$, and let $T_S$ be the constant diamond over $S$.  Then the stacky quotient $[T_S/G_S]$ is a decent v-stack, and the structure map $[T_S/G_S] \to S$ is fine. Indeed, we have already seen that $[S/G_S]$ is a decent v-stack fine over $S$, and the evident morphism $[T_S/G_S] \to [S/G_S]$ is representable in nice diamonds, so the claim immediately follows from \cite[Proposition 4.11]{EnhancedSixFunctors}.  The stack $[T_S/G_S]$ is not in general cohomologically smooth over $S$.  The category $D_{\et}([T_S/G_S],\Lambda)$ may be identified with the derived category of $G$-equivariant smooth $C(T,\Lambda)$-modules. Using this identification, we get a natural isomorphism 
\begin{equation}
\label{EqDualizingSheafForTModG}
H^0([T_S / G_S],K_{[T_S/G_S]/S}) \isom
\Hom_G(C_c(T,\Lambda)\otimes \mathrm{Haar}(G,\Lambda), \Lambda) 
\end{equation}
which we can think of as the space of $G$-invariant distributions on $T$ with values in $\mathrm{Haar}(G,\Lambda)^*$.   If $G$ is unimodular, and if we choose a Haar measure on $G$, then $H^0([T_S / G_S],K_{[T_S/G_S]/S})$ becomes isomorphic to $\Dist(T,\Lambda)^G$, the module of $G$-invariant distributions on $T$.   
\end{exm}

\subsection{The category of cohomological correspondences}

The Lefschetz-Verdier trace formula was expressed elegantly by Lu and Zheng \cite{LuZheng} in the language of symmetric monoidal 2-categories.  In brief, \cite{LuZheng} constructs such a category of {\em cohomological correspondences}, where the objects are pairs $(X,A)$, where $X$ is a scheme over a fixed base scheme $S$ and $A$ is an object of $D(X_{\et},\Lambda)$, and a morphism $(X,A)\to (X',A')$ is a correspondence $c=(c_1,c_2)\from C\to X\times_S X'$ together with a morphism $c_1^*A\to c_2^!A'$.  An endomorphism of a dualizable object $(X,A)$ has a categorical trace, which lives over the fixed point locus of $c$.  In the special case that $X=X'=C=S$ and $A$ is a perfect complex of $\Lambda$-modules, the categorical trace is just the Euler characteristic of an endomorphism of $A$. The trace formula is interpreted as the statement that the categorical trace is compatible with proper pushforwards.

We adapt here \cite{LuZheng} to the setting of v-stacks, but the same language could be used in the world of stacks in the scheme setting.  The main point of departure from \cite{LuZheng} is that stacks form a 2-category, and so one must keep track of the 2-morphisms witnessing commutativity of diagrams of stacks.  This means that the definition of cohomological correspondences we give (Definition \ref{DefCoCorr}) is a little more delicate than its analogue in \cite{LuZheng}.

First we recall some definitions and constructions concerning the categorical trace.

\begin{dfn}[{\cite[Definition 1.1, Construction 1.6]{LuZheng}}]
An object $X$ of a symmetric monoidal 2-category $(\mc{C},\otimes,1_{\mc{C}})$ is {\em dualizable} if there exists an object $X^\vee$ together with morphisms $\text{ev}_X\from X^\vee\otimes X \to 1_{\mc{C}}$ and $\text{coev}_X\from 1_{\mc{C}} \to X\otimes X^\vee$, such that the compositions

\[
\xymatrix{
X \ar[rr]^{\text{coev}_X\otimes \id_X\hspace{1cm}}
&&
X\otimes X^\vee \otimes X \ar[rr]^{\hspace{1cm}\id_X \otimes \text{ev}_X}
&& X
}
\]
and
\[
\xymatrix{
X^\vee \ar[rr]^{\id_{X^\vee}\otimes\text{coev}_X\hspace{1cm}}
&&
X^\vee \otimes X \otimes X^\vee \ar[rr]^{\hspace{1cm}\text{ev}_X\otimes \id_X}
&& X^\vee
}
\]
are isomorphic to the identities on $X$ and $X^\vee$, respectively.  Consequently, the functor $Y\mapsto X\otimes Y$ has right adjoint $Y\mapsto X^\vee\otimes Y$.  If $X$ is dualizable, then $X^\vee\otimes Y$ serves as an internal mapping object $\iHom(X,Y)$.   

Let $\Omega\mc{C}=\End(1_{\mc{C}})$ be the (1-)category of endomorphisms of the unit object of $\mc{C}$.  Let $f\in \End X$ be an endomorphism of a dualizable object $X$.  Define the categorical trace $\tr(f)$ as the composite:
\[
\xymatrix{
1_{\mc{C}} \ar[rr]^{\text{coev}_X} &&
X\otimes X^\vee \ar[rr]^{f\otimes \id_{X^\vee}} &&
X\otimes X^\vee \ar[rr]^{\text{ev}_X} &&
1_{\mc{C}}
}
\]
so that $\tr(f)$ is an object of $\Omega\mc{C}$.
\end{dfn}

\begin{exm} Let $\Lambda$ be an arbitrary ring, and let $D(\Lambda)$ be the derived category of $\Lambda$-modules.  An object $A$ of $D(\Lambda)$ is dualizable if and only if it is a perfect complex, in which case $\D A=\RHom(A,\Lambda[0])$ is a dual object.  (See Lemma \ref{lem:reflexivemodules} in the Appendix for a proof of this claim and related conditions.)  If $f$ is an endomorphism of the perfect complex of $A$, then the categorical trace $\tr(f)$ agrees with the Euler characteristic $\tr(f\vert A)$ of $f$.  
\end{exm}

Next we define the symmetric monoidal 2-category $\Corr_S$ of correspondences of v-stacks, and its cohomological enhancement $\CoCorr_S\to \Corr_S$. In the following discussion, we fix a decent v-stack $S$. 

\begin{dfn}[The category of correspondences] \label{DefCorr} We define a symmetric monoidal 2-category $\Corr_S$ as follows:
\begin{itemize}
\item The objects of $\Corr_S$ are decent $S$-v-stacks $X$ whose structure map $X \to S$ is fine.  
\item Given objects $X$ and $X'$, the category $\Hom_{\Corr_S}(X,X')$ has for its objects the correspondences:
\begin{equation}
\label{EqCorrespondence}
\xymatrix{
& C \ar[dl]_{c_1} \ar[dr]^{c_2} & \\
X && X'
}
\end{equation}
where each $c_i$ a morphism of decent v-stacks, and $c_2$ is assumed to be fine.\footnote{By \cite[Proposition 4.10]{EnhancedSixFunctors}, it is then automatic that the composition $C \to X' \to S$ is fine, and then also that $c_1:C \to X$ is fine.}  The composition of $(c_1,c_2)\from C\to X\times_S X'$ with $(d_1,d_2)\from D\to X'\times_S X''$ is the correspondence $(c_1d_1',d_2c_2')$ defined by the diagram:
\begin{equation}
\label{EqCompositionOfCorrespondences}
\xymatrix{
& & C\times_{X'} D \ar[dl]_{d_1'} \ar[dr]^{c_2'} && \\
& C \ar[dl]_{c_1} \ar[dr]^{c_2} && D \ar[dl]_{d_1} \ar[dr]^{d_2}  & \\
X && X' && X'' 
}
\end{equation}

\item If $c=(c_1,c_2)\from C\to X\times_S X'$ and $d=(d_1,d_2)\from D\to X\times_S X'$ represent two objects in $\Hom_{\Corr_S}(X,X')$, a 2-morphism $c\Rightarrow d$ is an equivalence class of 2-commutative diagrams
\begin{equation}
\label{EqTwoMorphism}
\xymatrix{
& & C \ar[dll]_{c_1} \ar[drr]^{c_2} \ar[dd]_p & & \\
X & \Downarrow \alpha_1 && \alpha_2 \Downarrow & X' \\
& & D \ar[ull]^{d_1} \ar[urr]_{d_2} & &
}
\end{equation}
where $p$ is proper and $\alpha_i\from c_i\Rightarrow d_i\circ p$ (for $i=1,2$) is a 2-isomorphism witnessing the 2-commutativity of the appropriate triangle.  We write $(p,\alpha_1,\alpha_2)$ as shorthand for the datum of such a diagram, or just $p$ if the 2-isomorphisms are clear from context.

We declare two such diagrams $(p,\alpha_1,\alpha_2)$ and $(q,\beta_1,\beta_2)$ equivalent if there is a 2-isomorphism $\gamma\from p\Rightarrow q$ such that $\beta_i=(\id_{d_i}\ast \gamma)\circ \alpha_i$ for $i=1,2$.

\item  The monoidal structure is defined by $X\otimes Y = X\times_S Y$, with unit object $S$.  Given $c=(c_1,c_2)\from C\to X\times_S X'$ and $d=(d_1,d_2)\from D\to Y\times_S Y'$ representing objects in $\Hom_{\Corr_S}(X,X')$ and $\Hom_{\Corr_S}(Y,Y')$ respectively, we define the object $c\otimes d$ of $\Hom_{\Corr_S}(X\times_S Y,X'\times_S Y')$ as the correspondence: 
\begin{equation}
\label{EqProductCorrespondence}
\xymatrix{
& C\times_S D \ar[dl]_{c_1\times_S d_1} \ar[dr]^{c_2\times_S d_2} &\\
X\times_S Y && X'\times_S Y'
}
\end{equation}
\end{itemize}
\end{dfn}

The next thing to do is to construct a symmetric monoidal 2-category $\CoCorr_S$ of cohomological correspondences, which lies over $\Corr_S$.

%The Grothendieck construction then produces a symmetric monoidal 2-category $\CoCorr_S$, whose object 

\begin{dfn} [The category of cohomological correspondences] \label{DefCoCorr} We define a symmetric monoidal 2-category $\CoCorr_S$, which comes equipped with a functor to $\Corr_S$.  

\begin{itemize}
\item An object of $\CoCorr_S$ is a pair $\mf{X}=(X,A)$, where $X$ is a decent $S$-v-stack whose structure map $X \to S$ is fine, and $A\in D_{\et}(X,\Lambda)$ is arbitrary. 
\item Given objects $\mf{X}=(X,A)$ and $\mf{X}'=(X',A')$ of $\CoCorr_S$,  the category $\Hom_{\CoCorr_S}(\mf{X},\mf{X}')$ consists of pairs $\mf{c}=(c,u)$, where $c=(c_1,c_2)$ is a correspondence as in \eqref{EqCorrespondence}, and $u\from c_1^*A\to c_2^!A'$ is a morphism in $D_{\et}(C,\Lambda)$.  The composition of $\mf{c}=(c,u)\from (X,A)\to (X',A')$ with $\mf{d}=(d,v)\from (X',A')\to (X'',A'')$ is $\mf{d}\circ\mf{c}=(e,w)$, where $e\from C\times_{X'} D\to X\times_S X''$ is the correspondence in \eqref{EqCompositionOfCorrespondences}, and $w$ is the composition
\[ (d_1')^*c_1^*A\stackrel{u}{\to} (d_1')^*c_2^! A' \stackrel{a}{\to} (c_2')^!d_1^*A' \stackrel{v}{\to} (c_2')^! d_2^! A'', \]
where the map labeled $a$ is adjoint to the base change isomorphism $(c_2')_!(d_1')^* \isom d_1^* (c_2)_!$. 
\item  Let $\mf{X}=(X,A)$ and $\mf{X}'=(X',A')$ be two objects of $\CoCorr_S$.  Let $\mf{c}=(c,u)$ and $\mf{d}=(d,v)$ be two objects in $\Hom_{\CoCorr_S}(\mf{X},\mf{X}')$, where $c=(c_1,c_2)\from C\to X\times_S X'$ and $d=(d_1,d_2)\from D\to X\times_S X'$ are correspondences, and $u\from c_1^*A\to c_2^!A'$ and $v\from d_1^*A\to d_2^!A'$ are morphisms.  A 2-morphism $\mf{p}\from \mf{c}\implies \mf{d}$ is an equivalence class of triples $(p,\alpha_1,\alpha_2)$ as in the diagram \eqref{EqTwoMorphism}, such that the composition
\begin{eqnarray*}
d_1^* A & \stackrel{\text{unit}}{\longrightarrow} & p_*p^*d_1^* A \\
&\stackrel{(\alpha_1^*)^{-1}}{\longrightarrow} & p_*c_1^* A \\
&\stackrel{u}{\to}&  p_* c_2^! A' \\
&\stackrel{\alpha_2^!}{\longrightarrow}& p_*p^! d_2^! A' \isom p_!p^!d_2^! A'\\
&\stackrel{\text{counit}}{\longrightarrow} & d_2^!A'
\end{eqnarray*}
agrees with $v\from d_1^*A\to d_2^!A'$.  
Here $\alpha_1^*$ and $\alpha_2^!$ are the natural isomorphisms $c_1^*\isomto p^*d_1^*$ and $c_2^!\isomto p^!d_2^!$, respectively.  

We need to check that the condition on the aforementioned composition depends only on the equivalence class of the triple $(p,\alpha_1,\alpha_2)$. To check this, let $(q,\beta_1,\beta_2)$ be another triple which is equivalent to $(p,\alpha_1,\alpha_2)$ by a 2-isomorphism $\gamma\from p\Rightarrow q$ in the sense of Definition \ref{DefCorr}, so that assume that $\beta_i=(\id_{d_i}\ast \gamma)\circ \alpha_i$ for $i=1,2$.  Consider the diagram in $D_{\et}(D,\Lambda)$:
\[
\xymatrix{
d_1^*A \ar[d]_{\mathrm{unit}} \ar[r]^{=} & d_1^* A \ar[d]^{\mathrm{unit}} \\
p_*p^*d_1^*A \ar[r]_{\gamma_*\gamma^*} \ar[d]_{(\alpha_1^*)^{-1}} &
q_*q^*d_1^*A \ar[d]^{(\beta_1^*)^{-1}} \\
p_*c_1^* A \ar[r]_{\gamma_*} \ar[d]_{u} & q_*c_1^* A \ar[d]^{u} \\
p_!c_2^! A' \ar[r]_{\gamma_!} \ar[d]_{\alpha_2^!} &
q_!c_2^! A' \ar[d]^{\beta_2^!} \\
p_!p^!d_2^! A'\ar[r]_{\gamma_!\gamma^!} \ar[d]_{\mathrm{counit}} &
q_!q^!d_2^! A' \ar[d]^{\mathrm{counit}} \\
d_2^!A' \ar[r]_{=} & d_2^!A'
}
\]
The first and fifth squares commute by the compatibility described in Remark \ref{RmkAdjointnessCompatibility}, the second and fourth squares commute because of the condition $\beta_i=(\id_{d_i}\ast \gamma)\circ \alpha_i$, taking into account the pseudo-functor structures on the four non-binary operations, and the third square commutes because of the equalities $p_*=p_!$, $q_*=q_!$, and $\gamma_*=\gamma_!$.   The commutativity of the outside rectangle says that if the composition along the left vertical arrow is $v$, then so is the composition along the right vertical arrow.  This shows that our notion of 2-morphsm in $\CoCorr_S$ is well-defined.

\item The symmetric monoidal structure on $\CoCorr_S$ is given by $(X,A)\otimes (X',A')= (X\times_S X', A\boxtimes_S A')$.  The unit object is $1_{\CoCorr_S}=(S,\Lambda_S)$.  Finally, given $(c,u)\from (X_1,A_1)\to (X'_1,A'_1)$ and $(d,v)\from (X_2,A_2)\to (X'_2,A'_2)$, the tensor product $(c,u)\otimes (d,v)$ is $(c\otimes d, w)$, where $c\otimes d$ is the correspondence in \eqref{EqProductCorrespondence}, and $w$ is the composition
\[ (c_1\times_S d_1)^*(A_1\boxtimes_S A_2)\isom c_1^*A_1 \boxtimes d_1^*A_2 \stackrel{u\boxtimes_S v}{\to} c_2^!A_1' \boxtimes_S d_2^!A_2' \stackrel{\kappa}{\to} (c_2\times_S d_2)^! (A_2\boxtimes A_2'), \]
where $\kappa$ is adjoint to the K\"unneth isomorphism $(c_2\times d_2)_!(B_1\boxtimes_S B_2)\isom (c_2)_!B_1 \boxtimes_S (d_2)_!B_2$.  
\end{itemize}
\end{dfn}

The category $\CoCorr_S$ has internal mapping objects:  if $\mf{X}_1=(X_1,A_1)$ and $\mf{X}_2=(X_2,A_2)$, then $\iHom(\mf{X}_1,\mf{X}_2)=(X_1\times_S X_2,\RHom(\pr_1^*A_1,\pr_2^!A_2))$, where $\pr_i\from X_1\times_S X_2\to X_i$ is the projection.

%Let $F\from \CoCorr_S\to\Corr_S$ be the forgetful functor.   An important feature of this construction is that if $f\from \mf{X}\to \mf{X}'$ is a morphism, and $F(f)\implies g$ is a 2-morphism, then there exists a unique 

We have the following characterization of dualizable objects in $\CoCorr_S$.

\begin{pro} Let $X$ be a decent $S$-v-stack whose structure map $\pi\from X\to S$ is fine, and let $A\in D_{\et}(X,\Lambda)$ be any object.  The following are equivalent.
\begin{enumerate}
\item The object $(X,A)$ is dualizable in $\CoCorr_S$.
\item The natural map $m\from \D_{X/S} A\boxtimes_S A \to \RHom(\pr_1^*A,\pr_2^!A)$ (see proof for construction) is an isomorphism.
\end{enumerate}
In this situation, the dual of $(X,A)$ is $(X,\D_{X/S}A)$.
\end{pro} 

\begin{proof} Let $\mf{X}=(X,A)$, and let $\mf{X}'=(X,\D_{X/S}A)$.  There is a morphism $\mf{e}\from \mf{X}'\otimes \mf{X}\to 1_{\CoCorr_S}$, defined as the pair $(c,u)$, where $c=(\Delta_{X/S},\pi)$ and $u$ is the composition 
\[\Delta_{X/S}^*(\D_{X/S} A \boxtimes_S A) \isomto \D_{X/S} A \otimes A \to K_{X/S}=\pi^!\Lambda_S.\]
Then $\mf{e}$ induces a morphism $\mf{X}'\otimes \mf{Y}\to \iHom(\mf{X},\mf{Y})$ for any object $\mf{Y}$ of $\CoCorr_S$.  For $\mf{Y}=\mf{X}$, the map $u$ becomes the map $m$ in (2).

Suppose $\mf{X}$ is dualizable, with witnesses $\mf{X}^\vee$, $\ev_{\mf{X}}$, and $\coev_{\mf{X}}$.   Then $\mf{X}^\vee\otimes \mf{Y}\to \iHom(\mf{X},\mf{Y})$ is an isomorphism for all objects $\mf{Y}$.   Setting $\mf{Y}=1_{\CoCorr_S}$, we find an isomorphism $\mf{X}^\vee\isom \mf{X}'$ which identifies $\ev_{\mf{X}}$ with $\mf{e}$.  Setting $\mf{Y}=\mf{X}$, we find that $m$ is an isomorphism.  

Conversely, if $m$ is an isomorphism, let $\coev_{\mf{X}}=(d,w)$, where $d=(\pi,\Delta_{X/S})$ and $w$ is the composition
\[
\pi^*\Lambda_S \isom \Lambda_X \stackrel{\epsilon}{\to} \RHom_\Lambda(A,A) 
\stackrel{(P5)}{\isomto}  \Delta_{X/S}^!\RHom_\Lambda(\pr_1^*A, \pr_2^!A) 
\]
followed by $m^{-1}\from \Delta_{X/S}^!\RHom_\Lambda(\pr_1^*A, \pr_2^!A)\to \Delta_{X/S}^!(A\boxtimes_S \D_{X/S}A)$.  Here $\epsilon$ is adjoint to $\id_A\from A\to A$.  A diagram chase now shows that $\coev_{\mf{X}}$ and $\ev_{\mf{X}}$ witness the dualizability of $\mf{X}$.
\end{proof}  

In the scheme setting, a pair $(X,A)$ is dualizable if and only if $A$ is locally acyclic over $S$ \cite[Theorem 2.16]{LuZheng}, under some mild assumptions.  Similarly, if $f:X\to S$ is a morphism of v-stacks which is representable in nice diamonds, then $(X,A)$ is dualizable in $\CoCorr_S$ if and only if $A$ is $f$-universally locally acyclic \cite[Theorem IV.2.24]{FarguesScholze}.  This result extends immediately to the situation of fine morphisms between decent v-stacks, using that universal local acyclicity is cohomologically-smooth-local on the source.

If $\mf{X}$ is a dualizable object of $\CoCorr_S$, and $\mf{f}\from \mf{X}\to \mf{X}$ is an endomorphism, we may define the categorical trace $\tr(\mf{f})$, an object of $\Omega\CoCorr_S=\End 1_{\CoCorr_S}$.  Let us make this explicit.  The category $\Omega\CoCorr_S$ has objects $(X,\omega)$, where $X$ is a decent $S$-v-stack with fine structure map $X\to S$, and $\omega\in H^0(X,K_{X/S})$ is arbitrary.  A morphism $(X,\omega)\to (X',\omega')$ is a diagram
\[
\xymatrix{
X\ar[rr]^p \ar[dr]_\pi && X' \ar[dl]^{\pi'} \\
& S & }
\]
with $p$ proper, such that $\omega'=p_*\omega$.  Here \[ p_*\from H^0(X,K_{X/S})\to H^0(X',K_{X'/S}) \]
is induced from $\pi_*K_{X/S}\isom \pi'_*p_*p^!K_{X'/S} \isom \pi'_*p_!p^!K_{X'/S} \stackrel{\text{counit}}{\to} \pi'_*K_{X'/S}$.  

Now let $\mf{X}=(X,A)$ be a dualizable object in $\CoCorr_S$, and let $\mf{f}=(c,u)\from \mf{X}\to \mf{X}$ be an endomorphism, with $c=(c_1,c_2)\from C\to X\times_S X$ and $u\from c_1^*A\to c_2^!A$.  By definition, $\tr(\mf{f}) = \ev_{\mf{X}}\circ (f\otimes \id_{\mf{X}^\vee}) \circ \coev_{\mf{X}}$.   The object $\tr(\mf{f})$ is represented by a pair $(\tr(c),\tr(u))$.  Here $\tr(c)\in \Omega 1_{\Corr_S}$ is the correspondence $\Fix(c)\to S\times_S S = S$, where $\Fix(c)$ is the fixed-point locus of the correspondence $c$, as in the cartesian diagram:
\[
\xymatrix{
\Fix(c) \ar[r]^{c'} \ar[d]_{\Delta'_{X/S}} & X \ar[d]^{\Delta_{X/S}} \\
C \ar[r]_c & X\times_S X
}
\]
For its part, the element $\tr(u)$ is an element of $H^0(\Fix(c),K_{\Fix(c)/S})$. \footnote{We will sometime notate this element as $\tr_c(u,A)$ if we wish to emphasize the roles of $c$ and $A$.}   It is the image of $u\in \Hom(c_1^*A,c_2^!A)$ under
\begin{eqnarray*}
H^0(C,\RHom(c_1^*A, c_2^!A)) 
&\stackrel{(P5)}{\isomto}& H^0(C,c^!\RHom(\pr_1^*A, \pr_2^!A)) \\
&\stackrel{\eqref{EqCrucialIsomorphismSchemes}}{\isomto}& H^0(C,c^!(\D_{X/S}A \boxtimes_S A)) \\
&\stackrel{\alpha}{\to}& H^0(C,c^!(\Delta_{X/S})_* (\D_{X/S}A \otimes A) ) \\
&\stackrel{\ev_A}{\to}& H^0(C,c^!(\Delta_{X/S})_* K_{X/S} )\\
&\stackrel{{\text{(BC2)}}}{\isomto}& H^0(C,(\Delta'_{X/S})_* (c')^! K_{X/S}) \\
&\isom&  H^0(\Fix(c),K_{\Fix(c)/S}).
\end{eqnarray*}
Here, the map labeled $\alpha$ is adjoint to $(\Delta_{X/S})^*(\D_{X/S}A \boxtimes_S A)\isomto \D_{X/S} A \otimes A$.   

\begin{dfn}[Inertia stack, characteristic class] \label{DefCharClass}
In the special case that $\mf{f}=\id_{\mf{X}}=(\Delta_{X/S},\id_A)$, the object $\tr(\Delta_{X/S})=X\times_{X\times_S X} X$ is the {\em inertia stack} of $X$, which we notate as $\In_S(X)$.  Its objects are pairs $(x,g)$, where $x\in X$ and $g\in \Aut x$.  Then $\tr(\id_A)$ is an element of $H^0(\In_S(X),K_{\In_S(X)})$, which we call the {\em characteristic class} of $A$.   We notate this element as $\cc_{X/S}(A)$. 
\end{dfn}

We record one lemma here for later reference.

\begin{lem}\label{LemRestrictionOfCC} Let $i\from U\to X$ be an open immersion of decent $S$-v-stacks fine over $S$.  Then $\In_S(i)\from \In_S(U)\to \In_S(X)$ is also an open immersion.  If $A\in D_{\et}(X,\Lambda)$ is ULA over $S$, then so is $i^*A$, and then 
\[ \cc_{U/S}(i^*A)=\In_S(i)^*\cc_{X/S}(A). \]
\end{lem}

\begin{proof} All constructions are local on $X$.
\end{proof}

\begin{thm}[Relative Lefschetz-Verdier trace formula]  \label{ThmLefschetzVerdier} Let $\mf{X}=(X,A)$ be a dualizable object in $\CoCorr_S$, and let $\mf{f}\in \End \mf{X}$ lie over the correspondence $c\from C\to X\times_S X$.  Suppose we are given a diagram
\[ 
\xymatrix{
X \ar[d]_{q} & C\ar[l] \ar[r] \ar[d]_p & X \ar[d]_{q} \\
X' & C'\ar[l] \ar[r] & X' 
}
\]
with $p,q$ proper.  Then $\mf{X}'=(X',q_*A)$ is also dualizable.   Let $\mf{q}\from \mf{X}\to \mf{X}'$ be the evident 1-morphism lying over $q$.  There is a unique morphism $\mf{f}'\in \End\mf{X}'$ lying over $C'\to X'\times_S X'$, such that $p$ defines a 2-morphism $\tilde{p}$ filling in the square
\[
\xymatrix{
 \mf{X} \ar[r]^{\mf{f}} \ar[d]_{\mf{q}} & \mf{X} \ar[d]^{\mf{q}} \\
\mf{X}' \ar[r]_{\mf{f}'} & \mf{X}'
} 
\]
Finally, there exists a (necessarily unique) 2-morphism $\tr(\tilde{p})\from \tr(\mf{f})\implies \tr(\mf{f}')$ lying over $\tr(p)\from \tr(c)\implies \tr(c')$.  
\end{thm}

\begin{proof}  This is formally the same as the proof of (a special case of) \cite[Theorem 2.21]{LuZheng}, so we give a brief sketch.  (In \cite{LuZheng} one gets a statement about the more general Lefschetz-Verdier pairing, which we also could have established.)   The existence and uniqueness of $\mf{f}'\in \End\mf{X}'$ follows from the definition of 2-morphisms in $\CoCorr_S$.   The dualizability of $\mf{X}'$ is \cite[Proposition 2.23]{LuZheng}.  
The dual of $\mf{X}'$ is $(\mf{X}')^\vee=(X',q_*\D_{X/S}A)$;  there is another natural map $\mf{q}^\vee\from \mf{X}^\vee\to (\mf{X}')^\vee$ defined similarly to $\mf{q}$.  

Consider the diagram:
\[
\xymatrix{
1_{\CoCorr_S} \ar[r]^{\coev_{\mf{X}}} \ar[dr]_{\coev_{\mf{X}'}} & \mf{X}\otimes \mf{X}^\vee \ar[d]^{\mf{q}\otimes \mf{q}^\vee} \ar[r]^{\mf{f}\otimes\id_{\mf{X}^\vee}} & \mf{X} \otimes \mf{X}^\vee \ar[d]_{\mf{q}\otimes \mf{q}^\vee} \ar[dr]^{\ev_{\mf{X}}} & \\
& \mf{X}'\otimes (\mf{X}')^\vee \ar[r]_{\mf{f}'\otimes\id_{\mf{X}'}} & \mf{X}'\otimes (\mf{X}')^{\vee} \ar[r]_{\ev_{\mf{X}'}} & 1_{\CoCorr_S} 
}
\]
The two outer triangles can be filled in with a 2-morphism, as can the inner square (via $p\otimes\id_{\mf{q}^{\vee}}$).  See \cite[Construction 1.7]{LuZheng} for details.  Composing, we find the required 2-morphism $\tr(\tilde{p})\from \tr(\mf{f})\implies \tr(\mf{f}')$.  
\end{proof}

\begin{cor}\label{CorLefschetzVerdier} Let $(X,A)$ be a dualizable object, and let $p\from X\to X'$ be proper.  Then $(X',p_!A)$ is dualizable.  The morphism $\In_S(p)\from \In_S(X)\to \In_S(X')$ is proper, and 
\[ \In_S(p)_*\cc_{X/S}(A) = \cc_{X'/S}(p_!A).\]
\end{cor}

\begin{exm} \label{ExLefschetzForDiamonds}
We can immediately deduce a familiar-looking trace formula from Theorem \ref{ThmLefschetzVerdier} in the case of proper diamonds.   Suppose $S=\Spd C$ for an algebraically closed perfectoid field $C$, and suppose $q\from X\to S$ is a nice diamond.  Let $A\in D_{\et}(X,\Lambda)$ be ULA over $S$.  Then $\mf{X}=(X,A)$ is a dualizable object of $\CoCorr_S$.   Let $\mf{f}=(c,u)$ be an endomorphism of $\mf{X}$ lying over a correspondence $c\from Y\to X\times_S X$.  The categorical trace $\tr(\mf{f})$ is an endomorphism of $1_{\CoCorr_S}$ consisting of the pair $(\Fix(c),\omega)$, where $\Fix(c)=Y\times_{c,X\times_S X,\Delta_{X/S}} X$ is the fixed point locus of the correspondence, and $\omega$ is a global section of $K_{\Fix(c)/S}$.  

Now suppose that $X$ is proper over $S$.  In the setting of Theorem \ref{ThmLefschetzVerdier}, we put $\mf{X}'=(S,q_*A)$ and $C'=S$.   The dualizability of $\mf{X}'$ means that $q_*A=R\Gamma(X,A)$ is a perfect complex.  The morphism $\mf{f}'\from \mf{X}'\to \mf{X}'$ supplied by Theorem \ref{ThmLefschetzVerdier} is the endomorphism $q_*(\mf{f})\from q_*A \to q_*A$.  Finally, the existence of $\tr(\tilde{p})\from \tr(\mf{f})\implies \tr(\mf{f}')$ lying over $\tr(p)\from \tr(c)\implies \tr(\id_S)$ implies that 
\begin{equation}
\label{EqLefschetzProperDiamonds}
\tr\left(q_*(\mf{f})\biggm\vert R\Gamma(X,A)\right) = \int_{\Fix(c)} \omega.
\end{equation}
\end{exm}

\begin{dfn} Let $x\in \Fix(c)$ be an isolated point, such that $x\to S$ an isomorphism.   The {\em local term} $\loc_x(\mf{f})$ is the restriction of $\omega$ to $x$, considered as an element of $\Lambda$.  

If $\mf{f}$ arises from an automorphism $g\from X\to X$ along with a morphism $u\from g^*A\to A$, we write the local term as $\loc_x(g,A)$ (the dependence on $u$ being implicit).
\end{dfn}

In the latter situation, if it so happens that $\Fix(g)$ consists of finitely many isolated $S$-points $x_1,\dots,x_n$, then \eqref{EqLefschetzProperDiamonds} reduces to
\[ \tr\left(q_*(g)\biggm\vert R\Gamma(X,A)\right) = \sum_{i=1}^n \loc_x(g,A).\]

\subsection{The trace distribution as a characteristic class}

Let $S$ be a geometric point, and let $G$ be a locally pro-$p$ group as in Example \ref{ExLocallyProPGroup}, so that $[S/G_S]$ is a decent v-stack and $f\from [S/G_S]\to S$ is fine and cohomologically smooth.   As in that example, we freely identify $D_{\et}([S/G_S],\Lambda)$ with the derived category of $\Lambda$-modules with a smooth $G$-action.   For an object $M\in D_{\et}([S/G_S],\Lambda)$, we let $M^\vee = \RHom(M,\Lambda)$; by \cite[Corollary V.1.4]{FarguesScholze}, this is just the usual (derived) smooth dual.

For a compact open subgroup $K\subset G$, we let $M^K$ be the complex of derived $K$-invariants.  If $K$ is pro-$p$, the map of complexes $M^K\to M$ admits a section, namely, averaging over $K$ with respect to a normalized Haar measure.  Thus $M^K$ is naturally a summand of $M$.  

\begin{pro} \label{ProAdmissible} Let $M$ be an object of $D_{\et}([S/G_S],\Lambda)$.  The following are equivalent:
\begin{enumerate}
\item The object $([S/G_S],M)$ of $\CoCorr_S$ is dualizable, with dual $([S/G_S],\D M)$, where $\D M=\mathrm{Haar}(G,\Lambda)^* \otimes M^\vee$.  
\item The object $M$ is ULA over $S$.
\item For all compact open pro-$p$ subgroups $K\subset G$, the (derived) $K$-invariants $M^K$ are a perfect complex of $\Lambda$-modules.
\end{enumerate}
\end{pro}

\begin{proof}  The equivalence between (1) dualizability in $\CoCorr_S$ and (2) the ULA property is \cite[Theorem IV.2.23]{FarguesScholze}.  For the equivalence between (2) and (3), see \cite[V.7.1]{FarguesScholze}.  (There, the authors work in the more general context of $\Bun_G$, but the method of proof can be used in our situation of $[S/G_S]$.)   The Verdier dual of $M$ is $\RHom(M,f^!\Lambda)$, and $f^!\Lambda = \mathrm{Haar}(G,\Lambda)^*$ by Example \ref{ExLocallyProPStructureMap}.

We note that is possible to give a direct proof of the implication (3)$\implies$(1).  Let $\mf{X}=([S/G_S],M)$ and $\mf{X}^\vee=([S/G_S],\D M)$.  The evaluation map $\mf{X}^\vee\otimes \mf{X} \to 1_{\CoCorr_S}$ lies over the correspondence $\Delta_f\times f\from [S/G_S]\to [S/G_S]^2 \times S$;  on the level of sheaves it is a twist of the evaluation map $M^\vee \otimes M \to \Lambda$.  The coevaluation map $1_{\CoCorr_S} \to \mf{X}\otimes \mf{X}^\vee$ lies over the correspondence $f\times \Delta_f\from [S/G_S] \to S\times [S/G_S]^2$.  Note that the diagonal map presents $G$ as a direct factor of $G^2$, so that Example \ref{ExLocallyProPGroupQuotient} applies to give an explicit description of $\Delta_f^!$.  The result is that the $\Lambda$-module of cohomological correspondences lying over $f\times \Delta_f$:
 \[f^*\Lambda\to \Delta_f^!(M\boxtimes \D M)\]
may be identified with the $\Lambda$-module 
\[ \Hom_{G\times G}(C_c(G,\Lambda)\otimes \mathrm{Haar}(G,\Lambda), M \boxtimes M^\vee). \]
In the latter expression, $G\times G$ acts on $C_c(G,\Lambda)$ by left and right translation.  We describe the coevaluation map as a $G\times G$-equivariant function 
\[ I\from C_c(G,\Lambda)\otimes \mathrm{Haar}(G,\Lambda) \to H^0(M\otimes M^\vee).\]
Let $h\in C_c(G,\Lambda)$ and $\mu\in \mathrm{Haar}(G,\Lambda)$;  then integration against $h\;d\mu$ describes an endomorphism $I_{h,\mu}\in \End M$:
\[ I_{h,\mu}(v) = \int_{g\in G} h(g)gv \; d\mu(g). \]
The function $h$ is left and right $K$-invariant for some sufficiently small pro-$p$ open subgroup $K\subset G$, in which case $I_{h,\mu}$ factors through a map $M^K\to M^K$.  Since $M^K$ is perfect by hypothesis, we have described an element of 
\[ \Hom(M^K,M^K) \isom H^0(\mathrm{RHom}(M^K,M^K))\isom H^0(M^K \otimes (M^K)^\vee).\]  Then $I(h\otimes \mu)$ is the image of $I_{h,\mu}$ in $H^0(M\otimes M^\vee)$.
\end{proof}

\begin{dfn}  The object $M$ of $D_{\et}([S/G_S],\Lambda)$ is {\em admissible} if it satisfies the equivalent conditions of Proposition \ref{ProAdmissible}.
\end{dfn}

Suppose $M$ is an admissible object of $D_{\et}([S/G_S],\Lambda)$.   The {\em trace distribution} of $M$ is a canonical element
\begin{equation}
\label{EqTrDist}
\trdist(M)\in \Hom_G(C_c(G,\Lambda)\otimes \mathrm{Haar}(G,\Lambda), \Lambda),
\end{equation}
where $G$ is meant to act on $C_c(G,\Lambda)$ by conjugation and on $\mathrm{Haar}(G,\Lambda)$ by the modular character.  Namely, $\trdist(M)$ sends $h\otimes \mu$ (where $h\in C_c(G,\Lambda)$ and $\mu\in \mathrm{Haar}(G,\Lambda)$ to the Euler characteristic of the operator $I_{h,\mu}$ described in the proof of Proposition \ref{ProAdmissible}.  

On the other hand, we have the inertia stack $\In_S([S/G_S])$ and the characteristic class $\cc_{[S/G_S]/S}(M)$ as in Definition \ref{DefCharClass}.  We have 
\[ \In_S([S/G_S]) = [G_S\sslash G_S], \]
the stack of conjugacy classes of $G$.  (Reasoning:  For a perfectoid space $Y\to S$, a $Y$-point of $\In_S([S/G_S])$ is a $G_S$-torsor $\tilde{Y}\to Y$ together with a $G_S$-equivariant automorphism $i\from \tilde{Y}\to \tilde{Y}$.  Such an automorphism arises as $i(y)=f(y).y$, where $f\from \tilde{Y}\to G_S$ is a morphism satisfying $f(gy)=gf(y)g^{-1}$.  The pair $(\tilde{Y},f)$ then constitutes a $Y$-point of $[G_S\sslash G_S]$.)  For its part, the characteristic class $\cc_{[S/G_S]/S}(M)$ lies in $H^0(\In_S([S/G_S]), K_{\In_S([S/G_S])/S})$, and by Example \ref{ExTModG} we have an isomorphism
\[ H^0(\In_S([S/G_S]), K_{\In_S([S/G_S])/S})\isom \Hom_G(C_c(G,\Lambda) \otimes \mathrm{Haar}(G,\Lambda), \Lambda) \]
onto the same module appearing in \eqref{EqTrDist}.

%Now suppose $\pi$ is a smooth admissible representation of $G$ on a $\Lambda$-module.  The {\em trace distribution} of $\pi$ is canonically an element of $\Hom_G(C_c(G,\Lambda)\otimes \mathrm{Haar}(G,\Lambda), \Lambda)$, where $G$ is meant to act on $C_c(G,\Lambda)$ by conjugation.  Namely, the trace distribution of $\pi$ sends $f\otimes \mu$ (where $f\in C_c(T,\Lambda)$ and $\mu\in\mathrm{Haar}(G,\Lambda)$) to the trace of the operator 
%\[ v\mapsto \int_{g\in G} f(g) \pi(g) v \; d\mu(g). \]
%On the other hand we have the object $[\pi]$ of $D_{\et}([S/G_S],\Lambda)$.  Its characteristic class $\cc_{[S/G_S]/S}$ lies in 
%\begin{equation}
%\label{EqDualizingSheafForGModG}
%H^0([G_S\sslash G_S],K_{[G_S\sslash G_S]/S})
%\isom \Hom_G(C_c(G,\Lambda)\otimes \mathrm{Haar}(G,\Lambda), \Lambda) 
%\end{equation}
%(this being a special case of \eqref{EqDualizingSheafForTModG}), where again $G$ acts on $C_c(G,\Lambda)$ by conjugation.  Note that if $G$ is unimodular and a Haar measure is chosen, then the module in \eqref{EqDualizingComplexForGModG} becomes isomorphic to $\Dist(G,\Lambda)^G$, the module of conjugation-invariant distributions on $G$.

\begin{pro}  Let $G$ be a locally pro-$p$ group satisfying the hypotheses of Example \ref{ExLocallyProPGroup}.  Let $M$ be an admissible object of $D_{\et}([S/G_S],\Lambda)$. Then
\[ \cc_{[S/G_S]/S}(M) = \trdist(M). \]
\end{pro}

\begin{proof}  The characteristic class of $M$ is the categorical trace of the identity on the object $\mf{X}=([S/G_S],M)$, which is $\ev_{\mf{X}}\circ \coev_{\mf{X}}$.  This equals the image of the identity map through the left side of the following commutative diagram:
\[
\xymatrix{
H^0(\RHom(M,M)) \ar[rr]^{\isom} \ar[d]_{\coev_{\mf{X}}} && \End_G M \ar[d]^{\coev_{M}} \\
H^0(\Delta_f^!(\D M \boxtimes M)) \ar[rr]_{\isom} \ar[d]_{\ev_{\mf{X}}} && \Hom_{G\times G} (C_c(G,\Lambda)\otimes \mathrm{Haar}(G,\Lambda),M^\vee \boxtimes M)  \ar[d]^{\ev_M} \\
H^0(K_{\In_S([S/G_S]/S}) \ar[rr]_{\isom} && \Hom_G(C_c(G,\Lambda)\otimes \mathrm{Haar}(G,\Lambda), \Lambda) 
}
\]
Whereas on the right side of the diagram, the map labeled $\coev_M$ carries the identity to the integration map $I$ described in the proof of Proposition \ref{ProAdmissible}, and then $\ev_M$ carries $I$ onto $\trdist(M)$ by definition of the latter.
\end{proof}

\subsection{A K\"unneth theorem for characteristic classes}

The goal of this section is to prove the compatibility of the categorical trace with fiber products, to get an analogue of the relation $\tr(A\otimes B)=\tr(A) \tr(B)$ for square matrices.  Again, throughout this section we fix a decent base v-stack $S$.

As an example of what we will do, suppose $X_1,X_2\to S$ are two fine morphisms of decent v-stacks, and suppose $A_i\in D_{\et}(X_i,\Lambda)$ is ULA over $S$ for $i=1,2$.  Then $A_1\boxtimes_S A_2$ is ULA over $S$, so we may define the characteristic class $\cc_{X_1\times_S X_2/S}(A_1\boxtimes_S A_2)$ in $H^0(\In_S(X_1\times_S X_2),K_{\In_S(X_1\times_S X_2)})$.  We have an isomorphism $\In_S(X_1\times_S X_2)\isom \In_S(X_1)\times_S \In_S(X_2)$, and therefore a K\"unneth map 
\begin{eqnarray*}
\kappa_S \from K_{\In_S(X_1)/S} \otimes K_{\In_S(X_2)/S} \to K_{\In_S(X_1\times_S X_2)/S}
\end{eqnarray*}
which we notate as $\mu_1\otimes\mu_2\mapsto \mu_1\boxtimes_S \mu_2$ for global sections $\mu_i$ of $K_{\In_S(X_i)/S}$.  Then it is straightforward to show (and a corollary of Theorem \ref{ThmKunnethForCharacteristicClass} below) that 
\[ \cc_{X_1/S}(A_1)\boxtimes_S \cc_{X_2/S}(A_2)=\cc_{X_1\times_S X_2/S}(A_1\boxtimes_S A_2).\]

For our applications we need a more general result involving fiber products over bases other than $S$.  First, we need a modification of the above K\"unneth map in a general setting.

\begin{dfn}[Modified K\"unneth map]  Let $U\to T$ be a cohomologically smooth morphism of decent $S$-v-stacks. Suppose we are given a 2-commutative diagram of decent $S$-v-stacks
\[ 
\xymatrix{
Y_i \ar[r]^{f_i} \ar[d]_{g_i} & X_i \ar[d] \\
U \ar[r] & T
}
\]
for $i=1,2$, such that $f_1$ and $f_2$ are fine.   Let $f\from Y_1\times_U Y_2\to X_1\times_T X_2$ and $g\from Y_1\times_U Y_2 \to U$ be the induced product maps.  Let $A_i$ be an object of $D_{\et}(X_i,\Lambda)$ for $i=1,2$.  We define a map
\[ \kappa_{U/T}\from f_1^!A_1 \boxtimes_U f_2^!A_2 \to f^!(A_1\boxtimes_T A_2) \otimes g^*K_{U/T} \]
as follows.  There is a cartesian diagram
\[
\xymatrix{
Y_1\times_U Y_2 \ar[r]^{\Delta_{U/T}'} \ar[d]_g & Y_1\times_T Y_2 \ar[d]^{g_1\times_T g_2} \\
U \ar[r]_{\Delta_{U/T}} & U\times_T U 
}
\]
The map $\kappa_{U/T}$ is defined as the composition
\begin{eqnarray*}
&&f_1^!A_1\boxtimes_U f_2^!A_2 \\
&\isom& (\Delta_{U/T}')^*(f_1^!A_1 \boxtimes_T f_2^!A_2) \\
&\to& (\Delta_{U/T}')^*(f_1\times_T f_2)^!(A_1\boxtimes_T A_2) \\
&\stackrel{\eqref{EqInverseDualizingComplexes}}{\isom}& (\Delta_{U/T}')^*(f_1\times_T f_2)^!(A_1\boxtimes_T A_2) \otimes g^*(\Delta_{U/T})^!\Lambda_T \otimes g^*K_{U/T} \\
&\to& (\Delta_{U/T}')^*(f_1\times_T f_2)^! (A_1\boxtimes_T A_2) \otimes (\Delta_{U/T}')^!\Lambda_{Y_1\times_T Y_2} \otimes g^*K_{U/T} \\
&\to& (\Delta_{U/T}')^!(f_1\times_T f_2)^!(A_1\boxtimes_T A_2) \otimes g^*K_{U/T} \\
&\isom& f^!(A_1\boxtimes_T A_2) \otimes g^*K_{U/T}.
\end{eqnarray*}
\end{dfn}

In particular, the case $X_1=X_2=T=S$ yields a map 
\begin{equation}
\label{EqModifiedKunneth}
 \kappa_{U/S} \from K_{Y_1/S} \boxtimes_U K_{Y_2/S} \to K_{Y_1\times_U Y_2/S} \otimes g^*K_{U/S}.
 \end{equation}

We can now introduce the setup of the main theorem of this section.  We consider bases $T\to S$ satisfying two hypotheses:  (1) $T\to S$ is cohomologically smooth, and (2) $\Delta_{T/S}\from T\to T\times_S T$ is cohomologically smooth.  These are satisfied for instance when $T=[S/G]$, where $G$ is a cohomologically smooth locally spatial group diamond over $S$.  Considering the diagram
\[
\xymatrix{
\In_S(T) \ar[r] \ar[d] & T \ar[d]^{\Delta_{T/S}} & \\
T \ar[r]_{\Delta_{T/S}} & T\times_S T\ar[r] \ar[d] & T \ar[d] \\
& T \ar[r] & S
}
\]
in which both squares are cartesian, we see that $\In_S(T)\to S$ is also cohomologically smooth.   Furthermore we can trivialize the dualizing complex $K_{\In_S(T)/S}$.

\begin{lem}  \label{LemTrivialDualizingSheaf} Let $T$ be a decent $S$-v-stack such that the structure map $\pi\from T\to S$ and diagonal $\Delta_{T/S} \from T\to T\times_S T$ are both cohomologically smooth.   Then the object $\mf{T}=(T,\Lambda_T)\in\CoCorr_S$ is dualizable, and its characteristic class $\cc_{T/S}(\Lambda_T)$, considered as a morphism $\Lambda_{\In_S(T)} \to K_{\In_S(T)/S}$, is an isomorphism.
\end{lem}

In the case $T=[S/G]$, where $G$ is a cohomologically smooth locally spatial group diamond over $S$, the inertia stack is $\In_S(T)=[G\sslash G]$, the stack of conjugacy classes of $G$.  This is a cohomologically smooth stack of dimension 0, so perhaps it is unsurprising that it has trivial dualizing complex; the lemma states that the trivialization is in fact canonical.

\begin{proof} Let $\pr_1,\pr_2\from T\times_S T\to T$ be the projection morphisms. The morphism in $D_{\et}(T,\Lambda)$ associated with the cohomological correspondence $\coev_{\mf{T}}\from$ $(S,\Lambda_S)\to (T\times_S T, \pr_2^*K_{T/S})$ is an isomorphism:
\[\pi^*\Lambda_S\isom\Delta_{T/S}^!\pr_1^!\pi^*\Lambda_S \stackrel{\alpha}{\isom} \Delta_{T/S}^!\pr_2^*\pi^!\Lambda_S =  \Delta_{T/S}^!\pr_2^*K_{T/S}. \]
Here we the use cohomological smoothness of $\pi$ to get the isomorphism $\alpha$, as in Theorem \ref{ThmSmoothBaseChange}.   The morphism $\Delta_{T/S}^*\pr_2^*K_{T/S}\to \pi^!\Lambda_S$ associated with $\ev_{\mf{T}}\from (T\times_S T,\pr_2^*K_{T/S})\to (S,\Lambda_S)$ is also an isomorphism (this is true without any hypotheses on $\pi$ or $\Delta_{T/S}$).   Referring to the diagram
\[
\xymatrix{
& & \In_S(T) \ar[dl]_{h} \ar[dr]^{h} && \\
& T \ar[dl]_{\pi} \ar[dr]^{\Delta_{T/S}} && T \ar[dl]_{\Delta_{T/S}} \ar[dr]^{\pi}  & \\
S && T\times_S T && S
}
\]
we may describe the characteristic class $\cc_{T/S}(\Lambda_T)$ as the composition
\begin{eqnarray*}
h^*\pi^*\Lambda_S &\isom&  h^*\Delta_{T/S}^!\pr_2^*K_{T/S}\\&\stackrel{\beta}{\isom}& h^!\Delta_{T/S}^*\pr_2^* K_{T/S}\isom h^!K_{T/S}\isom K_{\In_S(T)/S},
\end{eqnarray*}
which is an isomorphism.  Here we once again applied Theorem \ref{ThmSmoothBaseChange}, using the cohomological smoothness of $\Delta_{T/S}$. 
\end{proof}

Now suppose $p_i\from X_i\to T$ ($i=1,2$) is a fine morphism of decent $S$-v-stacks, with fiber product $p\from X_1\times_T X_2\to T$.  Then there is an isomorphism
\[ \In_S(X_1\times_T X_2) \isom \In_S(X_1)\times_{\In_S(T)} \In_S(X_2), \]
and therefore by the discussion above we have a modified K\"unneth map 
\[ \kappa_{\In_S(T)/S}\from K_{\In_S(X_1)/S} \boxtimes_{\In_S(T)} K_{\In_S(X_2)/S} \to K_{\In_S(X_1\times_T X_2)/S} \otimes \In(p)^*K_{\In_S(T)/S}. \]
Using the trivialization $\Lambda_{\In_S(T)/S}\isom K_{\In_S(T)/S}$ from Lemma \ref{LemTrivialDualizingSheaf}, we obtain a map
\begin{equation}
\label{EqBoxtimesOnDistributions}
K_{\In_S(X_1)/S} \boxtimes_{\In_S(T)} K_{\In_S(X_2)/S} \to K_{\In_S(X_1\times_T X_2)/S}, 
\end{equation}
which on global sections we notate as $\mu_1\otimes\mu_2\mapsto \mu_1\boxtimes_{\In_S(T)} \mu_2$.  

Finally we can state the main theorem of the section.

\begin{thm} \label{ThmKunnethForCharacteristicClass} Let $T$ be a decent $S$-v-stack such that the structure map $\pi\from T\to S$ and the diagonal $\Delta_{T/S}\from T\to T\times_S T$ are both cohomologically smooth. Let $X_1,X_2\to T$ be two fine morphisms of decent v-stacks (so also the induced morphisms $X_1,X_2 \to S$ are fine), and let $A_i\in D_{\et}(X_i,\Lambda)$ be a sheaf which is ULA over $S$.    Then $A_1\boxtimes_T A_2$ is ULA over $S$, and
\[ \cc_{X_1/S}(A_1)\boxtimes_{\In_S(T)} \cc_{X_2/S}(A_2)
= \cc_{X_1\times_T X_2/S}(A_1\boxtimes_T A_2). \]
\end{thm}

In order to prove Theorem \ref{ThmKunnethForCharacteristicClass}, we need to enhance the category $\CoCorr_S$ to include data coming from a smooth base v-stack, which we allow to vary.   At a first pass, one might think that such a category would have objects $(X\to T,A)$, where $T\to S$ is cohomologically smooth, $X\to T$ is a morphism of v-stacks, and $A\in D_{\et}(X,\Lambda)$.  The morphisms $(X\to T,A)\to (X'\to T',A')$ would be pairs $(q^\natural,\alpha)$, where $q^\natural =(p,q,p')$ is a morphism of correspondences as in a 2-commutative diagram:
\begin{equation}
\label{EqBasedCorrespondence}
\xymatrix{
X \ar[d]_{p} & C\ar[l]_{c} \ar[d]_{q} \ar[r]^{c} & X' \ar[d]^{p'}\\
T & U \ar[l]_{u} \ar[r]^{u'} & T'
}
\end{equation} 
and $\alpha\from c^*A\to (c')^!A'$ is a morphism.  We assume that $u'$ is cohomologically smooth. Given pairs $\mf{X}_1=(X_1\to T,A_1)$ and $\mf{X}_2=(X_2\to T,A_2)$ with common base $T$, we could then define $\mf{X}_1\boxtimes_T \mf{X}_2 = (X_1\times_T X_2\to T,A_1\boxtimes_T A_2)$.  

However, it turns out that this definition is not functorial in $\mf{X}_1$ and $\mf{X}_2$.  That is, given a morphism $e\from T\to T'$ in $\Corr_S$ and morphisms $f_i\from \mf{X}_i\to \mf{X}_i'$ lying over $e$, one cannot in general define a map $f_1\otimes_e f_2\from \mf{X}_1\boxtimes_T \mf{X}_2\to \mf{X}_1'\boxtimes_{T'} \mf{X}_2'$.  Essentially, this is due to the appearance of the invertible sheaf $K_{U/S}$ appearing in the modified K\"unneth map \eqref{EqModifiedKunneth}.  

To obtain a functorial definition of $\mf{X}_1\boxtimes_T \mf{X}_2$, we need to define an enhancement of the category which keeps track of an invertible sheaf living on the base.

\begin{dfn}[The category of based cohomological correspondences] \label{DefBCoCorr} 
We define a symmetric monoidal 2-category $\BCoCorr_S$.  The objects of $\BCoCorr_S$ are triples $(X\to T,A,B)$, where $X\to T$ is a (fine) morphism of decent $S$-v-stacks whose structure maps to $S$ are fine, where $A$ is an object of $D_{\et}(X,\Lambda)$, and where $B$ is an {\em invertible} object of $D_{\et}(T,\Lambda)$.   We assume that the structure map $T\to S$ is cohomologically smooth.  

Given objects $\mf{X}=(X\to T,A,B)$ and $\mf{X}'=(X'\to T',A',B')$, an object of the category $\Hom_{\BCoCorr_S}(\mf{X},\mf{X}')$ is a triple $(q^\natural,\alpha,\beta)$.  The first element in the triple is a morphism $q^\natural=(p,q,p')$ between correspondences as in \eqref{EqBasedCorrespondence}, where $u'$ is cohomologically smooth.   The second element is a morphism
$\alpha\from c^*A \to (c')^!A'$, and the third is an {\em isomorphism} $\beta\from u^*B\isomto (u')^! B'$.  Compositions of morphisms are defined similarly as in Definition \ref{DefCoCorr}.  (Note that the cohomological smoothness of $u'$ is preserved under composition of correspondences.)  Given objects $(X\to T,A,B)$ and $(X'\to T',A',B')$, and morphisms between them represented by $C\to X\times_U X'$ (lying over $U\to T\times_S T'$) and $D\to X\times_V X'$ (lying over $V\to T\times_S T'$), a 2-morphism is an equivalence class of 2-commutative diagrams as in \eqref{EqTwoMorphism}, together with a similar one involving morphisms $U\to V$.  

The monoidal structure on $\BCoCorr_S$ is defined by 
\[ (X\to T,A,B)\otimes (X'\to T',A',B')=(X\times_S X'\to T\times_S T',A\boxtimes_S A',B\boxtimes_S B'). \]
The unit object is $(S \overset{\mathrm{id}}{\to} S,\Lambda_S,\Lambda_S)$.   

Finally, we introduce the obvious monoidal functors $\mc{B},\mc{S}\from \BCoCorr_S\to \CoCorr_S$, with $\mc{B}(X\to T,A,B)=(T,B)$ (the {\em base}) and $\mc{S}(X\to T,A,B)=(X,A)$ (the {\em source}).
\end{dfn}

So far the objects $A$ and $B$ in Definition \ref{DefBCoCorr} have nothing to do with each other.  They only begin to interact when we talk about fiber products of objects of $\BCoCorr_S$ over common bases.   Given objects $\mf{X}_i=(X_i\stackrel{p_i}{\to} T,A_i,B)$ for $i=1,2$ with common base $\mf{T}=(T,B)$, we define
\[ \mf{X}_1\boxtimes_{\mf{T}} \mf{X}_2 = (X_1\times_T X_2\stackrel{p}{\to} T, (A_1\boxtimes_T A_2)\otimes p^*B^{-1},B). \]  
We claim that $\boxtimes_{\mf{T}}$ defines a monoidal functor 
\begin{equation}
\label{EqFiberProductFunctor}
\BCoCorr_S \times_{\mc{B},\CoCorr_S} \BCoCorr_S \to \BCoCorr_S.
\end{equation}
A morphism in the category $\BCoCorr_S \times_{\mc{B},\CoCorr_S} \BCoCorr_S$ is a morphism $e\from \mf{T}\to \mf{T}'$ in $\CoCorr_S$ together with a pair of morphisms $\mf{f}_i\from \mf{X}_i\to \mf{X}_i'$ ($i=1,2$) lying over $e$.  We may represent this state of affairs with a diagram 
\begin{equation}
\label{EqMorphismOfCorrespondences}
\xymatrix{
X_i \ar[d]_{p_i} & C_i \ar[l]_{c_i} \ar[d]_{q_i} \ar[r]^{c_i'} & X'_i \ar[d]^{p'_i} \\
T & U \ar[l]_{u} \ar[r]^{u'} & T'
}
\end{equation}
for $i=1,2$, with $u'$ cohomologically smooth, together with morphisms $\alpha_i\from c_i^*A_i\to (c_i')^!A_i'$ for $i=1,2$ and an isomorphism $\beta\from u^*B\isomto (u')^! B'$.   Taking fiber products over the base correspondence, we obtain a morphism of correspondences $q^\natural=(p,q,p')$ fitting into a diagram 
\begin{equation}
\xymatrix{
X_1\times_T X_2 \ar[d]_{p} & C_1\times_U C_2\ar[d]_{q} \ar[l]_c \ar[r]^{c'} & X_1' \times_{T'} X_2' \ar[d]^{p'} \\
T & U \ar[l]^u \ar[r]_{u'} & T' 
}
\end{equation}
The required morphism
\begin{equation}
\label{EqFiberProductOfCorrespondences}
c^*\left((A_1\boxtimes_T A_2)\otimes p^*B^{-1}\right) \to (c')^!\left((A_1'\boxtimes_{T'} A_2') \otimes (p')^*(B')^{-1}\right)
\end{equation}
is defined as the composition
\begin{eqnarray*}
c^*\left((A_1\boxtimes_T A_2)\otimes p^*B^{-1}\right) &\isom&
(c_1^*A_1 \boxtimes_U c_2^* A_2)\otimes q^*u^*B^{-1} \\
&\stackrel{(\alpha_1\boxtimes_U\alpha_2)\otimes \beta}{\to}&
(c_1')^!A_1' \boxtimes_U (c_2')^!A_2' \otimes q^*((u')^!B')^{-1} \\
&\stackrel{\kappa_{U/T'}}{\to}&
(c')^!(A_1' \boxtimes_{T'} A_2') \otimes q^*(K_{U/T'}\otimes ((u')^!B')^{-1}) \\
&\isom& (c')^!(A_1' \boxtimes_{T'} A_2') \otimes q^*(u')^*(B')^{-1} \\
&\isom& (c')^!\left((A_1'\boxtimes_{T'} A_2') \otimes (p')^*(B')^{-1}\right),
\end{eqnarray*}
where in the last step we used Lemma \ref{LemInvertibleSheaf}.  

To completely justify that \eqref{EqFiberProductFunctor} is a functor, one must also produce a 2-isomorphism
\begin{equation}
\label{EqBoxtimesIsFunctorial}
(\mf{f}_1'\boxtimes_{e'} \mf{f}_2') \circ (\mf{f}_1\boxtimes_e \mf{f}_2) \isom (\mf{f}_1'\circ \mf{f}_1)\boxtimes_{e'\circ e} (\mf{f}_2'\circ \mf{f}_2) 
\end{equation}
whenever all compositions are defined.  Furthermore one must also show that \eqref{EqFiberProductFunctor} is a monoidal functor; that is, we have an isomorphism
\begin{equation}
\label{EqBoxtimesIsTensorFunctor}
 (\mf{X}_1 \boxtimes_{\mf{T}} \mf{X}_2) \otimes (\mf{X}_1'\boxtimes_{\mf{T}'} \mf{X}_2')
\isom (\mf{X}_1\otimes \mf{X}_1')\boxtimes_{\mf{T}\otimes \mf{T}'} (\mf{X}_2\otimes\mf{X}_2'). 
\end{equation}
The details are straightforward but tedious.

We can now prove Theorem \ref{ThmKunnethForCharacteristicClass}.   Let $X_1,X_2\to T$ be two morphisms satisfying the assumptions of that theorem, and let $A_i\in D_{\et}(X_i,\Lambda)$ be two sheaves which are ULA over $S$.  Assume that the structure map $\pi\from T\to S$ and the diagonal $\Delta_{T/S}$ are both cohomologically smooth. 

Let $\mf{X}_i=(X_i\to T, A_i,\Lambda_T)\in \BCoCorr_S$ for $i=1,2$, so that $\mc{B}(\mf{X}_1)=\mc{B}(\mf{X}_2)=\mf{T}=(T,\Lambda_T)$.  Then $\mf{X}_i$ is dualizable, with dual $\mf{X}_i^\vee=(X_i\to T,\D_{X_i/S}A_i,K_{T/S})$, as witnessed by $\coev_{\mf{X}_i}\from 1_{\BCoCorr_S}\to \mf{X}_i\otimes \mf{X}_i^\vee$ and $\ev_{\mf{X}_i}\from \mf{X}_i^\vee \otimes\mf{X}_i \to 1_{\BCoCorr_S}$.   Note that $\mc{B}(\coev_{\mf{X}_i})=\coev_{\mf{T}}$ and $\mc{S}(\coev_{\mf{X}_i})=\coev_{\mc{S}(\mf{X}_i)}$, and similarly for $\ev$.   Then the categorical trace of $1_{\mf{X}_i}$ is $\tr(1_{\mf{X}_i})=\ev_{\mf{X}_i}\circ \coev_{\mf{X}_i}$, so that $\mc{S}(\tr(1_{\mf{X}_i}))=\tr(1_{\mc{S}(\mf{X}_i)})=\cc_{X_i/S}(A_i)$. 

Now consider $\mf{X}=\mf{X}_1\boxtimes_{\mf{T}} \mf{X}_2=(X_1\boxtimes_T X_2\to T, A_1\boxtimes_T A_2,\Lambda_T)$.  Define an object $\mf{X}^\vee = \mf{X}_1^\vee\boxtimes_{\mf{T}^\vee} \mf{X}_2^\vee$, and define morphisms $\coev_{\mf{X}}$ and $\ev_{\mf{X}}$ via the diagrams
\[
\xymatrix{
1_{\BCoCorr_S} \ar[rrrr]^{\coev_{\mf{X}_1}\boxtimes_{\coev_{\mf{T}}}\coev_{\mf{X}_2}\;\;\;\;} 
 \ar[drrrr]_{\coev_{\mf{X}}} &&&& (\mf{X}_1\otimes\mf{X}_1^\vee)\boxtimes_{\mf{T}\otimes\mf{T}^\vee} (\mf{X}_2\otimes\mf{X}_2^\vee) \ar[d]^{\cong \eqref{EqBoxtimesIsTensorFunctor}} \\
&&&&  \mf{X}\otimes \mf{X}^\vee
 }
 \]
and
\[
\xymatrix{
\mf{X}\otimes\mf{X}^\vee \ar[d]_{\eqref{EqBoxtimesIsTensorFunctor}\cong} \ar[drrrr]^{\ev_{\mf{X}}} &&&&\\
(\mf{X}_1^\vee \otimes \mf{X}_1)\boxtimes_{\mf{T}\otimes\mf{T}^\vee} (\mf{X}_2^\vee\otimes\mf{X}_2) \ar[rrrr]_{\ev_{\mf{X}_1}\boxtimes_{\ev_{\mf{T}}} \ev_{\mf{X}_2}} &&&& 1_{\BCoCorr_S} 
}
\]
Then $\mf{X}^\vee$, $\coev_{\mf{X}}$ and $\ev_{\mf{X}}$ witness the dualizability of $\mf{X}$.  It follows that $\mc{S}(\mf{X})=(X_1\times_T X_2,A_1\boxtimes_T A_2)$ is dualizable, so that $A_1\boxtimes_T A_2$ is ULA over $S$.   Now consider $\tr(1_{\mf{X}})=\ev_{\mf{X}}\circ \coev_{\mf{X}}$, an endomorphism of $1_{\BCoCorr_S}$.   On the one hand, $\mc{S}(\tr(1_{\mf{X}}))=\tr(1_{\mc{S}(\mf{X})})\in \End 1_{\CoCorr_S}$ is the data of the inertia stack $\In_S(X_1\times_T X_2)$ together with the characteristic class $\cc_{X_1\times_T X_2/S}(A)\in H^0(\In_S(X_1\times_T X_2),K_{\In_S(X_1\times_T X_2)/S})$.  On the other hand, \eqref{EqBoxtimesIsFunctorial} gives a 2-isomorphism
\[ \tr(1_{\mf{X}}) \isom (\ev_{\mf{X}_1}\boxtimes_{\ev_{\mf{T}}} \ev_{\mf{X}_2}) \circ (\coev_{\mf{X}_1}\boxtimes_{\coev_{\mf{T}}} \coev_{\mf{X}_2}) 
\isom \tr(1_{\mf{X_1}}) \boxtimes_{\tr(1_{\mf{T}})} \tr(1_{\mf{X}_2}).  \]
The source of this morphism is the correspondence $\In_S(X_1)\times_{\In_S(T_1)} \In_S(X_2)\to S\times_S S \isom S$ together with a global section of $K_{\In_S(X_1\times_T X_2)/S}$.  Reviewing the definition of $\boxtimes$ for morphisms in $\BCoCorr_S$ as in \eqref{EqFiberProductOfCorrespondences}, we see that this section is the image of $\cc_{X_1/S}(A_1)\otimes \cc_{X_2/S}(A_2)$ under 
\begin{eqnarray*}
 K_{\In_S(X_1)/S}\boxtimes_{\In_S(T)} K_{\In_S(X_2)/S} &\stackrel{\kappa_{\In_S(T)/S}}{\to}&
K_{\In_S(X_1\times_T X_2)/S} \otimes \In(p)^*K_{\In_S(T)} \\
&\isom& K_{\In_S(X_1\times_T X_2)/S}, 
\end{eqnarray*}
where the last isomorphism is induced from the inverse to 
$\cc_{T/S}(\Lambda_T)\from \Lambda_{\In_S(T)}  \to K_{\In_S(T)/S}$.  The result is exactly $\cc_{X_1/S}(A_1)\boxtimes_{\In_S(T)} \cc_{X_2/S}(A_2)$ as defined in Theorem \ref{ThmKunnethForCharacteristicClass}.

\subsection{The case of $[X/G]$ for $G$ smooth}
Let $X$ be a nice diamond over $S$ which is equipped with
an action of a cohomologically smooth $S$-group diamond $G$.   Let $\alpha\from X\times_S G\to X$ be the action map $(x,g)\mapsto g(x)$.   Let $Y=[X/G]$ be the stack quotient; this is a decent $S$-v-stack whose structure map to $S$ is fine.  The point of this section is to compare two contexts for the Lefschetz-Verdier trace formula: one for the identity correspondence on $[X/G]$, and the other for the morphism $g\from X\to X$ for an individual $g\in G(S)$.  

Let $A\in D_{\mathrm{\acute{e}t}}(Y,\Lambda)$
be ULA over $S$.  Then the pair $(Y,A)$ is dualizable in $\CoCorr_S$,  and we obtain
a characteristic class 
\[
\mathrm{cc}_{Y/S}(A)\in H^{0}(\mathrm{In}_{S}(Y),K_{\mathrm{In}_{S}(Y)/S}).
\]
On the other hand,  the pullback $A_X$ of $A$ along $X\to Y$ is also ULA over $S$ (because $G\to S$ is cohomologically smooth).  For each element $g\in G(S)$,  we have an isomorphism $u_g\from A_X\to g^*A_X$ lying over $g\from X\to X$. The pair $(g,u_g)$ constitutes an endomorphism of the dualizable object $(X,A_X)$ in $\CoCorr_S$, so we may define the categorical trace $\tr(g,u_g)\in H^0(\Fix(g),K_{\Fix(g)/S})$.  Here $\Fix(g)=X\times_{g,X\times_S X,\Delta_{X/S}} X$ is the fixed-point locus of $g$ on $X$.  The object is to show how the $\tr(g,u_g)$ can be derived from $\mathrm{cc}_{Y/S}(A)$.

First we give a concrete presentation of $\mathrm{In}_{S}(Y)$.   Define a correspondence $c$ on $X$ by
\begin{eqnarray*}
c=\pr_X\times_S \alpha:X\times_S G&\to& X\times_S X\\
(x,g)&\mapsto& (x,g(x)).
\end{eqnarray*}
Then the fixed-point locus $\Fix(c)\subset X\times_S G$ is $G$-stable for the $G$-action on $X\times_S G$ given by $h(x,g)=(h(x),hgh^{-1})$,  and then $\In_S(Y)\isom [\Fix(c)/G]$.  With respect to this isomorphism, the canonical map $p\from \In_S(Y)\to \In_S([S/G])\isom [G\sslash G]$ is the quotient by $G$ of the projection map $\Fix(c)\to G$.  

The $G$-equivariance of $A\vert_X$ may be expressed an isomorphism $u\from  A\vert_{X\times G} \to \alpha^*A\vert_X$.  This is not a cohomological correspondence in general (as $\alpha^*\neq \alpha^!$).   To obtain a cohomological correspondence on non-stacky objects,  we work over the base $G$.  Let $X_G=X\times_S G$,  and consider the correspondence $\tilde{c}$ defined by the diagram of diamonds over $G$:
\[
\xymatrix{ 
& X_G \ar[dl]_{\id_{X_G}} \ar[dr]^{\tilde{\alpha}\from (x,g)\mapsto (g(x),g) } & \\
X_G && X_G
}
\]
By design,  the fiber of this correspondence over $g\in G(S)$ is automorphism $g\from X\to X$.  
Moreover, there is a natural isomorphism $\Fix(c)\isom\Fix(\tilde{c})$, and the fiber of $\mathrm{Fix}(\tilde{c})$ over any $g\in G(S)$
is exactly $\mathrm{Fix}(g)$.  
The $G$-equivariance of $A\vert_{X}$ is encoded by an isomorphism $\tilde{u}\from A\vert_{X_G}\to \tilde{\alpha}^*A\vert_{X_G}$.  Since $\tilde{\alpha}$ is an isomorphism, we have $\tilde{\alpha}^*\isom\tilde{\alpha}^!$,  and therefore the pair $(\tilde{c},\tilde{u})$ constitutes an endomorphism of the dualizable object $(X_G,A\vert_{X_G})$ of $\CoCorr_G$.  The categorical trace of $(\tilde{c},\tilde{u})$ is an element
\[\tr(\tilde{c},\tilde{u}) \in H^0(\Fix(\tilde{c}),K_{\Fix(\tilde{c})/G}). \]
This is the ``universal local term'' for the action of $G$ on $X$,  in the sense that for any $g\in G(S)$, the restriction map
\[ H^0(\Fix(\tilde{c}),K_{\Fix(\tilde{c})/G}) \to H^0(\Fix(g), K_{\Fix(g)/S}) \]
carries $\tr(\tilde{c},\tilde{u})$ onto $\tr(g,u_g)$.   

We want to compare the characteristic class $\mathrm{cc}_{Y/S}(A)$
with the universal local term $\tr(\tilde{c},\tilde{u})$.
To do this, we first observe that from the Cartesian square
\[
\xymatrix{\mathrm{Fix}(\tilde{c})\ar[d]_{q}\ar[r] & G\ar[d]\\
\mathrm{In}_{S}(Y)\ar[r] & [G\sslash G]
}
\]
we obtain a canonical map $q^{\ast}K_{\mathrm{In}_{S}(Y)/[G\sslash G]}\to K_{\mathrm{Fix}(\tilde{c})/G}$,
and thus a canonical pullback map
\begin{equation}
\label{EqPullbackMap}
q^*\from H^{0}(\mathrm{In}_{S}(Y),K_{\mathrm{In}_{S}(Y)/[G\sslash G]})\to H^{0}(\mathrm{Fix}(\tilde{c}),K_{\mathrm{Fix}(\tilde{c})/G}).
\end{equation}
Next,  Lemma \ref{LemTrivialDualizingSheaf} applied to $T=[S/G]$ shows that $\cc_{T/S}(\Lambda_T)$ is an isomorphism $\Lambda_{[G\sslash G]} \isomto K_{[G\sslash G]/S}$.  This induces an isomorphism
\begin{equation}
\label{EqIdentificationOfDualizingSheaves}
K_{\In_S(Y)/[G\sslash G]} \isom p^!\Lambda_{[G\sslash G]} \isomto p^!K_{[G\sslash G]/S} \isom 
K_{\In_S(Y)/S}. 
\end{equation}
 
Combining \eqref{EqPullbackMap} and \eqref{EqIdentificationOfDualizingSheaves}, we obtain a canonical map
\begin{equation}
\label{EqDefinitionOfIota}
\iota:H^{0}(\mathrm{In}_{S}(Y),K_{\mathrm{In}_{S}(Y)/S})\to H^{0}(\mathrm{Fix}(\tilde{c}),K_{\mathrm{Fix}(\tilde{c})/G}).
\end{equation}
The main result of this section is the following theorem.
\begin{thm}
\label{ThmCharClassOnXModG}
Notation and assumptions as above, we have an equality
\[
\iota\left(\mathrm{cc}_{Y/S}(A)\right)=\mathrm{tr}_{\tilde{c}}(\tilde{u},A|_{X_{G}}).
\]
\end{thm}

\begin{proof}  We restate the theorem in the language of based cohomological correspondences.  The main players are
\begin{itemize}
\item $T=([S/G],\Lambda_{[S/G]})$, a dualizable object of $\CoCorr_S$.
\item $\mf{Y}=(Y\to [S/G],A,\Lambda_{[S/G]})$, a dualizable object of $\BCoCorr_S$ with base $T$.
\item $\mf{X}_G=(X_G,A_{X_G})$, a dualizable object of $\CoCorr_G$ with base $1_{\CoCorr_G}$.
\item $\alpha\in \End \mf{X}_G$, the endomorphism described by the pair $(\tilde{c},\tilde{u})$.
\end{itemize}
We would like to relate $\tr(\id_{\mf{Y}})$ to $\tr(\alpha)$.  The idea is to promote $\alpha$ to an endomorphism of based cohomological correspondences which lies over $\id_{T}$.  To this end we introduce some more objects in $\BCoCorr_S$:
\begin{itemize}
\item $\mf{G}=(G\to S,\Lambda_G,\Lambda_S)$ with base $1_{\CoCorr_S}$,
\item $\mf{G}'=(G\to [S/G]\times_S [S/G],\Lambda_G,\Lambda_{[S/G]}\boxtimes_S K_{[S/G]/S})$, with base $T\otimes T^\vee$, where the morphism $G\to [S/G]\times_S [S/G]$ is defined as the trivial $G\times_S G$-torsor $G_{G\times_S G}=G\times_S G\times_S G$.
\end{itemize}
We also define morphisms in $\BCoCorr_S$:
\begin{itemize}
\item $\coev_G \from \mf{G}\to \mf{G}'$, which has base $\coev_T$ and source $1_{(G,\Lambda_G)}$,
\item $\ev_G\from \mf{G}'\to \mf{G}$, which has base $\ev_T$ and source $1_{(G,\Lambda_G)}$.
\item An automorphism $\alpha_0\in \Aut \mf{G}'$ lying over the identity on both base and source, coming from a 2-isomorphism of $G\to [S/G]^2$ corresponding to the automorphism of the trivial torsor $G_{G\times G}\to G_{G\times G}$ defined by $(x,g,h)\mapsto (x,gx,h)$.  
\end{itemize}
Now observe that $\tr_G:=\ev_G\circ\alpha_0\circ\coev_G$ is an endomorphism of $\mf{G}$ with base $\tr(\id_T)$ and source $\id_{\mc{S}(\mf{G})}$, whose underlying based correspondence is shown in the diagram:
\[
\xymatrix{
G \ar[d] & G \ar[l]_{\id_G} \ar[d] \ar[r]^{\id_G} & G \ar[d] \\
S & [G\sslash G] \ar[l] \ar[r] & S
}
\]
where the central horizontal arrow sends $g$ to its own conjugacy class.  (If we had omitted $\alpha_0$ from the definition of $\tr_G$, the central horizontal arrow would send everything to the identity of $G$.)  

Recall that $\mc{S}\from \BCoCorr_S\to \CoCorr_S$ takes a based cohomological corresponce onto its source.  Let $\mc{F}\from \CoCorr_G\to\CoCorr_S$ be the functor which forgets the base $G$.  We have a diagram in $\CoCorr_S$:
\[
\xymatrix{
\mc{S}(1_{\BCoCorr_S} \boxtimes_{1_{\CoCorr_S}}\mf{G}) 
\ar[d]_{\mc{S}(\coev_{\mf{Y}}\boxtimes_{\coev_T} \coev_G)} 
\ar[rr]^{\sim} 
&&
\mc{F}(1_{\CoCorr_G})
\ar[d]^{\mc{F}(\coev_{\mf{X}_G})} 
\\
\mc{S}((\mf{Y}\otimes\mf{Y}^\vee) \boxtimes_{T\otimes T^\vee} \mf{G}')
\ar[d]_{\mc{S}(\id_{\mf{Y}\otimes \mf{Y}^\vee}  \boxtimes_{\id_{T\otimes T^\vee}} \alpha_0)} 
\ar[rr]^{\sim}
&&
\mc{F}(\mf{X}_G \otimes \mf{X}_G^\vee)
\ar[d]^{\mc{F}(\alpha)} 
\\
\mc{S}((\mf{Y}\otimes\mf{Y}^\vee) \boxtimes_{T\otimes T^\vee} \mf{G}') 
\ar[d]_{\mc{S}(\ev_{\mf{Y}} \boxtimes_{\ev_T} \ev_G )}
\ar[rr]^{\sim}
&&
\mc{F}(\mf{X}_G\otimes \mf{X}_G^\vee) 
\ar[d]^{\mc{F}(\ev_{\mf{X}_G})}
\\
\mc{S}(1_{\BCoCorr_S} \boxtimes_{1_{\CoCorr_S}} \mf{G})
\ar[rr]^{\sim}
&&
\mc{F}(1_{\CoCorr_G})
}
\]
where all squares are filled in with 2-isomorpisms.  The functoriality property of $\boxtimes$ from \eqref{EqBoxtimesIsFunctorial} now gives a 2-isomorphism
\begin{equation}
\label{EqCompareCCToTrace}
 \mc{S}\left(\cc_{Y/S}(A)\boxtimes_{\cc_{[S/G]/S}(\Lambda_{[S/G]})}  \tr_G(\alpha_0) \right)\isomto \mc{F}(\tr(\alpha)). 
 \end{equation}
The isomorphism of v-stacks implicit in \eqref{EqCompareCCToTrace} is expressed by the fact that we have a cartesian diagram:
\[
\xymatrix{
\Fix(\tilde{c}) \ar[r] \ar[d] & G \ar[d] \\
\In_S(Y) \ar[r] & [G\sslash G] 
}
\]
On the level of cohomology classes, \eqref{EqCompareCCToTrace} tells us that $\tr(\alpha)$, considered as an element of $H^0(\Fix(\tilde{c}),K_{\Fix(\tilde{c})/S})$ can be derived from $\cc_{Y/S}(A)$ in the manner described by the theorem.
\end{proof}

We conclude the section with a remark about isolated fixed points.  Assume there exists a conjugacy-invariant open subset $U\subset G$ whose elements act on $X$ with only isolated fixed points.  (This is the case for the action of the positive loop group on the affine Grassmannian. We study that scenario in the next section.)   Write $\In_S([X/G])_U$ for the pullback of $[U\sslash G]$ under $\In_S([X/G])\to [G\sslash G]$.  Then $\In_S([X/G])_U\to [U\sslash G]$ is \'etale over $[U\sslash G]$;  as such we have a canonical trivialization $K_{\In_S([X/G])_U/S}\isom \Lambda_{\In_S([X/G])_U}$.  Therefore the restriction over $U$ of the characteristic class of $A$ is an element
\[ cc_{[X/G]/S}(A)_U\in H^0(\In_S([X/G])_U,\Lambda); \]
that is, it is a continuous function on the space of pairs $(x,g)\in X\times_S U$ with $g(x)=x$.  Theorem \ref{ThmCharClassOnXModG} implies that this function is $(x,g)\mapsto \loc_x(g,A)$.

%\documentclass[11pt]{article}
%\usepackage{macros, xcolor}

%\begin{document}
%\maketitle

\section{Local terms on the $B_{\mathrm{dR}}$-affine Grassmannian}
\label{SectionLocalTerms}
The goal of this chapter is to explicitly compute certain local terms
on the $B_{\dR}$-affine Grassmannian, in terms of the geometric Satake equivalence.

\subsection{The main result}

To explain the main result, let us fix some notation. Let $F/\mathbf{Q}_{p}$
be a finite extension with residue field $\mathbf{F}_q$.  Let $C$ be the completion of an algebraic closure of $F$.   Let $G/F$ be a connected reductive group, and let $\mathrm{Gr}_{G}=LG/L^{+}G$ be the associated
$B_{\mathrm{dR}}$-affine Grassmannian over $\Spd C$.    We explain the notation: for a perfectoid $C$-algebra $R$, we have the loop group $LG(R)=B_{\dR}(R)$ and its positive subgroup $L^+G(R)=B_{\dR}^+(R)$.   Then $\Gr_G$ is an ind-spatial diamond admitting an action of $L^+G$ and in particular its subgroup $G(F)$.  For a cocharacter $\mu$ of $G_{\overline{F}}$, we let $\Gr_{G,\leq \mu}$ be the corresponding closed Schubert cell;  this is a proper diamond.  
Finally, define the {\em local Hecke stack} by
\[ \Hecke_G^{\loc} = [L^+G\backslash \Gr_G ] = [L^+G \backslash LG/L^+G].\]
We remark that there are versions of these objects living over $\Spd F$, but we will not need these for our results.

Fix a coefficient ring $\Lambda\in\left\{ \mathbf{Z}/\ell^{n}\mathbf{Z}[\sqrt{q}],\mathbf{Z}_{\ell}[\sqrt{q}\right\} $.
The {\em Satake category}
\[ \Sat_G(\Lambda) \subset D_{\et}(\Hecke^{\loc}_{G,C}, \Lambda) \]
is the subcategory of objects which are perverse, $\Lambda$-flat, and ULA over $\Spd C$ \cite[Definition I.6.2]{FarguesScholze}.   It is a symmetric monoidal category under the convolution product.

%JW -- I wanted to promote this into a theorem
\begin{thm}[{\cite[Theorem I.6.3]{FarguesScholze}}] 
\label{ThmSatakeEquivalence} There is an equivalence of symmetric monoidal categories:
\begin{eqnarray*}
\Rep_{\hat{G}}(\Lambda) &\isomto& \Sat_G(\Lambda) \\
V &\mapsto& \mc{S}_V 
\end{eqnarray*}
where $\hat{G}$ is the Langlands dual group (considered over $\Lambda$),  and $\Rep_{\hat{G}}(\Lambda)$ is the category of representations of $\hat{G}$ on finite projective $\Lambda$-modules.
\end{thm}
We continue to write $\mc{S}_V$ for the pullback of this object along the quotient $\Gr_G\to [L^+G \backslash \Gr_G]$. 

Our next order of business is to determine, for $g\in G(F)_{\sr}$, the fixed point locus $\Gr_G^g$.  The answer is the same regardless of which sort of affine Grassmannian we consider (classical, Witt vector, $B_{\dR}$), as the following proposition shows.

\begin{pro} \label{PropositionFixedPointsOnGr} Let $K^+$ be a discrete valuation ring with algebraically closed residue field $k$ and fraction field $K$.  Let $G$ be a reductive group over $K^+$.  Let $g\in G(K^+)$ be an element whose image in $G(k)$ is strongly regular, and let $T=\Cent(g,G)$.  The inclusion $T\subset G$ induces a bijection
\[ T(K)/T(K^+)\isom (G(K)/G(K^+))^g,\]
so that the fixed point locus of $g$ may be identified with $X_*(T)$.  

Consequently if $\Gr_G$ is any incarnation of the affine Grassmannian, then $\Gr_{G,\leq\mu}^g$ is finite over its base with underlying set $X_*(T)_{\leq \mu}$.
\end{pro}

\begin{proof} 
Let $\mc{B}$ be the (reduced) Bruhat-Tits building of the split reductive group $G_K$ over the discretely valued field $K$.  Thus $\mc{B}$ is a locally finite simplicial complex admitting an action of $LG=G(K)$.  We will identify the $LG$-set $\Gr_G$ with a piece of this building. 

By \cite[5.1.40]{BT2} there exists a hyperspecial point $\bar o \in \mc{B}$ corresponding to $L^+G=G(K^+)$.   The point $\bar o$ can be characterized by \cite[4.6.29]{BT2} as the unique fixed point of $L^+G$.  Let $\mc{B}^\tx{ext}$ be the extended Bruhat-Tits building of $G_K$.  Recall that $\mc{B}^\tx{ext}=\mc{B} \times X_*(A_G)_\R$, where $A_G$ is the connected center of $G$. The group $LG$ acts on $X_*(A_G)_\R$ via the isomorphism $X_*(A_G)_\R \to X_*(A_G')_\R$, where $A_G'$ is the maximal abelian quotient of $G$. Let $o=(\bar o,z)$ be any point in $\mc{B}^\tx{ext}$ lying over $\bar o$. Then $L^+G$ can be characterized as the full stabilizer of $o$ in $G(K)$:  It is clear that $L^+G$ stabilizes $o$, and the reverse inclusion follows from the Cartan decomposition $LG = L^+G \cdot X_*(T) \cdot L^+G$ (which relies on $\bar o$ being hyperspecial) and the fact that $X_*(T)$ acts on the apartment of $T$ in $\mc{B}^\tx{ext}$ by translations. It follows that the action of $LG$ on $\mc{B}^\tx{ext}$ provides an $LG$-equivariant bijection from $\Gr_G$ to the orbit of $LG$ through $o$.

Now suppose $x\in \Gr_G$ is fixed by a strongly regular element $g\in L^+T_{\sr}$.  Then its image in $\mc{B}^\tx{ext}$ is a $g$-fixed point belonging to the orbit of $o$, and we can write $x=ho$ for some $h\in LG$.  For every root $\alpha\from T\to \Gm$, the element $\alpha(g)$ does not lie in the kernel of $L^+\Gm\to \Gm$.  According to \cite[3.6.1]{TitsCorvallis} the image of $x$ in $\mc{B}$ belongs to the apartment $\mc{A}$ of $T$. At the same time, $g\in L^+G$ also fixes $\bar o$, so for the same reason $\bar o \in \mc{A}$. Thus $\bar o$ belongs to both apartments $\mc{A}$ and $h^{-1}\mc{A}$. Since $L^+G$ acts transitively on the apartments containing $\bar o$ \cite[4.6.28]{BT2}, we can multiply $h$ on the right by an element of $L^+G$ to ensure that $h^{-1}\mc{A}=\mc{A}$. By \cite[7.4.10]{BT1} we then have $h \in L^+N(T,G)$. Since $\bar o$ is hyperspecial, every Weyl reflection is realized in $L^+G$ and hence we may again modify $h$ on the right to achieve $h \in LT$. We see now that $x=ho$ is fixed by all of $LT$ and that furthermore the coset $x=hL^+G$ is the image of the coset $hL^+T$.
\end{proof}

Proposition \ref{PropositionFixedPointsOnGr} shows that if we fix a split maximal torus $\hat{T}\subset\hat{G}$,
there is a natural finite-to-one map
\begin{align*}
\mathrm{Gr}_{G}^{g} & \to X_{+}^{\ast}(\hat{T})\\
x & \mapsto\nu_{x}.
\end{align*}
Note that $\nu_{x}$ simply records which open Schubert cell of $\mathrm{Gr}_{G}$
contains the point $x$. 

Now, for any $V\in\mathrm{Rep}(\hat{G})$ and any $x\in\mathrm{Gr}_{G}^{g}$, there is an associated local term $\mathrm{loc}_{x}(g,\mathcal{S}_{V})\in\Lambda$. The main result of this chapter
is the following theorem, giving an explicit computation of these local terms.
\begin{thm}\label{ThmLocalTermsGrassmannian}
Let $V\in\mathrm{Rep}(\hat{G})$ be an object of the Satake category,
and let $g\in G(F)_{\mathrm{sr}}$ be a strongly regular semisimple
element. Then for any $x\in\mathrm{Gr}_{G}^{g}$, there is an equality in $\Lambda$:
\[
\mathrm{loc}_{x}(g,\mathcal{S}_{V})=(-1)^{\left\langle 2\rho,\nu_{x}\right\rangle }\mathrm{rank}_{\Lambda}V[\nu_{x}].
\]
\end{thm}

Note that since $V$ is (by hypothesis) a finite projective $\Lambda$-module,
and tori are reductive in the strongest sense, the weight space $V[\nu_{x}]$
is a finite projective $\Lambda$-module, so the right-hand side of
this equality is well-defined.

Due to the highly inexplicit nature of local terms, the proof of Theorem
\ref{ThmLocalTermsGrassmannian} is rather indirect. Indeed, we would be able to give a simple
proof of Theorem \ref{ThmLocalTermsGrassmannian} \emph{if} we knew the equality between ``true''
and ``naive'' local terms on $\mathrm{Gr}_{G}$. Unfortunately,
this equality seems to be a very difficult problem. Even for schemes,
the problem of comparing true and naive local terms was only settled
very recently by Varshavsky. Instead, our strategy reduces the computation
of the local terms in Theorem \ref{ThmLocalTermsGrassmannian} to an analogous computation on the
Witt vector affine Grassmannian, where a global-to-local argument
can be pushed through. The key theme in the proof is the idea that
\emph{local terms are constant in families.}

For our applications, the following restatement of the main results of this section in terms of characteristic classes on the quotient $[\Gr_{G,\leq \mu}/L_m^+G]$ will be useful.  

\begin{thm} \label{ThmCharClassOfLocalHeckeStack}  Let $V$ be such that $\mc{S}_V$ is supported on some Schubert cell $\Gr_{G,\leq \mu}$. Choose some large $m$ such that the $L^+ G$-action on this cell factors through the quotient $L_m^+G$, and set $X=[\Gr_{G,\leq \mu}/L_m^+G]$.  

Then the set of connected components of $\In_S(X)_{\sr}$ may be identified with $X_*^+(T)_{\leq \mu}/W$, and the dualizing complex of $\In_S(X)_{\sr}$ has a canonical trivialization.   With respect to those identifications,  the restriction of $\cc_{X/S}(\mc{S}_V)$ to $\In_S(X)_{\sr}$ is the function sending $\lambda\in X_*(T)$ to $(-1)^{\class{2\rho_G,\lambda}}\rank_\Lambda V[\lambda]$.  
\end{thm}

\begin{proof} The first claim is proved in \S \ref{SubsectionIndependence} below. Since $L_m^+G$ is a cohomologically smooth group diamond, Theorem \ref{ThmCharClassOnXModG} applies to the quotient $X=[\Gr_{G,\leq\mu}/L_m^+G]$.  The remark following the proof of that theorem applies to the locus $L_m^+G_{\sr}$, so that we may relate $\cc_{X/S}(\mc{S}_V)$ to the local terms $\loc_x(g,\mc{S}_V)$.  The latter have been computed by Theorem \ref{ThmLocalTermsGrassmannian}.
\end{proof}

\subsection{Strategy of proof}

In this section, we reduce Theorem \ref{ThmLocalTermsGrassmannian} to four auxiliary propositions
stated below. The proofs of these propositions will occupy the remainder
of this chapter.

As a preliminary observation, note that all of the objects appearing in Theorem \ref{ThmLocalTermsGrassmannian} depend on $G$ only through its base change to $\overline{F}$, so we may enlarge $F$ whenever convenient in the argument. In particular, we can and do assume that $G$ admits a split reductive model $\mathcal{G} / \mathcal{O}_F$, and that $\mathcal{G}(\mathcal{O}_F)$ contains elements of finite prime-to-$p$ order with strongly regular semisimple image in $\mathcal{G}(\mathbf{F}_q)$.

Now we begin the argument. First, we show that the local terms appearing in Theorem \ref{ThmLocalTermsGrassmannian} are essentially
independent of $g$.
\begin{pro}\label{PropIndependenceOfg}
In the notation and setup of Theorem \ref{ThmLocalTermsGrassmannian}, $\mathrm{loc}_{x}(g,\mathcal{S}_{V})$
depends on $g$ and $x$ only through the cocharacter $\nu_{x}$.
More precisely, if $g,g'\in G(F)_{\mathrm{sr}}$ are two strongly
regular semisimple elements and $x\in\mathrm{Gr}_{G}^{g}$ resp. $x'\in\mathrm{Gr}_{G}^{g'}$
are fixed points such that $\nu_{x}=\nu_{x'}$, then
\[
\mathrm{loc}_{x}(g,\mathcal{S}_{V})=\mathrm{loc}_{x'}(g',\mathcal{S}_{V}).
\]
\end{pro}

Next, we are going to degenerate from characteristic zero into characteristic
$p$. For this, fix a split reductive model $\mathcal{G}/\mathcal{O}_{F}$
of $G$, and let $\mathrm{Gr}_{\mathcal{G}}$ be the associated Beilinson-Drinfeld
affine Grassmannian over $S=\Spd \OO_C$.
Recall that this is a small v-sheaf which interpolates between the
$B_{\mathrm{dR}}$-affine Grassmannian $\mathrm{Gr}_{G}$ and the
Witt vector affine Grassmannian $\mathrm{Gr}_{\mathcal{G}}^{W}$,
in the sense that we have a commutative diagram
\[
\xymatrix{(\mathrm{Gr}_{\mathcal{G}}^{W})^{\lozenge}\ar[d]\ar[r]^{i} & \mathrm{Gr}_{\mathcal{G}}\ar[d] & \mathrm{Gr}_{G}\ar[d]\ar[l]_{j}\\
s=\mathrm{Spd}\,\overline{\mathbf{F}_{q}}\ar[r] & S & \eta=\mathrm{Spd} C \ar[l]
}
\]
with cartesian squares. We will crucially use the fact that all of
these Grassmannians satisfy compatible forms of geometric Satake,
in the sense that there are natural monoidal functors
\[
\xymatrix{ & D_{\mathrm{\acute{e}t}}(\mathrm{Gr}_{G},\Lambda)\\
\mathrm{Rep}(\hat{G})\ar[r]\ar[ur]\ar[dr] & D_{\mathrm{\acute{e}t}}(\mathrm{Gr}_{\mathcal{G}},\Lambda)\ar[u]^{j^{\ast}}\ar[d]_{i^{\ast}}\\
 & D_{\mathrm{\acute{e}t}}((\mathrm{Gr}_{\mathcal{G}}^{W})^{\lozenge},\Lambda)
}
\]
such that the vertical arrows are equivalences of categories on the
essential images of $\mathrm{Rep}(\hat{G})$.
\begin{pro}\label{PropFixedPointsBD}
Let $g\in\mathcal{G}(\mathcal{O}_{F})$ be an element such that $\overline{g}\in\mathcal{G}(\mathbf{F}_q)$
is strongly regular semisimple. Then $\mathcal{T}=\mathrm{Cent}(g,\mathcal{G})$
is a maximal torus, and there is a natural isomorphism
\[
\mathrm{Gr}_{\mathcal{G}}^{g}\cong\mathrm{Gr}_{\mathcal{T}}\cong X_{\ast}(\mathcal{T})_S.
\]
\end{pro}

In particular, if $\beta\simeq S\subset\mathrm{Gr}_{\mathcal{G}}^{g}$
is any connected component, then $\beta_{\eta}$ and $\beta_{s}$
are isolated fixed points for the $g$-action in the generic and special
fiber, respectively.
\begin{pro}\label{PropDegeneratingLocalTerms}
Let $g\in\mathcal{G}(\mathcal{O}_{F})$ be an element such that $\overline{g}\in\mathcal{G}(\mathbf{F}_q)$
is strongly regular semisimple. Then
for any $V\in\mathrm{Rep}(\hat{G})$ and any connected component $\beta\subset\mathrm{Gr}_{\mathcal{G}}^{g}$,
we have the equality
\[
\mathrm{loc}_{\beta_{\eta}}(g,j^{\ast}\mathcal{S}_{V})=\mathrm{loc}_{\beta_{s}}(g,i^{\ast}\mathcal{S}_{V})
\]
of local terms.
\end{pro}

Finally, we compute the local terms on the Witt vector affine Grassmannian
by a direct argument. 
\begin{pro}\label{PropLocalTermsWitt}
Let $g\in\mathcal{G}(\mathcal{O}_{F})$ be an element with finite prime-to-$p$ order such that $\overline{g}\in\mathcal{G}(\mathbf{F}_q)$
is strongly regular semisimple. Then
for any $x\in\mathrm{Gr}_{\mathcal{G}}^{W,g}$ and any $V\in\mathrm{Rep}(\hat{G})$,
\[
\mathrm{loc}_{x}(g,\mathcal{S}_{V})=(-1)^{\left\langle 2\rho,\nu_{x}\right\rangle }\mathrm{rank}_{\Lambda}V[\nu_{x}]
\]
where $\nu_{x}\in X_{+}^{\ast}(\hat{T})$ is as before.
\end{pro}

\subsection{Local terms and base change}

In this section we prove two key technical results, namely that formation
of local terms commutes with any base change, and with passage from
perfect schemes to v-sheaves.

In order to fix notation, we briefly recall the key definitions concerning
local terms; we apologize for the overlap with Chapter \ref{SectionLefschetzVerdier}. Let $S$
be a small v-sheaf, which will be our base object. Let $f:X\to S$
be a map of v-sheaves representable in nice diamonds. Consider a correspondence
$c=(c_1,c_2)\from C\to X\times_{S}X$ given by a map of v-sheaves
representable in nice diamonds. This gives rise to a cartesian diagram
\[
\xymatrix{\mathrm{Fix}(c)\ar[d]_{c'}\ar[r]^{\Delta'} & C\ar[d]^{c=(c_{1},c_{2})}\\
X\ar[r]_{\Delta} & X\times_{S}X
}
\]
of small v-sheaves. We will sometimes assume that $c_{1}$ is proper
and that $\mathrm{Fix}(c)$ is a disjoint union of open-closed subspaces
which are proper over $S$. These conditions will hold e.g. if $f$
and $c$ are proper.

Let $\mathcal{F}\in D_{\mathrm{\acute{e}t}}(X,\Lambda)$ be an $f$-ULA
object. Recall that a cohomological correspondence over $c$ is a
map $u:Rc_{2!}c_{1}^{\ast}\mathcal{F}\to\mathcal{F}$, i.e. an element
$u\in\mathrm{Hom}(c_{1}^{\ast}\mathcal{F},Rc_{2}^{!}\mathcal{F})$.
If $c_{1}$ is proper, then applying $Rf_{!}$ induces an endomorphism
\begin{align*}
Rf_{!}u:Rf_{!}\mathcal{F} & \to Rf_{!}Rc_{1\ast}c_{1}^{\ast}\mathcal{F}=Rf_{!}Rc_{1!}c_{1}^{\ast}\mathcal{F}\\
 & \;\;\;\cong Rf_{!}Rc_{2!}c_{1}^{\ast}\mathcal{F}\overset{u}{\to}Rf_{!}\mathcal{F}
\end{align*}
of $Rf_{!}\mathcal{F}$. On the other hand, there is a natural map
\[
\mathrm{Hom}(c_{1}^{\ast}\mathcal{F},Rc_{2}^{!}\mathcal{F})\to H^{0}(\mathrm{Fix}(c),K_{\mathrm{Fix}(c)/S}),
\]
cf. the discussion immediately before Definition 4.3.5, and we write
$\mathrm{tr}_{c}(u,\mathcal{F})\in H^{0}(\mathrm{Fix}(c),K_{\mathrm{Fix}(c)/S})$
for the image of $u$ under this map. 

If $\beta\subset\mathrm{Fix}(c)$ is a closed-open subspace with \emph{proper}
structure map $g:\beta\to S$, then $H^{0}(\beta,K_{\beta/S})=H^{0}(\beta,Rg^{!}\Lambda)$
is canonically a direct summand of $H^{0}(\mathrm{Fix}(c),K_{\mathrm{Fix}(c)/S})$,
and we can further consider the image of $\mathrm{tr}_{c}(u,\mathcal{F})$
under the map
\begin{align*}
H^{0}(\mathrm{Fix}(c),K_{\mathrm{Fix}(c)/S}) & \to H^{0}(\beta,Rg^{!}\Lambda)\cong H^{0}(S,Rg_{\ast}Rg^{!}\Lambda)\\
 & \;\;\;\;\;\cong H^{0}(S,Rg_{!}Rg^{!}\Lambda)\to H^{0}(S,\Lambda).
\end{align*}
By definition, this is the local term $\mathrm{loc}_{\beta}(u,\mathcal{F})$.
In most situations we care about, $S$ is connected, so $H^{0}(S,\Lambda)=\Lambda$
and we simply regard $\mathrm{loc}_{\beta}(u,\mathcal{F})$ as an
element of $\Lambda$. Note that local terms are additive, in the
sense that if $\beta=\beta_{1}\coprod\beta_{2}$, then $\mathrm{loc}_{\beta}(u,\mathcal{F})=\mathrm{loc}_{\beta_{1}}(u,\mathcal{F})+\mathrm{loc}_{\beta_{2}}(u,\mathcal{F})$.
If $S$ is a geometric point, $f$ and $c$ are proper, and $\pi_{0}(\mathrm{Fix}(c))$
is a discrete (and therefore finite) set, the usual Lefschetz trace
formula holds, and says that
\[
\mathrm{tr}(u|R\Gamma_{c}(X,\mathcal{F}))=\sum_{\beta\in\pi_{0}(\mathrm{Fix}(c))}\mathrm{loc}_{\beta}(u,\mathcal{F}).
\]

We need to understand how local terms interact with base change on
$S$. More precisely, assume we are given a morphism $a:T\to S$.
Then all objects and morphisms above naturally base change to objects
over $T$. Note that $\mathrm{Fix}(c)_{T}=\mathrm{Fix}(c_{T})$. We
write $a_{X}:X_{T}\to X$, $a_{C}:C_{T}\to C$, etc. for the base
changes of $a$. We naturally get a cohomological correspondence $u_{T}$
on $\mathcal{F}_{T}=a_{X}^{\ast}\mathcal{F}$ over $c_{T}$ by taking
the image of $u$ under the map
\begin{align*}
\mathrm{Hom}(c_{1}^{\ast}\mathcal{F},Rc_{2}^{!}\mathcal{F}) & \to\mathrm{Hom}(a_{C}^{\ast}c_{1}^{\ast}\mathcal{F},a_{C}^{\ast}Rc_{2}^{!}\mathcal{F})\cong\mathrm{Hom}(c_{1,T}^{\ast}a_{X}^{\ast}\mathcal{F},a_{C}^{\ast}Rc_{2}^{!}\mathcal{F})\\
 & \;\;\;\;\to\mathrm{Hom}(c_{1,T}^{\ast}a_{X}^{\ast}\mathcal{F},Rc_{2,T}^{!}a_{X}^{\ast}\mathcal{F}).
\end{align*}
The final arrow here is induced by the canonical map $a_{C}^{\ast}Rc_{2}^{!}\mathcal{F}\to Rc_{2,T}^{!}a_{X}^{\ast}\mathcal{F}$.
This map is a special case of the natural transformation $\beta_{f,g}:\tilde{f}^{\ast}Rg^{!}\to R\tilde{g}^{!}f^{\ast}$
which exists for any cartesian diagram
\[
\xymatrix{X'\ar[r]^{\tilde{f}}\ar[d]_{\tilde{g}} & Y'\ar[d]^{g}\\
X\ar[r]_{f} & Y
}
\]
with $g$ representable in nice diamonds. The transformation in question
is adjoint to the map $R\tilde{g}_{!}\tilde{f}^{\ast}Rg^{!}\cong f^{\ast}Rg_{!}Rg^{!}\to f^{\ast}$
(it is also\emph{ }adjoint to the map $Rg^{!}\to Rg^{!}Rf_{\ast}f^{\ast}\cong R\tilde{f}_{\ast}R\tilde{g}^{!}f^{\ast}$).

In this setup, the next proposition says that formation
of local terms commutes with base change along $T\to S$.
\begin{pro}\label{PropLocalTermsBaseChange}
For any given $\beta\subset\mathrm{Fix}(c)$ as above, the natural
map
\[
H^{0}(S,\Lambda)\to H^{0}(T,\Lambda)
\]
sends $\mathrm{loc}_{\beta}(u,\mathcal{F})$ to $\mathrm{loc}_{\beta_{T}}(u_{T},\mathcal{F}_{T})$.
In particular, if $S$ and $T$ are connected, then $\mathrm{loc}_{\beta}(u,\mathcal{F})=\mathrm{loc}_{\beta_{T}}(u_{T},\mathcal{F}_{T})$
as elements of $\Lambda$.
\end{pro}

\begin{proof}
By a straightforward argument, this reduces to showing that there
is a natural map
\[
H^{0}(\mathrm{Fix}(c),K_{\mathrm{Fix}(c)/S})\to H^{0}(\mathrm{Fix}(c)_{T},K_{\mathrm{Fix}(c)_{T}/T})
\]
compatible with the map $H^{0}(S,\Lambda)\to H^{0}(T,\Lambda)$, and
sending $\mathrm{tr}_{c}(u,\mathcal{F})$ to $\mathrm{tr}_{c_{T}}(u_{T},\mathcal{F}_{T})$.
To obtain the map itself, apply $H^{0}(\mathrm{Fix}(c),-)$ to the
composition
\begin{align*}
K_{\mathrm{Fix}(c)/S} & =R(f\circ c')^{!}\Lambda\to Ra_{\mathrm{Fix}(c)\ast}a_{\mathrm{Fix}(c)}^{\ast}R(f\circ c')^{!}\Lambda\\
 & \;\overset{\beta_{a,f\circ c'}}{\longrightarrow}Ra_{\mathrm{Fix}(c)\ast}R(f_{T}\circ c'_{T})^{!}a^{\ast}\Lambda=Ra_{\mathrm{Fix}(c)\ast}K_{\mathrm{Fix}(c)_{T}/T}.
\end{align*}
The claim about the relation between $\mathrm{tr}_{c}$ and $\mathrm{tr}_{c_{T}}$
now follows from the fact that the base change functor $\mathrm{CoCorr}_{S}\to\mathrm{CoCorr}_{T}$
is symmetric monoidal, and therefore preserves dualizable objects
and traces of endomorphisms thereof.
\end{proof}
We will also need to compare local terms associated with perfect schemes
and with v-sheaves. More precisely, fix a perfect field $k/\mathbf{F}_{p}$,
and let $\mathrm{PSch}_{k}$ be the category of perfect schemes over
$k$. There is a natural functor $X\mapsto X^{\lozenge}$ from $\mathrm{PSch}_{k}$
to small v-sheaves over $\mathrm{Spd}\,k$, characterized by $(\mathrm{Spec}\,R)^{\lozenge}(A,A^{+})=\mathrm{Hom}_{k}(R,A)$.
Said differently, $X^{\lozenge}$ sends $\Spec R$ to $\Spa(R,R^{+})^{\lozenge}$
where $R^{+}$ is the integral closure of $k$ in $R$. This functor
commutes with finite limits. Moreover, if $f:X\to Y$ is separated
and perfectly of finite type, then $f^{\lozenge}$ is representable
in locally spatial diamonds and compactifiable with finite dim.trg.
By \cite[\S 27]{ScholzeEtaleCohomology},  for any $X$ there is a fully faithful symmetric
monoidal functor $c_{X}^{\ast}:D_{\mathrm{\acute{e}t}}(X,\Lambda)\to D_{\mathrm{\acute{e}t}}(X^{\lozenge},\Lambda)$
compatible with $f^{\ast}$ and $Rf_{!}$ in the evident senses. Moreover,
one has canonical natural transformations
\[
c_{X}^{\ast}R\mathscr{H}\mathrm{om}(-,-)\to R\mathscr{H}\mathrm{om}(c_{X}^{\ast}-,c_{X}^{\ast}-)
\]
and $c_{X}^{\ast}Rf^{!}\to Rf^{\lozenge!}c_{Y}^{\ast}$ for $f$ separated
and perfectly of finite type.

Now, let $\mathrm{PSch}_{k}^{\mathrm{ft}}$ be the full subcategory
of schemes separated and perfectly of finite type over $k$. Fix $X\in\mathrm{PSch}_{k}^{\mathrm{ft}}$
with structure map $f:X\to\Spec k$, and let $c:C\to X\times_{k}X$
be a correspondence in $\mathrm{PSch}_{k}^{\mathrm{ft}}$ such that
$c_{1}$ and $f\circ c'$ are perfectly proper. Let $\mathcal{F}\in D_{\mathrm{\acute{e}t}}(X,\Lambda)$
be an $f$-ULA object equipped with a cohomological correspondence
$u$ lying over $c$, so we get local terms $\mathrm{loc}_{\beta}(u,\mathcal{F})\in H^{0}(\mathrm{Spec}k,\Lambda)=\Lambda$
by the schematic version of the recipe recalled above.

On the other hand, applying $(-)^{\lozenge}$ and using commutation
with finite limits, we get a correspondence $c^{\lozenge}:C^{\lozenge}\to X^{\lozenge}\times_{\mathrm{Spd}\,k}X^{\lozenge}$
of v-sheaves over $S=\mathrm{Spd}\,k$ with $\mathrm{Fix}(c)^{\lozenge}=\mathrm{Fix}(c^{\lozenge})$,
satisfying all of our assumptions from above. Moreover, $u$ naturally
induces a cohomological correspondence $u^{\lozenge}$ on $c_{X}^{\ast}\mathcal{F}$
lying over $c^{\lozenge}$, by taking the image of $u$ under the
natural map
\begin{align*}
\mathrm{Hom}(c_{1}^{\ast}\mathcal{F},Rc_{2}^{!}\mathcal{F}) & \to\mathrm{Hom}(c_{C}^{\ast}c_{1}^{\ast}\mathcal{F},c_{C}^{\ast}Rc_{2}^{!}\mathcal{F})\cong\mathrm{Hom}(c_{1}^{\lozenge\ast}c_{X}^{\ast}\mathcal{F},c_{C}^{\ast}Rc_{2}^{!}\mathcal{F})\\
 & \;\;\;\;\to\mathrm{Hom}(c_{1}^{\lozenge\ast}c_{X}^{\ast}\mathcal{F},Rc_{2}^{\lozenge!}c_{X}^{\ast}\mathcal{F}).
\end{align*}

\begin{pro}\label{PropAlgebraizingLocalTerms}
Maintain the previous setup and notation. Then $c_{X}^{\ast}\mathcal{F}$
is $f^{\lozenge}$-ULA, and for any open-closed $\beta\subset\mathrm{Fix}(c)$,
we have an equality
\[
\mathrm{loc}_{\beta}(u,\mathcal{F})=\mathrm{loc}_{\beta^{\lozenge}}(u^{\lozenge},c_{X}^{\ast}\mathcal{F})
\]
of local terms.
\end{pro}

\begin{proof}
This is formally identical to the proof of Proposition \ref{PropLocalTermsBaseChange}, using
the fact that $(-)^{\lozenge}$ induces a symmetric monoidal functor
on the appropriate categories of cohomological correspondences. 
\end{proof}

\subsection{Independence of $g$}
\label{SubsectionIndependence}
In this section we prove Proposition \ref{PropIndependenceOfg}. In this section only, we
set $S=\Spd C$.

Fix $V$ as in the proposition. Decomposing $V$ into isotypic summands
for the action of $Z(\hat{G})^{\circ}$, we can assume that $\mathcal{S}_{V}$
is supported on a single connected component of $\mathrm{Gr}_{G}$.
We can then pick some $\mu$ such that $\mathcal{S}_{V}$ is supported
on the Schubert cell $\mathrm{Gr}_{G,\leq\mu}$. Choose some large
$m$ such that the $L^{+}G$ action on $\mathrm{Gr}_{G,\leq\mu}$
factors over the truncated loop group $L_{m}^{+}G$. The sheaf $\mathcal{S}_{V}$
is naturally the pullback of a sheaf again denoted $\mathcal{S}_{V}$
on the quotient stack $X=[\mathrm{Gr}_{G,\leq\mu}/L_{m}^{+}G]$, so
we can consider the characteristic class $\mathrm{cc}_{X/S}(\mathcal{S}_{V})$.

To analyze this class, we need to understand the inertia stack of
$X$. For this, we need some notation. Let $L_{m}^{+}G_{\mathrm{sr}}$
be the preimage of the strongly regular semisimple locus $G_{\mathrm{sr}}\subset G$
under the theta map $L_{m}^{+}G\to G$. Pick any maximal torus $T\subset G$
with Weyl group $W$, and set $L_{m}^{+}T_{\mathrm{sr}}=L_{m}^{+}T\cap L_{m}^{+}G_{\mathrm{sr}}$.
\begin{pro}
\label{PropInSX}
1. The open substack 
\[
\mathrm{In}_{S}([S/L_{m}^{+}G])_{\mathrm{sr}}=[L_{m}^{+}G_{\mathrm{sr}}\stslash L_{m}^{+}G]\subset\mathrm{In}_{S}([S/L_{m}^{+}G])
\]
is canonically identified with $[L_{m}^{+}T_{\mathrm{sr}}/(W\ltimes L_{m}^{+}T)]$
via the natural map.

2. The open substack 
\[
\mathrm{In}_{S}(X)_{\mathrm{sr}}=\mathrm{In}_{S}(X)\times_{\mathrm{In}_{S}([S/L^{+}G])}[L_{m}^{+}G_{\mathrm{sr}}\stslash L_{m}^{+}G]\subset\mathrm{In}_{S}(X)
\]
is canonically identified with $X_{\ast}(T)_{\leq\mu}\times^{W}[L_{m}^{+}T_{\mathrm{sr}}\stslash L_{m}^{+}T]$,
such that the natural map $\mathrm{In}_{S}(X)_{\mathrm{sr}}\to\mathrm{In}_{S}([S/L_{m}^{+}G])_{\mathrm{sr}}$
coincides via the identification in part 1. with the evident projection
onto $[L_{m}^{+}T_{\mathrm{sr}}/(W\ltimes L_{m}^{+}T)]$.
\end{pro}

\begin{proof}  The idea behind (1) is that any $g\in L^+G_{\sr}$ is conjugate to an element of $L^+T_{\sr}$, which is well-defined up to the action of the normalizer of this group, which is $W\ltimes L^+T$.

For (2), we observe that an object of $\In_S(X)_{\sr}$ is a pair $(x,g)$, where $g\in L_m^+G_{\sr}$ fixes $x\in \Gr_{G,\leq\mu}$;  the automorphisms of this object are $L^+_mG$.  The $g$ can be conjugated to lie in $L^+T_{\sr}$, and then by Proposition \ref{PropositionFixedPointsOnGr},  the $x$ can be identified with an element of $X_*(T)_{\leq\mu}$, which is well-defined up to an element of $W$.  
\end{proof}
\begin{cor}
There is a natural isomorphism
\[
H^{0}(\mathrm{In}_{S}(X)_{\mathrm{sr}},K_{\mathrm{In}_{S}(X)_{\mathrm{sr}}/S})\cong C(X_{\ast}(T)_{\leq\mu},\Lambda)^{W}
\]
which sends $\cc_{X/S}(\mc{S}_V)$ to the function sending $\lambda\in X_*(T)_{\leq \mu}$ to $\loc_{x_\lambda}(g,\mc{S}_V)$, where $x_\lambda\in \Gr_{G,\leq \mu}$ is the $T$-fixed point corresponding to $\lambda$, and $g\in L_m^+T_{\sr}$.  In particular, $\loc_{x_\lambda}(g,\mc{S}_V)$ does not depend on the choice of $g\in L_m^+T_{\sr}$.  
\end{cor}

\begin{proof}  Combine Theorem \ref{ThmCharClassOnXModG} with the description of $\In_S(X)_{\sr}$ from Proposition \ref{PropInSX}.
\end{proof}

\subsection{Degeneration to characteristic $p$}

In this section we prove Propositions \ref{PropFixedPointsBD} and \ref{PropDegeneratingLocalTerms}. 

\begin{proof}[Proof of Proposition \ref{PropFixedPointsBD}] The isomorphism $\mathrm{Gr}_{\mathcal{T}}\cong {X_{\ast}(\mathcal{T})}_S$
is \cite[Proposition 21.3.1]{ScholzeLectures}.   There is an evident map $f: \mathrm{Gr}_{\mathcal{T}}\to\mathrm{Gr}_{\mathcal{\mathcal{G}}}^{g}$, and it remains to see that $f$ is an isomorphism. For this, we first note that $f$ is a closed immersion. This follows from the observation the source and target of $f$ are both closed subfunctors of $\mathrm{Gr}_{\mathcal{G}}$. For the source, this follows from \cite[Proposition 20.3.7]{ScholzeLectures}, while for the target this follows from the fact that $\mathrm{Gr}_{\mathcal{G}} \to S$ is separated.

Since $f$ is a closed immersion, it is both qcqs and specializing.   By \cite[Lemma 12.5]{ScholzeEtaleCohomology}
 it is enough to check that $f$ is a bijection on rank one geometric points. This can be checked separately
on the generic and special fibers.   Both cases are handled by Proposition \ref{PropositionFixedPointsOnGr}.
\end{proof}

\begin{proof}[Proof of Proposition \ref{PropDegeneratingLocalTerms}] By two applications of (the connected case of) Proposition \ref{PropLocalTermsBaseChange}, applied to the maps $\eta \to S$ and $s \to S$, we get equalities
\[
\mathrm{loc}_{\beta_{\eta}}(g,j^{\ast}\mathcal{S}_{V})= \mathrm{loc}_{\beta}(g,\mathcal{S}_{V})
= \mathrm{loc}_{\beta_{s}}(g,i^{\ast}\mathcal{S}_{V}),
\]
and the result follows.
\end{proof}

\subsection{Local terms on the Witt vector affine Grassmannian}
\begin{proof}[Proof of Proposition \ref{PropLocalTermsWitt}]
Fix $g$ and $V$ as in the statement, and let $\mathcal{T}\subset\mathcal{G}$
be the connected centralizer of $g$. For every $\nu\in X_{\ast}(\mathcal{T})$,
let $S_{\nu}\subset\mathrm{Gr}_{\mathcal{G}}^{W}$ be the associated
semi-infinite orbit, with closure $\overline{S_{\nu}}=\cup_{\nu'\leq\nu}S_{\nu}$. 

Let $X\subset\mathrm{Gr}_{\mathcal{G}}^{W}$ be a finite union of
closed Schubert cells containing the support of $\mathcal{S}_{V}$,
so $X$ is a perfectly projective $k$-scheme by the results in \cite{BhattScholzeWitt}.
Write $X_{\nu}=X\cap S_{\nu}$, $X_{\leq\nu}=X\cap\overline{S_{\nu}}$,
and $\partial X_{\leq\nu}=X_{\leq\nu}\smallsetminus X_{\nu}$. Note
that all of these spaces are stable under $g$, and in fact under
$\mathcal{T}$. Note also that each $X_{\nu}$ contains a \emph{unique}
$g$-fixed point $x_{\nu}$.

\begin{pro} \label{PropEulerCharOfSemiInfiniteOrbit} The compactly supported Euler characteristic of $\mc{S}_V$ on $X_{\nu}$ is
\[ \chi_c(X_{\nu},\mc{S}_V) = (-1)^{\left\langle 2\rho,\nu\right\rangle }\rank V[\nu]. \]
\end{pro}

\begin{proof} This is a consequence of the integral-coefficients version of the geometric Satake equivalence for the Witt vector affine Grassmannian given in \cite{YuIntegralGeometricSatake}.  There it is shown (Proposition 4.2) that $H^d_c(X_\nu, \mc{S}_V)$ is zero unless $d=\class{2\rho,\nu}$, and in that degree it corresponds exactly to the $\nu$-weight functor in the Satake category.
\end{proof}

Any $t\in\mathcal{T}$ must act trivially on $H^d_c(X_\nu,\mc{S}_V)$, so $\chi_{c}(U,\mathcal{S}_{V})$
coincides with the trace of $g$ on $R\Gamma_{c}(X_\nu,\mathcal{S}_{V})$.  The same is true for $X_{\leq \nu}$ and $\partial X_{\leq \nu}$.  We compute:
\begin{align*}
(-1)^{\left\langle 2\rho,\nu\right\rangle }\rank V[\nu] & =\chi_{c}(X_{\nu},\mathcal{S}_{V})\\
 & =\chi_{c}(X_{\leq\nu},\mathcal{S}_{V})-\chi_{c}(\partial X_{\leq\nu},\mathcal{S}_{V})\\
 & =\mathrm{tr}(g|R\Gamma_{c}(X_{\leq\nu},\mathcal{S}_{V}))-\mathrm{tr}(g|R\Gamma_{c}(\partial X_{\leq\nu},\mathcal{S}_{V}))\\
 & =\sum_{\nu'\leq\nu}\mathrm{loc}_{x_{\nu'}}(g,\mathcal{S}_{V}|_{X_{\leq\nu}})-\sum_{\nu'\leq\nu,\nu'\neq\nu}\mathrm{loc}_{x_{\nu'}}(g,\mathcal{S}_{V}|_{\partial X_{\leq\nu}})\\
 & =\sum_{\nu'\leq\nu}\mathrm{loc}_{x_{\nu'}}(g,\mathcal{S}_{V})-\sum_{\nu'\leq\nu,\nu'\neq\nu}\mathrm{loc}_{x_{\nu'}}(g,\mathcal{S}_{V})\\
 & =\mathrm{loc}_{x_{\nu}}(g,\mathcal{S}_{V}).
\end{align*}
The penultimate equality is the key technical fact, and follows from
Proposition \ref{PropDeperfection} below together with the assumptions on $g$.
\end{proof}

\begin{pro}\label{PropDeperfection}
Let $k/\mathbf{F}_{p}$ be an algebraically closed field, and let $X$ be a perfectly
finite type $k$-scheme with an automorphism $g:X\to X$ of finite
prime-to-p order. Let $A\in D_{c}^{b}(X,\mathbf{Z}_{\ell})$
be an object equipped with a morphism $u:g^{\ast}A\to A$.
Then for every isolated $g$-fixed point $x$, the true local term
$\mathrm{loc}_{x}(g,A)$ equals the naive local term $\mathrm{tr}(g|A_{x})$. 

In particular, if $Z\subset X$ is a g-stable closed subscheme, then
$\mathrm{loc}_{x}(g,A)=\mathrm{loc}_{x}(g,A|_{Z})$.
\end{pro}

\begin{proof}
With the word ``perfectly'' deleted,  this is a recent result of
Varshavsky \cite{VarshavskyLocalTerms} (combine Theorem 4.10(b) and Corollary 5.4(b)).  We will reduce to Varshavsky's result by deperfecting. 

Precisely, since $\mathrm{loc}_{x}(g,A)$ is insensitive
to replacing $X,A$ by $U,A|U$ for $U\subset X$
any $g$-invariant open neighborhood of $x$, we can assume that $X$
is affine, so $X=\Spec R$ with $R$ perfectly of finite type.
Let $R_{0}\subset R$ be a finite type $k$-algebra with $R_{0}^{\mathrm{perf}}=R$,
and let $R_1\subset R$ be the $k$-algebra generated by $g^{i}R_{0}$
for all $1\leq i\leq\ord(g)$. Then $R_1\subset R$ is a finite-type
$k$-algebra stable under $g$, with $R_1^{\mathrm{perf}}=A$, so $X_{1}=\Spec R_1$
is a deperfection of $X$ equipped with an automorphism $g_{1}$ deperfecting
$g$; since $X\to X_{1}$ is a homeomorphism, there is a unique $g_{1}$-fixed
point $x_{1}$ under $x$. Next, $g^{\ast}A \to A$
deperfects uniquely to a complex $A_{1}$ on $X_1$ equipped with
a map $g_{1}^{\ast}A_{1}\to A_{1}$, using the
equivalence of categories $D(X_{\mathrm{et}},\Lambda)\isom D(X_{1,\et},\Lambda)$.
Finally, we compute that
\[
\mathrm{loc}_{x}(g,A)=\mathrm{loc}_{x_{1}}(g_{1},A_{1})=\mathrm{tr}(g_{1}|A_{1,x_{1}})=\mathrm{tr}(g|A_{x})
\]
where the first equality is formal nonsense (the six functors on $k$-varieties
and on perfectly finite type $k$-schemes are compatible under perfection),
the second equality is Varshavsky's theorem, and the third equality
is trivial.
\end{proof}

%!TEX root = FixedPoints2022.tex
\section{Application to the Hecke stacks}
\label{SectionProofOfMainTheorem}

In this final chapter we prove Theorem \ref{TheoremMain}, by applying the technology of the Lefschetz-Verdier trace formula to the Hecke stacks over $\Bun_G$.  

\subsection{$\Bun_G$, the local and global Hecke stacks, and their relation to shtuka spaces}
Let $F/\Q_p$ be a finite extension, and let $G/F$ be a connected reductive group.   Let $k$ be an algebraically closed perfectoid field containing the residue field of $F$.

For an algebraically closed perfectoid field $C/k$, there is a bijection \cite{FarguesGBundles}
\[ b \mapsto \E^b \]
between Kottwitz' set $B(G)$ and isomorphism classes of $G$-bundles on the Fargues-Fontaine curve $X_C$.  Therefore the moduli stack of $G$-bundles is some geometric version of the set $B(G)$.

\begin{dfn}[{\cite[Definition III.0.1 and Theorem III.0.2]{FarguesScholze}}] Let $\Bun_G$ be the v-stack which assigns to a perfectoid space $S/k$ the groupoid of $G$-bundles on $X_S$.  Given a class $b\in B(G)$, let $i_b\from \Bun_G^b\to \Bun_G$ be the locally closed substack classifying $G$-bundles which are isomorphic to $\E^b$ at every geometric point.
\end{dfn}

Then $\Bun_G$ is a cohomologically smooth Artin v-stack over $\Spd k$ \cite[Theorem I.4.1(vii)]{FarguesScholze}.  Central to its study are the substacks $\Bun_G^b$.  For each $b\in B(G)$ we have an isomorphism $\Bun_G^b\isom [\Spd k/\tilde{G}_b]$, where 
\[\tilde{G}_b=\underline{\Aut}\; \E_b\]
is a group diamond over $\Spd k$.   This fits in an exact sequence of group diamonds over $\Spd k$:
\[ 0 \to \tilde{G}_b^\circ \to \tilde{G}_b \to G_b(F)_{\Spd k} \to 0\]
Here, the neutral component $\tilde{G}_b^\circ\subset \tilde{G}_b$ is a cohomologically smooth group diamond over $\Spd k$, and $G_b$ is the automorphism group of the isocrystal $b$.  The group $G_b$ is an inner form of a Levi subgroup of the quasisplit inner form of $G$.   If $b$ is basic, then $i_b$ is an open immersion, and $\tilde{G}_b=G_b(F)_{\Spd k}$.

We next recall the Hecke correspondence on $\Bun_G$ and its relation to the local shtuka spaces $\Sht_{G,b,\mu}$.   Since our main result on the cohomology of local Shimura varieties does not concern the action of a Weil group, all objects in this discussion will live over the base $S=\Spd C$, where $C$ is an algebraically closed perfectoid field containing $F$, whose residue field contains $k$. In particular we have $\Bun_{G,C}=\Bun_G \times_{\Spd k} S$.   If $T$ is a perfectoid space over $C$, the Fargues-Fontaine curve $X_T$ comes equipped with a degree 1 Cartier divisor $D_T$, corresponding to the untilt $T$ of $T^\flat$.   

We introduce now a diagram of v-stacks over $\Spd C$ containing both local and global Hecke correspondences:
\begin{equation}
\label{EqHeckeCorrespondenceDiagram}
\xymatrix{
\Bun_{G,C} \ar[d] & \Hecke_{G,C} \ar[r]^{h_1} \ar[d] \ar[l]_{h_2} & \Bun_{G,C} \ar[d] \\
\Bun_{G,C}^{\loc} & \Hecke^{\loc}_{G,C} \ar[r]_{h_1^{\loc}} \ar[l]^{h_2^{\loc}} & \Bun_{G,C}^{\loc}
}
\end{equation}
We explain below the objects and morphisms appearing in \eqref{EqHeckeCorrespondenceDiagram}.   Let $T=\Spa(R,R^+)$ be an affinoid perfectoid space over $\Spa C$.
\begin{itemize}
\item The $T$-points of the stack $\Hecke_{G,C}$ classify triples $(\mc{E}_1,\mc{E}_2,f)$, where $\mc{E}_1$ and $\mc{E}_2$ are $G$-bundles on $X_T$,  and 
\[ f\from \mc{E}_1\vert_{X_T\backslash D_T} \isom \mc{E}_2\vert_{X_T\backslash D_T} \]
is an isomorphism which is meromorphic along $D_T$.
\item The morphism $h_i$ sends a triple as above to $\mc{E}_i$ for $i=1,2$.
\item The $T$-points of $\Bun_{G,C}^{\loc}$ classify $G$-bundles on $\Spec B_{\dR}^+(R)$, this being the completion of $X_T$ along $D_T$.   Such $G$-bundles are v-locally trivial on $T$, so that we have an isomorphism 
\[ \Bun_{G,C}^{\loc} \isom [\Spd C / L^+G], \]
where $L^+G = G(B_{\dR}^+)$ is the positive loop group.
\item The $T$-points of $\Hecke^{\loc}_{G,C}$ classify triples $(\mc{E}_1,\mc{E}_2,f)$, where $\mc{E}_1$ and $\mc{E}_2$ are $G$-bundles on $\Spec B_{\dR}^+(R)$, and $f$ is an isomorphism between their restrictions to $\Spec B_{\dR}(R)$,  meromorphic along $D_T$.   We have an isomorphism
\[ \Hecke_{G,C}^{\loc} \isom [L^+G \backslash LG / L^+G ],\]
where $LG=G(B_{\dR})$ is the full loop group.   Put another way, we have the $B_{\dR}$-affine Grassmannian $\Gr_{G,C}=LG/L^+G$, and then $\Hecke_{G,C}^{\loc}=[L^+G \backslash \Gr_{G,C}]$.  
\item The morphism $h_i^{\loc}$ sends such a triple to $\mc{E}_i$ for $i=1,2$.
\item The vertical maps send an object to its completion along $D_T$ in the evident manner.
\end{itemize}
The squares in \eqref{EqHeckeCorrespondenceDiagram} are cartesian, by Beauville-Laszlo gluing.

It is a basic fact that $\Bun_{G,C}$ is a decent v-stack, and the structure map $\Bun_{G,C} \to S=\Spd C$ is fine. With some care, it is possible to ``truncate'' some of the other objects appearing in \eqref{EqHeckeCorrespondenceDiagram} to obtain decent $S$-v-stacks with fine structure maps to $S$.  In particular, let $\mu$ be a dominant cocharacter of $G$, and let $\Hecke_{G,\leq\mu,C}$ be the substack of $\Hecke_{G,\mu}$ consisting of triples $(\mc{E}_1,\mc{E}_2,f)$ where the meromorphy of $f$ is fiberwise bounded by $\mu$.  Then $\Hecke_{G,\leq\mu,C}$ is decent and the maps to $\Bun_{G,C}$ induced by restricting $h_1$ and $h_2$ are fine. We may define $\Hecke_{G,\leq\mu,C}^{\loc}$ analogously; this is isomorphic to $[\Gr_{G,\leq \mu,C}/L^+G]$, where $\Gr_{G,\leq \mu,C}$ is the bounded Grassmannian.  This is not quite a decent v-stack. However, if we instead form the quotient $[\Gr_{G,\leq \mu,C}/L^{+}_{m}G]$ for some sufficiently large truncation as in Theorem \ref{ThmCharClassOfLocalHeckeStack}, we do obtain a decent $S$-v-stack with fine structure map. This is sufficient for our purposes. 
% By \cite[Proposition 20.2.3]{ScholzeWeinstein}, $\Gr_{\leq \mu,C}$ is a spatial diamond, proper over $\Spd C$.  
%
%The action of $L^+G$ on $\Gr_{G,\leq \mu,C}$ factors through a quotient $L^+_mG$ for some integer $m\gg 0$;  the latter object is a cohomologically smooth group diamond.  We have a cartesian diagram:
%\begin{equation}
%\label{EqHeckeCorrespondenceDiagramTruncated}
%\xymatrix{
%\Hecke_{G,\leq \mu,C} \ar[r]^{h_1} \ar[d] & \Bun_{G,C} \ar[d] \\
%\Hecke^{\loc}_{G,\leq \mu,m,C} \ar[r]_{h_1^{\loc}}& \Bun_{G,C,m}^{\loc}
%}
%\end{equation}
%which shows that $\Hecke_{G,\leq\mu,C}$ is an $S$-Artin v-stack.  

%Now let $\Lambda$ be a ring which is $n$-torsion for some $n$ prime to $p$.
%The {\em Satake category}
%\[ \Sat_G(\Lambda) \subset D_{\et}(\Hecke^{\loc}_{G,C}, \Lambda) \]
%is the subcategory of objects which are perverse, $\Lambda$-flat, and ULA over $\Spd C$ \cite[Definition I.6.2]{FarguesScholze}.   It is a symmetric monoidal category under the convolution product.
%
%\begin{thm}[{\cite[Theorem I.6.3]{FarguesScholze}}] 
%\label{ThmSatakeEquivalence} There is an equivalence of symmetric monoidal categories:
%\begin{eqnarray*}
%\Rep_{\hat{G}}(\Lambda) &\isomto& \Sat_G(\Lambda) \\
%V &\mapsto& \mc{S}_V 
%\end{eqnarray*}
%where $\hat{G}$ is the Langlands dual group, (considered over $\Lambda$),  and $\Rep_{\hat{G}}(\Lambda)$ is the category of representations of $\hat{G}$ on finite projective $\Lambda$-modules.
%\end{thm}

Now let $b\in B(G,\mu)$ be basic.   We explain the relation between Hecke stacks and local shtuka spaces. It will be helpful to refer to the commutative diagram of stacks
\begin{equation}
\label{BigHeckeDiagram}
\xymatrix{
\Hecke^{b,1}_{G,\leq \mu,C} \ar@/_2.5pc/[dd]_{h_1^{b,1}} \ar@/^2pc/[rr]^{h_2^{b,1}} \ar[r]^{i_b''} \ar[d]_{i_1''} &
\Hecke^{\ast,1}_{G,\leq \mu,C} \ar[r]^{h_2^{\ast,1}} \ar[d]_{i_1'} &
\Bun_{G,C}^1 \ar[d]_{i_1} \\
\Hecke^{b,\ast}_{G,\leq\mu,C} \ar[r]_{i_b'} \ar[d]_{h_1^{b,\ast}} &
\Hecke_{G,\leq\mu,C} \ar[d]_{h_1} \ar[r]_{h_2} & \Bun_{G,C} \\
\Bun_{G,C}^b \ar[r]_{i_b}  & \Bun_{G,C}
}
\end{equation}
in which all squares are cartesian, the morphisms labeled with $i$ are open immersions, and the morphisms $h_1$ and $h_2$ are proper. 

The top row of \eqref{BigHeckeDiagram} can be described via the diagram:
\[
\xymatrix{
[\Gr_{G,\leq\mu,C}^{1,\adm}/\underline{G(F)}] \ar[r] \ar[d]_{\isom} &
[\Gr_{G,\leq\mu}^1/\underline{G(F)}] \ar[r] \ar[d]_{\isom} &
[\point/\underline{G(F)}] \ar[d]_{\isom}\\
\Hecke_{\leq\mu,C}^{b,1} \ar[r]_{i_b''} &
\Hecke_{\leq\mu,C}^{\ast,1} \ar[r]_{h_2^{\ast,1}} &
\Bun_{G,C}^1.
}
\]
Explanation: $\Gr_{G,\leq\mu,C}^1$ assigns to $T=\Spa(R,R^+)$ the set of pairs $(\mc{E},f)$, where $\mc{E}$ is a $G$-bundle on $X_S$, and $f\from \mc{E}^1\vert_{X_T\backslash D_T}\isom \mc{E}\vert_{X_T\backslash D_T}$ is an isomorphism,  which is bounded by $\mu$ along $D_T$.  
The bundle $\mc{E}^1$ can be canonically trivialized over $\Spa B_{\dR}^+(R)$, and in so doing we obtain an isomorphism $\Gr^1_{G,\leq\mu,C}\isom \Gr_{G,\leq\mu,C}$.   Within $\Gr_{G,\leq\mu}^1$ we have the open locus $\Gr_{G,\leq\mu}^{1,\adm}$, consisting of those pairs $(\mc{E},\gamma)$, where $\mc{E}$ is everywhere isomorphic to $\mc{E}^b$.

Similarly the leftmost column of \eqref{BigHeckeDiagram} can be described via the diagram:
\[
\xymatrix{
[\Gr_{G,\leq\mu,C}^{b,\adm}/\underline{G_b(F)}] \ar[r]^{\isom} \ar[d]
& \Hecke_{G,\leq\mu,C}^{b,1} \ar[d]^{i_1''} \\
[\Gr_{G,\leq\mu,C}^b/\underline{J_b(F)}] \ar[r]^{\isom} \ar[d] &
\Hecke_{G,\leq\mu,C}^{b,\ast} \ar[d]^{h_1^{b,\ast}} \\
[\Spd C/\underline{G_b(F)}] \ar[r]_{\isom} &
\Bun_{G,C}^b.
}
\]
Explanation: $\Gr_{G,\leq\mu,C}^b$ assigns to $T=\Spa(R,R^+)$ the set of pairs $(\mc{E},f)$, where $\mc{E}$ is a $G$-bundle, and $f\from \mc{E}\vert_{X_T\backslash D_T}\isom \mc{E}^b\vert_{X_T\backslash D_T}$ is an isomorphism, which is bounded by $\mu$ along $D_T$.  We have an isomorphism $\Gr_{G,\leq\mu,C}^b\isom \Gr_{G,\leq -\mu,C}$.  Within $\Gr_{G,\leq\mu,C}^b$, we have the open {\em admissible locus} $\Gr_{G,\leq\mu,C}^{b,\adm}$ consisting of pairs $(\mc{E},f)$, where $\mc{E}$ is everywhere isomorphic to $\E^1$.

The moduli space of local shtukas $\Sht_{G,b,\mu,C}$ appears as the fiber product:
\begin{equation}
\xymatrix{
\Sht_{G,b,\mu,C} \ar[rr] \ar[d] &&  \Spd C \ar[d] \\
\Hecke^{b,1}_{G,\leq\mu,C} \ar[rr]_{h_1^{b,1}\times h_2^{b,1}} && \Bun_{G,C}^b\times \Bun_{G,C}^1  
}
\end{equation}
where the right vertical morphism corresponds to $\E^b\times\E^1$.   This is evident from the definition of $\Sht_{G,b,\mu}$:  its $S$-points are morphisms $f\from \E^1_{X_S\backslash D_S}\isom \E^b_{X_S\backslash D_S}$ which are bounded by $\mu$ on $D_S$.  %Similarly, for any compact open subgroup $K\subset G(F)$, the finite-level shtuka moduli space $\Sht_{(G,b,\mu),C}/\underline{K}$ appears as a pullback by the map $j_K\from [\Spd C/\underline{K}] \to [\Spd C/\underline{G(F)}]$.

We also have the period morphisms:
\begin{equation}
\label{EqPeriodMorphisms}
\xymatrix{
& \Sht_{G,b,\mu,C} \ar[dl]_{\pi_1} \ar[dr]^{\pi_2} & \\
\Gr^b_{G,\leq \mu,C} && \Gr^1_{G,\leq \mu,C} 
}
\end{equation}
The morphism $\pi_1$ is a $G_b(F)_S$-equivariant $G(F)_S$-torsor over the admissible locus $\Gr^{b,\adm}_{G,\leq\mu,C}$.  Similarly, $\pi_2$ is a $G(F)_S$-equivariant $G_b(F)_S$-torsor over the admissible locus $\Gr^{1,\adm}_{G,\leq\mu,C}$.  

\subsection{The inertia stack of the Hecke stack;  admissibility of elliptic fixed points}

We continue to put $S=\Spd C$.  Here we investigate the inertia stack $\In_S(\Hecke_{G,\leq \mu,C})$, or at least the part of it lying over the strongly regular locus in $\In_S([S/G(F)_S])\isom [G(F)_S\stslash G(F)_S]$.  

It will help to introduce some notation.  Suppose $\mc{E}$ is a $G$-bundle on $X_C$ equipped with a trivialization over the completion at $\infty=D_C$.  Let $T\subset G$ be a maximal torus.  We have seen in Proposition \ref{PropositionFixedPointsOnGr} that there is a bijection $\lambda\mapsto L_\lambda$ between $X_*(T)$ and the set of $T$-fixed points of $\Gr_{G}$.  Given $\lambda\in X_*(T)$, we let $\mc{E}[\lambda]$ be the modification of $\mc{E}$ corresponding to $L_\lambda$. 

\begin{lem}[{\cite[Lemma 3.5.5]{CaraianiScholze}, see also \cite[\S2.2]{ChenFarguesShen}, but note that we use the opposite convention concerning Schubert cells}] \label{LemmaBehaviorOfKappa} Let $\E$ be a $G$-bundle on $X_{C}$ equipped with a trivialization at $\infty$, let $T\subset G$ be a maximal torus, let $\lambda\in X_*(T)$ be a cocharacter, and let $\hat{\lambda}\in X^*(\hat{T})$ be the corresponding character.  In the group $X^*(Z(\hat{G})^{\Gamma})$ we have
\[\kappa(\E[\lambda])=\kappa(\E)+\hat{\lambda}\vert_{Z(\hat{G})^\Gamma}.\]
\end{lem}

\begin{pro} 
\label{PropositionDescriptionOfFixedPointsInM}
Suppose a pair $(g,g')\in G(F)_{\sr}\times G_b(F)_{\sr}$ fixes a point $x\in \Sht_{G,b, \mu}(C)$.  Let $T=\Cent(g,G)$ and $T'=\Cent(g',G_b)$.  
Then $\pi_1(x)\in \Gr_{G,\leq-\mu}^{g'}$ and $\pi_2(x)\in \Gr_{G,\leq\mu}^g$ correspond to cocharacters $\lambda'\in X_*(T')_{\leq -\mu}$ and $\lambda\in X_*(T)_{\leq \mu}$, respectively.  

There exists $y\in G(\breve{F})$ such that $\ad y$ is an $F$-rational isomorphism $T\to T'$, which carries $g$ to $g'$ and $\lambda$ onto $-\lambda'$.  The invariant $\inv[b](g,g')\in B(T)\isom X_*(T)_{\Gamma}$ agrees with the image of $\lambda$ under $X_*(T)\to X_*(T)_{\Gamma}$.   Therefore $(g,g',\lambda)$ lies in $\Rel_{b,\mu}$.   
\end{pro}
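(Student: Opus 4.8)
The plan is to unwind the definitions of the period morphisms $\pi_1,\pi_2$ on $\Sht_{G,b,\mu}$ and track the $G$-bundle data attached to a fixed point $x$, in the spirit of the Lubin--Tate example worked out above. Recall that a $C$-point $x$ of $\Sht_{G,b,\mu}$ is an isomorphism $\gamma\from \E^1|_{X_{C^\flat}\sm\{\infty\}}\isom \E^b|_{X_{C^\flat}\sm\{\infty\}}$ meromorphic along $\infty$ and bounded by $\mu$. Under the two identifications $\Sht_{G,b,\mu}\isom \Gr^{b,1}_{G,\leq\mu}$ (as a $\ul{G(F)}$-torsor) and $\Sht_{G,b,\mu}\isom \Gr^{1,b}_{G,\leq\mu}$ (as a $\ul{J_b(F)}$-torsor), the point $\pi_1(x)$ records the modification $\E^1$ of $\E^b$ near $\infty$ determined by $\gamma$, while $\pi_2(x)$ records the modification $\E^b$ of $\E^1$ determined by $\gamma^{-1}$. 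The action of $g\in G(F)=\Aut\E^1$ is by precomposition on $\gamma$, and of $g'\in J_b(F)=\Aut\E^b$ by postcomposition; so $(g,g')$ fixes $x$ exactly when $\gamma\circ g = g'\circ\gamma$, i.e.\ $\gamma$ intertwines the automorphism $g$ of $\E^1$ with the automorphism $g'$ of $\E^b$.

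First I would extract the cocharacters. Since $g'\in J_b(F)_{\sr}$ lies in a unique maximal torus $T'=\Cent(g',J_b)$, and $g'$ stabilizes $\pi_1(x)\in(\Gr^{b,1}_{G,\leq\mu})^{g'}\subset (\Gr_G)^{g'}$, Proposition \ref{PropositionFixedPointsOnGr} (applied after a trivialization of $\E^b$ at $\infty$, with $g'$ playing the role of a strongly regular element of $L^+T'$, using that $\alpha(g')\neq 1$ for all roots $\alpha$ because $g'$ is regular semisimple) identifies $(\Gr_G)^{g'}$ with $\Gr_{T'}=X_*(T')$, hence $\pi_1(x)=L_{\lambda'}$ for a unique $\lambda'\in X_*(T')$, and the boundedness by $\mu$ forces $\lambda'\in X_*(T')_{\leq\mu}$. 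Symmetrically $\pi_2(x)=L_\lambda$ for a unique $\lambda\in X_*(T)_{\leq\mu}$ with $T=\Cent(g,G)$. This is essentially a restatement of the earlier fixed-point analysis and should be routine.

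Next I would produce $y$. The intertwining relation $\gamma\circ g=g'\circ\gamma$ at the level of automorphisms of $G$-bundles, read after trivializing both $\E^1$ and $\E^b$ on a formal neighborhood of $\infty$ and then over $B_{\dR}(C)$, exhibits $g$ and $g'$ as conjugate over $B_{\dR}(C)$ by the element of $G(B_{\dR}(C))$ coming from $\gamma$; in particular they become conjugate over an algebraic closure of $\breve F$, so by Lemma \ref{LemmaSteinberg} there is $y\in G(\breve F)$ with $g'=ygy^{-1}$, and then $\ad y\from T\to T'$ is an $F$-rational isomorphism carrying $g$ to $g'$ (rationality being automatic from $g,g'$ being $F$-rational and strongly regular, as in the discussion preceding Definition \ref{dfn:inv}). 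That $\ad y$ carries $\lambda$ to $\lambda'$ follows because $\ad y$ also transports the modification datum: $\gamma$ intertwines the $\lambda$-modification of $\E^1$ with the $\lambda'$-modification of $\E^b$, so $\lambda'=(\ad y)(\lambda)$ up to $W(T',J_b)$, and strong regularity pins down the torus uniquely so no Weyl ambiguity remains.

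Finally, for the invariant: by Definition \ref{dfn:inv}, $\inv[b](g,g')$ is the class of $y^{-1}by^\sigma$ in $B(T)$, where any $y$ with $g'=ygy^{-1}$ may be used. Here I would use the $y$ just constructed and show $[y^{-1}by^\sigma]\in B(T)$ equals the image of $\lambda$ under $X_*(T)\to X_*(T)_\Gamma\isom B(T)$. The cleanest route is via the Kottwitz homomorphism: both classes in $B(T)$ are determined by their images in $B(T_{\ad})$ (which vanish, since $g$ and $g'$ are related semisimple elements and the cocycle $y^{-1}by^\sigma$ lies in $T$ precisely because $g,g'$ are $F$-rational in their respective forms) together with their $\kappa$-invariant in $X_*(T)_\Gamma$; so it suffices to compute $\kappa$. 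The relation $\E^1=\E^b[\lambda']$ near $\infty$ (from $\pi_1(x)=L_{\lambda'}$) together with Lemma \ref{LemmaBehaviorOfKappa} gives $\kappa(\E^1)=\kappa(\E^b)-\hat\lambda'|_{Z(\hat G)^\Gamma}$, i.e.\ $-\kappa(b)=-\hat\lambda'|_{Z(\hat G)^\Gamma}$, matching the identity from Fact \ref{fct:inv} that $\kappa(\inv[b](g,g'))=\kappa(b)$ and that the image of $\lambda$ in $\pi_1(G)_\Gamma$ equals $\kappa(b)$. Transporting $\lambda'$ back to $\lambda$ via $\ad y$ and chasing the functoriality of $\kappa$ through $B(T)\to B(G)$ then yields $\inv[b](g,g')=$ image of $\lambda$ in $X_*(T)_\Gamma$.

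I expect the main obstacle to be the last step: carefully matching the $B_{\dR}$-local modification data with the global isocrystal/$\sigma$-conjugacy data defining $\inv[b]$, i.e.\ checking that ``$\E^1=\E^b[\lambda]$ near $\infty$, as seen through $\gamma$'' translates exactly into ``$y^{-1}by^\sigma$ has class $\lambda$ in $B(T)$''. This is the point where $p$-adic Hodge theory genuinely enters (as the authors warn), and it amounts to the compatibility between Fargues' classification of $G$-bundles on $X_C$ and the Beauville--Laszlo-type local picture at $\infty$; the torus case and Lemma \ref{LemmaBehaviorOfKappa} do most of the work, but assembling them $G$-equivariantly and reconciling the two trivializations used for $\pi_1$ and $\pi_2$ will require care.
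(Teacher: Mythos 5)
Your extraction of the cocharacters $\lambda,\lambda'$ via Proposition~\ref{PropositionFixedPointsOnGr} and the construction of $y\in G(\breve F)$ via the $B_{\dR}$-intertwining and Lemma~\ref{LemmaSteinberg} track the paper's argument essentially line by line and are fine; the observation that $\ad y$ is automatically $F$-rational because it is the unique isomorphism $T\to T'$ sending $g\mapsto g'$ is also the paper's argument.

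The gap is in the computation of $\inv[b](g,g')$, and you half-diagnose it yourself. Your stated argument applies Lemma~\ref{LemmaBehaviorOfKappa} to the $G$-bundles $\E^1,\E^b$ and lands in $X^*(Z(\hat G)^\Gamma)\cong\pi_1(G)_\Gamma$; but the proposition asserts an equality in $B(T)\cong X_*(T)_\Gamma$, and the map $X_*(T)_\Gamma\to\pi_1(G)_\Gamma$ is far from injective, so ``chasing functoriality of $\kappa$ through $B(T)\to B(G)$'' cannot recover the finer statement. (In fact the $\pi_1(G)_\Gamma$-equality is already the content of Fact~\ref{fct:inv}, so your computation proves nothing new.) The side remark about $B(T_{\ad})$ is also off: for a torus $T$ the Kottwitz map $\kappa_T:B(T)\to X_*(T)_\Gamma$ is already an isomorphism, so there is no Newton/adjoint component to control. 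The mechanism the paper uses to get the equality in $B(T)$ is a genuine reduction to $T$-bundles, which you gesture at in the final paragraph but never carry out: one sets $b_0=y^{-1}by^\sigma\in T(\breve F)$, notes that $y$ induces an isomorphism $\E^{b_0}[\lambda]\to\E^b[\lambda']$, and then that $y^{-1}\gamma\colon\E^1\to\E^{b_0}[\lambda]$ descends to an isomorphism of $T$-bundles. At that point Lemma~\ref{LemmaBehaviorOfKappa} applied to the torus $T$ (where $\kappa_T$ is an isomorphism) gives exactly $\kappa_T(\E^{b_0})=\lambda$ in $X_*(T)_\Gamma\cong B(T)$, and $[b_0]=\inv[b](g,g')$ by definition. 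Without explicitly descending the modification to a $T$-bundle modification, the argument does not close.
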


\begin{proof}  The point $x$ corresponds to an isomorphism $\gamma\from \E^1[\lambda]\to \E^b$,  and also to an isomorphism $\gamma'\from \E^1\to \E^b[\lambda']$.  Each of these interlaces the action of $g$ with $g'$, and furthermore $\gamma=\gamma'$ away from $\infty$.    Trivializing $\E^1$ and $\E^b$ away from $\infty$, we see that $g$ and $g'$ become conjugate over the ring $B_e=H^0(X_C\backslash \set{\infty},\OO_{X_C})$,  which implies they are conjugate over $\overline{F}$ and (by Lemma \ref{LemmaSteinberg}) they are even conjugate over $\breve{F}$.  Let $y\in G(\breve{F})$ be an element such that $(\ad y)(g) = g'$.  Then $\ad y$ is a $\breve{F}$-rational isomorphism $T\to T'$ which carries $g$ onto $g'$.  In fact, since there is only one such isomorphism, we can conclude that $\ad y$ is an $F$-rational isomorphism $T\to T'$.    Let $\lambda_0=(\ad y^{-1})(\lambda')\in X_*(T)$.  

Let $b_0=y^{-1}by^{\sigma}$.   Then (cf. Definition \ref{dfn:inv}) we have $b_0\in T(F)$.  The element $y$ induces isomorphisms $y\from \E^{b_0}\to \E^b$ and $y\from \E^{b_0}[\lambda_0]\to \E^b[\lambda']$.  Then the isomorphism $y^{-1}\gamma'\from \E^1\to \E^{b_0}[\lambda_0]$ descends to an isomorphism of $T$-bundles;  comparing this with the isomorphism $\gamma y^{-1}\from \E^1[\lambda]\to \E^{b_0}$ shows that $\lambda_0=-\lambda$.   In light of the isomorphism of $T$-bundles $\E^1[\lambda]\isom \E^{b_0}$,  Lemma \ref{LemmaBehaviorOfKappa} implies that the identity $\kappa(\E^{b_0})=\lambda$ holds in $B(T)$.  But also $\kappa(\E^{b_0})$ is the class of $[b_0]$ in $B(T)$, which is $\inv[b](g,g')$ by definition.
\end{proof}

Proposition \ref{PropositionDescriptionOfFixedPointsInM} shows that if $(g,g')\in G(F)_{\sr}\times G_b(F)_{\sr}$ fixes a point of $\Sht_{G,b,\mu}$, then $g$ and $g'$ are related.  However the converse may fail: if a pair of related strongly regular elements $(g,g')$ is given, it is not necessarily true that $(g,g')$ fixes a point of $\Sht_{G,b, \mu}$. Indeed, a necessary condition for this is that the action of $g'$ on $\Gr_{G,\leq -\mu}^b$ has a fixed point in the admissible locus, and this is not automatic.

This converse result is always true, however, if $g$ (or equivalently, $g'$) is an elliptic element.

\begin{thm}
\label{TheoremEllipticFixedPointsAreAdmissible}  Let $g\in G(F)_{\elli}$.  Then the fixed points of $g$ acting on $\Gr_{G,\leq\mu}^1$ lie in the admissible locus $\Gr_{G,\leq\mu}^{1,\adm}$.   Similarly, if $g'\in G_b(F)_{\elli}$, then the fixed points of $g'$ acting on $\Gr_{G,\leq -\mu}^b$ lie in the admissible locus $\Gr_{G,\leq -\mu}^{b,\adm}$.  
\end{thm}

\begin{proof}  We prove the first statement; the second is similar.  Let $g\in G(F)_{\elli}$, and let $T=\Cent(g,G)$ be the elliptic maximal torus containing $g$. Suppose we are given a $g$-fixed point $x\in \Gr_{G,\leq\mu}(C)$.  Then $x$ corresponds to a cocharacter $\lambda\in X_*(T)$, which in turn corresponds to a modification $\E^1[\lambda]$ of the trivial $G$-bundle $\E^1$.  We wish to show that $\E^1[\lambda]\isom \E^b$.  First we will show that it is semistable.

Let $b'\in G(\breve{F})$ be an element whose class in $B(G)$ corresponds to the isomorphism class of $\E^1[\lambda]$.  We wish to show that $b'$ is basic.  We have the algebraic group $G_{b'}/F$, which is a priori an inner form of a Levi subgroup $M^*$ of $G^*$, where $G^*$ is the quasi-split inner form of $G$.  Showing that $b'$ is basic is equivalent to showing that $M^*=G^*$.

We have an isomorphism $\gamma\from \E^1[\lambda]\isom \E^{b'}$.   The action of $g\in T(F)$ on $\E^1$ extends to an action on $\E^1[\lambda]$, which can be transported via $\gamma$ to obtain an automorphism $g'\in \tilde{G}_{b'}(C)=\Aut \E_{b'}$.  Let $\overline{g}'$ be the image of $g'$ under the projection $\tilde{G}_{b'}(C)\to G_{b'}(F)$.

The $G$-bundles $\E^1$ and $\E^{b'}$ may be trivialized over $\Spec B_{\dR}^+(C)$.   In doing so, we obtain embeddings of $G(F)=\Aut \E^1$ and $\tilde{G}_{b'}(C)=\Aut \E^{b'}$ into $G(B_{\dR}^+(C))$;  we denote both of these by $h\mapsto h_\infty$.  We also have the isomorphism $\gamma_\infty$ between $\E^1$ and $\E^{b'}$ over $\Spec B_{\dR}(C)$;  we may identify $\gamma_\infty$ with an element of $G(B_{\dR}(C))$, and then $g'_\infty =\gamma_\infty g_\infty \gamma_\infty^{-1}$ holds in $G(B_{\dR}(C))$.

%Recall from \S\ref{SectionVectorBundlesGBundles} that both $\E_b[\nu]$ and $\E_{b'}$ come equipped with a trivialization over $B_{\dR}^+$. The stalks of $g'$, $g$, and $\gamma$ at the point $\infty\from \Spec C\to X$ are thus naturally identified with elements of $G(B_{\dR}^+)$. Among them, $g'_\infty\in P(B_{\dR}^+)$, where $P\subset G$ is a parabolic subgroup with Levi factor $J_{b'}$.  Let $\overline{g}'_\infty$ be the image of $g_\infty$ under $P(B_{\dR}^+)\to J_{b'}(B_{\dR}^+)$.   By Lemma \ref{AutomorphismsOfGBundles}, $\overline{g}'_\infty\in J_{b'}(F)$.

The element $\bar g'_\infty$ is conjugate to $g'_\infty$,
%\cite{lem:hum}
so $\bar g'_\infty$ is conjugate to $g_\infty$ in $G(B_{\dR})$. Since $g$ and $\overline{g}'$ are both regular semisimple $\bar F$-points of $G$, being conjugate in $G(B_{\dR})$ is the same as being conjugate in $G(\bar F)$. Their centralizers, being $F$-rational tori, are thus isomorphic over $F$. Thus $G_{b'}$ contains a maximal torus that is elliptic for $G$. Elliptic maximal tori transfer across inner forms \cite[\S10]{Kot86}, which means that the Levi subgroup $M^* \subset G^*$ of which $G_{b'}$ is an inner form contains a maximal torus that is elliptic for $G^*$. Therefore $M^*=G^*$.

We have shown that $\E^1[\lambda]\isom \E_{b'}$ is semistable, implying that $\Aut \E^{b'}=G_{b'}(F)$ and that $g'\in G_{b'}(F)$.  Lemma \ref{LemmaBehaviorOfKappa} shows that $\kappa([b'])$ equals the image of $\lambda$ in $\pi_1(G)_{\Gamma}$;  this is the same as the image of $\mu$, which in turn is the same as $\kappa([b])$ because $b\in B(G,\mu)$.   Since $b'$ is basic, we have $[b']=[b]$ by \cite[Proposition 5.6]{KottwitzIsocrystals}.
\end{proof}

Recall the locally profinite set $\Rel_{b,\leq \mu}$ from Definition \ref{DfnRelB}.  This is the set of conjugacy classes of triples $(g,g',\lambda)$, where $g\in G(F)$ and $g'\in G_b(F)$ are related strongly regular elements, and $\lambda$ is a cocharacter of $T=\Cent(g,G)$, bounded by $\mu$, such that $\kappa(\inv[b](g,g'))$ agrees with the image of $\lambda$ in $X_*(T)_{\Gamma}$.   Let $\Rel_{b,\leq \mu,\elli}$ be the subset where $g$ (equivalently, $g'$) is elliptic. 

Theorem \ref{TheoremEllipticFixedPointsAreAdmissible} has the following corollary.   For a v-stack $X$, we write $\abs{X}$ for the underlying topological space.

\begin{cor} \label{CorIdentificationOfInHecke} Let $\In_S(\Hecke^{b,\ast}_{G,\leq\mu,S})_{\elli}$ be the preimage under $\In_S(h_1)$ of $\In_S(\Bun^b_{G,S})_{\elli}$.  Similarly let $\In_S(\Hecke^{\ast,1}_{G,\leq\mu,S})_{\elli}$ be the preimage under $\In_S(h_2)$ of $\In_S(\Bun^1_{G,S})_{\elli}$.  Then 
\[ \In_S(\Hecke^{b,\ast}_{G,\leq\mu,S})_{\elli}=
\In_S(\Hecke^{\ast,1}_{G,\leq\mu,S})_{\elli}=
\In_S(\Hecke^{b,1}_{G,\leq\mu,S})_{\elli}. \]
There is a homeomorphism $\abs{\In_S(\Hecke_{G,b,\leq \mu,S}^{b,1})_{\elli}}\isom \Rel_{b,\mu,\elli}$.
\end{cor}

\begin{proof} The first claim is just the statement that fixed points of elliptic elements on $\Gr^b_G$ and $\Gr^1_G$ are admissible.  For the second claim: since $\Hecke^{\ast,1}_{G,\leq\mu,S} \isom [\Gr_{G,\leq\mu,S}/G(F)_S]$, we can think of $\abs{\In_S(\Hecke_{G,b,\leq \mu}^{b,1})_{\elli}}$ as the set of conjugacy classes of pairs $(g,\lambda)$, where $g\in G(F)_{\elli}$ and $\lambda\in X_*(T)_{\leq \mu}$, where $T=\Cent(g,G)$.  We have an isomorphism $\E^1[\lambda]\isom \E^b$.  The element $g\in G(F)\isom \Aut \E^1$ determines an element $g'\in G_b(F)\isom \Aut\E^b$, up to conjugacy.  By Proposition \ref{PropositionDescriptionOfFixedPointsInM}, the triple $(g,g',\lambda)$ determines an element of $\Rel_{b,\mu,\elli}$.  Conversely, given such a triple $(g,g',\lambda)$, the pair $(g,\lambda)$ determines an element $g''\in G_b(F)$ as we have just argued, but then $g'$ and $g''$ are conjugate by Remark \ref{RmkAtMostOne}.  
\end{proof}

\subsection{Transfer of distributions from $G_b$ to $G$} \label{sub:6.3}

We continue to let $b$ be a basic element of $B(G)$. Let $\Lambda$ be a ring in which $p$ is invertible. Recall the Hecke transfer map
\[ T^{G_b\to G}_{b,\mu}\from C(G_b(F)_{\sr} \stslash G_b(F),\Lambda) \to C(G(F)_{\sr} \stslash G(F),\Lambda) \]
from \ref{DefinitionTbmu}.  As promised, we can now promote this to a transfer of distributions, at least after restriction to elliptic loci (and assuming, as we have been doing all along, that the $\Lambda$-valued Haar measures on $G(F)$ and $G_b(F)$ are chosen compatibly).  

Recall the period morphisms:
\[ \Gr^b_{G,\leq \mu,C} \stackrel{\pi_1}{\leftarrow} \Sht_{G,b,\mu,C}  \stackrel{\pi_2}{\to} \Gr^1_{G,\leq\mu,C}, \]
in which $\pi_1$ is a $G(F)_S$-torsor over its image, and $\pi_2$ is a $G_b(F)_S$-torsor over its image.
Consider the action map on $\Sht_{G,b,\mu,C}$:
\[ \alpha_{\Sht}\from G(F)_S \times G_b(F)_S \times \Sht_{G,b,\mu,C} \to \Sht_{G,b,\mu,C} \]
and also those on the period domains:
\begin{eqnarray*}
\alpha_1\from G(F)_S\times \Gr^1_{G,\leq\mu,C} &\to& \Gr^1_{G,\leq\mu,C} \\
\alpha_b\from G_b(F)_S\times \Gr^b_{G,\leq\mu,C} &\to& \Gr^b_{G,\leq\mu,C} 
\end{eqnarray*}
For $?\in\set{\Sht,1,b}$ we can define the elliptic fixed-point locus $\Fix(\alpha_?)_{\elli}$ of the corresponding action map, consisting of pairs $(g,x)$ with $g$ elliptic and $g.x=x$;  let us think of each $\Fix(\alpha_?)$ as a locally profinite set.  For instance, $\Fix(\alpha_1)_{\elli}$ is the set of pairs $(g,\lambda)$, where $g\in G(F)_{\elli}$, and $\lambda\in X_*(T_g)$ ($T_g=\Cent(g,G)$) is bounded by $\mu$.  These fit into a diagram
\begin{equation}
\label{EqFixLocusDiagram}
\xymatrix{
\Fix(\alpha_b)_{\elli} \ar[d]_{q_1} & \Fix(\alpha_{\Sht})_{\elli} \ar[l]_{p_1} \ar[r]^{p_2} & \Fix(\alpha_1)_{\elli} \ar[d]^{q_2} \\
G_b(F)_{\elli} && G(F)_{\elli}
}
\end{equation}
of locally profinite sets, in which $p_1$ is a $G_b(F)$-equivariant $G(F)$-torsor, $p_2$ is a $G(F)$-equivariant $G_b(F)$-torsor, and $q_1$ and $q_2$ are finite \'etale.  (The maps $p_i$ are surjective by Theorem \ref{TheoremEllipticFixedPointsAreAdmissible}.)  Furthermore, let us observe that for $(g,g',x)\in \Fix(\alpha_{\Sht})$, the image of $x$ in $\Gr_G^1(C)^g$ may be identified with a cocharacter $\lambda\in X_*(T)$ of $T=\Cent(g,G)$, and then the triple $(g,g',\lambda)$ lies in $\Rel_{b,\mu}$ by Proposition \ref{PropositionDescriptionOfFixedPointsInM}.  A key observation is that we have a diagram of stacks in locally profinite sets, in which both squares are cartesian:
\begin{equation}
\label{EqLiftOfRelCorrespondence}
\xymatrix{
[G_b(F)_{\elli}\sslash G_b(F)] \ar[d] & [\Fix(\alpha_{\Sht})_{\elli}/(G_b(F)\times G(F))] \ar[l] \ar[d] \ar[r] & [G(F)_{\elli}\sslash G(F)] \ar[d] \\
G_b(F)_{\elli}\sslash G_b(F) & \Rel_{b,\mu,\elli} \ar[r] \ar[l] & G(F)_{\elli}\sslash G(F) 
}
\end{equation}
Thus, at least over the elliptic locus, we have promoted a correspondence between sets of conjugacy classes to a correspondence between stacks of conjugacy classes.  Formally, this is exactly what is required to promote our transfer of functions to a transfer of distributions.   

\begin{lem}\label{LemmaDistributions}  Let $H$ be a locally pro-$p$ group,  and let $\Lambda$ be a commutative ring in which $p$ is invertible.  Choose a $\Lambda$-valued Haar measure on $H$.  Let $h\from \tilde{T}\to T$ be an $H$-torsor in locally profinite sets. The integration-along-fibers map
$C_c(\tilde{T},\Lambda) \to C_c(T,\Lambda)$ induces an isomorphism of $C(T,\Lambda)$-modules
\[ h_*\from C_c(\tilde{T},\Lambda)_H\to C_c(T,\Lambda) \]
and, dually, an isomorphism of $C(T,\Lambda)$-modules
\[ h_*\from \Dist(\tilde{T},\Lambda)^H \to \Dist(T,\Lambda). \]
\end{lem}

\begin{proof}
The $C(T,\Lambda)$-modules $C_c(T,\Lambda)$ and $C_c(\tilde T,\Lambda)$ are smooth in the sense of Definition \ref{dfn:modlps}.   Therefore by Lemma \ref{lem:sheaflps} the statement is local on $T$, so we may assume that the torsor  $\tilde{T}=T\times H$ is split. Then $C_c(\tilde T,\Lambda)_H=C_c(T,\Lambda) \otimes_\Lambda C_c(H,\Lambda)_H$.   The integration map $C_c(H,\Lambda)_H \to \Lambda$ is an isomorphism, so that $C_c(\tilde{T},\Lambda)_H\isom C_c(T,\Lambda)$.
\end{proof}

Recall from \S \ref{SectionInteractionWithOrbitalIntegrals} that we have chosen compatible Haar measures on $G(F)$ and $G_b(F)$.

\begin{dfn} \label{DfnTTilde} 
% Choose $\Lambda$-valued Haar measures $dg$ and $dg'$ on $G(F)$ and on $G_b(F)$. Then, 
With notation as in \eqref{EqFixLocusDiagram}, we define a $\Lambda$-linear map 
\begin{equation}
\label{EquationGtoJ}
\tilde{T}_{b,\mu}^{G\to G_b} \from C_c(G(F)_{\elli},\Lambda)_{G(F)} \to C_c(G_b(F)_{\elli},\Lambda)_{G_b(F)} 
\end{equation}
as the composition
\begin{eqnarray*}
C_c(G(F)_{\elli},\Lambda)_{G(F)} 
&\stackrel{q_2^*}{\to}&
C_c(\Fix(\alpha_1)_{\elli},\Lambda)_{G(F)} \\
&\stackrel{(p_2)_*^{-1}}{\isomto}&
C_c(\Fix(\alpha_{\Sht})_{\elli},\Lambda)_{G(F)\times G_b(F)} \\
&\stackrel{(p_1)_*}{\isomto}& C_c(\Fix(\alpha_b)_{\elli},\Lambda)_{G_b(F)} \\
&\stackrel{\cdot K_\mu}{\to}& C_c(\Fix(\alpha_b)_{\elli},\Lambda)_{G_b(F)} \\
&\stackrel{q_{1*}}{\to}&
C_c(G_b(F)_{\elli},\Lambda)_{G_b(F)},
\end{eqnarray*}
where $q_2^*$ means pullback, $q_{1*}$ means pushforward ({\em i.e.}, sum over fibers), the isomorphisms $(p_i)_*$ are induced by our choices of Haar measures as in Lemma \ref{LemmaDistributions}, and finally $K_\mu\in C(\Fix(\alpha_b),\Lambda)^{G_b(F)}$ is the function $(g',\lambda')\mapsto (-1)^d\rank V_\mu^\vee[\lambda']$, where $d=\class{\mu,2\rho_G}$.
\end{dfn}

% Again, we emphasize that this definition depends on the specific choice of Haar measures $dg$ and $dg'$; rescaling these choices will rescale $\tilde{T}_{b,\mu}^{G\to G_b}$.

\begin{pro}\label{PropositionLiftOfPhi} Assume that $\Lambda = \overline{\mathbf{Q}_{\ell}}$.
Let $\phi\in C_c(G(F)_{\elli},\Lambda)$, and let $\phi'\in C_c(G_b(F)_{\elli},\Lambda)$ be any lift of $\tilde{T}^{G\to G_b}_{b,\mu}\phi$.  Then the orbital integrals of $\phi$ and $\phi'$ are related by
\[ \phi_{G_b}' = T_{b,\mu}^{G\to G_b}\phi_G. \]
\end{pro}

\begin{proof}   For $g'\in G_b(F)_{\elli}$, with centralizer $T'$, we have:
\begin{eqnarray*}
&& \phi'_{G_b}(g') \\
&=&\int_{h'\in G_b(F)/T'(F)} \phi'(h'g'(h')^{-1})\;dh'\\
&=&
(-1)^d \sum_{\lambda'\in X_*(T')_{\leq -\mu}} \rank V_\mu^\vee[\lambda']
\int_{h'\in \frac{G_b(F)}{T'(F)}} [(p_1)_*(p_2)_*^{-1}q_2^*\phi](h'.(g',\lambda')) dh\;dh'. 
\end{eqnarray*}
Let $T$ be a transfer of the elliptic torus $T'$ to $G$.  Since $\Fix(\alpha_{\Sht})_{\elli}\to \Fix(\alpha_b)_{\elli}$ is a $G(F)$-torsor,  we may choose for each $\lambda'$ a lift $y_{\lambda'}=(g_{\lambda'},g',x_{\lambda'})$ of $(g',\lambda')$ to $\Fix(\alpha_{\Sht})$ with $g_{\lambda'}\in T(F)$.   Then $\phi'_{G_b}(g')$ equals
\[ (-1)^d\sum_{\lambda'\in X_*(T')_{\leq -\mu}}\rank V_\mu^\vee[\lambda']
\int_{h'\in G_b(F)/T'(F)} \int_{h\in G(F)} [(p_2)_*^{-1}q_2^*\phi]
\left((h,h').(y_{\lambda'})\right) \; dh\;dh'
\]
 We rewrite the inner integral as a nested integral; so that our expression for $\phi'_{G_b}(g')$ equals:
\begin{eqnarray*}
&&(-1)^d\sum_{\lambda'} \rank V_\mu^{\vee}[\lambda']
\int_{h'\in G_b(F)/T'(F)} \int_{h\in G(F)/T(F)} \int_{t\in T(F)} [(p_2)_*^{-1}q_2^*\phi]\left((ht^{-1},h'). y_{\lambda'}\right) \;dt\;dh\;dh' \\
&=&(-1)^d
\sum_{\lambda'} \rank V_\mu^{\vee}[\lambda']\int_{h'\in G_b(F)/T'(F)} \int_{h\in G(F)/T(F)} \int_{t'\in T'(F)} [(p_2)_*^{-1}q_2^*\phi]\left((h,h't').y_{\lambda'}\right) \;dt'\;dh\;dh'
\end{eqnarray*}
Here we have used Proposition \ref{PropositionDescriptionOfFixedPointsInM}:  
there is an isomorphism $\iota\from t\mapsto t'$ between $T(F)$ and $T'(F)$ satisfying $(t,t').y_{\lambda'}=y_{\lambda'}$.  This induces a bijection $\lambda\mapsto \lambda'=-\iota_*\lambda$ between $X_*(T)_{\leq\mu}$ and $X_*(T')_{\leq-\mu}$.   Given $\lambda\in X_*(T)_{\leq \mu}$, we let $g_{\lambda}=g_{\lambda'}$.  Then by Proposition \ref{PropositionDescriptionOfFixedPointsInM}, the preimage of $g'$ in $\Rel_{b,\mu}$ is exactly $\set{(g_{\lambda},g,\lambda)}_{\lambda\in X_*(T)_{\leq\mu}}$.  

Noting that $\rank V_\mu[\lambda]=\rank V_{\mu}^{\vee}[\lambda']$,  we exchange the order of the first two integrals above to obtain
\begin{eqnarray*}
\phi'_{G_b}(g')
&=&
(-1)^d\sum_{\lambda\in X_*(T)_{\leq\mu}} \rank V_{\mu}[\lambda]
\int_{h\in G(F)/T(F)}\int_{h'\in G_b(F)} [(p_2)_*^{-1}q_2^*\phi]\left((h,h')\cdot y_{\lambda'}\right)\;dh'\;dh
\\
&=& (-1)^d\sum_{\lambda\in X_*(T)_{\leq\mu}} \rank V_{\mu}[\lambda']\int_{h\in G(F)/T(F)}
\phi(hg_\lambda h^{-1})\;dh\\
&=&(-1)^d\sum_{\lambda\in X_*(T)_{\leq\mu}} \rank V_\mu[\lambda] \phi_G(g_{\lambda}) \\
&=& [T_{b,\mu}^{G\to G_b}\phi_G](g').
\end{eqnarray*}
\end{proof}

\begin{dfn} 
\label{DefinitionTransferOfDistributions} 
Let 
\[ \mc{T}_{b,\mu}^{G_b\to G} \from \Dist(G_b(F)_{\elli},\Lambda)^{G_b(F)} \to
\Dist(G(F)_{\elli},\Lambda)^{G(F)} \]
be the $\Lambda$-linear dual of $\tilde{T}_{b,\mu}^{G\to G_b}$.  % Again, note that this map depends on the Haar measures $dg$ and $dg'$.  
\end{dfn}

\begin{pro} \label{PropCompatibility}  Assume that $\Lambda = \overline{\mathbf{Q}_{\ell}}$. 
%and that the Haar measures $dg$ and $dg'$ are compatible in the sense of \S \ref{SectionInteractionWithOrbitalIntegrals}.   
Then the transfer of distributions $\mc{T}^{G_b\to G}_{b,\mu}$ extends the transfer of functions $T^{G_b\to G}_{b,\mu}$ from Definition \ref{DefinitionTbmu}.  
\end{pro}

\begin{proof} Let $f\in C(G_b(F)_{\elli},\Lambda)^{G_b(F)}$ be a conjugation-invariant function.  Let $\phi\in C_c(G(F)_{\elli},\Lambda)$,  and let $\phi'\in C_c(G_b(F)_{\elli},\Lambda)$ be a lift of $\tilde{T}^{G\to G_b}_{b,\mu}\phi$.  
Using the Weyl integration formula \eqref{EqWeylIntegration}, Lemma \ref{LemAdjointTransfers}, and Proposition \ref{PropositionLiftOfPhi} we compute
\begin{eqnarray*}
\int_{g\in G(F)_{\elli}} \phi(g)\mc{T}_{b,\mu}^{G_b\to G}(f\;dg')
&=&
\int_{g'\in G_b(F)_{\elli}} f(g')\phi'(g')\; dg' \\
&=&
\class{f,\phi'_{G_b}}_{G_b}\\
&=& \class{f,T_{b,\mu}^{G\to G_b}\phi_G}_{G_b} \\
&=& \class{T_{b,\mu}^{G_b\to G} f, \phi_G}_{G}\\
&=&
\int_{g\in G(F)_{\elli}} \phi(g)(T_{b,\mu}^{G_b\to G} f)\; dg, 
\end{eqnarray*}
so that 
\[ \mc{T}_{b,\mu}^{G_b\to G}(f\; dg') = T_{b,\mu}^{G_b\to G}(f) \; dg\]
as desired.
\end{proof}

\subsection{Hecke operators on $\Bun_G$ and the cohomology of shtuka spaces}

We are finally ready to reap our rewards. For the remainder of this chapter, we fix a prime $\ell \neq p$, and write $\Lambda$ for a $\mathbf{Z}_\ell$-algebra. Let $\hat{G}$ be the Langlands dual group over $\mathbf{Z}_{\ell}$.

We begin by quickly reviewing the results of \cite{FarguesScholze} on the categories $D_{\lis}(\Bun_G,\Lambda)$ and $D_{\lis}(\Bun_G^{b},\Lambda)$, and the action of Hecke operators on $D_{\lis}(\Bun_G,\Lambda)$.

The first key fact is that for any $b \in B(G)$, there is a natural equivalence of categories
\begin{equation}
\label{EqDerCatOfBunGb}
 D(G_b(F),\Lambda) \isom D_{\lis}(\Bun_G^b,\Lambda) 
 \end{equation}
\cite[Theorem I.5.1]{FarguesScholze}.  For a complex $\rho$ of smooth representations of $G_b(F)$, we will slightly abusively also write $\rho$ for the corresponding object of $D_{\lis}(\Bun_G^b,\Lambda)$.  

Next, recall that there is a notion of ULA objects in $D_{\lis}(\Bun_G,\Lambda)$. These admit the following concrete characterization. 

\begin{thm}[{\cite[Theorem I.5.1(v)]{FarguesScholze}}] \label{ThmULASheavesOnBunG} The following are equivalent for an object $A\in D_{\lis}(\Bun_G,\Lambda)$.  
\begin{enumerate}
\item $A$ is ULA over $\Spd k$.
\item For all $b\in B(G)$, the restriction $i_b^*A$, considered as an object of $D(G_b(F),\Lambda)$ via \eqref{EqDerCatOfBunGb}, is admissible in the sense that $(i_b^*A)^K$ is a perfect complex for all pro-$p$ open subgroups $K\subset G_b(F)$.  
\end{enumerate}
Moreover, ULA objects are preserved under Verdier duality $\D=\D_{\Bun_G/\Spd k}$, and satisfy Verdier biduality.
\end{thm}

\begin{cor} Let $b\in B(G)$, and let $\rho$ be an admissible complex in $D(G_b(F),\Lambda)$.  The objects $(i_b)_*\rho$ and $(i_b)_!\rho$ of $D_{\lis}(\Bun_G,\Lambda)$ are ULA over $\Spd k$.  
\end{cor}

\begin{proof} The object $(i_b)_!\rho$ is ULA by the criterion in Theorem \ref{ThmULASheavesOnBunG}.  Using Verdier duality (P4.) we have $\D((i_b)_!\rho^\vee)\isom (i_b)_* \rho$, so that $(i_b)_* \rho$ is also ULA.
\end{proof}

Next, recall that any object $V$ of $\Rep_{\hat{G}}(\Lambda)$ gives rise to a Hecke operator $T_V$, which is an endofunctor of $D_{\lis}(\Bun_{G,C},\Lambda)$. When $\Lambda$ is a torsion ring, there is a natural equivalence $D_{\lis}(\Bun_{G,C},\Lambda) \cong D_{\et}(\Bun_{G,C},\Lambda)$, and the operator $T_V$ is defined concretely as the operation
\begin{eqnarray*}
T_V\from  D_{\et}(\Bun_{G,C},\Lambda) &\to& D_{\et}(\Bun_{G,C}, \Lambda) \\
\mc{F} &\mapsto& h_{2!}(h_1^* \mc{F}\otimes \mc{S}_V).
\end{eqnarray*}
Here $\mc{S}_V\in D_{\et}(\Hecke_{G,C},\Lambda)$ is pulled back from the object $\mc{S}_V\in D_{\et}(\Hecke_{G,C}^{\loc},\Lambda)$ corresponding to $V$ under the Satake equivalence (Theorem \ref{ThmSatakeEquivalence}).  

%\begin{rmk}  \label{RmkSwitcheroo}  There is an involution $\sw$ on  $\Hecke_{G,C}^{\loc}\isom [L^+G\backslash LG/L^+G]$ induced from $g\mapsto g^{-1}$.  (In other words, $\sw$ sends a modification of vector bundles to its inverse.)  Then $\sw$ induces an involution $\sw^*$ on the Satake category, and hence on the dual group $\hat{G}$.  This involution is identified up to conjugacy as the Chevalley involution of $\hat{G}$ (see \cite[Proposition VI.12.1]{FarguesScholze}, which refers to this as the Cartan involution), which is $t\mapsto t^{-1}$ on $\hat{T}$.  One must take care in referring to pullbacks of $\mc{S}_V$ to $\Gr_{G,C}$.  The pullback along $\Gr_{G,C}\to [L^+G\backslash \Gr_{G,C}]\isom \Hecke_{G,C}^{\loc}$ is what we continue to call $\mc{S}_V$.  With this convention, the pullback along $\Gr_{G,C}\to [\Gr_{G,C}/L^+G]\isom \Hecke_{G,C}^{\loc}$ is $\mc{S}_{\sw^*(V)}$.
%\end{rmk}

\begin{thm}[{\cite[Theorem IX.0.1]{FarguesScholze}}]  \label{ThmHeckePreservesULA}The Hecke operators preserve the subcategories of ULA and compact objects in $D_{\lis}(\Bun_{G,C},\Lambda)$. For any $V$, $T_V$ has left and right adjoint given by $T_{V^\vee}$, where $V^\vee$ is the dual representation of $\hat{G}$. The actions of Hecke operators are compatible with extension of scalars along any ring map $\Lambda \to \Lambda'$.
\end{thm}

Next, we explain the relation between the Hecke operators $T_V$ and the cohomology of local shtuka spaces.  Let $\mu$ be a dominant cocharacter of $G$, and let $V_\mu \in \Rep(\hat{G})$ be the associated Weyl module. For any $\mathbf{Z}_{\ell}$-algebra $\Lambda$, we write $V_{\mu,\Lambda} \in \Rep(\hat{G}_{\Lambda})$ for the base change of $V_{\mu}$. Let $\mc{S}_{\mu}=\mc{S}_{V_{\mu}}$ be the corresponding object in the Satake category with $\mathbf{Z}_{\ell}$-coefficients; similarly, if $\Lambda$ is a torsion ring we write $\mc{S}_{\mu,\Lambda}=\mc{S}_{V_{\mu},\Lambda}$ for the corresponding object with $\Lambda$-coefficients. We will slightly abuse notation by using the same notations for the pullbacks of $\mc{S}_{\mu}$ and $\mc{S}_{\mu,\Lambda}$ to various other v-stacks, including $\Gr_{G,\leq \mu,C}$ and $\Sht_{G,b,\mu,C}$ (along the period morphism $\pi_1$ from \eqref{EqPeriodMorphisms}).

\begin{lem}  
\label{LemCohomologyOfShtukasModK}
Let $\Lambda$ be a $\mathbf{Z}_\ell$-algebra, let $K\subset G(F)$ be an open compact subgroup, and let $I_{K,\Lambda}={\cInd}_K^{G(F)} \Lambda$, where $\cInd$ is compactly supported induction.  Then there is a natural isomorphism
\[ R\Gamma_c(\Sht_{G,b,\mu,K,C},\mc{S}_{\mu,\Lambda}) \isom i_b^\ast T_{V_{\mu,\Lambda}} (i_1)_! I_{K,\Lambda} \] in $D(G_b(F),\Lambda)$.
\end{lem}

\begin{proof} 
When $\Lambda$ is a torsion ring, we can give the following direct argument. The global sections of $i_b^\ast T_{V_{\mu,\Lambda}} (i_1)_!I_{K,\Lambda}$ over $\Spd C\to \Bun_G^b$ are
\[
R\Gamma(\Spd C, i_b^\ast h_{2!} (h_1^*(i_1)_! I_{K,\Lambda} \otimes_{\Lambda} \mc{S}_{\mu,\Lambda})) \isom R\Gamma_c(\Gr^{b,1}, I_{K,\Lambda} \vert_{\Gr^{b,1}} \otimes_{\Lambda} \mc{S}_{\mu,\Lambda}). \]
Now use $I_{K,\Lambda} \isom (j_K)_! \Lambda$ along with proper base change to get the result.

The general case follows from the proof of \cite[Proposition IX.3.2]{FarguesScholze}.
\end{proof}

Recall that when $\rho$ is any smooth $G_b(F)$-representation with $\overline{\Q_\ell}$-coefficients,  we defined an object
\[ R\Gamma(G,b,\mu)[\rho] \isom \varinjlim_K R\Hom_{G_b(F)}(R\Gamma_c(\Sht_{(G,b,\mu),C}/K,\mc{S}_{\mu}) \otimes \overline{\mathbf{Q}_{\ell}}, \rho), \]
in $D(G(F),\overline{\mathbf{Q}_{\ell}})$, cf. Definition \ref{dfn:stcoh}. The association $\rho \mapsto R\Gamma(G,b,\mu)[\rho]$ clearly extends to a functor $D(G_b(F),\overline{\mathbf{Q}_{\ell}}) \to D(G(F),\overline{\mathbf{Q}_{\ell}})$. Our next goal is to give an alternative approach to this construction, which is valid for more general coefficient rings and which makes the finiteness properties of this construction transparent.

\begin{pro}\label{PropHInTermsOfHecke}  Let $\rho$ be any object of $D(G_b(F),\overline{\mathbf{Q}_{\ell}})$.  Then there is a natural isomorphism 
\[ R\Gamma(G,b,\mu)[\rho] \isom i_1^*T_{{V^{\vee}_{\mu,\overline{\mathbf{Q}_{\ell}}}}} (i_b)_*\rho \] in $D(G(F),\overline{\mathbf{Q}_{\ell}})$.

If $\rho$ is admissible, then so is $R\Gamma(G,b,\mu)[\rho]$. If $\rho$ is of finite length, then so is $R\Gamma(G,b,\mu)[\rho]$.
\end{pro}

\begin{proof}  Let $K\subset G(F)$ be a compact open subgroup.  Using 
Lemma \ref{LemCohomologyOfShtukasModK}, various adjunctions, and the compatibility of Hecke operators with extension of scalars, we have
\begin{eqnarray*}
R\Hom_{G_b(F)}(R\Gamma_c(\Sht_{G,b,\mu,K,C},\mc{S}_V) \otimes \overline{\mathbf{Q}_{\ell}},\rho) 
&\isom&
R\Hom_{G_b(F)}(i_b^\ast T_{V_{\mu}} (i_1)_! {\cInd}_K^{G(F)} \mathbf{Z}_{\ell} \otimes \overline{\mathbf{Q}_{\ell}}, \rho) \\
& \isom & R\Hom_{G_b(F)}(i_b^\ast T_{V_{\mu,\overline{\mathbf{Q}_{\ell}}}} (i_1)_! {\cInd}_{K}^{G(F)} \overline{\mathbf{Q}_{\ell}}, \rho) \\
&\isom& (i_1^* T_{{V^{\vee}_{\mu,\overline{\mathbf{Q}_{\ell}}}}}  (i_b)_*\rho)^K. 
\end{eqnarray*}
Taking the colimit over $K$ gives the first claim. The claim about preservation of admissibility now follows from Theorem \ref{ThmULASheavesOnBunG} combined with Theorem \ref{ThmHeckePreservesULA}. For the final claim, fix some $\rho$ of finite length. Note that $i_1^*T_{{V^{\vee}_{\mu,\overline{\mathbf{Q}_{\ell}}}}} (i_b)_*\rho$ is the smooth dual of $i_1^*T_{{V^{\vee}_{\mu,\overline{\mathbf{Q}_{\ell}}}}} (i_b)_! \rho^{\vee}$, so it's enough to show that $i_1^*T_{{V^{\vee}_{\mu,\overline{\mathbf{Q}_{\ell}}}}} (i_b)_! \rho^{\vee}$ is of finite length. But finite length is equivalent to being both compact and admissible, so we conclude by observing that the operation $i_1^*T_{{V^{\vee}_{\mu,\overline{\mathbf{Q}_{\ell}}}}} (i_b)_!(-)$ preserves compact objects.
\end{proof}

\begin{dfn} For any $\mathbf{Z}_{\ell}$-algebra $\Lambda$, we write \[ R\Gamma(G,b,\mu)[-]: D(G_b(F),\Lambda) \to D(G(F),\Lambda) \] for the functor $i_1^* T_{{V^{\vee}_{\mu,\Lambda}}}  (i_b)_*(-)$.
\end{dfn}

By the previous discussion, this functor is compatible with extension of scalars along any map $\Lambda \to \Lambda'$, and preserves admissible objects. Moreover, if $\Lambda$ is Artinian and $\rho \in D(G_b(F),\Lambda)$ is admissible of finite length, then $R\Gamma(G,b,\mu)[\rho]$ is also of finite length, by the same argument as in the proof of Proposition \ref{PropHInTermsOfHecke}.

We now come to the technical heart of this paper. Choose $\mathbf{Z}_{\ell}$-valued Haar measures on $G(F)$ and $G_b(F)$, compatibly as in \S\ref{SectionInteractionWithOrbitalIntegrals}. These induce $\Lambda$-valued Haar measures on the same groups compatibly with varying $\Lambda$.  Then for any $\Lambda$, any admissible representation $\pi$ of $G(F)$ with coefficients in $\Lambda$ has a corresponding $\Lambda$-valued trace distribution $\trdist(\pi)$, and similarly for $G_b(F)$.  Recall also that we defined a transfer of $\Lambda$-valued distributions $\mc{T}_{b,\mu}^{G_b\to G}$, Definition \ref{DefinitionTransferOfDistributions}.
%, which also depends on these choices of Haar measures.

\begin{pro} \label{PropTraceDist} Let $\Lambda$ be any torsion $\mathbf{Z}_{\ell}$-algebra, and let
 $\rho$ be any admissible representation of $G_b(F)$ with coefficients in $\Lambda$.  Then have an equality
\[ \trdist R\Gamma (G,b,\mu)[\rho]_{\elli}
= \mc{T}_{b,\mu}^{G_b\to G} \trdist(\rho)_{\elli} \]
 in $\Dist(G(F)_{\elli},\Lambda)^{G(F)}$.
\end{pro}

%As a sanity check, note that all dependencies on the choices of Haar measures are compatible: rescaling $dg$ and $dg'$ by $C$ and $C'$, we see that $\trdist R\Gamma (G,b,\mu)[\rho]$ scales by $C$, $\trdist(\rho)$ scales by $C'$, and $\mc{T}_{b,\mu}^{G_b\to G}$ scales by $C/C'$.

\begin{proof} 
In the following proof, we set $S=\Spd C$ and $V=V_{\mu,\Lambda}$ for brevity.

We have an isomorphism
\[ H^0(\In_S(\Bun_{G,C}^1),K_{\In_S(\Bun_{G,C}^1)/S})\isom \Dist(G(F),\Lambda)^{G(F)},\] 
and similarly for $G_b(F)$.   With respect those isomorphisms,  the left side of the desired equality is the characteristic class
\[  \cc_{\Bun_{G,C}^1/S}\left(i_1^*T_{V^\vee}(i_b)_*\rho \right)\] restricted to $\Dist(G(F)_{\elli},\Lambda)^{G(F)}$.

For the remainder of the proof we introduce the abbreviations $B=\Bun_{G,C}$ and $H=\Hecke_{G,\leq\mu,C}$ and $\In=\In_S$.   Let us also use a subscript ``ell'' to mean restriction to the appropriate elliptic locus.  Taking inertia stacks in \eqref{BigHeckeDiagram}, we obtain a commutative diagram
\begin{equation}
\label{EqBigInertiaDiagram}
\xymatrix{
& &
\In(H^{\ast,1})_{\elli} \ar[r]^{\In(h_2^{\ast,1})_{\elli}} \ar[d]_{j_1'} \ar[ddll]_{\id}
&
\In(B^1)_{\elli} \ar[d]^{j_1} \\
& & 
\In(H^{\ast,1}) \ar[r]_{\In(h_2^{\ast,1})} \ar[d]_{\In(i_1')} 
&
\In(B^1) \ar[d]^{\In(i_1)} \\
\In(H^{b,\ast})_{\elli} \ar[r]_{j_b'} \ar[d]_{\In(h_1^{b,\ast})_{\elli}} 
&
\In(H^{b,\ast}) \ar[r]_{\In(i_b')} \ar[d]_{\In(h_1^{b,\ast})} 
&
\In(H) \ar[r]_{\In(h_2)} \ar[d]_{\In(h_1)} 
&
\In(B) \\
\In(B^b)_{\elli} \ar[r]_{j_b} 
&
\In(B^b) \ar[r]_{\In(i_b)} 
&
\In(B)
&
}
\end{equation}
in which all squares are cartesian, and the morphism labeled $\id$ is the equality from Corollary \ref{CorIdentificationOfInHecke}.   The characteristic class in question is 
\begin{eqnarray*}
\label{EqInvocationOfTraceFormula}
 j_1^*\cc_{B^1/S}(i_1^*T_{V^\vee}(i_b)_*\rho )
&\stackrel{\text{Lem. }\ref{LemRestrictionOfCC}}{=}& j_1^*\In(i_1)^*\cc_{B/S}\left((h_2)_!(h_1^*(i_b)_*\rho \otimes \mc{S}_{V^\vee}\right) \\
&\stackrel{\text{Cor.} \ref{CorLefschetzVerdier}}{=}&j_1^*\In(i_1)^*\In(h_2)_*\cc_{H/S}(h_1^*(i_b)_*\rho \otimes \mc{S}_{V^\vee})\\
&=&(\In(h_2^{\ast,1})_{\elli})_*(j_1')^*\In(i_1')^*\cc_{H/S}(h_1^*(i_b)_*\rho \otimes\mc{S}_{V^\vee})\\
&=&(\In(h_2^{\ast,1})_{\elli})_*(j_b')^*\In(i_b')^*\cc_{H/S}(h_1^*(i_b)_*\rho \otimes\mc{S}_{V^\vee})\\
&\stackrel{\text{Lem. }\ref{LemRestrictionOfCC}}{=}& 
(\In(h_2^{\ast,1})_{\elli})_*(j_b')^*\cc_{H^{b,\ast}/S}((i_b')^*h_1^*(i_b)_*\rho \otimes\mc{S}_{V^\vee}) \\
&=&(\In(h_2^{\ast,1})_{\elli})_*(j_b')^*
\cc_{H^{b,\ast}/S}
((h_1^{b,\ast})^*\rho \otimes \mc{S}_{V^\vee}).
\end{eqnarray*}
Noting that $H^{b,\ast}\isom [\Gr^b_{G,\leq-\mu,C}/G_b(F)_S]$, we have a cartesian diagram of decent $S$-v-stacks:
\begin{equation}
\label{EqDiagramForHBStar}
\xymatrix{
H^{b,\ast} \ar[r] \ar[d] &  H^{\loc}_m \ar[d] \\
B^b \ar[r] & B_m^{\loc} 
}
\end{equation}
Here $B^b\isom [S/G_b(F)_S]$, $H^{\loc}_m\isom [\Gr_{G,\leq-\mu,C}/L^+_mG]$, and $B_m^{\loc}\isom [S/L^+_mG]$;  the $m$ here is chosen large enough so that the action of $L^+G$ on $\Gr_{G,\leq-\mu,C}$ factors through the quotient $L^+_mG$.    Through this, we can identify $(h_1^{b,\ast})^*\rho \otimes \mc{S}_{V^\vee}$ with $\rho \boxtimes_{B^{\loc}_m} \mc{S}_{V^\vee}$.    It is at this point we apply Theorem \ref{ThmKunnethForCharacteristicClass}, valid because the base $B_m^{\loc}=[S/L^+_mG]$ satisfies the hypotheses of Lemma \ref{LemTrivialDualizingSheaf}.  We get
\begin{eqnarray*}
 j_1^*\cc_{B^1/S}(i_1^*T_{V^\vee}(i_b)_*\rho)
&=& (\In(h_2^{\ast,1})_{\elli})_*(j_b')^*\left(\cc_{B^b/S}\rho \boxtimes_{\In(B^{\loc}_m)} \cc_{H_m^{\loc}/S}(\mc{S}_{V^\vee}) \right). \\
&=& (\In(h_2^{\ast,1})_{\elli})_*\left(\cc_{B^b/S}\rho_{\elli} \boxtimes_{\In(B^{\loc}_m)_{\sr}} \cc_{H_m^{\loc}/S}(\mc{S}_{V^\vee})_{\sr}\right) 
\end{eqnarray*}

Considering the diagram
\[
\xymatrix{
H^0(\In(B^b)_{\elli},K_{\In(B^b)/S})
\ar[d]_{-\boxtimes_{\In(B_m^{\loc})_{\sr}} \cc_{H_m^{\loc}/S}(\mc{S}_{V^\vee})_{\sr}}
\ar[rr]^{\sim}
&&
\Dist(G_b(F)_{\elli},\Lambda)^{G_b(F)} \ar[d]^{q_1^*(-)\otimes (-1)^{\class{2\rho_G,\mu}}\rank V^\vee[-]} \\
H^0(\In(H^{b,*})_{\elli},
K_{\In(H^{b,*})/S}) \ar[rr]^{\sim} \ar[d]_{=}
& & \Dist(\Fix(\alpha_b)_{\elli},\Lambda)^{G_b(F)} \ar[d]^{(p_1)_*^{-1}} \\
H^0(\In(H^{b,1})_{\elli},K_{\In(H^{b,1})/S})\ar[d]_{=} \ar[rr]^{\sim} &&
\Dist(\Fix(\alpha_{\Sht})_{\elli},\Lambda)^{G(F)\times G_b(F)}\ar[d]^{(p_2)_*} \\
H^0(\In(H^{\ast,1})_{\elli},K_{\In(H^{\ast,1})/S}) 
\ar[d]_{\In(h_2^{\ast,1})_*}
\ar[rr]^{\sim} && 
\Dist(\Fix(\alpha_1)_{\elli},\Lambda)^{G(F)} \ar[d]^{(q_2)_*} \\
H^0(\In(B^1)_{\elli},K_{\In(B^1)/S}) \ar[rr]_{\sim}
&&
\Dist(G(F)_{\elli},\Lambda)^{G(F)}
}
\]
our characteristic class is the image of $\cc_{B^b/S}(\rho)_{\elli}$ under the composite vertical map on the left.  The diagram is commutative;  the hardest thing to check is the commutativity of the top square, which follows from Proposition \ref{ThmCCOfRhoTimesSV} below. The composition along the right column is $\mc{T}^{G_b\to G}_{b,\mu}$, giving us the desired equality of distributions.  
\end{proof}

It remains to justify one step in this computation. Maintain the notation and assumptions of the previous theorem. The v-stack $H^{b,\ast}$ can be expressed as a fiber product as in \eqref{EqDiagramForHBStar};  we have the ULA object $\rho \boxtimes_{B_m^{\loc}} \mc{S}_{V^\vee}$, whose characteristic class can be calculated using Theorem \ref{ThmKunnethForCharacteristicClass}.

Let
\[ \alpha_b \from G_b(F)_S\times \Gr_{G,\leq-\mu} \to \Gr_{G,\leq-\mu} \]
be the action map, so that we have an isomorphism 
\[ \In_S(H^{b,\ast}) \isom [\Fix(\alpha_b)/G_b(F)_S]. \]
Let $\Fix(\alpha_b)_{\sr}$ be the open subset lying over $G_b(F)_{\sr}$ (and use the same convention for other objects);  then $\Fix(\alpha_b)_{\sr}$ is a locally profinite set, which is finite over $G_b(F)_{\sr}$ with fibers $X_*(T)_{\leq -\mu}$.  

\begin{pro}\label{ThmCCOfRhoTimesSV} The characteristic class
\[ \cc_{H/S}(\rho \boxtimes \mc{S}_{V^\vee})_{\sr}\in H^0(\In_S(H)_{\sr},K_{\In_S(T)/S})\isom \Dist(\Fix(\alpha_b)_{\sr},\Lambda)^{G_b(F)} \]
equals the image of $\trdist(\rho) \otimes (-1)^{\class{2\rho_G,-}}\rank V^{\vee}[-]$ under the evident map
\begin{equation}
\label{EqMultiplyDistributionByFunction}   \Dist(G_b(F)_{\sr},\Lambda)^{G_b(F)}\otimes C(X_*(T)_{\leq -\mu},\Lambda)^W \to \Dist(\Fix(\alpha_b)_{\sr},\Lambda)^{G_b(F)}.
\end{equation}
\end{pro}

\begin{proof} Take inertia stacks in \eqref{EqDiagramForHBStar},  and restrict to the strongly regular locus in $\In_S[S/L^+_mG]$ to obtain a cartesian diagram
\[
\xymatrix{
[\Fix(\alpha_b)_{\sr}/G_b(F)_S] \ar[rr] \ar[d] && [G_{b,\sr}(F)_S\stslash G_b(F)_S]  \ar[d] \\
X_*(T)_{\leq \mu}\times^W [L_m^+T_{\sr}\stslash T_{\sr}] \ar[rr]&& 
[L_m^+T_{\sr}/(W\ltimes L_m^+T)] 
}
\]
The K\"unneth map \eqref{EqBoxtimesOnDistributions} in this situation reduces to \eqref{EqMultiplyDistributionByFunction} on the level of global sections.  The result now follows from Theorems \ref{ThmKunnethForCharacteristicClass} and \ref{ThmCharClassOfLocalHeckeStack}.
\end{proof}

This formally implies the following theorem.
\begin{thm} \label{TheoremCalculationOfCharacter} Let $\rho$ be any finite length admissible $G_b(F)$-representation with $\overline{\mathbf{Q}_{\ell}}$-coefficients. Assume that $\rho$ admits a $\overline{\mathbf{Z}_{\ell}}$-lattice in the sense of Definition \ref{DefLattice}.  Then for all $\phi \in C_c(G(F)_{\elli},\overline{\mathbf{Q}_{\ell}})$, the equality \[ \tr (\phi | \Mant_{b,\mu}(\rho)) = \left[\mc{T}_{b,\mu}^{G_b\to G} (\trdist \rho)\right](\phi) \] holds.
\end{thm}

Recall that by definition, $\mc{T}_{b,\mu}^{G_b\to G} (\trdist\rho)(\phi)$ depends only on $(\trdist\rho)_{\elli}$.

\begin{proof} Fix $\rho$ and $\phi$ as in the theorem, and fix a $\overline{\mathbf{Z}_{\ell}}$-lattice $\rho^{\circ} \subset \rho$. After rescaling, we may also assume $\phi$ is valued in $\overline{\mathbf{Z}_{\ell}}$. It is clear from the definitions that $\tr (\phi | \Mant_{b,\mu}(\rho)) = \tr (\phi | R\Gamma(G,b,\mu)[\rho^{\circ}])$ and \[ \left[\mc{T}_{b,\mu}^{G_b\to G} (\trdist \rho)\right](\phi) = \left[\mc{T}_{b,\mu}^{G_b\to G} (\trdist \rho^{\circ})\right](\phi). \]

For all $n\geq 1$, set $\rho^{\circ}_{n} = \rho^{\circ} \otimes \mathbf{Z}/\ell^n$, and write $\phi_n \in C_c(G(F)_{\elli},\overline{\mathbf{Z}_{\ell}}/\ell^n)$ for the obvious reductions of $\phi$. Applying Proposition \ref{PropTraceDist} with $\Lambda = \overline{\mathbf{Z}_{\ell}}/\ell^n$, we get equalities \[ \tr (\phi_n | R\Gamma(G,b,\mu)[\rho^{\circ}_n]) = \left[\mc{T}_{b,\mu}^{G_b\to G} (\trdist \rho^{\circ}_n)\right](\phi_n) \]
for all $n\geq 1$. The result now follows by taking the inverse limit over $n$.
\end{proof}

\subsection{Proof of Theorem \ref{TheoremMain}}

We are finally ready to prove the main theorem of the paper, which we restate for the convenience of the reader.

\begin{thm} \label{TheoremMainRedux}
Assume the refined local Langlands correspondence \cite[Conjecture G]{KalethaLocalLanglands}. Let $\phi\from W_F \times \tx{SL}_2 \to {^LG}$ be a discrete Langlands parameter with coefficients in $\overline{\Q}_\ell$, and let $\rho\in \Pi_\phi(G_b)$ be a member of its $L$-packet.  After ignoring the action of $W_E$, we have an equality
 \[\Mant_{b,\mu}(\rho)=\sum_{\pi\in\Pi_{\phi}(G)} \left[\dim \Hom_{S_\phi}(\delta_{\pi,\rho},r_\mu)\right]\pi + \mathrm{err} \]
in $\Groth(G(F))$, where $\mathrm{err} \in \Groth(G(F))$ is a virtual representation whose character vanishes on the locus of elliptic elements of $G(F)$.

If the packet $\Pi_{\phi}(G)$ consists entirely of supercuspidal representations and the semisimple $L$-parameter $\varphi_{\rho}$ associated with $\rho$ as in \cite[\S I.9.6]{FarguesScholze} is supercuspidal, then in fact $\mathrm{err}=0$.
\end{thm}

%
%\begin{thm} \label{TheoremCalculationOfCharacter}
%Let $\rho$ be a finite length admissible $G_b(F)$-representation, and assume that $\rho$ contains an $G_b(F)$-invariant $\overline{\Z}_\ell$-lattice. Let $(\tr\rho)_{\elli}\in \Dist(G_b(F)_{\elli},\overline{\Q}_{\ell})$ be the trace distribution of $G_b(F)_{\elli}$ on $\rho$.   Then the $H^i(G,b,\mu)[\rho]$ are finite-length admissible representations of $G(F)$, and the trace distribution of $G(F)_{\elli}$ on the Euler characteristic $H^*(G,b,\mu)[\rho]$ equals $T_{b,\mu}^{G_b\to G} (\tr\rho)_{\elli}$.
%\end{thm}
%
%In the next subsection, we will deduce from this a more general result, Theorem \ref{TheoremCalculationOfCharacterNoLattice}, which doesn't require the assumption that $\rho$ admits a lattice.
%
%Combining Theorem \ref{TheoremCalculationOfCharacterNoLattice} with Theorem \ref{TheoremEndoscopicTraceRelation} shows that
%the trace distribution of $H^*(G,b,\mu)[\rho]$ on $G(F)_{\elli}$ agrees with the trace distribution of 
%\[ (-1)^d \sum_{\pi \in \Pi_\phi(G_b)} \dim \Hom_{S_\phi}(\delta_{\pi,\rho},r_\mu). \]
%
%This implies all of Theorem \ref{TheoremMain} except for the final claim regarding the vanishing of the error term.  This final claim is proved in Theorem \ref{NoError} below.

The main ingredient in the proof is the following extension of Theorem \ref{TheoremCalculationOfCharacter} to its natural level of generality.

\begin{thm} \label{TheoremCalculationOfCharacterNoLattice} Let $\rho$ be any finite length admissible $G_b(F)$-representation with $\overline{\mathbf{Q}_{\ell}}$-coefficients.  Then for all $\phi \in C_c(G(F)_{\elli},\overline{\mathbf{Q}_{\ell}})$, the equality \[ \tr (\phi | \Mant_{b,\mu}(\rho)) = \left[\mc{T}_{b,\mu}^{G_b\to G} (\trdist \rho)\right](\phi) \] holds.

In particular, the virtual character of $\Mant_{b,\mu}(\rho)$ restricted to $G(F)_{\elli}$ is equal to $T_{b,\mu}^{G_b \to G}(\Theta_\rho)$.
\end{thm}

We will formally deduce this from Theorem \ref{TheoremCalculationOfCharacter} by a continuity argument.

For the proof of this theorem, it will be convenient to use the language of Grothendieck groups. In particular, by the finiteness results mentioned above, $\Mant_{b,\mu}(-)$ can be regarded as a group homomorphism $\Mant_{b,\mu}(-): \Groth(G_b(F)) \to \Groth(G(F))$. Recall that 
%(after choosing a Haar measure) 
any element $\phi \in C_c(G(F),\overline{\mathbf{Q}_\ell})$ defines a linear form $\tr(\phi | -): \Groth(G(F)) \to \overline{\mathbf{Q}_\ell}$. By definition, a linear form $f: \Groth(G(F)) \to \overline{\mathbf{Q}_\ell}$ is a \emph{trace form} if it can be written as $\tr( \phi | -)$ for some $\phi \in C_c(G(F),\overline{\mathbf{Q}_\ell})$. The key ingredient in the proof of Theorem \ref{TheoremCalculationOfCharacterNoLattice} is the following result, which roughly says that $\Mant_{b,\mu}(\rho)$ is a continuous function of $\rho$.

\begin{thm} \label{TheoremContinuous} For any $\phi \in C_c(G(F),\overline{\mathbf{Q}_\ell})$, the linear form \[ \tr(\phi | \Mant_{b,\mu}(-)) : \Groth(G_b(F)) \to \overline{\mathbf{Q}_{\ell}} \] is a trace form.
\end{thm}

With future applications in mind, we'll actually prove the following refined form of this theorem which also accounts for the Weil group action.

\begin{thm}
For any fixed $\phi\in C_{c}(G(F),\overline{\mathbf{Q}_{\ell}})$
and $w\in W_{E}$, the linear form $\Groth(G_{b}(F))\to\overline{\mathbf{Q}_{\ell}}$
defined by 
\[
\rho\mapsto\mathrm{tr}(\phi\times w | R\Gamma(G,b,\mu)[\rho])
\]
is a trace form.
\end{thm}

In the classical setting of Rapoport-Zink spaces, this was conjectured by Taylor, cf. \cite[Conjecture 8.3]{ShinEL}. Taking $w=1$, we deduce Theorem \ref{TheoremContinuous}.

\begin{proof}
For any reductive group $H/F$, the trace Paley-Wiener theorem of Bernstein-Deligne-Kazhdan, \cite{BDK},
characterizes trace forms among all linear forms on $\Groth(H(F))$ by the
following two conditions:

1. There is some open compact subgroup $K\subset H(F)$
such that $f(\pi)\neq0$ only if $\pi^{K}\ne0$.

2. For any parabolic $P=MU\subset H$ and any irreducible smooth $M(F)$-representation
$\sigma$, $f(i_{M}^{H}(\sigma\psi))$ is an algebraic
function of $\psi$, where $\psi$ varies over the unramified characters of $M(F)$. Here $i_{M}^{H}(-)$ denotes normalizes parabolic induction.

We'll prove the theorem by showing that the linear form $\mathrm{tr}(\phi\times w | R\Gamma(G,b,\mu)[-])$ satisfies the conditions
of the trace Paley-Wiener theorem, applied to the group $H=G_b$.

\textbf{Verification of Condition 1. }Fix a pro-$p$ open compact
subgroup $K\subset G(F)$ such that $\phi$ is bi-$K$-invariant.
If $\mathrm{tr}(\phi\times w| R\Gamma(G,b,\mu)[\rho])\neq0$,
then $(i_{\mathbf{1}}^{\ast}T_{V^{\vee}_{\mu,\overline{\mathbf{Q}_{\ell}}}}i_{b \ast }\rho)^{K}\neq0$.
Therefore, it suffices to see that there is some open compact $K'\subset G_{b}(F)$
such that $(i_{\mathbf{1}}^{\ast}T_{V^{\vee}_{\mu,\overline{\mathbf{Q}_{\ell}}}}i_{b \ast}\rho)^{K}\neq0$
only if $\rho^{K'}\neq0$. For this, write
\begin{align*}
(i_{\mathbf{1}}^{\ast}T_{V^{\vee}_{\mu,\overline{\mathbf{Q}_{\ell}}}}i_{b \ast}\rho)^{K} & \cong R\mathrm{Hom}(i_{\mathbf{1}!}\cInd_{K}^{G(F)}\overline{\mathbf{Q}_{\ell}},T_{V^{\vee}_{\mu,\overline{\mathbf{Q}_{\ell}}}}i_{b \ast}\rho)\\
 & \cong R\mathrm{Hom}(T_{V_{\mu,\overline{\mathbf{Q}_{\ell}}}}i_{\mathbf{1}!}\cInd_{K}^{G(F)}\overline{\mathbf{Q}_{\ell}},i_{b \ast}\rho)\\
 & \cong R\mathrm{Hom}(i_{b}^{\ast}T_{V_{\mu,\overline{\mathbf{Q}_{\ell}}}}i_{\mathbf{1}!}\cInd_{K}^{G(F)}\overline{\mathbf{Q}_{\ell}},\rho).
\end{align*}

But now $i_{b}^{\ast}T_{V_{\mu,\overline{\mathbf{Q}_{\ell}}}}i_{\mathbf{1}!}\cInd_{K}^{G(F)}\overline{\mathbf{Q}_{\ell}}$
is compact, and hence supported on only finitely many Bernstein components
for $G_{b}(F)$. This shows that the irreducible
$\rho$'s with $(i_{\mathbf{1}}^{\ast}T_{V^{\vee}_{\mu,\overline{\mathbf{Q}_{\ell}}}}i_{b \ast}\rho)^{K}\neq0$
are supported on finitely many Bernstein components for $G_{b}(F)$.
Quite generally, if $\Theta$ is any finite union of Bernstein components
for $G_{b}(F)$, we can choose an open compact subgroup
$K'\subset G_{b}(F)$ such that $\rho^{K'}\neq0$ if
$\rho$ is supported on $\Theta$. This gives the result.

\textbf{Verification of Condition 2.} Fix $P=MU\subset G_{b}$ and
$\sigma$ as in Condition 2. Let $X=\mathrm{Spec}R$ be the smooth affine
algebraic variety over $\overline{\mathbf{Q}_{\ell}}$ parametrizing
unramified characters of $M(F)$. Let $\boldsymbol{\psi}:M(F)\to R^{\times}$
be the universal character, and form $\Pi=i_{M}^{G_{b}}(\sigma\boldsymbol{\psi})$.
This is an admissible smooth $R[G_{b}(F)]$-module
interpolating the parabolic inductions $i_{M}^{G_{b}}(\sigma\psi)$
over varying unramified characters $\psi$ in the evident sense.\footnote{To see that $\Pi$ is admissible in our slightly nonstandard sense, observe first that $\Pi^K$ is finitely generated as an $R$-module for all pro-$p$ open compact subgroups $K \subset G_b(F)$, since $P(F) \backslash G_b(F) / K$ is finite. But $R$ is a smooth $\overline{\mathbf{Q}_{\ell}}$-algebra, so any finitely generated $R$-module is automatically a perfect complex, giving the desired result.} Since $\Pi$ is admissible, the pushforward
 $i_{b \ast}\Pi\in D_{\mathrm{lis}}(\mathrm{Bun}_{G},R)$ is ULA.
Since Hecke operators preserve ULA complexes, we deduce that $i_{\mathbf{1}}^{\ast}T_{V^{\vee}_{\mu,R}}i_{b \ast}\Pi\in D(G(F),R)^{BW_{E}}$
is an admissible complex of smooth $R[G(F)]$-modules
with $W_{E}$-action, which interpolates the individual complexes
\[ R\Gamma(G,b,\mu)[i_{M}^{G_{b}}(\sigma\psi)]=i_{\mathbf{1}}^{\ast}T_{V^{\vee}_{\mu,\overline{\mathbf{Q}_{\ell}}}}i_{b \ast}i_{M}^{G_{b}}(\sigma\psi)\]
in the evident sense. 

Now fix a pro-$p$ open compact subgroup $K \subset G(F)$ such that $\phi$ is bi-$K$-invariant, so $\phi\times w$
defines an endomorphism of the perfect complex \[ (i_{\mathbf{1}}^{\ast}T_{V^{\vee}_{\mu,R}}i_{b \ast }\Pi)^{K}\in\mathrm{Perf}(R). \]
Let $f\in R$ be the trace of this endomorphism. Unwinding definitions,
we see that for any unramified character $\psi:M(F)\to\overline{\mathbf{Q}_{\ell}}^{\times}$
with associated point $x_{\psi}\in X(\overline{\mathbf{Q}_{\ell}})$,
there is an equality
\[
f(x_{\psi})=\mathrm{tr}\left(\phi\times w| R\Gamma(G,b,\mu)[i_{M}^{G_{b}}(\sigma\psi)]\right).
\]
This shows that $\mathrm{tr}(\phi\times w|R\Gamma(G,b,\mu)[i_{M}^{G_{b}}(\sigma\psi)])$
is an algebraic function of $\psi$, as desired.
\end{proof}

Let us say a subset $S\subset \mathrm{Irr}_{\overline{\mathbf{Q}_{\ell}}}(G(F))$
is dense if any trace form on $\Groth(G(F))$ which vanishes on $S$ vanishes identically.
For instance, the Langlands classification implies that (for any fixed
choice of isomorphism $\mathbf{C}\simeq\overline{\mathbf{Q}_{\ell}}$)
tempered representations are dense, cf. \cite[Theorem 0]{KazhdanCuspidal}. 

\begin{lem} \label{IntegralRepresentationsDense}
The subset of irreducible representations $\pi\in\mathrm{Irr}_{\overline{\mathbf{Q}_{\ell}}}(G(F))$
admitting $\overline{\mathbf{Z}_{\ell}}$-lattices is dense.
\end{lem}

It seems reasonable to think of this lemma as an $\ell$-adic analogue of the density of tempered
representations.

\begin{proof} Let $f$ be a trace form, and assume that $f(\tau)=0$ for every $\tau \in\mathrm{Irr}_{\overline{\mathbf{Q}_{\ell}}}(G(F))$ admitting a $\overline{\mathbf{Z}_{\ell}}$-lattice.

By Proposition \ref{PropLanglands}, it's enough to show that $f( i_{M}^{G}(\sigma \psi))=0$ for any parabolic $P=MU \subset G$, any unramified character $\psi$ of $M(F)$, and any $\sigma \in\mathrm{Irr}_{\overline{\mathbf{Q}_{\ell}}}(M(F))$ admitting a $\overline{\mathbf{Z}_{\ell}}$-lattice. Fix $P$ and $\sigma$, and consider the function $g$ on unramified characters of $M(F)$ sending $\psi$ to $f \left( i_{M}^{G}(\sigma \psi) \right)$. By the easy direction of the trace Paley-Wiener theorem, $g$ is a regular function on the variety of unramified characters of $M(F)$. 

Let us say an unramified character $\psi$ is integral if it takes values in $\overline{\mathbf{Z}_{\ell}}^{\times}$. If $\psi$ is integral, then $\sigma \psi$ admits a $\overline{\mathbf{Z}_{\ell}}$-lattice, and hence also $i_{M}^{G}(\sigma \psi)$ admits a $\overline{\mathbf{Z}_{\ell}}$-lattice. In particular, if $\psi$ is integral and $i_{M}^{G}(\sigma \psi)$ is irreducible, then $g(\psi)=0$ by combining these observations with our assumption on $f$. Now integral characters are Zariski-dense in the variety of unramified characters of $M(F)$, and the subset $T$ of integral characters such that $i_{M}^{G}(\sigma \psi)$ is irreducible is also Zariski-dense (use \cite[Theorem 5.1]{Dat}). Since $g(\psi)=0$ for all $\psi \in T$, we deduce that $g \equiv 0$, so in particular \[ 0 = g(\psi) = f \left( i_{M}^{G}(\sigma \psi) \right) \] for all $\psi$. This gives the result. 
\end{proof}

\begin{proof}[Proof of Theorem \ref{TheoremCalculationOfCharacterNoLattice}] Fix $\phi$ as in the statement of the theorem, and consider the linear form \[f(-)=\tr(\phi | \Mant_{b,\mu}(-)) - \left[\mc{T}_{b,\mu}^{G_b\to G} (\trdist - )\right](\phi) \] on $\Groth(G_b(F))$.  By Theorem \ref{TheoremCalculationOfCharacter}, we know that $f(\rho)=0$ if $\rho$ admits a lattice. We need to show that $f$ vanishes identically. 

The key observation is that $f$ is a trace form. Indeed, $\tr(\phi | \Mant_{b,\mu}(-))$ is a trace form by Theorem \ref{TheoremContinuous}. Moreover, $\left[\mc{T}_{b,\mu}^{G_b\to G} (\trdist - )\right](\phi)$ is a trace form, since we can rewrite $\left[\mc{T}_{b,\mu}^{G_b\to G} (\trdist \rho )\right](\phi)$ as the trace of $\tilde{T}_{b,\mu}^{G\to G_b}(\phi) \in C_c(G_b(F)_{\elli},\overline{\mathbf{Q}_{\ell}})_{G_b(F)}$ acting on $\rho$. Thus $f$ is a difference of trace forms, and hence a trace form. Since $f(\rho)=0$ for any $\rho$ admitting a lattice, Lemma \ref{IntegralRepresentationsDense} now implies the desired result.

For the final claim about virtual characters, choose compatible $\overline{\mathbf{Q}_{\ell}}$-valued Haar measures $dg$ and $dg'$ on $G(F)$ and $G_b(F)$. Fix some $\rho$, and let $\Xi \in C(G(F)_{\sr} \stslash G(F),\overline{\mathbf{Q}_{\ell}})$ be the virtual character of $\Mant_{b,\mu}(\rho)$. Pick any $\phi \in C_c(G(F)_{\elli},\overline{\mathbf{Q}_{\ell}})$. Then \[\tr(\phi | \Mant_{b,\mu}(\rho)) = \int_{G(F)} \Xi (g) \phi(g) dg\] by definition. On the other hand, \[ \left[\mc{T}_{b,\mu}^{G_b\to G} (\trdist \rho )\right](\phi) =  \int_{G(F)} T_{b,\mu}^{G_b \to G}(\Theta_{\rho})(g) \phi(g) dg\] by compatibility of the Haar measures and Proposition \ref{PropCompatibility}. Combining these observations, we get an equality \[ \int_{G(F)} T_{b,\mu}^{G_b \to G}(\Theta_{\rho})(g) \phi(g) dg = \int_{G(F)} \Xi (g) \phi(g) dg\] for any $\phi \in C_c(G(F)_{\elli},\overline{\mathbf{Q}_{\ell}})$.  The result now follows by varying $\phi$.
\end{proof}

\begin{proof}[Proof of Theorem \ref{TheoremMainRedux}] The claimed equality in $\Groth(G(F))$ is an immediate consequence of Theorem \ref{TheoremCalculationOfCharacterNoLattice} and Theorem \ref{TheoremEndoscopicTraceRelation}.

For the claim regarding the error term, consider the virtual representation \[ \mathrm{err}=\Mant_{b,\mu}(\rho)- \sum_{\pi\in \Pi_\phi(G)} \dim \Hom_{S_\phi}(\delta_{\pi,\rho},r_\mu)\pi. \] By the first half of the theorem, we know that $\mathrm{err}$ is non-elliptic. By Theorem \ref{ThmNonelliptic}, it thus suffices to show that $\mathrm{err}$ is a virtual sum of supercuspidal representations. Since the packet $\Pi_{\phi}(G)$ is supercuspidal by assumption, we're reduced to showing that $\Mant_{b,\mu}(\rho)$ is a virtual supercuspidal representation. By definition, this is the Grothendieck class of the complex $A = i_{\mathbf{1}}^{\ast} T_{V^{\vee}_{\mu,\overline{\mathbf{Q}_{\ell}}}} i_{b \ast} \rho \in D(G(F),\overline{\mathbf{Q}_{\ell}})$, so we need to see that any irreducible $\tau$ occurring in the Jordan-H\"older series of $H^\ast (A)$ is supercuspidal. Since $\varphi_{\tau} = \varphi_{\rho}$ by the commutation of Hecke operators with excursion operators, the claim now follows from the assumption on $\varphi_{\rho}$ and \cite[Theorem I.9.6.viii]{FarguesScholze}.
\end{proof}

\subsection{Application to inner forms of $\GL_n$}

We give an application to the local Langlands correspondence. Recall that for any $G/F$, any $b\in B(G)$ and any $\tau \in \mathrm{Irr}(G_b(F))$, the construction in \cite[Proposition I.9.1]{FarguesScholze} (applied to $A= i_{b!} \tau$) gives rise to a semisimple $L$-parameter $\varphi_{\tau}:W_F \to \phantom{}^L G(\overline{\mathbf{Q}_{\ell}})$ associated with $\tau$. This construction is canonical and satisfies a long list of desirable properties \cite[Theorem I.9.6]{FarguesScholze}. However, it is a highly nontrivial problem to compare this construction with ``previously known" realizations of the local Langlands correspondence.

\begin{thm} Let $G$ be any inner form of $\mathrm{GL}_n/F$, and let $\pi$ be an irreducible smooth representation of $G(F)$. Then the $L$-parameter $\varphi_\pi$ associated with $\pi$ as in \cite[\S I.9]{FarguesScholze} agrees with the usual semisimplified $L$-parameter attached to $\pi$.
\end{thm}

\begin{proof}
By \cite[Theorem I.9.6.viii]{FarguesScholze}, we can assume $\pi$ is supercuspidal. Pick some basic $b$ with $G_b = \mathrm{GL}_{n}/F$, and let $\rho \in \mathrm{Irr}(G_b(F))$ be the Jacquet-Langlands transfer of $\pi$ \cite{DKV}, so the (usual) semisimple $L$-parameters of $\rho$ and $\pi$ agree. By \cite[Theorem 1.9.6.viii-ix]{FarguesScholze}, we know that $\varphi_\rho$ agrees with the usual semisimple $L$-parameter of $\rho$. To prove the theorem, it thus suffices to show that $\varphi_\pi = \varphi_\rho$.

Pick any $\mu$ such that $b \in B(G,\mu)$. By Theorem \ref{TheoremCalculationOfCharacterNoLattice} and the usual character relation characterizing the Jacquet-Langlands correspondence, we have an equality $\Mant_{b,\mu}(\rho) = \dim V_{\mu} \cdot \pi + e$ in $\Groth(G(F))$, where $e$ is a non-elliptic virtual representation. Since $\pi$ is supercuspidal, this implies that $\pi$ occurs as a subquotient of some cohomology group of the complex $A = i_{\mathbf{1}}^{\ast} T_{V^{\vee}_{\mu,\overline{\mathbf{Q}_{\ell}}}} i_{b \ast} \rho \in D(G(F),\overline{\mathbf{Q}_{\ell}})$. But Hecke operators commute with excursion operators, so $\varphi_{\tau} = \varphi_{\rho}$ for any irreducible $\tau$ occurring in the Jordan-Holder series of $H^\ast (A)$. 
\end{proof}

\appendix
%!TEX root = FixedPoints2022.tex
\section{Endoscopy}
\label{LLCReview}

\subsection{Endoscopic character relations}
\label{CharacterRelations}

We recall here the endoscopic character identities, which are part of the refined local Langlands correspondence, following the formulation of \cite[\S5.4]{KalRI}, also recalled in \cite[\S4.2]{KalethaLocalLanglands}. They play a key role in the proof of Theorem \ref{TheoremEndoscopicTraceRelation}. We recall the notation established before the statement of that theorem.

\begin{itemize}
\item $F/\Q_p$ is a finite extension, $F^\tx{nr}/F$ a maximal unramified extension.
\item $G$ is a connected reductive group defined over $F$.
\item $G^*$ is a quasi-split connected reductive group defined over $F$.
\item $\Psi$ is a $G^*$-conjugacy class of inner twists  $\psi\from G^*\to G$.
\item $\bar z_\sigma=\psi^{-1}\sigma(\psi)\in G^*_{\tx{ad}}$, so that $\bar z\in Z^1(F,G^*_{\tx{ad}})$.
\item $z\in Z^1(u\to W,Z(G^*)\to G^*)$ is a lift of $\bar z$.
\item $b\in G(F^{\tx{nr}})$ is a decent basic element.
\item $G_b$ is the corresponding inner form of $G$.
\item $\xi\from G_{F^{\tx{nr}}}\to G_{b,F^{\tx{nr}}}$ is the identity map.
\item $z_b \in Z^1(u\to W,Z(G)\to G)$ and $g \in G(\ol{\breve F})$ satisfy \eqref{eq:zb}.
\item $\mf{w}$ is a Whittaker datum for $G^*$.
\item $\phi\from W_F \times \mathrm{SL}_2 \to {^LG}$ is a discrete $L$-parameter.
\item $S_\phi=\Cent(\phi,\hat{G})$.
\item $S_\phi^+$ is the group defined in Definition \ref{DefinitionOfSphiplus}.
\item $\lambda_z$ resp. $\lambda_{z_b}$ the image of the class of $z$ resp. $z_b$ under the isomorphism $H^1(u \to W,Z(G^*) \to G^*) \to \pi_0(Z(\hat{\bar G})^+)^*$.
\end{itemize}

% Thus $G$ is a connected reductive group defined over a finite extension $F$ of $\Q_p$, $b \in G(F^\tx{nr})$ is a decent basic element, $G_b$ the corresponding inner form. We fix an inner twising $\psi : G^* \to G$ with $G^*$ quasi-split, let $\bar z_\sigma = \psi^{-1}\sigma(\psi) \in G^*_\tx{ad}$ and let $z \in Z^1(u \to W,Z(G^*) \to G^*)$ be a lift of $\bar z \in Z^1(F,G^*_\tx{ad})$. Let $z_b \in Z^1(u \to W,Z(G) \to G)$ be the image of $b$ under \eqref{eq:rivsbg} and let $\xi : G_{F^\tx{nr}} \to J_{b,F^\tx{nr}}$ denote the identity map. Thus $(\psi,z) : G^* \to G$ and $(\xi\circ\psi,\psi^{-1}(z_b)\cdot z) : G^* \to G_b$ are rigid inner twists. Let $\mf{w}$ be a Whittaker datum for $G^*$. Given a discrete parameter $\phi : W_F \to {^LG}$,

Recall that $\tx{Ad}(g) : G_{z_b} \to G_b$ is an $F$-isomorphism. We will use it to identify the two groups, and drop $g$ from the notation. We will use the letter $g$ for a different purpose below.
 
Associated to $\phi$ are the $L$-packets $\Pi_\phi(G)$ and $\Pi_\phi(G_{z_b})$ and the bijections
\[
\Pi_{\phi}(G) \to \Irr(\pi_0(S_{\phi}^+),\lambda_{z}),\qquad
\Pi_{\phi}(G_{z_b}) \to \Irr(\pi_0(S_{\phi}^+),\lambda_{z}+\lambda_{z_b})
\]
denoted by $\pi \mapsto \tau_{z,\mf{w},\pi}$ and $\rho \mapsto \tau_{z,\mf{w},\rho}$.

We now choose  a semi-simple element $s \in S_\phi$ and an element $\dot s \in S_\phi^+$ which lifts $s$. Let $e(G)$ and $e(G_{z_b})$ be the Kottwitz signs of the groups $G$ and $G_{z_b}$, as defined in \cite{Kot83}. Of course $e(G_{z_b})=e(G_b)$. Consider the virtual characters
\[ e(G)\sum_{\pi \in \Pi_\phi(G)} \tr\tau_{z,\mf{w},\pi}(\dot s) \cdot \Theta_\pi\qquad\tx{and}\qquad e(G_{z_b})\sum_{\rho \in \Pi_\phi(G_{z_b})} \tr\tau_{z,\mf{w},\rho}(\dot s) \cdot \Theta_\rho.\]
The endoscopic character identities are equations which relate these two virtual characters to virtual characters on an endoscopic group $H_1$. From the pair $(\phi,\dot s)$ one obtains a refined elliptic endoscopic datum
\begin{equation} \label{eq:red}
\mf{\dot e}=(H,\mc{H},\dot s,\eta)
\end{equation}
in the sense of \cite[\S5.3]{KalRI} as follows. Let $\hat H=\tx{Cent}(s,\hat G)^\circ$. The image of $\phi$ is contained in $\tx{Cent}(s,\hat G)$, which in turns acts by conjugation on its connected component $\hat H$. This gives a homomorphism $W_F \to \tx{Aut}(\hat H)$. Letting $\Psi_0(\hat H)$ be the based root datum of $\hat H$ \cite[\S1.1]{Kot84} and $\Psi_0^\vee(\hat H)$ its dual, we obtain the homomorphism
\[ W_F \to \tx{Aut}(\hat H) \to \tx{Out}(\hat H) = \tx{Aut}(\Psi_0(\hat H)) = \tx{Aut}(\Psi_0(\hat H)^\vee). \]
Since the target is finite, this homomorphism extends to $\Gamma_F$ and we obtain a based root datum with Galois action, hence a quasi-split connected reductive group $H$ defined over $F$. Its dual group is by construction equal to $\hat H$. We let $\mc{H}=\hat H \cdot \phi(W_F)$, noting that the right factor normalizes the left so their product $\mc{H}$ is a subgroup of $^LG$. Finally, we let $\eta : \mc{H} \to {^LG}$ be the natural inclusion. Note that by construction $\phi$ takes image in $\mc{H}$, i.e. it factors through $\eta$.

We can realize the $L$-group of $H$ as $^LH = \hat H \rtimes W_F$, but we caution the reader that $W_F$ does not act on $\hat H$ via the map $W_F \to \tx{Aut}(\hat H)$ given by $\phi$ as above. Rather, we have to modify this action to ensure that it preserves a pinning of $\hat H$. More precisely, after fixing an arbitrary pinning of $\hat H$ we obtain a splitting $\tx{Out}(\hat H) \to \tx{Aut}(\hat H)$ of the projection $\tx{Aut}(\hat H) \to \tx{Out}(\hat H)$ and the action of $W_F$ on $\hat H$ we use to form $^LH$ is given by composing the above map $W_F \to \tx{Out}(\hat H)$ with this splitting.

Both $^LH$ and $\mc{H}$ are thus extensions of $W_F$ by $\hat H$, but they need not be isomorphic. If they are, we fix arbitrarily an isomorphism $\eta_1 : \mc{H} \to {^LH}$ of extensions. Then $\phi^s = \eta_1 \circ \phi$ is a discrete parameter for $H$.

In the general case we need to introduce a $z$-pair $\mf{z}=(H_1,\eta_1)$ as in \cite[\S2]{KS99}. It consists of a $z$-extension $H_1 \to H$ (recall this means that $H_1$ has a simply connected derived subgroup and the kernel of $H_1 \to H$ is an induced torus) and $\eta_1 : \mc{H} \to {^LH_1}$ is an $L$-embedding that extends the natural embedding $\hat H \to \hat H_1$. As is shown in \cite[\S2.2]{KS99}, such a $z$-pair always exists. Again we set $\phi^s = \eta_1 \circ \phi$ and obtain a discrete parameter for $H_1$. In the situation where an isomorphism $\eta_1 : \mc{H} \to {^LH}$ does exist, we will allows ourselves to take $H=H_1$ and so regard $\mf{z}=(H,\eta_1)$ as a $z$-pair, even though in general $H$ will not have a simply connected derived subgroup.

The virtual character on $H_1$ that the above virtual characters on $G$ and $G_{z_b}$ are to be related to is
\[ S\Theta_{\phi^s} := \sum_{\pi^s \in \Pi_{\phi^s}(H_1)} \tx{dim}(\tau_{\pi^s})\Theta_{\pi^s}. \]
Here $\pi^s \mapsto \tau_{\pi^s}$ is a bijection $\Pi_{\phi^s}(H_1) \to \tx{Irr}(\pi_0(\tx{Cent}(\phi^s,\hat H_1)/Z(\hat H_1)^\Gamma))$ determined by an arbitrary choice of Whittaker datum for $H_1$. The argument in the proof of Lemma \ref{lem:deltaindep} shows the independence of $\tx{dim}(\tau_{\pi^s})$ of the choice of a Whittaker datum for $H_1$.

The relationship between the virtual characters on $G$, $G_{z_b}$, and $H_1$, is expressed in terms of the Langlands-Shelstad transfer factor $\Delta_\tx{abs}'[\mf{\dot e},\mf{z},\mf{w},(\psi,z)]$ for the pair of groups $(H_1,G)$ and the corresponding Langlands-Shelstad transfer factor $\Delta_\tx{abs}'[\mf{\dot e},\mf{z},\mf{w},(\xi\circ\psi,\psi^{-1}(z_b)\cdot z)]$ for the pair of groups $(H_1,G_{z_b})$, both of which are defined by \cite[(5.10)]{KalRI}. We will abbreviate both of them to just $\Delta$. It is a simple consequence of the Weyl integration formula that the character relation \cite[(5.11)]{KalRI} can be restated in terms of character functions (rather than character distributions) as
\begin{equation}
\label{FormulaForThetaPi}
e(G)\sum_{\pi\in \Pi_\phi(G)} \tr\tau_{z,\mf{w},\pi}(\dot s)\Theta_\pi(g)=
\sum_{h_1\in H_1(F)/\tx{st.}} \Delta(h_1,g)S\Theta_{\phi^s}(h_1)
\end{equation}
for any strongly regular semi-simple element $g \in G(F)$. The sum on the right runs over stable conjugacy classes of strongly regular semi-simple elements of $H_1(F)$. We also have the analogous identity for $G_{z_b}$:
\begin{equation}
\label{FormulaForThetaRho}
 e(G_{z_b})\sum_{\rho\in \Pi_\phi(G_{z_b})} \tr\tau_{z,\mf{w},\rho}(\dot s)\Theta_\rho(g')=
\sum_{h_1\in H_1(F)/\tx{st.}} \Delta(h_1,g')S\Theta_{\phi^s}(h_1).
\end{equation}
For the purposes of this paper, we are only interested in the right hand sides of these two equations as a bridge between their left-hand sides. Essential for this bridge is a certain compatibility between the transfer factors appearing on both right-hand sides:

\begin{lem} \label{lem:RelationBetweenTransferFactors}
\begin{equation}
\label{RelationBetweenTransferFactors}
 \Delta(h_1,g') = \Delta(h_1,g) \cdot \<\tx{inv}[b](g,g'),s^\natural_{h,g}\>.
\end{equation}
\end{lem}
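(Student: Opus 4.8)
The plan is to unwind the definitions of both transfer factors $\Delta(h_1,g) = \Delta_{\mathrm{abs}}'[\mf{\dot e},\mf{z},\mf{w},(\psi,z)](h_1,g)$ and $\Delta(h_1,j) = \Delta_{\mathrm{abs}}'[\mf{\dot e},\mf{z},\mf{w},(\xi\circ\psi,\psi^{-1}(z_b)\cdot z)](h_1,j)$ using the recipe of \cite[(5.10)]{KalRI}, and compare them term by term. Recall that $\Delta_{\mathrm{abs}}'$ is built as a product of several factors, of which only the ``$\Delta_I$'' and ``$\Delta_{III}$''-type pieces depend on the rigid inner twist datum; the ``$\Delta_{II}$'' and ``$\Delta_{IV}$'' pieces depend only on $h_1$, on the $\bar F$-conjugacy class of $g$ (equivalently of $j$, since they are related), and on the chosen $\chi$-data and $a$-data. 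Since $g\in G(F)_{\sr}$ and $j\in J_b(F)_{\sr}$ are related, Lemma~\ref{LemmaSteinberg} gives $y\in G(\breve F)$ with $j=ygy^{-1}$, and $\mathrm{Ad}(y)$ identifies $T=\Cent(g,G)$ with $\Cent(j,J_b)$ as $F$-tori; this lets us use a single admissible embedding $\hat T\to\hat G$ and a single $h\in H_1(F)$ matching both $g$ and $j$. So the $\Delta_{II}$ and $\Delta_{IV}$ factors agree exactly, and the whole discrepancy $\Delta(h_1,j)/\Delta(h_1,g)$ is concentrated in the cohomological pairing that constitutes the rigid-inner-twist-dependent part.

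Next I would identify that cohomological pairing explicitly. In the formalism of \cite[\S5.3--5.4]{KalRI}, the relevant factor is a Tate-Nakayama--type pairing $\langle \mathrm{inv}(\psi,z), \hat s\rangle$ between a class in $H^1(u\to W, Z(G^*)\to G^*)$ (measuring the position of the rigid inner twist relative to the endoscopic element $h_1$) and the image $s_{h}$ of $\dot s$ under an admissible embedding $\hat H_1 \hookrightarrow \hat T$. Changing the rigidifying cocycle from $(\psi,z)$ to $(\xi\circ\psi, \psi^{-1}(z_b)\cdot z)$ changes the first argument of this pairing by the class obtained from $z_b$, i.e.\ by the image of $b$ under \eqref{eq:rivsbg}, pushed into $H^1(u\to W, Z(T)\to T)$ via the torus $T$. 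The key point is then to recognize that this change of class, paired against $s_{h,g}\in\hat T$, is precisely the quantity $\langle \mathrm{inv}[b](g,j), s_{h,g}\rangle$ of Definition~\ref{dfn:inv}, where $\mathrm{inv}[b](g,j) = [y^{-1}by^\sigma]\in B(T)$, using the comparison map $B(T)_{\mathrm{bas}}\to H^1(u\to W, Z(T)\to T)$ of \cite[\S3.3]{KalRIBG} and the compatibility of the Kottwitz/Tate-Nakayama pairings for $B(T)$ with those for $H^1(u\to W,-)$. I would carry this out by first treating the torus $T$ itself (where $B(T)\cong X_*(T)_\Gamma$ and the pairing with $\hat T$ is the obvious one), and then transporting to $H^1(u\to W,Z(G^*)\to G^*)$ via functoriality of \eqref{eq:rivsbg} along $T\hookrightarrow G^*$, which is where the multiplicativity of the transfer factor in the rigidifying cocycle (the cocycle-level version of \cite[Lemma 6.2]{KalRIBG}) gets used.

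The main obstacle I expect is bookkeeping the precise normalizations: making sure that the ``$y$'' conjugating $g$ to $j$, the cocycle $z_b$ attached to $b$, and the admissible embedding $\hat T\to\hat G$ carrying $\dot s$ to $s_{h,g}$ are all set up so that the signs and the direction of the pairing line up — in particular the caveat raised in the proof of Lemma~\ref{lem:deltaindep} that the transfer factor here is related to the one in \cite{KalGen} by $s\mapsto s^{-1}$. I would handle this by reducing everything, via the admissible embedding, to a statement purely about the torus $T$ and its dual $\hat T$: there the difference of transfer factors is a difference of two Tate-Nakayama pairings $H^1(u\to W, Z(T)\to T)\times \pi_0(\hat{\bar T}^+)\to\C^\times$, the difference of the two $H^1$-classes is the image of $[y^{-1}by^\sigma]$, and the pairing value is $\langle \mathrm{inv}[b](g,j), s_{h,g}\rangle$ by unwinding Definition~\ref{dfn:inv}. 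Once the torus case is pinned down, functoriality of all the constructions in \cite{KalRI}, \cite{KalRIBG} along $T\hookrightarrow G^*$ (respectively $T\hookrightarrow J_b$) upgrades it to the desired identity \eqref{RelationBetweenTransferFactors}. The remaining verification that the non-cohomological factors $\Delta_{II},\Delta_{IV}$ genuinely coincide for $(h_1,g)$ and $(h_1,j)$ is routine given that $g$ and $j$ share a common $F$-torus and $\bar F$-conjugacy class.
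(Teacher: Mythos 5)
Your plan follows essentially the same route as the paper's proof: both compare the two $\Delta_{\mathrm{abs}}'$ factors via the defining formula \cite[(5.10)]{KalRI}, observe that the discrepancy is concentrated in the Tate--Nakayama--type pairing against the invariant in rigid cohomology (the non-cohomological pieces being unchanged since $g$ and $j$ share an $F$-torus and $\bar F$-conjugacy class), and then convert that pairing into $\langle\mathrm{inv}[b](g,j),s_{h,g}\rangle$ using the $B(T)$-versus-$H^1(u\to W,\cdot)$ comparison of \cite{KalRIBG}. The paper packages this last step as a citation to the equality $\langle\mathrm{inv}[b](g,j),s_{h,g}\rangle=\langle\mathrm{inv}[z_b](g,j),\dot s_{h,g}\rangle$ from \cite[\S4.2]{KalRIBG}, whereas you sketch a torus reduction plus functoriality that amounts to re-deriving the content of that reference; the one place to be careful is that the relevant invariant in \cite[(5.10)]{KalRI} lives in $H^1(u\to W,Z(G)\to T_g)$ rather than in $H^1(u\to W,Z(G^*)\to G^*)$, though your subsequent reduction to the torus corrects for this.
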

We need to explain the second factor. Given maximal tori $T_H \subset H$ and $T \subset G$, there is a notion of an admissible isomorphism $T_H \to T$, for which we refer the reader to \cite[\S1.3]{KalethaLocalLanglands}. Two strongly regular semi-simple elements $h\in H(\Q_p)$ and $g\in G(\Q_p)$ are called \emph{related} if there exists an admissible isomorphism $T_h \to T_g$ between their centralizers mapping $h$ to $g$. If such an isomorphism exists, it is unique, and in particular defined over $F$, and shall be called $\varphi_{h,g}$. An element $h_1 \in H_1(F)$ is called related to $g \in G(F)$ if and only if its image $h \in H(F)$ is so. Since $g$ and $g'$ are stably conjugate, an element $h_1 \in H_1(F)$ is related to $g$ if and only if it is related to $g'$. If that is not the case, both $\Delta(h_1,g')$ and $\Delta(h_1,g)$ are zero and \eqref{RelationBetweenTransferFactors} is trivially true. Thus assume that $h_1$ is related to both $g$ and $g'$. Let $s^\natural \in S_\phi$ be the image of $\dot s$ under \eqref{eq:rivsbgllc}. Note that $s^\natural \in s \cdot Z(\hat G)^{\circ,\Gamma}$ and hence the preimage of $s^\natural$ under $\eta$ belongs to $Z(\hat H)^\Gamma$, which in turns embeds naturally into $\hat T_h^\Gamma$. Using the admissible isomorphism $\varphi_{h,g}$ we transport $s^\natural$ into $\hat T_g^\Gamma$ and denote it by $s^\natural_{h,g}$. It is then paired with $\tx{inv}[b](g,g')$ via the isomorphism $B(T_g) \cong X^*(\hat T_g^\Gamma)$ of \cite[\S2.4]{KottwitzIsocrystals}.
\begin{proof}
For every finite subgroup $Z \subset Z(G) \subset T_g$ one obtains from $\varphi_{h,g}$ an isomorphism $T_h/\varphi_{h,g}^{-1}(Z) \to T_g/Z$. Using the subgroups $Z_n$ from \S\ref{sub:geninner} we form the quotients $T_{h,n}=T_h/\varphi_{h,g}^{-1}(Z_n)$ and $T_{g,n}=T_g/Z_n$. From $\varphi_{h,g}$ we obtain an isomorphism
\[ \hat{\bar T_h} \to \hat{\bar T_g} \]
between the limits over $n$ of the tori dual to $T_{h,n}$ and $T_{g,n}$. Let $\dot s_{h,g} \in [\hat{\bar T_g}]^+$ be the image of $\dot s$ under this isomorphism. Let $\tx{inv}[z_b](g,g') \in H^1(u \to W,Z(G) \to T_g)$ be the invariant defined in \cite[\S5.1]{KalRI}. If we replace $\<\tx{inv}[b](g,g'),s^\natural_{h,g}\>$ by $\<\tx{inv}[z_b](g,g'),\dot s_{h,g}\>$ then the lemma follows immediately from the defining formula \cite[(5.10)]{KalRI} of the transfer factors. The lemma follows from the equality $\<\tx{inv}[b](g,g'),s^\natural_{h,g}\> = \<\tx{inv}[z_b](g,g'),\dot s_{h,g}\>$ proved in \cite[\S4.2]{KalRIBG}.
\end{proof}

\subsection{The Kottwitz sign}
\label{SectionKottwitzSign}

We will give a formula for the Kottwitz sign $e(G)$ in terms of the dual group $\hat G$. Fix a quasi-split inner form $G^*$ and an inner twisting $\psi\from G^* \to G$. Let $h \in H^1(\Gamma,G^*_\tx{ad})$ be the class of $\sigma \mapsto \psi^{-1}\sigma(\psi)$. Via the Kottwitz homomorphism \cite[Theorem 1.2]{Kot86} the class $h$ corresponds to a character $\nu \in X^*(Z(\hat G_\tx{sc})^\Gamma)$.

Choose an arbitrary Borel pair $(\hat T_\tx{sc},\hat B_\tx{sc})$ of $\hat G_\tx{sc}$ and let $2\rho \in X_*(\hat T_\tx{sc})$ be the sum of the $\hat B_\tx{sc}$-positive coroots. The restriction map $X^*(\hat T_\tx{sc}) \to X^*(Z(\hat G_\tx{sc}))$ is surjective and we can lift $\nu$ to $\dot\nu \in X^*(\hat T_\tx{sc})$ and form $\<2\rho,\dot\nu\> \in \Z$. A different lift $\dot\nu$ would differ by an element of $X^*(\hat T_\tx{ad})$, and since $\rho \in X_*(\hat T_\tx{ad})$ we see that the image of $\<2\rho,\dot\nu\>$ in $\Z/2\Z$ is independent of the choice of lift $\dot\nu$. We thus write $\<2\rho,\nu\> \in \Z/2\Z$. Since any two Borel pairs in $\hat G_\tx{sc}$ are conjugate $\<2\rho,\nu\>$ does not depend on the choice of $(\hat T_\tx{sc},\hat B_\tx{sc})$.

\begin{lem} \label{lem:kotsign}
\[ e(G) = (-1)^{\<2\rho,\nu\>}. \]
\end{lem}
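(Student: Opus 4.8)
The plan is to reduce the assertion to Kottwitz's rank description of $e(G)$ and then to a type-by-type verification on absolutely simple groups.

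First I would recall from \cite{Kot83} that over a $p$-adic field $e(G)$ depends only on the inner form and is given by the parity of the difference of $F$-ranks of $G^*$ and $G$ (equivalently of their derived groups), so that the lemma becomes the congruence $\mathrm{rk}_F(G^*)-\mathrm{rk}_F(G)\equiv\langle 2\rho,\nu\rangle\pmod 2$. Both sides are unchanged if we replace $G$ by $G_{\mathrm{der}}$ and then by its simply connected cover: the left side because passing to the derived group and central isogenies preserve $F$-ranks, and the right side because $\hat G_\tx{sc}$, the center $Z(\hat G_\tx{sc})$, the coroot sum $2\rho$, the adjoint group $G_\tx{ad}$, and hence the inner class $\nu$, all depend only on the root datum of $G_{\mathrm{der}}$ up to isogeny. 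Writing the resulting group as $\prod_i\Res_{F_i/F}G_i$ with $G_i$ absolutely simple simply connected over a finite extension $F_i/F$, both sides are visibly additive over the factors, and for a single factor the data over $F$ matches the data over $F_i$ by Shapiro's lemma (applied to $H^1(F,(\Res_{F_i/F}G_i)_\tx{ad})\cong H^1(F_i,G_{i,\tx{ad}})$ and to $Z(\widehat{\Res_{F_i/F}G_{i,\tx{sc}}})^{\Gamma_F}\cong Z(\hat G_{i,\tx{sc}})^{\Gamma_{F_i}}$) together with the compatibility of $e$ with restriction of scalars. This reduces the problem to $G$ absolutely simple simply connected over a $p$-adic field.

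For the absolutely simple case I would run through the quasi-split types, using that the inner forms are classified by $H^1(F,G^*_\tx{ad})\cong X^*(\pi_0(Z(\hat G_\tx{sc})^\Gamma))$ \cite{Kot86}. For the types $G_2$, $F_4$, $E_8$, and for ${}^3D_4$ and ${}^6D_4$ (and more generally whenever $Z(\hat G_\tx{sc})^\Gamma$ becomes trivial after the Galois twist), both sides vanish. In each remaining case — $A_{n-1}$ and ${}^2A_{n-1}$, $B_n$, $C_n$, ${}^1D_n$ and ${}^2D_n$, ${}^1E_6$ and ${}^2E_6$, $E_7$ — and for each character $\nu$ of $\pi_0(Z(\hat G_\tx{sc})^\Gamma)$, I would: choose an explicit lift $\dot\nu$ of $\nu$ to the weight lattice $X^*(\hat T_\tx{sc})$ and compute $\langle 2\rho,\dot\nu\rangle\bmod 2$; identify the inner form $G$ attached to $\nu$ via \cite{Kot86} and read $\mathrm{rk}_F(G)$ off Tits' classification of semisimple groups over $p$-adic fields \cite{TitsCorvallis}; and check that the two parities agree. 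For type $A$ this recovers the familiar value $(-1)^{n-\gcd(k,n)}$ for the inner form $\mathrm{SL}_{n/\gcd(k,n)}(D)$ attached to the Brauer class $k/n$, and a short congruence check gives $(-1)^{n-\gcd(k,n)}=(-1)^{(n-1)k}$, matching the weight-lattice side.

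The main obstacle is the type-$D$ and exceptional portion of this verification. For $D_n$ the center $Z(\hat G_\tx{sc})$ is $\mu_2\times\mu_2$ (or $\mu_4$ for $n$ odd) and the inner forms run through the spin, half-spin, and $\mathrm{SO}^*_{2n}$-type forms, with the Galois action and the $F$-ranks depending delicately on whether one is in ${}^1D_n$ or ${}^2D_n$; and for $E_6$, $E_7$ one needs the $F$-ranks of all $p$-adic inner forms in hand. Carrying out the pairing $\langle 2\rho,\dot\nu\rangle$ in bases adapted to each of these lattices and matching it against the Tits index in each case is where the real bookkeeping lies. (Alternatively, since $F$ is $p$-adic one could try to move $\bar z$ to a cocycle valued in a maximal $F$-torus of $G^*_\tx{ad}$ and compare $e(G)$ with $\langle 2\rho,\nu\rangle$ directly in torus Galois cohomology, but the finite type-by-type check is self-contained and avoids this.)
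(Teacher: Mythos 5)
Your approach is a genuinely different route from the paper's, and it is sound in principle, but it is incomplete as written: the type-by-type verification that you correctly identify as ``where the real bookkeeping lies'' is the substance of the argument, and you have not carried it out for types $B$, $C$, $D$, $E_6$, or $E_7$. The reduction steps are fine: the identification of $e(G)$ with the parity of $\mathrm{rk}_F G^* - \mathrm{rk}_F G$ over a $p$-adic field is Kottwitz's theorem; both sides of the desired congruence depend only on the isogeny class of $G_\tx{der}$ and on the inner twist, so one may pass to the simply connected cover; additivity over direct factors and Shapiro's lemma reduce to the absolutely simple case; and the verification for type $A$ and the triality forms of $D_4$ is correct. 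But a proof that stops at ``one would then check each remaining type against the Tits tables'' is not a proof, and the $D_n$ analysis in particular (with $\mu_2\times\mu_2$ versus $\mu_4$ centers, the $\Z/2$-twists, and the spin/half-spin/orthogonal forms) is nontrivial enough that it cannot be waved at.

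For comparison, the paper's proof is short and entirely case-free. It unwinds the definition of $e(G)$ as the image of $h$ under $H^1(\Gamma,G^*_\tx{ad}) \to H^2(\Gamma,Z(G^*_\tx{sc})) \to H^2(\Gamma,\{\pm1\}) \to \{\pm1\}$, reinterprets the second and third arrows via the Tate--Nakayama pairing as cup product of $\delta h$ with $\rho \in H^0(\Gamma,X^*(Z(G^*_\tx{sc})))$, transports $\rho$ across the duality $X^*(Z(G^*_\tx{sc})) \cong Z(\hat G_\tx{sc})$ to the element $(-1)^{2\rho} \in Z(\hat G_\tx{sc})^\Gamma$, and concludes by citing \cite[Lemma 1.8]{Kot86}, which is precisely the duality statement relating this pairing to Kottwitz's $\kappa$-map. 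Interestingly, the alternative you mention parenthetically at the end --- moving the cocycle $\bar z$ into torus cohomology and pairing directly --- is much closer in spirit to what the paper actually does than the case analysis you develop at length. If you want a self-contained argument, I would encourage you to pursue that duality route rather than the enumeration; it trades a finite (but large and error-prone) table check for a single application of local Tate--Nakayama duality, and it does not require invoking the Tits classification at all.
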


\begin{proof}
We fix $\Gamma$-invariant Borel pairs $(T_\tx{ad},B_\tx{ad})$ in $G^*_\tx{ad}$ and $(\hat T_\tx{sc},\hat B_\tx{sc})$ in $\hat G_\tx{sc}$. Then we have the identification $X^*(T_\tx{ad})=X_*(\hat T_\tx{sc})$. Let $(T_\tx{sc},B_\tx{sc})$ be the preimage in $G^*_\tx{sc}$ of $(T_\tx{ad},B_\tx{ad})$.

By definition the Kottwitz sign is the image of $h$ under
\[ \xymatrix{ H^1(\Gamma,G^*_\tx{ad})\ar[r]^-\delta&H^2(\Gamma,Z(G^*_\tx{sc}))\ar[r]^-\rho&H^2(\Gamma,\{\pm 1\})\ar[r]&\{\pm 1\}, }\]
where $\rho \in X^*(T_\tx{sc})$ is half the sum of the $B_\tx{sc}$-positive roots and its restriction to $Z(G^*_\tx{sc})$ is independent of the choice of $(T_\tx{ad},B_\tx{ad})$. By functoriality of the Tate-Nakayama pairing this is the same as pairing $\delta h \in H^2(\Gamma,Z(G^*_\tx{sc}))$ with $\rho \in H^0(\Gamma,X^*(Z(G^*_\tx{sc})))$. The canonical pairing $X^*(T_\tx{ad}) \otimes X^*(\hat T_\tx{sc}) \to \Z$ induces the perfect pairing $X^*(T_\tx{sc})/X^*(T_\tx{ad}) \otimes X^*(\hat T_\tx{sc})/X^*(\hat T_\tx{ad}) \to \Q/\Z$ and hence the isomorphism $X^*(Z(G^*_\tx{sc})) \to \tx{Hom}_\Z(X^*(Z(\hat G_\tx{sc})),\Q/\Z) = Z(\hat G_\tx{sc})$, where the last equality uses the exponential map. Under this isomorphism $\rho \in X^*(Z(G^*_\tx{sc}))^\Gamma$ maps to the element $(-1)^{2\rho} \in Z(\hat G_\tx{sc})^\Gamma$ obtained by mapping $(-1) \in \C^\times$ under $2\rho \in X^*(T_\tx{ad})=X_*(\hat T_\tx{sc})$. The lemma now follows from \cite[Lemma 1.8]{Kot86}.
\end{proof}

\section{Elementary Lemmas}

\subsection{Homological algebra}
\label{SectionHomologicalAlgebra}

\begin{lem} \label{lem:reflexivemodulesnaive}
Let $R$ be a discrete valuation ring with maximal ideal $\mf{m}$. Let $\kappa=R/\mf{m}$ be the residue field, and let $\Lambda=R/\mf{m}^k$ for some $k>0$. For a $\Lambda$-module $M$ we have the dual module $M^*=\tx{Hom}_\Lambda(M,\Lambda)$ and the natural morphisms $M \to M^{**}$ and $(M^* \otimes M) \to (M \otimes M^*)^*$.

The morphism $M \to M^{**}$ is an isomorphism if and only if $M$ is finitely generated.
\end{lem}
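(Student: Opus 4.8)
\textbf{Plan for the proof of Lemma \ref{lem:reflexivemodulesnaive}.}

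The plan is to prove both directions directly using the structure theory of finitely generated modules over the principal ideal ring $\Lambda = R/\mf{m}^k$, together with a limiting argument for the infinitely generated case. First I would dispose of the easy direction: if $M$ is finitely generated, then by the structure theorem $M \cong \bigoplus_{i=1}^n \Lambda/\mf{m}^{a_i}$ with $1 \le a_i \le k$. For a cyclic module $N = \Lambda/\mf{m}^a$ one computes $N^* = \Hom_\Lambda(\Lambda/\mf{m}^a, \Lambda) \cong \mf{m}^{k-a}/\mf{m}^k \cong \Lambda/\mf{m}^a$, the isomorphism being multiplication by a generator $\pi^{k-a}$ (where $\pi$ generates $\mf{m}$). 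Iterating, $N^{**} \cong \Lambda/\mf{m}^a$, and one checks the canonical map $N \to N^{**}$ is this composite, hence an isomorphism. Since duality and the biduality map commute with finite direct sums, the result for general finitely generated $M$ follows.

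For the converse I would argue contrapositively: suppose $M$ is not finitely generated, and show $M \to M^{**}$ fails to be an isomorphism --- in fact I expect it fails to be injective, or at least fails to be surjective, and the cleanest route is to show $M^{**}$ is ``too big'' or that $M$ has elements killed by the map. Actually the sharper statement is that for \emph{any} $\Lambda$-module, $M^{**}$ is a direct product completion-type object; concretely, if $M = \varinjlim M_i$ is the filtered colimit of its finitely generated submodules, then $M^* = \varprojlim M_i^*$ and $M^{**} = \varprojlim(M_i^{**}) $ does not obviously simplify, so instead I would pick a specific witness. If $M$ is not finitely generated over the Artinian ring $\Lambda$, then $M$ is not of finite length, so either $M$ contains a submodule isomorphic to an infinite direct sum $\bigoplus_{i \in I} \Lambda/\mf{m}$, or $M$ surjects onto such. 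In the first case the inclusion $\bigoplus_I \kappa \hookrightarrow M$ together with functoriality shows that the composite $\bigoplus_I \kappa \to M \to M^{**}$ factors through $(\bigoplus_I \kappa)^{**} = (\prod_I \kappa)^* \supsetneq \bigoplus_I \kappa$; tracking an element of $M$ lying in the image of the infinite sum but mapping to something in the product but not the sum --- wait, that needs care --- the honest obstruction is that $(\bigoplus_I \kappa)^{**} \cong (\prod_I \kappa)^*$ is strictly larger than $\bigoplus_I \kappa$ when $I$ is infinite, and one leverages a splitting or a section to transfer this failure to $M$.

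\textbf{The main obstacle.} The genuinely delicate point is the converse direction when $M$ is infinitely generated but possibly has no nice direct-sum decomposition (e.g.\ $M$ could be a countably generated non-free module with complicated socle filtration). The clean fix is: reduce to $\Lambda = \kappa$ a field by a d\'evissage on $k$ --- if $M \to M^{**}$ is an isomorphism, show the same holds for $M/\mf{m}M$ and $M[\mf{m}]$ as $\kappa$-vector spaces (using that $\Hom_\Lambda(-,\Lambda)$ applied to the exact sequences $0 \to \mf{m}M \to M \to M/\mf{m}M \to 0$ interacts well with the self-injectivity of $\Lambda$, which holds since $\Lambda$ is a $0$-dimensional Gorenstein ring) --- and then invoke the classical fact that a $\kappa$-vector space $V$ satisfies $V \cong V^{**}$ if and only if $\dim_\kappa V < \infty$. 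From finiteness of $\dim_\kappa M/\mf{m}M$ one recovers that $M$ is finitely generated by Nakayama. I expect the bookkeeping in this d\'evissage --- in particular verifying that $\Lambda$ is self-injective so that $(-)^*$ is exact, and that $(M/\mf{m}M)^* = M^*[\mf{m}]$ etc.\ --- to be the part requiring the most care, though none of it is deep.
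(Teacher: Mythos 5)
Your proof is essentially correct, and your ``clean fix'' in the final paragraph is in fact very close to the paper's argument, with one genuine (and arguably cleaner) difference in the endgame. The paper proceeds by induction on $k$: after using self-injectivity of $\Lambda$ to get the exact diagram and deducing that $N = M/\mf{m}M$ has finite $\kappa$-dimension, it applies the snake lemma to conclude $\mf{m}M \to (\mf{m}M)^{**}$ is an isomorphism, identifies the $\Lambda$-dual of $\mf{m}M$ with its $\Lambda/\mf{m}^{k-1}$-dual (via $\Lambda[\mf{m}^{k-1}] = \mf{m}/\mf{m}^k \cong \Lambda/\mf{m}^{k-1}$), and recurses. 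You instead propose: once $\dim_\kappa M/\mf{m}M < \infty$, conclude finite generation of $M$ directly by Nakayama using the nilpotence $\mf{m}^k = 0$. This sidesteps the induction and the need to re-interpret the biduality map for $\mf{m}M$ as a module over the smaller ring, so it buys you a shorter and less fiddly conclusion. Both versions rest on the same key points you correctly flag as the crux: self-injectivity of $\Lambda$ (so $(-)^*$ is exact and the diagram behaves), and the identification of the $\Lambda$-dual of $M/\mf{m}M$ with the $\kappa$-dual (via $\Lambda[\mf{m}] \cong \kappa$), after which the classical vector-space fact applies. Two remarks on your write-up: (i) the passage about infinite direct sums $\bigoplus_I \kappa$ is, as you sensed, not the right route --- a general non-finitely-generated $\Lambda$-module need not contain or map onto such a sum in a way you can exploit, so it's good that you discard it; (ii) you don't actually need $M[\mf{m}]$ or to show that $N \to N^{**}$ is an \emph{isomorphism} --- surjectivity suffices, since the canonical map $V \to V^{**}$ is always injective for a vector space, and surjectivity already forces finite dimension.
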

\begin{proof}
For the ``if'' direction of the first point we note that the structure theorem for $R$-modules implies that a finitely generated $\Lambda$-module is a direct sum of finitely many cyclic $\Lambda$-modules, and each cyclic $\Lambda$-module is isomorphic to its own double dual.

Conversely assume that $M\to M^{**}$ is an isomorphism. We induct on $k$.  If $k=1$, then $\Lambda$ is a field, and this is well-known.  For general $k$ we consider $N=M/\mf{m} M$.  The ring $\Lambda$ is an Artinian serial ring, and hence it is injective as a module over itself.  Thus the dualization functor is exact, and we get a commutative diagram
\begin{equation}
\label{EquationSnakeLemma}
\xymatrix{
0\ar[r] & \mf{m} M \ar[r] \ar[d] & M \ar[r] \ar[d]_{\isom} & N \ar[r] \ar[d] & 0\\
0 \ar[r] & (\mf{m} M)^{**} \ar[r] & M^{**} \ar[r] & N^{**} \ar[r] & 0,
}
\end{equation}
which shows that the right-most vertical map is surjective and the left-most vertical map is injective.

We have an isomorphism of $\Lambda$-modules $\mf{m}^{m-1}\Lambda \to \kappa$, from which we obtain
\[ N^*=\Hom_{\Lambda}(N,\Lambda)=
\Hom_{\Lambda}(N,\mf{m}^{m-1}\Lambda)
\isom \Hom_{\kappa}(N,\kappa). \]
Thus $N^{**}$ is also the double dual of $N$ in the category of $\kappa$-vector spaces, and it is easy to check that the right-most vertical map in \eqref{EquationSnakeLemma} is the canonical map in that category. Thus, this map is an isomorphism, and $N$ is fintely generated as a $\kappa$-vector space.

By the Snake Lemma, the left-most vertical arrow in \eqref{EquationSnakeLemma} is an isomorphism. We can apply the inductive hypothesis to the $(\Lambda/\mf{m}^{m-1})$-module $\mf{m} M$ and conclude that it is finitely generated. Thus so is $M$.

\end{proof}

\begin{lem} \label{lem:reflexivemodules} Let $\Lambda$ be an arbitrary ring, and let $D(\Lambda)$ be the derived category of $\Lambda$-modules.  For an object $M$ of $D(\Lambda)$, let $\D M=\Rhom(M,\Lambda[0])$.
\begin{enumerate}
\item Assume that $\Lambda=R/\mf{m}^k$ for a discrete valuation ring $R$ with maximal ideal $\mf{m}$.   Then the natural morphism $M\to \D\D M$ is an isomorphism if and only if each $H^i(M)$ is finitely generated.
\item For general $\Lambda$, the following are equivalent:
\begin{enumerate}
\item The natural maps $M\to \D\D M$ and $\D M\otimes M\to \D(M\otimes \D M)$ are isomorphisms.
\item The natural map $M\otimes \D M\to \Rhom(M,M)$ is an isomorphism.
\item $M$ is strongly dualizable;  that is, for any object $N$, $N\otimes \D M\to \Rhom(M,N)$ is an isomorphism.
\item $M$ is a compact object;  that is, the functor $N\mapsto \Rhom(M,N)$ commutes with colimits.
\item $M$ is a perfect complex; that is, $M$ is isomorphic to a bounded complex of finitely generated projective $\Lambda$-modules.
\end{enumerate}
\end{enumerate}
(Throughout, the $\otimes$ means derived tensor product.)
\end{lem}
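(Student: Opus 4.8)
The plan is to treat the two parts separately, each by reduction to well-known facts about the derived category $D(\Lambda)$ and, for part (1), about modules over a self-injective ring.

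For part (1) I would first use self-injectivity of $\Lambda$ to make $\D=\Rhom(-,\Lambda[0])$ exact on modules: $\Ext^i_\Lambda(N,\Lambda)=0$ for $i>0$, so $\D$ commutes with cohomology up to a reindexing, giving natural isomorphisms $H^i(\D M)\cong\Hom_\Lambda(H^{-i}M,\Lambda)$ and hence $H^i(\D\D M)\cong(H^iM)^{**}$, compatibly with the canonical morphism $M\to\D\D M$. Thus $M\to\D\D M$ is an isomorphism precisely when $N\to N^{**}$ is an isomorphism for each $N=H^iM$, and the remaining module-theoretic assertion — that reflexivity of a module is equivalent to finite generation — is Lemma \ref{lem:reflexivemodulesnaive} for the rings $\Lambda=R/\mf m^k$, which is the only case needed in the body of the paper (for a general Noetherian self-injective $\Lambda$ one may instead invoke the duality theory of quasi-Frobenius rings).

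For part (2) I would work in the closed symmetric monoidal category $D(\Lambda)$ with unit $\Lambda[0]$ and derived tensor product, organizing the five conditions around the notion of a \emph{dualizable} object and proving the cycle (e) $\Rightarrow$ (c) $\Rightarrow$ (b) $\Rightarrow$ ($M$ dualizable) $\Rightarrow$ (e), together with the equivalences (a) $\Leftrightarrow$ (b) and (d) $\Leftrightarrow$ (e). The implication (e) $\Rightarrow$ (c) is an explicit computation: representing a perfect $M$ by a bounded complex of finitely generated projectives, $\D M$ is computed by the termwise $\Lambda$-dual complex, and the canonical map $N\otimes\D M\to\Rhom(M,N)$ becomes, in each bidegree, the tautological isomorphism $N\otimes\Hom_\Lambda(P,\Lambda)\cong\Hom_\Lambda(P,N)$ for $P$ finitely generated projective. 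Specializing $N=M$ gives (c) $\Rightarrow$ (b). For (b) $\Rightarrow$ ($M$ dualizable) I would invoke the standard fact that in a closed symmetric monoidal category, if $M\otimes\D M\to\Rhom(M,M)$ is an isomorphism then the preimage of $\mathrm{id}_M$ is a coevaluation map exhibiting $\D M$ as a dual of $M$ (the triangle identities being a diagram chase). A dualizable object of $D(\Lambda)$ is a perfect complex (this is classical), which yields ($M$ dualizable) $\Rightarrow$ (e); and a dualizable object is in particular reflexive, with $M\otimes\D M\cong\Rhom(M,M)$, which closes the cycle. The equivalence (d) $\Leftrightarrow$ (e) — that the compact objects of $D(\Lambda)$ are exactly the perfect complexes — holds for an arbitrary ring $\Lambda$ by a theorem of Neeman. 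Finally (a) $\Leftrightarrow$ (b): since $\D(M\otimes\D M)\cong\Rhom(M,\Rhom(\D M,\Lambda))=\Rhom(M,\D\D M)$, reflexivity of $M$ identifies the map $\D M\otimes M\to\D(M\otimes\D M)$ of (a) with the canonical map $M\otimes\D M\to\Rhom(M,M)$ of (b) (using also the symmetry $M\otimes\D M\cong\D M\otimes M$ and a routine check that the canonical maps match up), so (a) implies (b); conversely (b) forces $M$ dualizable, hence reflexive, hence (a).

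The only step requiring genuine care is the formal lemma invoked in (b) $\Rightarrow$ ($M$ dualizable), namely verifying the two triangle identities for the candidate coevaluation; everything else in part (2) is a matter of assembling standard input — the termwise computation for complexes of projectives, the classical identification of dualizable objects of $D(\Lambda)$ with perfect complexes, and Neeman's theorem identifying compact with perfect — while part (1) is entirely formal once Lemma \ref{lem:reflexivemodulesnaive} is in hand.
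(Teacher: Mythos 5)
Your proof is correct and takes essentially the same approach as the paper. Part (1) is identical, and part (2) relies on the same three ingredients (the termwise computation for bounded complexes of finitely generated projectives, the coevaluation trick, and the identification of compact objects of $D(\Lambda)$ with perfect complexes), differing from the paper only in the ordering of the implications: you run the cycle (e)$\,\Rightarrow\,$(c)$\,\Rightarrow\,$(b)$\,\Rightarrow\,$dualizable$\,\Rightarrow\,$(e) together with (a)$\,\Leftrightarrow\,$(b) and (d)$\,\Leftrightarrow\,$(e), whereas the paper runs the single cycle (a)$\,\Rightarrow\,$(b)$\,\Rightarrow\,$(c)$\,\Rightarrow\,$(d)$\,\Rightarrow\,$(e)$\,\Rightarrow\,$(a).
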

\begin{proof}
For the first statement, the self-injectivity of $\Lambda$ implies that $H^i(\D M)\isom H^{-i}(M)^*$, so that $H^i(\D \D M)\isom H^i(M)^{**}$.  Therefore $M\to \D\D M$ is an isomorphism if and only if each $H^i(M)\to H^i(M)^{**}$ is an isomorphism.  By Lemma \ref{lem:reflexivemodulesnaive}, this is equivalent to each $H^i(M)$ being finitely generated.

We now turn to the second statement.   For (a)$\implies$(b), assume that $M\to \D \D M$ and $\D M\otimes M\to \D(M\otimes \D M)$ are isomorphisms.  Then $\Rhom(M,M)\isom \Rhom(M,\D \D M)\isom \Rhom(M\otimes \D M, \Lambda)\isom \D(M\otimes \D M)\isom \D M\otimes M$.

For (b)$\implies$(c), the identity map on $M$ induces a morphism $\varepsilon\from \Lambda[0]\to \Rhom(M,M)\isomto M\otimes \D M$ (the coevaluation map).  The required inverse to $N\otimes \D M\to \Rhom(M,N)$ is
\[ \Rhom(M,N)\stackrel{\text{id}\otimes \varepsilon}{\to} \Rhom(M,N)\otimes M\otimes \D M \to N\otimes \D M.\]

For (c)$\implies$(d), we use the fact that $\otimes$ commutes with colimits.

For (d)$\implies$(e), we use the fact that compact objects of $D(\Lambda)$ are perfect \cite[Tag 07LT]{Stacks}.

Finally, for (e) implies (a), we can write $M$ as a bounded complex of finitely generated projective $\Lambda$-modules.  Then duals and derived tensor products can be computed on the level of chain complexes.  We are reduced to showing, for finitely generated projective $\Lambda$-modules $A$ and $B$, that $A\to A^{**}$ and $A^*\otimes B\to (A\otimes B^*)^*$ are isomorphisms.   After localizing on $\Lambda$, we may assume that $A$ and $B$ are free of finite rank (since duals commute over direct sums), where these statements are easy to check.
\end{proof}

We thank Bhargav Bhatt for helping us with the above proof.

\subsection{Sheaves on locally profinite sets} \label{Sheaveslpfsets}

Let $S$ be a locally profinite set and $\Lambda$ a discrete ring. We have the ring $C(S,\Lambda)$ of locally constant functions on $S$, and the non-unital ring $C_c(S,\Lambda)$ of locally constant compactly supported functions on $S$. For each compact open subset $U \subset S$ let $\tb{1}_U$ denote the characteristic function. Then $C(U,\Lambda)$ is a principal ideal of both $C_c(S,\Lambda)$ and $C(S,\Lambda)$ generated by $\tb{1}_U$. Multiplication by $\tb{1}_U$ is a homomorphism $C(S,\Lambda) \to C(U,\Lambda)$ of rings with unity. In this way every $C(U,\Lambda)$-module becomes a $C(S,\Lambda)$-module.

\begin{dfn} \label{dfn:modlps}
We call a $C(S,\Lambda)$-module $M$ 
\begin{enumerate}
    \item \emph{smooth} if it satisfies the following equivalent conditions
    \begin{enumerate}
        \item The multiplication map $M \otimes_{C(S,\Lambda)}C_c(S,\Lambda) \to M$ is an isomorphism.
	\item The natural map $\varinjlim (\tb{1}_U \cdot M) \to M$ is an isomorphism, where the colimit runs over the open compact subsets $U \subset S$ and the transition map $\tb{1}_U\cdot M \to \tb{1}_V\cdot M$ for $U \subset V$ is given by the natural inclusion.
    \end{enumerate}
    \item \emph{complete} if the natural map $M \to \varprojlim_U (\tb{1}_U \cdot M)$ is an isomorphism, where again $U$ runs over the open compact subsets of $S$ and the transition map $\tb{1}_U\cdot M \to \tb{1}_V\cdot M$ for $V \subset U$ is multiplication by $\tb{1}_V$.
\end{enumerate}
\end{dfn}

\begin{lem}
Let $V \subset S$ be compact open and let $M$ be any $C(S,\Lambda)$-module. Then 
\begin{enumerate}
    \item $\tb{1}_{V}\cdot M$ is a submodule of $\varinjlim_U (\tb{1}_U \cdot M)$ and equals $\tb{1}_V\cdot \varinjlim (\tb{1}_U\cdot M)$.
    \item $\tb{1}_{V}\cdot M$ is a submodule of $\varprojlim_U (\tb{1}_U \cdot M)$ and equals $\tb{1}_V\cdot \varprojlim (\tb{1}_U\cdot M)$.
\end{enumerate}
\end{lem}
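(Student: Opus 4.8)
The plan is to reduce everything to the idempotence relations $\tb{1}_A\tb{1}_B=\tb{1}_{A\cap B}$ satisfied by the characteristic functions of compact open subsets. First I would record the basic bookkeeping: since $\tb{1}_U$ is idempotent, $\tb{1}_U\cdot M$ is the submodule $\{m\in M:\tb{1}_U m=m\}$ of $M$, and for $U\subseteq V$ the relation $\tb{1}_U\tb{1}_V=\tb{1}_U$ shows $\tb{1}_U\cdot M\subseteq \tb{1}_V\cdot M$, this inclusion being exactly the transition map used in the colimit. Since the poset of compact open subsets of $S$ is directed under $\subseteq$ (finite unions and intersections of compact opens are compact open), $\varinjlim_U(\tb{1}_U\cdot M)$ is a filtered colimit of submodules of $M$ along inclusions, hence is canonically identified with the union $\bigcup_U \tb{1}_U\cdot M\subseteq M$; in particular the canonical map $\varinjlim_U(\tb{1}_U\cdot M)\to M$ is injective.

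For part (1): since $\tb{1}_V\cdot M$ is the $U=V$ term of the colimit diagram, it maps canonically into $\varinjlim_U(\tb{1}_U\cdot M)$, and this map is injective because its composite with the injection $\varinjlim_U(\tb{1}_U\cdot M)\hookrightarrow M$ is the inclusion $\tb{1}_V\cdot M\hookrightarrow M$; so $\tb{1}_V\cdot M$ is a submodule of $\varinjlim_U(\tb{1}_U\cdot M)$. For the equality I would compute, using the identification above,
\[ \tb{1}_V\cdot\varinjlim_U(\tb{1}_U\cdot M)=\bigcup_U \tb{1}_V\tb{1}_U\cdot M=\bigcup_U \tb{1}_{V\cap U}\cdot M. \]
Each $\tb{1}_{V\cap U}\cdot M$ lies in $\tb{1}_V\cdot M$ since $V\cap U\subseteq V$, and the term $U=V$ gives $\tb{1}_V\cdot M$ itself; hence the union equals $\tb{1}_V\cdot M$.

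For part (2) I would first describe $\varprojlim_U(\tb{1}_U\cdot M)$ explicitly as the $\mc{C}^\infty(S,\Lambda)$-module of compatible families $(m_U)_U$ with $m_U\in\tb{1}_U\cdot M$ and $\tb{1}_W m_U=m_W$ whenever $W\subseteq U$, the action being componentwise. Introduce the $\mc{C}^\infty(S,\Lambda)$-linear map $\iota\from \tb{1}_V\cdot M\to\varprojlim_U(\tb{1}_U\cdot M)$, $m\mapsto(\tb{1}_U m)_U$; compatibility is immediate from $\tb{1}_W\tb{1}_U=\tb{1}_W$ for $W\subseteq U$, and $\iota$ is injective because its $V$-component is $m\mapsto\tb{1}_V m=m$. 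Next, for any compatible family $(m_U)_U$ one has $\tb{1}_V m_U=\tb{1}_{V\cap U}m_U=m_{V\cap U}$ by compatibility, so $\tb{1}_V\cdot(m_U)_U=(m_{V\cap U})_U$; its $V$-component $m_V$ lies in $\tb{1}_V\cdot M$, and $\iota(m_V)=(\tb{1}_U m_V)_U=(m_{V\cap U})_U$ (using $m_V=\tb{1}_V m_V$ and compatibility again). Thus $\tb{1}_V\cdot\varprojlim_U(\tb{1}_U\cdot M)$ is exactly the image of $\iota$, and ``evaluation at the $V$-component'' is a two-sided inverse of $\iota$ onto that image. Identifying $\tb{1}_V\cdot M$ with this submodule via $\iota$ yields both statements of part (2).

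The computations are all routine, so there is no real obstacle; the one place to be careful is conceptual rather than technical. In the colimit case $\tb{1}_V\cdot M$ is genuinely a subset of $M$ and of the colimit, whereas in the limit case one must keep track of the embedding $\iota$ and verify that the componentwise action of $\tb{1}_V$ on $\varprojlim_U(\tb{1}_U\cdot M)$ has image precisely $\iota(\tb{1}_V\cdot M)$, so that the assertion ``$\tb{1}_V\cdot M$ is a submodule of $\varprojlim_U(\tb{1}_U\cdot M)$'' is interpreted through $\iota$ and is compatible with the equality ``$\tb{1}_V\cdot M=\tb{1}_V\cdot\varprojlim_U(\tb{1}_U\cdot M)$''.
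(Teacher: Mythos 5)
Your proof is correct. The paper states this lemma without proof, so there is no authorial argument to compare against; your proposal supplies the missing details. All the computations reduce, as you say, to the idempotent identities $\tb{1}_A\tb{1}_B=\tb{1}_{A\cap B}$ for compact open subsets, together with the observation that the colimit is a filtered union inside $M$ while the limit is the module of compatible families. Your bookkeeping of the two identifications---$\tb{1}_V\cdot M\hookrightarrow\varinjlim_U(\tb{1}_U\cdot M)$ via the inclusion $\bigcup_U\tb{1}_U\cdot M\subseteq M$, and $\tb{1}_V\cdot M\hookrightarrow\varprojlim_U(\tb{1}_U\cdot M)$ via $\iota(m)=(\tb{1}_U m)_U$---is exactly the right reading of the statement, and the verification that $\tb{1}_V\cdot\varprojlim_U(\tb{1}_U\cdot M)=\iota(\tb{1}_V\cdot M)$, with evaluation at the $V$-component as inverse, is complete. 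The final caveat you flag (in the limit case the submodule lives in $\varprojlim$ only through $\iota$) is the one point genuinely worth articulating, and you handle it correctly.
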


\begin{lem}
\begin{enumerate}
	\item The functor $M \mapsto M^s:=\varinjlim (\tb{1}_U \cdot M)$ is a projector onto the category of smooth modules.
	\item The functor $M \mapsto M^c:=\varprojlim (\tb{1}_U \cdot M)$ is a projector onto the category of complete modules.
	\item The two functors give mutually inverse equivalences of categories between the categories of smooth and complete modules.
\end{enumerate}
\end{lem}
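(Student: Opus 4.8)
The plan is to deduce all three assertions formally from the two identities established in the preceding lemma: for every compact open $V \subset S$ and every $\mc{C}^\infty(S,\Lambda)$-module $M$, multiplication by $\tb{1}_V$ identifies $\tb{1}_V \cdot M^s = \tb{1}_V \cdot \varinjlim_U(\tb{1}_U\cdot M)$ and $\tb{1}_V \cdot M^c = \tb{1}_V \cdot \varprojlim_U(\tb{1}_U \cdot M)$ both with $\tb{1}_V \cdot M$, compatibly with the transition maps indexed by $V$.

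First I would show that $M^s$ is always smooth and $M^c$ is always complete. For smoothness of $M^s$, the identity above gives $\varinjlim_V(\tb{1}_V \cdot M^s) = \varinjlim_V(\tb{1}_V \cdot M) = M^s$, so the canonical map of condition (b) of Definition~\ref{dfn:modlps} is an isomorphism; dually $\varprojlim_V(\tb{1}_V \cdot M^c) = \varprojlim_V(\tb{1}_V \cdot M) = M^c$ shows $M^c$ is complete. Next I would note that on a smooth module the canonical map $M^s \to M$ is an isomorphism (this is precisely condition (b)), and on a complete module the canonical map $M \to M^c$ is an isomorphism (the completeness condition). Combined with the evident functoriality of $M \mapsto M^s$ and $M \mapsto M^c$ — each is built from multiplication by idempotents together with a filtered colimit, respectively a cofiltered limit — this gives parts (1) and (2): the two functors land in the asserted subcategories and restrict there to the identity.

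For part (3) it remains to check that the two composites recover the identity, naturally. Given $M$ smooth, the key identity yields $\tb{1}_V\cdot M^c = \tb{1}_V \cdot M$ for all $V$, so $(M^c)^s = \varinjlim_V(\tb{1}_V\cdot M^c) = \varinjlim_V(\tb{1}_V\cdot M) = M^s = M$, the last step using smoothness of $M$; dually, for $N$ complete, $(N^s)^c = \varprojlim_V(\tb{1}_V\cdot N^s) = \varprojlim_V(\tb{1}_V\cdot N) = N^c = N$. Since every identification invoked is the canonical one, these isomorphisms are natural in $M$ and $N$, which exhibits $M \mapsto M^s$ and $N \mapsto N^c$ as mutually inverse equivalences between the categories of smooth and of complete $\mc{C}^\infty(S,\Lambda)$-modules. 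The only point demanding care — and the closest thing to an obstacle — is the bookkeeping that the identifications furnished by the preceding lemma are compatible with the transition maps, so that they may be pushed through the colimits and limits and produce the canonical natural transformations rather than merely abstract isomorphisms; no genuine difficulty is expected beyond this.
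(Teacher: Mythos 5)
The paper states this lemma without proof, so there is no written argument to compare against; your proposal supplies the natural, direct proof and it is correct. All three assertions do fall out of the two identities in the preceding lemma ($\tb{1}_V \cdot M^s = \tb{1}_V\cdot M$ and $\tb{1}_V \cdot M^c = \tb{1}_V\cdot M$), exactly as you lay out: $M^s$ is smooth and $M^c$ is complete because applying the functor again reproduces the same (co)limit; the canonical maps $M^s \to M$ (for $M$ smooth) and $M \to M^c$ (for $M$ complete) are isomorphisms by definition, giving idempotence; and for the equivalence in part (3), one computes $(M^c)^s = \varinjlim_V \tb{1}_V M^c = \varinjlim_V \tb{1}_V M = M^s$, which equals $M$ when $M$ is smooth, and dually for complete modules. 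Two small remarks: in your final sentence the roles of $(\cdot)^s$ and $(\cdot)^c$ are swapped relative to the variable names you assigned ($M$ smooth, $N$ complete) — the equivalence is given by restricting $(\cdot)^c$ to smooth modules and $(\cdot)^s$ to complete modules — which is clearly a slip rather than an error. The ``bookkeeping'' you flag at the end is genuine but routine: the isomorphisms $\tb{1}_V M \cong \tb{1}_V M^c$ all arise as restrictions of the single canonical map $M \to M^c$, and $\tb{1}_V M^s = \tb{1}_V M$ is literally an equality of submodules of $M$ (since $M^s = \bigcup_U \tb{1}_U M \subset M$), so compatibility with transition maps is automatic.
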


Let $\mc{B}$ the set of open compact subsets of $S$. Then $\mc{B}$ is a basis for the topology of $S$ and is closed under taking finite intersections and finite unions. Restriction gives an equivalence between the category of sheaves on $S$ and the category of sheaves on $\mc{B}$. Define $R(U)=C(U,\Lambda)$. This is a sheaf of rings on $S$.

Let $\mc{F}$ be an $R$-module sheaf on $S$. For $U \in \mc{B}$ we extend the $R(U)$-module structure on $\mc{F}(U)$ to a $C(S,\Lambda)$-module structure as remarked above. Then the restriction map $\mc{F}(S) \to \mc{F}(U)$ becomes a morphism of $C(S,\Lambda)$-modules.

\begin{lem} \label{lem:flabby}
\begin{enumerate}
	\item For any $U \in \mc{B}$ the restriction map $\mc{F}(S) \to \mc{F}(U)$ is surjective and its restriction to $\tb{1}_U \cdot \mc{F}(S)$ is an isomorphism $\tb{1}_U \cdot \mc{F}(S) \to \mc{F}(U)$.
	\item We have $\mc{F}(S) = \varprojlim_U \mc{F}(U)$, where the transition maps are the restriction maps.
\end{enumerate}
\end{lem}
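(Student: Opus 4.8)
The plan is to deduce both statements from the sheaf axiom applied to two natural covers of $S$, keeping careful track of how the idempotents $\tb{1}_U\in\mc{C}^\infty(S,\Lambda)$ act on $\mc{F}(S)$.

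For part (1), I would first note that, $S$ being Hausdorff and $U$ compact, $U$ is closed, so $V:=S\sm U$ is open and $\{U,V\}$ is a partition of $S$ into two disjoint open sets. Since $U\cap V=\emptyset$ and $\mc{F}(\emptyset)=0$, the gluing axiom for this cover produces an isomorphism $(\res_U,\res_V)\from\mc{F}(S)\isomto\mc{F}(U)\times\mc{F}(V)$; in particular $\res_U$ is the first projection, hence surjective. Next I would identify $\tb{1}_U\cdot\mc{F}(S)$ under this isomorphism: the function $\tb{1}_U\in R(S)$ restricts to the unit $1\in R(U)$ and to $0\in R(V)$, so compatibility of the $R$-module structure with restriction gives $\res_U(\tb{1}_U\cdot m)=\res_U(m)$ and $\res_V(\tb{1}_U\cdot m)=0$ for every $m\in\mc{F}(S)$. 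Therefore $\tb{1}_U\cdot\mc{F}(S)$ corresponds to $\mc{F}(U)\times\{0\}$, and $\res_U$ restricts to an isomorphism $\tb{1}_U\cdot\mc{F}(S)\isomto\mc{F}(U)$.

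For part (2), I would apply the sheaf axiom to the open cover $\{U\}_{U\in\mc{B}}$ of $S$ (legitimate since $\mc{B}$ is a basis), presenting $\mc{F}(S)$ as the equalizer of $\prod_U\mc{F}(U)\rightrightarrows\prod_{U_1,U_2}\mc{F}(U_1\cap U_2)$. Because $\mc{B}$ is directed under inclusion and closed under finite intersections, this equalizer is canonically the inverse limit $\varprojlim_{U\in\mc{B}}\mc{F}(U)$ along the restriction maps: an element of the equalizer is a family $(s_U)$ with $\res^{U_1}_{U_1\cap U_2}(s_{U_1})=\res^{U_2}_{U_1\cap U_2}(s_{U_2})$, and specializing to pairs $U\subseteq V$ (where $U\cap V=U$) recovers exactly the compatibility $s_U=\res^V_U(s_V)$ defining the inverse limit; conversely any compatible family satisfies the equalizer condition by inserting $U_1\cap U_2\in\mc{B}$. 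This yields $\mc{F}(S)=\varprojlim_U\mc{F}(U)$.

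Both arguments are essentially routine; the only point that demands genuine care is the bookkeeping in part (1) — tracking that the $\mc{C}^\infty(S,\Lambda)$-module structure on $\mc{F}(S)$ is the one induced from the $R(U)$-module structures via the maps $\tb{1}_U\cdot(-)$, so that $\tb{1}_U$ acts through its restrictions $1$ on $U$ and $0$ on $V$ — together with the small verification in part (2) that the equalizer of the (infinite but directed) cover coincides with the inverse limit. I do not expect either to constitute a real obstacle.
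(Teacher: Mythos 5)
Your proof is correct. The paper states this lemma without proof (it is background material in the section on sheaves on locally profinite sets), so there is no argument to compare against; I will simply assess yours.

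Both steps are sound. In part (1), the key observation that $U$ is clopen (compact in Hausdorff implies closed, and $U\in\mc{B}$ is open by definition) makes $\{U, S\sm U\}$ a disjoint open cover, so the sheaf axiom gives $\mc{F}(S)\cong\mc{F}(U)\times\mc{F}(S\sm U)$ with $\res_U$ the first projection. Your identification of $\tb{1}_U\cdot\mc{F}(S)$ with $\mc{F}(U)\times\{0\}$ is correct because $\tb{1}_U$ restricts to the unit in $R(U)$ and to $0$ in $R(S\sm U)$, and restriction is a morphism of modules over the ring restriction. In part (2), since $\mc{B}$ covers $S$ and is closed under finite intersections, the equalizer of $\prod_U\mc{F}(U)\rightrightarrows\prod_{U_1,U_2}\mc{F}(U_1\cap U_2)$ is literally the poset inverse limit $\varprojlim_{U\in\mc{B}}\mc{F}(U)$: compatibility along inclusions $U\subseteq V$ is the special case $U_1=U$, $U_2=V$ of the equalizer condition, and conversely the equalizer condition for arbitrary $U_1,U_2$ follows by factoring both restrictions through $s_{U_1\cap U_2}$. (The closure of $\mc{B}$ under finite unions, which you invoke to claim directedness, is not actually needed for this identification, though it is harmless to mention.) No gap.
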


Let $M$ be an $C(S,\Lambda)$-module. Let $\mc{F}_M(U)=R(U)M=\tb{1}_UM$. This is a $C(S,\Lambda)$-submodule of $M$. Given $V,U \in \mc{B}$ with $V \subset U$ we have the map $\mc{F}_M(U) \to \mc{F}_M(V)$ defined by multiplication by $\tb{1}_V$. In this way $\mc{F}_M$ becomes an $R$-module sheaf.

Let $f : M \to N$ be a morphism of $C(S,\Lambda)$-modules. We define for each $U$ the morphism $f_U : \mc{F}_M(U) \to \mc{F}_N(U)$ simply by restricting $f$ to $\mc{F}_M(U)$. One checks immediately that $(f_U)_U$ is a morphism of sheaves of $R$-modules. Therefore we obtain a functor from the category of $C(S,\Lambda)$-modules to the category of sheaves of $R$-modules.

Given a sheaf $\mc{F}$ on $S$ we can define the smooth module $M_\mc{F}^s$ and the complete module $M_\mc{F}^c$ by
\[ M_\mc{F}^s = \varinjlim_U \mc{F}(U)\qquad M_\mc{F}^c = \varprojlim_U \mc{F}(U)\]
where the limit is taken over the restriction maps, and the colimit is taken over their sections given by Lemma \ref{lem:flabby}, and in both cases $U$ runs over $\mc{B}$. Conversely, given any $C(S,\Lambda)$-module $M$ we have the sheaf $\mc{F}_M$.

\begin{lem} \label{lem:sheaflps}
These functors give mutually inverse equivalences of categories from the category of smooth (resp. complete) $C(S,\Lambda)$-modules to the category of $R$-module sheaves. These equivalences commute with the equivalence between the categories of smooth and complete modules. Furthermore $\mc{F}_M(S)=M^c$.
\end{lem}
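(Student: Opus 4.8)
The plan is to show that the functor $M\mapsto\mc{F}_M$ and the functor $\mc{F}\mapsto M^s_\mc{F}$ (respectively $\mc{F}\mapsto M^c_\mc{F}$) are quasi-inverse, with the two preceding lemmas on smooth/complete modules and Lemma~\ref{lem:flabby} supplying all of the non-formal content; the identity $\mc{F}_M(S)=M^c$ will then come out as a corollary.

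First I would check that $U\mapsto\tb{1}_UM$, with transition maps multiplication by $\tb{1}_V$, really is a sheaf of $R$-modules on the basis $\mc{B}$. Since $\mc{B}$ is closed under finite unions and intersections, it suffices to verify the sheaf axiom for finite covers of a compact open $U\in\mc{B}$, and any such cover refines to a finite disjoint decomposition $U=U_1\sqcup\dots\sqcup U_n$ with $U_i\in\mc{B}$. For such a decomposition $\tb{1}_U=\tb{1}_{U_1}+\dots+\tb{1}_{U_n}$ is a sum of orthogonal idempotents, so $\tb{1}_UM=\bigoplus_i\tb{1}_{U_i}M$, which is exactly the sheaf condition. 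Extending along the equivalence between sheaves on $S$ and sheaves on $\mc{B}$ produces the $R$-module sheaf $\mc{F}_M$, and then Lemma~\ref{lem:flabby}(2) applied to $\mc{F}_M$ gives $\mc{F}_M(S)=\varprojlim_U\mc{F}_M(U)=\varprojlim_U\tb{1}_UM=M^c$, which is the final assertion.

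Next I would compute the round trips on objects. For an $R$-module sheaf $\mc{F}$, Lemma~\ref{lem:flabby}(1) identifies $\mc{F}(U)$ with $\tb{1}_U\mc{F}(S)$ compatibly with the transition maps, whence $M^s_\mc{F}=\varinjlim_U\mc{F}(U)=\varinjlim_U\tb{1}_U\mc{F}(S)=(\mc{F}(S))^s$ and $M^c_\mc{F}=\varprojlim_U\mc{F}(U)=\mc{F}(S)$. Using the lemma that $\tb{1}_U\cdot N=\tb{1}_U\cdot N^s$ for any $\mc{C}^\infty(S,\Lambda)$-module $N$, one then gets $\mc{F}_{M^s_\mc{F}}(U)=\tb{1}_U(\mc{F}(S))^s=\tb{1}_U\mc{F}(S)=\mc{F}(U)$, naturally in $U$, i.e. $\mc{F}_{M^s_\mc{F}}\isom\mc{F}$. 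In the other direction, for a smooth module $M$ we have $M^s_{\mc{F}_M}=\varinjlim_U\tb{1}_UM=M^s=M$, and for a complete module $M$ we have $M^c_{\mc{F}_M}=\varprojlim_U\tb{1}_UM=M^c=M$. Compatibility with the equivalence smooth $\leftrightarrow$ complete is formal from the same inputs: since $\tb{1}_UM$ depends only on $M^s$ (equivalently on $M^c$), one has $\mc{F}_M=\mc{F}_{M^s}=\mc{F}_{M^c}$, and on the sheaf side $(M^c_\mc{F})^s=(\mc{F}(S))^s=M^s_\mc{F}$.

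Finally, for morphisms: $f\from M\to N$ is sent to the morphism of sheaves with components $f|_{\tb{1}_UM}\from\tb{1}_UM\to\tb{1}_UN$ (landing in $\tb{1}_UN$ by $\mc{C}^\infty(S,\Lambda)$-linearity), and a morphism $\mc{F}\to\mc{G}$ is sent to the induced map on colimits; that these give mutually inverse bijections on $\Hom$-sets follows by unwinding the object-level identifications above together with their naturality. I expect the only genuinely non-formal step to be the verification that $\mc{F}_M$ is a sheaf, i.e. the reduction of an arbitrary cover to a finite disjoint one and the orthogonal-idempotent decomposition; everything else is routine bookkeeping with filtered colimits, cofiltered limits, and the two prior lemmas.
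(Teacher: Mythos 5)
The paper states Lemma~\ref{lem:sheaflps} without proof, leaving the verification as routine bookkeeping, so there is no argument of the authors' to compare against; your task was to supply the missing proof, and what you have written is essentially correct. The identifications $\mc{F}(U)\cong\tb{1}_U\mc{F}(S)$ from Lemma~\ref{lem:flabby}, together with the equalities $\tb{1}_U N=\tb{1}_U N^s=\tb{1}_U N^c$ from the preceding lemma, do give the round-trip isomorphisms on objects exactly as you compute, and the statements about morphisms and about compatibility with the smooth/complete equivalence follow from the naturality of those identifications.

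The one place that deserves a sentence more is the verification that $\mc{F}_M$ is in fact a sheaf. You reduce to finite covers by compactness and then to finite disjoint decompositions by total disconnectedness, and for a disjoint decomposition the orthogonal-idempotent identity $\tb{1}_U M=\bigoplus_i\tb{1}_{U_i}M$ indeed \emph{is} the sheaf condition. But for a general presheaf the sheaf condition for a refinement does not formally imply the sheaf condition for the original cover, so the phrase ``which is exactly the sheaf condition'' hides a small step. For the present presheaf one can close the gap directly: given a finite cover $U=\bigcup_i U_i$ with compatible $m_i\in\tb{1}_{U_i}M$ and a disjoint clopen refinement $\{V_j\}$ with $V_j\subset U_{i(j)}$, the element $m=\sum_j\tb{1}_{V_j}m_{i(j)}$ satisfies $\tb{1}_{U_i}m=\sum_j\tb{1}_{U_i\cap V_j}m_{i(j)}=\sum_j\tb{1}_{U_i\cap V_j}m_i=m_i$ using the compatibility of the $m_i$ on pairwise intersections, and uniqueness follows by multiplying by the $\tb{1}_{V_j}$ and summing. (Alternatively one can invoke that sieves generated by finite disjoint covers cofinally generate the open-cover Grothendieck topology on $\mc{B}$, so the two notions of ``sheaf'' coincide.) Apart from this elision, the argument is complete and correct.
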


\section{Some representation theory}

Let $G$ be a reductive group over a finite extension $F/\mathbf{Q}_p$. For a parabolic subgroup $P$ of $G$ we write $i_P^G$ for the functor of normalized parabolic induction and $r_G^P$ for the normalized Jacquet module functor.

Fix a minimal parabolic $P_0=M_0 U_0$. A parabolic subgroup $P$ is called standard if it contains $P_0$. There is a unique Levi factor $M$ of $P$ that contains $M_0$, and conversely $M$ determines $P$. In that situation we may write $i_M^G$ and $r_G^M$ in place of $i_P^G$ and $r_G^P$.

\subsection{Non-elliptic representations}

Recall that a finite-length (virtual) $G(F)$-representation is \emph{non-elliptic} if its Harish-Chandra character vanishes on all elliptic elements. Our goal in this section is the following result.

\begin{thm}\label{ThmNonelliptic} Let $\pi \in \Groth(G(F))$ be any finite-length virtual $G(F)$-representation with $\mathbf{C}$-coefficients, or with $\overline{\mathbf{Q}_{\ell}}$-coefficients. Then $\pi$ is non-elliptic if and only if it can be expressed as a $\mathbf{Q}$-linear virtual combination of representations induced from proper parabolic subgroups of $G$.
\end{thm}

When $G(F)$ has compact center, this is (a weaker version of) a classical result of Kazhdan \cite{KazhdanCuspidal}. The general statement seems to be well-known to experts, but we were unable to find an explicit formulation in the literature.

\begin{proof}It suffices to treat the case of complex coefficients. Parabolic inductions are non-elliptic by van Dijk's formula \cite{vanDijk}, so the ``if" direction is clear. We will deduce the ``only if" direction from \cite{DatK0}; in what follows we freely use various notations from loc. cit., in particular writing $\mathscr{R}(G)$ for the Grothendieck group of finite length smooth $\mathbf{C}$-representations of $G(F)$. 

Suppose that $\pi \in \mathscr{R}(G)$ is non-elliptic. Following the notation of \cite{DatK0}, pick any $f \in \overline{\mathscr{H}}^{d(G)}(G)$. Then all regular semisimple non-elliptic orbital integrals of $f$ vanish by \cite[Theorem 3.2.iii]{DatK0}, so $\tr (f | \pi) = 0$ by our assumption on $\pi$ and the Weyl integration formula. Therefore $\pi \in  \overline{\mathscr{H}}^{d(G)} (G)^{\perp}$, so $\pi \in \mathscr{R}_{\mathbf{C}_{d(G)}}(G)$ by \cite[Theorem 3.2.ii]{DatK0}. Now applying \cite[Proposition 2.5.i]{DatK0} to the Hopf system $\mathscr{A}(-) = \mathscr{R}(-) \otimes \mathbf{Q}$ with $d=d(G)$, we see that $\pi \in \mathscr{R}(G) \otimes \mathbf{Q}$ is annihilated by the operator $1-\sum_{d(M)>d(G)}c_d(M) i_{M}^{G} r_{G}^{M}$ for some rational numbers $c_d(M)$. Therefore \[\pi = \sum_{d(M)>d(G)}c_d(M) i_{M}^{G} r_{G}^{M}(\pi),\] and the right-hand side is a $\mathbf{Q}$-linear virtual combination of proper parabolic inductions, giving the result.
\end{proof}

\subsection{Integral representations and parabolic inductions}
 Fix a prime $\ell \neq p$. As usual, let $\Groth(G(F))$ be the Grothendieck group of finite-length smooth $\overline{\mathbf{Q}_{\ell}}$-representations of $G(F)$.

\begin{dfn} \label{DefLattice} Let $\pi$ be an admissible smooth $\overline{\mathbf{Q}_{\ell}}$-representation of $G(F)$. We say $\pi$ admits a $\overline{\mathbf{Z}_{\ell}}$-lattice if there exists an admissible smooth $\ell$-torsion-free $\overline{\mathbf{Z}_{\ell}}[G(F)]$-module $L$ together with an isomorphism $L[1/\ell] \simeq \pi$.
\end{dfn}

Recall that our convention on the meaning of admissible is slightly non-standard, so in particular any such $L$ has the property that $L^{K}$ is a finite free $\overline{\mathbf{Z}_{\ell}}$-module for all open compact pro-$p$ subgroups $K \subset G(F)$, and whence $L$ is a free $\overline{\mathbf{Z}_{\ell}}$-module. The existence of a $\overline{\mathbf{Z}_{\ell}}$-lattice in our sense implies, but is strictly stronger than, the existence of a ``$\overline{\mathbf{Z}_{\ell}}G(F)$-r\'eseau" in the sense of \cite{VignerasBook}. Note also that if $\pi$ is a finite-length admissible representation admitting a $\overline{\mathbf{Z}_{\ell}}$-lattice, then any such lattice is finitely generated as a $\overline{\mathbf{Z}_{\ell}}[G(F)]$-module by \cite{VignerasWhittaker}.

The goal of this section is to prove the following result.
\begin{pro}\label{PropLanglands} The group $\Groth(G(F))$ is generated by representations of the form $ i_{M}^{G}(\sigma \otimes \psi)$, where $i_{M}^{G}(-)$ is the normalized parabolic induction functor associated with a standard Levi subgroup $M$, $\psi$ is an unramified character of $M(F)$, and $\sigma$ is an irreducible admissible $\overline{\mathbf{Q}_{\ell}}$-representation of $M(F)$ admitting a $\overline{\mathbf{Z}_{\ell}}$-lattice.
\end{pro}

We will deduce this from Dat's theory of $\nu$-tempered representations \cite{Dat}. In particular, we will apply the theory from \cite{Dat} with $\mathcal{K} = \overline{\mathbf{Q}_{\ell}}$ or with $\mathcal{K}=E \subset \overline{\mathbf{Q}_{\ell}}$ a finite extension of $\mathbf{Q}_{\ell}$, equipped with the usual norms, so $\nu$ is a positive multiple of the usual $\ell$-adic valuation.

\begin{lem} \label{TemperedLattice} Let $\pi$ be any irreducible smooth $\overline{\mathbf{Q}_{\ell}}$-representation of $G(F)$. If $\pi$ is $\nu$-tempered, then $\pi$ admits a $\overline{\mathbf{Z}_{\ell}}$-lattice.
\end{lem}
\begin{proof} Suppose given $\pi$ as in the lemma. By \cite[II.4.7]{VignerasBook}, we may choose some $E$ and some admissible $E$-representation $\pi_E$ together with an isomorphism $\pi_E \otimes_E \overline{\mathbf{Q}_{\ell}} = \pi$. By definition, $\pi$ is $\nu$-tempered if and only if $\pi_E$ is $\nu$-tempered, \cite[Lemma 3.3]{Dat}. Since $\pi_E$ is $\nu$-tempered, it admits an $\mathcal{O}_E$-lattice $L$ by \cite[Proposition 6.3]{Dat}. Then $L \otimes_{\mathcal{O}_E} \overline{\mathbf{Z}_{\ell}}$ is the desired $\overline{\mathbf{Z}_{\ell}}$-lattice in $\pi$.
\end{proof}

We will now freely use all the notation and results of \cite[ \S 2-3]{Dat}, with $\mathcal{K} = \overline{\mathbf{Q}_{\ell}}$. A triple $(M,\sigma,\psi)$ consisting of a standard Levi subgroup $M \subset G$, a $\nu$-tempered irreducible representation $\sigma$ of $M(F)$, and an unramified character $\psi$ of $M(F)$ with $-\nu(\psi) \in (\mf{a}_P)^{*,+}$, is called a Langlands triple. The corresponding representation $i_P^G(\sigma\otimes\psi)$ has a unique irreducible quotient, which we will denote by $j_P^G(\sigma\otimes\psi)$. Every irreducible smooth representation $\pi$ of $G(F)$ is isomorphic to $j_P^G(\sigma\otimes\psi)$ for a (essentially) unique Langlands triple, cf. \cite[Theorem 3.11]{Dat}. The uniqueness of the triple $(M,\sigma,\psi)$ with a given irreducible quotient $\pi$ allows us to index the representation $i_P^G(\sigma\otimes\psi)$ by $\pi$. We shall write $I(\pi)$ for this representation, and refer to it as the \emph{standard representation} associated with $\pi$. Note that there is a natural surjection $I(\pi) \to \pi$.

On the other hand, by \cite[Theorem 3.11.ii]{Dat}, $\lambda_{\pi}:= -\nu(\psi) \in \mf{a}_{M_0}^{\ast}$ is also a well-defined invariant of $\pi$. Note that $\pi$ is $\nu$-tempered if and only if $\lambda_{\pi}=0$, and that $M$ can be read off from $\lambda_{\pi}$. The following key lemma is the analogue of \cite[Lemma XI.2.13]{BW} in our setting.

\begin{lem} \label{keyinequality} Let $\pi$ be any irreducible representation. Write $\pi = j_P^G(\sigma\otimes\psi)$, and let $\pi'$ be any nonzero irreducible subquotient of $I(\pi) = i_{M}^{G}(\sigma \otimes \psi)$. Then $\lambda_{\pi'} \leq \lambda_{\pi}$ in the usual partial ordering on $\mf{a}_{M_0}^{\ast}$, and $\lambda_{\pi'} < \lambda_{\pi}$ if $\pi'$ is a subquotient of $\tx{ker}(I(\pi) \to \pi)$.
\end{lem}

\begin{proof} After twisting, we may assume $\pi$ and $\pi'$ have integral central characters. Write $\pi' = j_Q^G(\sigma'\otimes\psi')$ for some Langlands triple $(L,\sigma',\psi')$. By the proof of \cite[Theorem 3.11.i]{Dat}, $\lambda_{\pi'}$ occurs in $-\nu(\mc{E}(A_L,r^{\ol{Q}}_{G}(\pi')))$, so the result now follows from the subsequent proposition.
\end{proof}

\begin{pro} \label{pro:stdrep_exp}
Let $MU=P$ and $LN=Q$ be standard parabolic subgroups of $G$. Let $\sigma$ be a $\nu$-tempered irreducible representation of $M(F)$, and $\psi : M(F) \to \overline{\mathbf{Q}_{\ell}}^\times$ an unramified character with $\mu=-\nu(\psi) \in (\mf{a}_P^G)^{*,+}$. Let $\pi'$ be a subquotient of $i_P^G(\sigma\otimes\psi)$ and let $\mu' \in -\nu(\mc{E}(A_L,r^{\ol{Q}}_G(\pi')))$. Then
\begin{enumerate}
	\item $\mu' \leq \mu$.
	\item If $\pi'$ is a subquotient of $\tx{ker}(i_P^G(\sigma\otimes\psi) \to j_P^G(\sigma\otimes\psi))$, then $\mu' < \mu$.
\end{enumerate}
\end{pro}
\begin{proof}
The exponents of $\pi'$ are a subset of the exponents of $r^{\ol{Q}}_{G}(i_P^G(\sigma\otimes\psi))$. These were analyzed in the proof of \cite[Lemma 3.7]{Dat}, where it was shown that if $\psi'$ is such an exponent and $\mu'=-\nu(\psi')$ is such an exponent, then 
\[ \mu-\mu' = \mu-[\mu]_L^G + [\mu-w^{-1}\mu]_L^G - \ol{{^+}(\mf{a}_{\ol{Q}}^G)^*},	\]
for some $w \in W_M \lmod W/ W_L$. It was moreover shown that $\mu-[\mu]_L^G$ and $[\mu-w^{-1}\mu]_L^G$ belong to $\ol{^+(\mf{a}_M^G)^*}$, which shows $\mu-\mu' \geq 0$, hence (1).

For (2) we may replace $\pi'$ with $\tx{ker}(i_P^G(\sigma\otimes\psi) \to j_P^G(\sigma\otimes\psi))$, since the exponents of the former are again a subset of the exponents of the latter. We assume by way of contradiction that $\mu=\mu'$. We have $\mu \in (\mf{a}_P^G)^{*,+}$ and $\mu' \in (\mf{a}_L^G)^*$, so $\mu' = \mu$ implies that the intersection $(\mf{a}_P^G)^* \cap (\mf{a}_L^G)^*$ is non-empty. Since $(\mf{a}_P^G)^{*,+}$ is an open subset of $(\mf{a}_M^G)^*$, we see that $(\mf{a}_M^G)^* \subset (\mf{a}_L^G)^*$, hence $Q \subset P$. According to the formula $r^{\ol{Q}}_{G}(\pi') = r^{\ol{Q}\cap M}_{M}(r^{\ol{P}}_{G}(\pi'))$, $\mu' \in -\nu(\mc{E}(A_M,r^{\ol{P}}_{G}(\pi'))$. 

By construction of the Langlands quotient $j_P^G(\sigma\otimes\psi)$ we have the exact sequence
\[ 0 \to \pi' \to i_P^G(\sigma\otimes\psi) \to i_{\ol{P}}^G(\sigma\otimes\psi), \]
where the map between the two parabolic inductions is the intertwining operator $J_{\ol{P},P}$ of \cite[Lemma 3.7]{Dat}. We recall that this intertwining operator was obtained via Frobenius reciprocity from the unique (up to scalar) element of $\tx{Hom}_M(r^{\ol{P}}_G(i_P^G(\sigma\otimes\psi)),\sigma\otimes\psi)$. This element is the unique retraction of the natural embedding of $\sigma\otimes\psi$ into $r^{\ol{P}}_G(i_P^G(\sigma\otimes\psi))$.

We can describe this element in a slightly different way that is more suitable for our purposes. The representation $r^{\ol{P}}_G(i_P^G(\sigma\otimes\psi))$ has a filtration indexed by elements of $W_M \lmod W_G / W_M$ (strictly speaking, one has to choose a total order that refines the Bruhat order), and the  natural embedding of $\sigma\otimes\psi$ into $r^{\ol{P}}_G(i_P^G(\sigma\otimes\psi))$ identifies $\sigma\otimes\psi$ with the beginning part of this filtration, indexed by $w=1$. It is shown in equation (3.9) of the proof of \cite[Lemma 3.7]{Dat} that for any exponent $\psi''$ of a subqoutient corresponding to $w \neq 1$, $\mu''=-\nu(\psi'')$ satisfies $\mu'' < \mu$. On the other hand, all exponents of $\sigma\otimes\psi$ have image $\mu$ under $-\nu$. Therefore, the retraction $r^{\ol{P}}_G(i_P^G(\sigma\otimes\psi)) \to \sigma\otimes\psi$ is simply the projection onto the $\mu$-direct summand of the the exponent decomposition of $r^{\ol{P}}_G(i_P^G(\sigma\otimes\psi))$. 

Applying $r^{\ol{P}}_G$ to the above displayed exact sequence we obtain the exact sequence
\[ 0 \to r^{\ol{P}}_G(\pi') \to r^{\ol{P}}_G(i_P^G(\sigma\otimes\psi)) \to r^{\ol{P}}_G(i_{\ol{P}}^G(\sigma\otimes\psi)). \]
Therefore $r^{\ol{P}}_G(\pi')$, being the kernel of $r^{\ol{P}}_G(i_P^G(\sigma\otimes\psi)) \to r^{\ol{P}}_G(i_{\ol{P}}^G(\sigma\otimes\psi))$, is contained in the kernel of the composition of this map with the evaluation-at-$1$ map $r^{\ol{P}}_G(i_{\ol{P}}^G(\sigma\otimes\psi)) \to \sigma\otimes\psi$. But that composition is, by construction of $J_{\ol{P},P}$ via Frobenius reciprocity, equal to the projection $r^{\ol{P}}_G(i_P^G(\sigma\otimes\psi)) \to (r^{\ol{P}}_G(i_P^G(\sigma\otimes\psi)))_\mu$. Thus the exponents of the kernel of that projection are those exponents of $r^{\ol{P}}_G(i_P^G(\sigma\otimes\psi))$ whose image $\mu''$ under $-\nu$ is not equal to $\mu$. By what was said in the previous paragraph, these satisfy $\mu'' < \mu$.
\end{proof}

\begin{proof}[Proof of Proposition \ref{PropLanglands}]
Fix a point $\theta$ in the Bernstein variety for $G(F)$, and let $\Irr(G(F))_{\theta} \subset \Irr(G(F))$ be the finite set of irreducible representations with cuspidal support $\theta$. Let 
\[ \Groth(G(F))_{\theta} \subset \Groth(G(F)) \] 
be the subgroup generated by $\Irr(G(F))_{\theta}$, so 
\[ \Groth(G(F))=\oplus_{\theta} \Groth(G(F))_{\theta}.\] 
By Lemma \ref{TemperedLattice}, it suffices to prove that $\Groth(G(F))_{\theta}$ is generated by representations $i_{M}^{G}(\sigma \otimes \psi)$ for Langlands triples $(M,\sigma,\psi)$, i.e. by standard representations. Note that $\pi \in \Irr(G(F))_{\theta}$ implies $I(\pi) \in \Groth(G(F))_{\theta}$, cf. \cite[Proposition 2.4]{BDK}. We will prove the finer result that the standard representations $I(\pi), \pi \in \Irr(G(F))_{\theta}$ give a basis for $\Groth(G(F))_{\theta}$.

Set \[ S = \{ \lambda_{\pi}, \pi \in \Irr(G(F))_{\theta} \} \subset \mf{a}_{M_0}^{\ast}. \] Note that $S$ is a finite set, and inherits a natural partial order from the partial order on $\mf{a}_{M_0}^{\ast}$. Pick any $\pi \in \Irr(G(F))_{\theta}$. If $\lambda_{\pi}$ is minimal in $S$, then the natural map $I(\pi) \to \pi$ is an isomorphism by Lemma \ref{keyinequality}. In general, if $\lambda_{\pi}$ is not minimal in $S$, then by Lemma \ref{keyinequality} and induction on $S$, we may assume that $\tx{ker}(I(\pi) \to \pi)$ is a $\mathbf{Z}$-linear combination of standard representations $I(\pi'), \pi' \in \Irr(G(F))_{\theta}$. Then also $\pi = I(\pi) - \tx{ker}(I(\pi) \to \pi)$ is a $\mathbf{Z}$-linear combination of standard representations, giving the desired result.
\end{proof}

\bibliographystyle{amsalpha}
\bibliography{bibfile}

\end{document}